\numberwithin{equation}{subsection}
\definecolor{airforceblue}{rgb}{0.36, 0.54, 0.66}
\definecolor{bured}{rgb}{0.8, 0.0, 0.0}
\definecolor{burntsienna}{rgb}{0.91, 0.45, 0.32}
\definecolor{burntsienna}{rgb}{0.91, 0.45, 0.32}
\definecolor{burntsienna}{rgb}{0.91, 0.45, 0.32}
\definecolor{chromeyellow}{rgb}{1.0, 0.65, 0.0}
\definecolor{cobalt}{rgb}{0.0, 0.28, 0.67}
\definecolor{coralred}{rgb}{1.0, 0.25, 0.25}
\definecolor{coralred}{rgb}{1.0, 0.25, 0.25}
\definecolor{coralred}{rgb}{1.0, 0.25, 0.25}
\definecolor{ferrarired}{rgb}{1.0, 0.11, 0.0}
\definecolor{forestgreen}{rgb}{0.0, 0.27, 0.13}
\definecolor{cream}{rgb}{1.0, 0.99, 0.82}
\definecolor{cream}{rgb}{1.0, 0.99, 0.82}
\definecolor{cream}{rgb}{1.0, 0.99, 0.82}
\definecolor{cream}{rgb}{1.0, 0.99, 0.82}
\definecolor{candy}{rgb}{0.64, 0.0, 0.0}
\definecolor{darkblue}{rgb}{0.0, 0.0, 0.55}
\definecolor{crimson}{rgb}{0.86, 0.08, 0.24}
\definecolor{carmine}{rgb}{0.94, 0.19, 0.22}
\definecolor{ashgrey}{rgb}{0.7, 0.75, 0.71}
\definecolor{tomato}{rgb}{1.0, 0.39, 0.28}
\definecolor{awesome}{rgb}{1.0, 0.13, 0.32}
\definecolor{cadmiumorange}{rgb}{0.93, 0.53, 0.18}
\definecolor{darkseagreen}{rgb}{0.56, 0.74, 0.56}
\definecolor{darkspringgreen}{rgb}{0.09, 0.45, 0.27}
\definecolor{antiquefuchsia}{rgb}{0.57, 0.36, 0.51}
\definecolor{amaranth}{rgb}{0.9, 0.17, 0.31}
\definecolor{myc}{cmyk}{0.0009,0.8,0.8,0.00}
\newtheorem{theorem}{Theorem}[section]
\newtheorem{proposition}[theorem]{Proposition}
\newtheorem{corollary}[theorem]{Corollary}
\newtheorem{lemma}[theorem]{Lemma}
\newtheorem{claim}[theorem]{Claim}
\newtheorem*{nb}{\footnotesize N.B}
\theoremstyle{definition}
\newtheorem{definition}[theorem]{Definition}
\newtheorem{remark}[theorem]{Remark}
\newtheorem{ex}[theorem]{Example}
\newtheorem{exs}[theorem]{Examples}
\newcommand\p{\partial}
\newcommand\io{{\infty}}
\newcommand\inde{\operatorname{index}}
\newcommand\range{\operatorname{ran}}
\newcommand\diag{\operatorname{diag}}
\newcommand\re{\operatorname{Re}}
\newcommand\card{\operatorname{Card}}
\newcommand\spectrum{\operatorname{Spectrum}}
\newcommand\Id{\operatorname{Id}}
\newcommand\N{\mathbb N}
\newcommand\Z{\mathbb Z}
\newcommand\R{\mathbb R}
\newcommand{\poscal}[2]{\langle#1,#2\rangle}
\newcommand{\Poscal}[2]{\left\langle#1,#2\right\rangle}
\newcommand{\poi}[2]{\left\{#1,#2\right\}}
\newcommand{\Poi}[2]{\Bigl\{#1,#2\Bigr\}}
\newcommand{\norm}[1]{\Vert#1\Vert}
\newcommand{\Norm}[1]{\bigl\Vert#1\bigr\Vert}
\newcommand{\trinorm}[1]{\vert\hskip-1pt\norm{#1}\hskip-1pt\vert}
\newcommand{\tend}[4]{{{#1}\begin{matrix}\\ \longrightarrow\\ \hfill{\scriptstyle #3\rightarrow #4}\end{matrix}#2}}
\newcommand{\val}[1]{\vert#1\vert}
\newcommand{\Val}[1]{\left\vert#1\right\vert}
\newcommand{\sign}{\operatorname{sign}}
\newcommand{\tr}[1]{{^t}#1}
\newcommand{\ops}[2]{{\text{Op}_{#1}(#2)}}
\newcommand{\OPS}[2]{{\text{Op}_{#1}\bigl(#2\bigr)}}
\newcommand{\opw}[1]{{\text{\textrm Op}_{\text{\textrm w}}(#1)}}
\newcommand{\OPW}[1]{{\text{\textrm Op}_{\text{\textrm w}}\bigl(#1\bigr)}}
\newcommand{\mett}[4]{M_{#1,#2,#3}^{\scriptscriptstyle\{#4\}}}
\newcommand{\Mett}[4]{\mathcal M_{#1,#2,#3}^{\scriptscriptstyle\{#4\}}}
\newcommand{\RZ}{\R^{2n}}
\newcommand{\trace}{\operatorname{trace}}
\newcommand{\hs}{{\hskip15pt}}
\newcommand{\vs}{\vskip.3cm}
\let\no=\noindent
\newcommand{\wt}[1]{\widetilde{#1}}
\def\mat22#1#2#3#4{\begin{pmatrix}#1&#2\\ #3&#4\end{pmatrix}}
\def\symm{\text{\sl Sym}(n,\R)}
\def\sym+{\text{\sl Sym}_{+}(n,\R)}
\def\mett#1#2#3#4{M_{#1,#2,#3}^{\scriptscriptstyle\{#4\}}}
\def\Mett#1#2#3#4{\mathcal M_{#1,#2,#3}^{\scriptscriptstyle\{#4\}}}
\def\expo#1{^{\scriptscriptstyle\{#1\}}}
\begin{document}
\title[Uncertainty principle for metaplectic transformations]{On the Uncertainty Principle \\for Metaplectic Transformations}
\date{\today}
\author[Nicolas Lerner]{Nicolas Lerner}
\address{\noindent \textsc{N. Lerner, Institut de Math\'ematiques de Jussieu,
Sorbonne Universit\'e (formerly Paris VI),
Campus Pierre et Marie Curie,
4 Place Jussieu,
75252 Paris cedex 05,
France}}
\email{nicolas.lerner@imj-prg.fr, nicolas.lerner@sorbonne-universite.fr}
\numberwithin{equation}{subsection}
\begin{abstract}
We explore the new proofs and extensions of the Heisenberg Uncertainty Principle introduced by 
A.~Widgerson \& Y.~Widgerson in \cite{MR4229152},
deve\-loped in \cite{MR4453622} by 
N.C.~Dias,
F.~Luef
and J.N.~Prata
and also in \cite{MR4337266}
by Y. Tang.
In particular we give here a proof of the Uncertainty Principle for operators in the Metaplectic group
in any dimension.
\end{abstract}
\keywords{Uncertainty principle, Wigner distribution, Metaplectic group}
\subjclass[2010]{}
\dedicatory{\empty}
\maketitle
$$\color{magenta}
\boxed{\boxed{\textbf{\today,\quad\currenttime}}}
$$
{\color{ferrarired}\footnotesize
\tableofcontents}
\baselineskip=1.2\normalbaselineskip
\section{Introduction}
\subsection{Foreword}
The Heisenberg Uncertainty Principle,
a cornerstone of Quantum Mechanics,
could somehow be summarized by the fact that \emph{Operators do not usually commute}
and the calculation of the commutator of two operators $A,B,$
say from a Hilbert space into itself,
\begin{equation}\label{comm01}
[A,B]=AB-BA,
\end{equation}
can unravel important properties linked to stability of matter. In particular the fact that, in one dimension,
\begin{equation}\label{sharp1}
\Bigl[\frac{d}{dx}, x\Bigr]=1,
\end{equation}
implies readily that 
\begin{multline}\label{flat02}
\Bigl\Vert{\frac{du}{dx}}\Bigr\Vert_{L^{2}(\R)}\norm{xu}_{L^{2}(\R)}\ge
\re\poscal{\frac{du}{idx}}{ixu}_{L^{2}(\R)}=\frac12\re\poscal{[\frac{d}{dx},x] u}{u}_{L^{2}(\R)}\\=\frac12\norm{u}_{L^{2}(\R)}^{2},
\end{multline}
and is closely related to the fact that the Harmonic
Oscillator
$$
\mathcal H=-\hbar^{2}\frac{d^{2}}{dx^{2}}+x^{2},
$$
is bounded from below by the positive constant $\hbar$, whereas the Hamiltonian function $\xi^{2}+x^{2}$
(\emph{quantized} by $\mathcal H$)
is actually vanishing at the origin.
Similarly, we have in $n$ dimensions with $n\ge 2$, $\kappa>0$, 
$$
\mathcal L_{\kappa}=-\frac{\hbar^{2}\Delta}2-\frac{\kappa}{\val x}\ge -\frac{2\kappa^{2}}{(n-1)^{2}\hbar^{2}}>-\infty,
$$
whereas the Hamiltonian function 
$
\frac{\val{\xi}^{2}}2-\frac{\kappa}{\val x}
$
(quantized by $\mathcal L_{\kappa}$)
is unbounded from below.
The \emph{Heisenberg inequality} \eqref{flat02} can be written in $n$ dimensions as 
\begin{equation}\label{natura}
\Norm{\val D u}_{L^2(\R^n)}\Norm{\val x u}_{L^2(\R^n)}\ge \frac{n}{4\pi}\Norm{ u}_{L^2(\R^n)}^2,
\end{equation}
and follows easily from the one-dimensional  \eqref{flat02}.
(see e.g. Section \ref{sec.ugtf88} in our Appendix.) One could say as well as for \eqref{flat02}
that \eqref{natura} is a consequence of the non-commutation of the Fourier multiplier $\val D$
with the operator of multiplication by $\val x$.
It is also interesting to notice
the following result (see e.g. Lemma 1.2.29 in \cite{MR4726947}).
\begin{lemma}
Let $\mathbb H$ be a Hilbert space and let $J,K\in \mathcal B(\mathbb H)$; then the commutator $[J,K]\not=\Id$.
\end{lemma}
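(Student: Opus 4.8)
The plan is to argue by contradiction, following the classical Wintner--Wielandt argument (the same mechanism that forbids a bounded pair $P,Q$ with $[P,Q]=i\hbar\Id$). Assuming $\mathbb H\neq\{0\}$ (otherwise $\Id=0$ and there is nothing to prove), suppose $[J,K]=JK-KJ=\Id$. The first step is to establish by induction on $n\ge 1$ the identity
\begin{equation*}
[J,K^{n}]=JK^{n}-K^{n}J=nK^{n-1}.
\end{equation*}
The case $n=1$ is the hypothesis, and the inductive step follows from the Leibniz-type formula for commutators, namely $[J,K^{n+1}]=[J,K^{n}]K+K^{n}[J,K]=nK^{n-1}K+K^{n}=(n+1)K^{n}$.

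Next I would take operator norms in that identity. Since $J,K\in\mathcal B(\mathbb H)$, the triangle inequality together with submultiplicativity of the norm gives, for every $n\ge 1$,
\begin{equation*}
n\norm{K^{n-1}}=\norm{JK^{n}-K^{n}J}\le 2\norm{J}\,\norm{K^{n}}\le 2\norm{J}\,\norm{K}\,\norm{K^{n-1}}.
\end{equation*}
Now I distinguish two cases. If $K$ is nilpotent, let $m\ge 1$ be minimal with $K^{m}=0$; then $K^{m-1}\neq 0$ (as $K^{0}=\Id\neq 0$), yet the identity of the first step with $n=m$ forces $0=mK^{m-1}$, a contradiction. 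Otherwise $K^{n-1}\neq 0$ for all $n\ge 1$, and dividing the displayed inequality by $\norm{K^{n-1}}>0$ yields $n\le 2\norm{J}\,\norm{K}$ for every $n\in\N$, which is absurd. In either case we contradict $[J,K]=\Id$, which proves the lemma.

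There is essentially no serious obstacle in this argument; the only point that requires a little care is that one may not cancel the factor $\norm{K^{n-1}}$ in the norm inequality without first knowing it is nonzero, which is precisely why the nilpotent case has to be ruled out separately by the purely algebraic identity of the first step. It is worth noting that boundedness of the operators is used only in this second (non-nilpotent) branch, through the finiteness of $\norm J$ and $\norm K$.
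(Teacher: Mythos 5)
Your proof is correct and is the classical Wielandt norm argument, the standard elementary proof of this fact. The paper itself gives no proof here, citing only Lemma~1.2.29 of \cite{MR4726947}; the argument given there is the same one you reproduce, namely the inductive identity $[J,K^{n}]=nK^{n-1}$, followed by the norm bound $n\norm{K^{n-1}}\le 2\norm{J}\norm{K}\norm{K^{n-1}}$ forcing a contradiction, with the nilpotent case of $K$ disposed of separately as you do. Your cautionary remark about not dividing by $\norm{K^{n-1}}$ before ruling out nilpotency is exactly the right point of care.
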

That simple result, whose proof is elementary, is showing that 
Quantum Mechanics must deal with unbounded operators to have properties like \eqref{sharp1}
or creation-annihilation properties such as
$$
\frac12\Bigl[ \frac{d}{idx} -ix, \frac{d}{idx}+ix \Bigr]=1.
$$
E.~Lieb's article \cite{MR2766495} and E.~Lieb \& R.~Seiringer' book \cite{MR2583992}
are providing a whole theory of stability of matter based upon the non-commutation of operators quantifying the kinetic energy with the operators of multiplication by a potential depending on the configuration variables. 
\par
The Uncertainty Principle can be also seen as a \emph{limitation on the sharp localization of both $u$ and its Fourier transform\footnote{See Section \ref{sec.app001} in the Appendix for our normalization in the definition of the Fourier transformation.}
$\widehat u$} in the sense that, for $u$ normalized in $L^{2}(\R^{n})$, 
\begin{equation}\label{dag001}
\inf_{(y,\eta)\in \RZ}\Norm{\val{\eta-\xi} \widehat u(\xi)}_{L^2(\R^n_{\xi})}\Norm{\val{y-x} u(x)}_{L^2(\R^n_{x})}\ge \frac{n}{4\pi},
\end{equation}
which is an immediate consequence of \eqref{natura}.
The paper \cite{MR1993414}
by
A.~Bonami, 
B.~Demange
and P.~Jaming,
(see also \cite{MR2264204} by the first two previous authors),
generalizing the article \cite{MR1150375} of L.~H\"ormander,
proved in particular that
\begin{equation}\label{dag002}
\iint_{\R^{n}\times \R^{n}}\frac{\val{u(x)}\val{\widehat u(\xi)}}{
(1+\val x+\val \xi)^{n}} e^{2\pi \val{\poscal{x}{\xi}}} dx d\xi<+\io
\Longrightarrow u=0.
\end{equation}
Also the papers  \cite{MR0780328} by M.~Benedicks,
\cite{MR0461025} by W.O.~Amrein \&  A.M.~Berthier, proved that both supports of $u,\widehat u$ cannot have a finite Lebesgue measure unless $u=0$.
That result was extended to more general operators than the Fourier transformation  in the article \cite{MR3173045}
by S.~Ghobber \& P.~Jaming.
\par
We must also quote the wide  and sophisticated program around the Uncertainty Principle
given in the article  \cite{MR0611747}
by C.~Fefferman \& D.H.~Phong
and in the C.~Fefferman's survey paper \cite{MR0707957}, where the authors  were able to describe an
explicit procedure to \emph{obtain a sharp  lowerbound  for large classes of pseudo-differential operators}.
In fact, given a Hamiltonian function $a(x,\xi)$,
a smooth real-valued function verifying some rather natural estimates,
they would consider the set
\begin{equation}\label{diam01}
\mathcal N_{a}=\{(x,\xi)\in \RZ, a(x,\xi)<0\},
\end{equation}
and evaluate its symplectic importance by counting the number of disjoint symplectic images
of the unit ball contained in it.
For instance, if $\mathcal N_{a}$ does not contain any symplectic image of the unit ball of $\RZ$, then the operator $\opw{a}$
(the Weyl quantization of the Hamiltonian function $a$, see e.g. Section \ref{sec.weyl})
should be bounded from below.
Note that there is a striking link with the results of symplectic rigidity of M.~Gromov
\cite{MR0809718}
(see also the articles by I.~Ekeland \& H.~Hofer
\cite{MR0978597},
\cite{MR1044064})
where it is proven that the very existence of a symplectic mapping $\chi$,
such that
\[
\chi: \underbrace{\{(x,\xi)\in \RZ, {\val{x}}^{2}+{\val\xi}^{2}\le 1\}}_{\text{the unit ball}}\longrightarrow  \underbrace{\{(x,\xi)\in \RZ, x_{1}^{2}+\xi_{1}^{2}\le r^{2}\}}_{\text{vertical cylinder with radius $r$}}
\]
implies that $r$ should be $\ge 1$.
Of course that obstruction is not due to the volume when $n\ge 2$, since the cylinder has an infinite volume.
In particular, the Fefferman-Phong procedure should lead to the fact that  the operator 
$$
\mathcal U^{*}\bigl[\opw{x_{1}^{2}+\xi_{1}^{2}}\bigr]\mathcal U,
$$ 
should be bounded from below by a positive constant,
for a very large class of non-classical elliptic Fourier Integral Operators $\mathcal U$,
quantifying a canonical transformation.
The explicit mathematical link between the results of M.~Gromov, I.~Ekeland \& H.~Hofer mentioned above and the Fefferman-Phong procedure is not so easy to derive
rigourously,
since it does not seem obvious to quantize any smooth canonical transformation $\chi$ by a Fourier Integral Operator
without having some explicit bounds on the derivatives of $
\chi$.
We may nevertheless point out that in the article \cite{MR2664605},
J.-M.~Bony is providing quite general formulas for the quantization of canonical transformations
into tractable Fourier Integral Operators, using some partitions of unity related to metrics on the phase space.
\par
More recently, the paper 
 \cite{MR4229152}
 by 
 A.~Wigderson and Y.~Wigderson
 provided a renewed point of view on uncertainty principles
 along with new proofs. Their paper was  deve\-loped in \cite{MR4453622} by 
N.C.~Dias,
F.~Luef
and J.N.~Prata
and also in \cite{MR4337266}
by Y.~Tang.
\par
This article is organized in four sections: in Section 1, after the above Foreword, we formulate the main result of the article, a version of the uncertainty principle for metaplectic transformations.
We hope that having a statement at the beginning of the article could serve as a motivation for the reader to go on. However, it seems necessary to spend some time at reminding a couple of classical  facts  on the symplectic and metaplectic groups: this is what we do in Section 2,
in particular with the perspective to using a special set of generators for the metaplectic group.
We end that section with some basic elements on the Weyl quantization.
In Section 3, we give the proof of the main theorem of the paper,
using the symplectic covariance of the Weyl quantization as a key tool; we split our discussion in two different cases, related to the invertibility of a submatrix of the symplectic transformation linked to the metaplectic mapping under scope.
Our article contains also an Appendix as Section 4, essentially devised to ensure a reasonable degree of 
self-containedness for our arguments.
\subsection{The main result: uncertainty principle for metaplectic transformations}
\begin{definition}
 Let $f$ be  a function in $L^{2}_{\textrm{loc}}(\R^{n})$. We define the \emph{variance} of $f$ as
 \begin{equation}\label{variance}
\mathcal V(f)=\int_{\R^{n}}{\val x}^{2}{\val{f(x)}}^{2}dx.
\end{equation}
\end{definition}
\begin{remark}\label{rem.0013}
 Inequality \eqref{natura} implies that, for $u\in \mathscr S(\R^{n})$, 
\begin{equation}\label{var001}
\mathcal V(\mathcal Fu)\mathcal V(u)\ge \frac{n^{2}}{2^{4}\pi^{2}}\norm{u}_{L^{2}(\R^{n})}^{4},
\end{equation}
where $\mathcal F$ stands for the Fourier transform. 
We may wonder if there are other operators satisfying this type of  inequality, besides the Fourier transform.
 Obviously the Identity does not satisfy any inequality of type \eqref{var001}: we have 
 \begin{equation}\label{213213}
\inf_{\substack{u\in \mathscr S(\R^{n})\\\norm{u}_{L^{2}(\R^{n})=1}}}\mathcal V(u)=0.
\end{equation}
To prove this, we just need to concentrate a function near the origin: take with $\varepsilon>0$,
$
u_{\varepsilon}(x)=\phi(x\varepsilon^{-1})\varepsilon^{-n/2}, \ \phi\in \mathscr S(\R^{n}),
\norm{\phi}_{L^{2}(\R^{n})}=1.
$
We obtain $\norm{u_{\varepsilon}}_{L^{2}(\R^{n})}=1$ and 
\begin{equation*}
\mathcal V(u_{\varepsilon})=\int \val x^{2}
{\val{\phi\bigl(\frac x\varepsilon\bigr)}}^{2}\varepsilon^{-n} dx=\varepsilon^{2}\int
{ \val y}^{2}{\val{\phi(y)}}^{2} dy,
\end{equation*}
implying \eqref{213213}.
\end{remark}
\begin{definition}
Let $M$ be a bounded  operator on $L^{2}(\R^{n})$, sending $\mathscr S(\R^{n})$ into itself. We define 
\begin{equation}\label{mesure++}
\mu(M)=\inf_{\substack{u\in \mathscr S(\R^{n})\\\norm{u}_{L^{2}(\R^{n})=1}}}
\sqrt{\mathcal V(Mu)\mathcal V(u)}.
\end{equation}
\end{definition}
We have proven above\footnote{The constant $\frac{n^{2}}{4\pi^{2}}$ in \eqref{var001} is sharp
as it is seen by checking the variances of $u, \hat u$ with  $u(x)=2^{n/4} e^{-\pi \val x^{2}}$,
 as in Section \ref{sec.ugtf88} of our Appendix.} that 
\begin{equation}\label{first1}
\mu(\mathcal F)=\frac{n}{4\pi}, \qquad \mu(\Id)=0.
\end{equation}
We would like to introduce the symplectic group $\text{\sl Sp}(2n,\R)$
(which is a subgroup of  $\text{\sl Sl}(2n,\R)$)
and its two-fold covering the metaplectic group
$\text{\sl Mp}(n,\R)$,
with
\begin{equation}\label{}
\Psi:\text{\sl Mp}(n,\R)\longrightarrow \text{\sl Sp}(2n,\R),
\end{equation}
as the covering map.
We shall provide in the next sections a couple of reminders about these two groups, but at the moment, we need only to say that
with the canonical symplectic form $\sigma$ being given 
on $\R^{n}\times \R^{n}$  by 
\begin{equation}\label{sympfor}
\poscal{\sigma X}{Y}=\bigl[X,Y\bigr]=\xi\cdot y-\eta\cdot x, \quad \text{with $X=(x,\xi), Y=(y,\eta)$,}
\end{equation}
the symplectic group
$\text{\sl Sp}(2n,\R)$ is the subgroup of $S\in \text{\it Gl}(2n,\R)$
such that\footnote{We shall see below that the determinant of symplectic matrices is actually $1$.} 
\begin{equation}\label{55vhqs}
\forall X, Y\in \RZ, \quad [S X, S Y]=[X,Y], \qquad\text{i.e.\quad} S^{*}\sigma S=\sigma,
\end{equation}
where $S^{*}$ is the transpose and 
\begin{equation}\label{ffqq99}
\sigma=\mat22{0}{I_{n}}{-I_{n}}{0}.
\end{equation}
On the other hand, the metaplectic group
is  a subgroup of the unitary group of $L^{2}(\R^{n})$,
whose elements are also isomorphisms of $\mathscr S(\R^{n})$ and $\mathscr S'(\R^{n})$.
We can then formulate without further postponing 
the main result of this paper.
\begin{theorem}\label{thm.21poii}
Let $\mathcal M$ be an element of the metaplectic group $\text{\sl Mp}(n,\R)$  such that $\Psi(\mathcal M)=\Xi,$
where the $(2n)\times (2n)$ matrix $\Xi\in \text{\sl Sp}(2n,\R)$
is written in four  $(n\times n)$ blocks as 
\begin{equation}\label{blocks}
 \Xi=\mat22{\Xi_{11}}{\Xi_{12}}{\Xi_{21}}{\Xi_{22}}.
\end{equation}
Then we have, with $\mu$ defined in \eqref{mesure++}, 
\begin{equation}\label{main01}
 \mu(\mathcal M)=\frac1{4\pi}\trace\bigl((
 \Xi_{12}^{*}\Xi_{12}
 )^{1/2}\bigr)=\frac1{4\pi}\trinorm{\Xi_{12}}_{{\mathtt{S}^{1}}},
\end{equation}
where $\trinorm{\Xi_{12}}_{{\mathtt{S}^{1}}}$ is the Schatten norm of index 1.
\end{theorem}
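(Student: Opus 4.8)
The plan is to use the symplectic covariance of the Weyl quantization to replace the computation of $\mu(\mathcal M)$ by a question about two positive quadratic Hamiltonians, then to extract the lower bound from a commutator identity optimised over the orthogonal group, and finally to prove sharpness by testing against Gaussians after reducing $\Xi_{12}$ to diagonal form. For the first step I would observe that, since $\opw{\val x^2}$ is the operator of multiplication by $\val x^2$ and $\mathcal M$ is unitary, covariance gives $\mathcal V(\mathcal Mu)=\poscal{\opw{q}u}{u}$ with $q(x,\xi)=\val{\Xi_{11}x+\Xi_{12}\xi}^2$ (should the covariance formula be stated with $\Xi^{-1}$ instead of $\Xi$, the $\xi$-block becomes $-\Xi_{12}^{*}$, which has the same singular values, so the final answer is unaffected). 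Writing $L_k=\opw{(\Xi_{11}x+\Xi_{12}\xi)_k}=\sum_j(\Xi_{11})_{kj}M_{x_j}+(\Xi_{12})_{kj}D_j$, a self-adjoint operator, where $M_{x_j}$ is multiplication by $x_j$ and $D_j=\frac1{2\pi i}\partial_{x_j}$, the exactness of the Weyl calculus on affine symbols gives $\opw{q}=\sum_kL_k^2$; hence, for $\norm{u}_{L^2}=1$, one has $\mathcal V(\mathcal Mu)=\sum_k\norm{L_ku}^2$ and $\mathcal V(u)=\sum_j\norm{M_{x_j}u}^2$, with the basic relation $[L_k,M_{x_j}]=\frac1{2\pi i}(\Xi_{12})_{kj}$.

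For the lower bound I would fix $W\in \mathrm{O}(n)$, set $G_k=\sum_jW_{kj}M_{x_j}$ (so that $\sum_k\norm{G_ku}^2=\mathcal V(u)$ by orthogonality of $W$), and use Cauchy--Schwarz repeatedly:
\[
\mathcal V(\mathcal Mu)^{1/2}\,\mathcal V(u)^{1/2}\ \ge\ \sum_k\norm{L_ku}\,\norm{G_ku}\ \ge\ \Bigl|\im\sum_k\poscal{L_ku}{G_ku}\Bigr|.
\]
Since $L_k$ and $G_k$ are self-adjoint, $\poscal{L_ku}{G_ku}-\overline{\poscal{L_ku}{G_ku}}=\poscal{[G_k,L_k]u}{u}$ with $[G_k,L_k]=\frac{i}{2\pi}(W\Xi_{12}^{*})_{kk}$, so $\im\sum_k\poscal{L_ku}{G_ku}=\frac1{4\pi}\trace(W\Xi_{12}^{*})$. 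Taking the supremum over $W\in\mathrm{O}(n)$ and using $\sup_{W\in\mathrm{O}(n)}\trace(W\Xi_{12}^{*})=\trace\bigl((\Xi_{12}^{*}\Xi_{12})^{1/2}\bigr)=\trinorm{\Xi_{12}}_{{\mathtt{S}^1}}$ then yields $\mathcal V(\mathcal Mu)\mathcal V(u)\ge\frac1{16\pi^2}\trinorm{\Xi_{12}}_{{\mathtt{S}^1}}^2$, i.e. $\mu(\mathcal M)\ge\frac1{4\pi}\trinorm{\Xi_{12}}_{{\mathtt{S}^1}}$, with no case distinction required.

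For sharpness I would first note that conjugating $\mathcal M$ by metaplectic operators whose symplectic image has the block form $\mat22{A}{0}{C}{D}$ with $A\in\mathrm{O}(n)$ (such operators commute with multiplication by $\val x^2$) leaves both $\mu(\mathcal M)$ and $\trinorm{\Xi_{12}}_{{\mathtt{S}^1}}$ unchanged, while replacing $\Xi_{12}$ by $A_1\Xi_{12}A_2$ with $A_1,A_2\in\mathrm{O}(n)$; a singular value decomposition thus reduces matters to $\Xi_{12}=\diag(s_1,\dots,s_r,0,\dots,0)$ with $s_i>0$. The symplectic relation $\Xi_{11}\Xi_{12}^{*}=\Xi_{12}\Xi_{11}^{*}$ then forces $\Xi_{11}=\mat22{P}{Q}{0}{R}$ with $\Delta_r^{-1}P$ symmetric, where $\Delta_r=\diag(s_1,\dots,s_r)$, so a further $\val x^2$-preserving conjugation by the shear $\xi'\mapsto\xi'-\Delta_r^{-1}Px'$ replaces $q$ by $\val{Qx''+\Delta_r\xi'}^2+\val{Rx''}^2$, with $x=(x',x'')\in\R^r\times\R^{n-r}$. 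I would then test with $u(x',x'')=\bigl(\prod_{i=1}^rg_{s_i}(x_i')\bigr)\psi_\varepsilon(x'')$, where $g_\lambda(t)=(2/\lambda)^{1/4}e^{-\pi t^2/\lambda}$ and $\psi_\varepsilon$ is an $L^2$-normalised bump shrinking to the origin: the $x''$-variables enter $q$ only through multiplication operators and drop out as $\varepsilon\to0$, and the elementary identities $\norm{D_tg_\lambda}^2=\frac1{4\pi\lambda}$, $\norm{M_tg_\lambda}^2=\frac\lambda{4\pi}$ (applied with $\lambda=s_i$) give $\mathcal V(\mathcal Mu)\mathcal V(u)\to\frac1{16\pi^2}\bigl(\sum_{i=1}^rs_i\bigr)^2$; when $\Xi_{12}$ is invertible the test function is already a genuine Gaussian and no limit is needed. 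This matches the lower bound, which proves \eqref{main01}.

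The hard part will be Step 3: orchestrating the symplectic normalisation (the SVD of $\Xi_{12}$, the block-triangular form of $\Xi_{11}$ forced by the symplectic relations, and the shear), and justifying the limiting procedure that discards the degenerate directions of $\Xi_{12}$, all while verifying that every conjugating metaplectic operator genuinely leaves the variance functional invariant. This is exactly where the dichotomy ``$\Xi_{12}$ invertible / not invertible'' enters. By contrast, once covariance is in place, the lower bound is essentially a one-line commutator estimate combined with the identification of $\max_{W\in\mathrm{O}(n)}\trace(W\Xi_{12}^{*})$ with the Schatten-$1$ norm.
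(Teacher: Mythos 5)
Your proof is correct and, while it starts from the same symplectic-covariance principle, takes a more unified route than the paper, particularly for the upper bound. For the lower bound, your argument---write $\opw{\val x^{2}\circ\Xi}=\sum_{k}L_{k}^{2}$ using $\ell\sharp\ell=\ell^{2}$ for linear $\ell$, optimize the commutator estimate over $W\in \mathrm{O}(n)$, and identify $\sup_{W\in\mathrm{O}(n)}\trace(W\Xi_{12}^{*})$ with $\trinorm{\Xi_{12}}_{{\mathtt{S}^{1}}}$ through the polar decomposition of Proposition \ref{pro.54128u}---is in substance the argument the paper gives in its singular-case paragraph following Theorem \ref{lem.44kkqs} (choosing $\zeta_{j}=\mathcal U\tau_{j}$); the difference is that you present it uniformly from the start, making the detour through the $\mathcal M_{P,L,Q}$ kernel and Lemma \ref{lem.241+++} in the invertible case superfluous. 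For the upper bound your route genuinely diverges from the paper's: the paper invokes the factorization Proposition \ref{pro.9159++} to write $\mathcal M$ as a product of two $\mathcal M_{P,L,Q}$'s, normalizes to $\wt{\mathcal M}=M_{0,B,C}$ via Lemma \ref{lem.nor456}, and bounds $\inf\bigl[\spectrum\mathcal H_{C,\Omega}\bigr]$ by the dimension-reduction Proposition \ref{pro.app123}. You instead sandwich $\mathcal M$ by metaplectic operators whose symplectic images have the form $\mat22{A}{0}{C}{A}$ with $A\in\mathrm{O}(n)$ and $A^{*}C$ symmetric (these do leave $\mathcal V$, and hence $\mu$, invariant), using a singular value decomposition to reduce $\Xi_{12}$ to $\Delta_{r}\oplus 0$ and then a symplectic shear to kill the $P$-block of $\Xi_{11}$; the key structural point you rely on---that the symplectic relation \eqref{symp04} forces $\Xi_{11}$ block-upper-triangular with $\Delta_{r}^{-1}P$ symmetric---is correct and is exactly what makes the shear symplectic. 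Testing with $\prod_{i}g_{s_{i}}(x_{i}')\otimes\psi_{\varepsilon}(x'')$ and letting $\varepsilon\to 0$ then re-derives by hand the particular instance of Proposition \ref{pro.app123} that is needed. What your approach buys is a self-contained, single-pass argument that bypasses the metaplectic factorization machinery entirely; the cost is the normalization bookkeeping, which you rightly flag as the crux but which does all check out.
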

\begin{nb}
 For $n=1$, the inequality  $\mu(\mathcal M)\ge \val{\Xi_{12}}/(4\pi)$
 is already proven  for a dense subset of $\text{\sl Mp}(1,\R)$
in the paper 
 \cite{MR4229152}
by 
 A.~Wigderson and Y.~Wigderson.
 We provide here a multi-dimensional version
 along with an algebraic identification of the lower bound $\mu(\mathcal M)$.
\end{nb}
\begin{remark}
 The equalities \eqref{first1} are coherent with the above theorem since we have 
\begin{align*}
\Psi(e^{-\frac{i\pi n}4}\mathcal F)&=\mat22{0}{I_{n}}{-I_{n}}{0}, \quad
\trinorm{I_{n}}_{{\mathtt{S}^{1}}}=n,\quad \mu(z\mathcal F)=\mu(\mathcal F)\quad \text{for $\val z=1$},
\\
 \Psi(\Id_{L^{2}(\R^{n})})&=I_{2n}=\mat22{I_{n}}{0}{0}{I_{n}}.
\end{align*}
\end{remark}
\section{Symplectic and metaplectic groups, Weyl quantization}
We recall in this section some classical facts about the symplectic and metaplectic  groups
as well as various
matters on the Weyl quantization.
We rely essentially on our Memoir \cite{MR4726947},
where complete proofs are given.The statements and examples which follow   will be hopefully  useful
for the reader,   but we shall of course omit the proofs appearing in  \cite{MR4726947}.
We have used 
E.P.~Wigner's works \cite{MR1635991},
J.~Leray's book \cite{MR644633}
and other Lecture Notes of this author at the {\sl Coll\`ege de France} such as \cite{MR0501198},
L.~H\"ormander's
four-volume treatise, {\sl The Analysis of Linear Partial Differential operators} and in particular Volume III,
as well as {K.~Gr\"{o}chenig's }\cite{MR1843717} {\sl Foundations of time-frequency analysis},
along with G.B.~Folland's \cite{MR983366}, 
A.~Unterberger's \cite{MR552965}, 
and N.~Lerner's \cite{MR2599384}.
\subsection{The symplectic group}
The canonical symplectic form $\sigma$
on $\R^{n}\times \R^{n}$ is defined in \eqref{sympfor},
the symplectic group
$\text{\sl Sp}(2n,\R)$ 
is defined in \eqref{55vhqs}.
It is easy to prove directly from \eqref{55vhqs} that
$
\text{\sl Sp}(2,\R)=Sl(2, \R).
$
In the sequel we shall denote by 
\begin{equation}\label{sym001}
\text{\sl Sym}(n,\R)=\{(n\times n) \text{ real symmetric matrices}\}.
\end{equation}
\begin{theorem}\label{4.thm.gensym}
Let $n$ be an integer $\ge 1$.
The group
$\text{\sl Sp}(2n,\R)$ is included in $Sl(2n, \R)$
and generated by the following mappings
\begin{align}
\mat22{I_{n}}{0}{A}{I_{n}}&,\ \text{where $A\in \text{\sl Sym}(n,\R)$,}
\label{syty01}\\
\mat22{B^{-1}}{0}{0}{B^*}&,\quad \text{where }B\in \text{\it Gl}(n,\R),
\label{syty02}\\
\mat22{I_{n}}{-C}{0}{I_{n}}&,\ \text{where $C\in \text{\sl Sym}(n,\R)$.}
\label{syty03}
\end{align} 
For $A, B, C$ as above, the mapping
\begin{multline}\label{gensym}
\Xi_{A,B,C}=
{\arraycolsep=8pt     
\begin{pmatrix*}[l]
{B^{-1}}&{-B^{-1}C}\\
{AB^{-1}}&{B^*-AB^{-1}C}
\end{pmatrix*}}
=
\mat22{I_{n}}{0}{A}{I_{n}}
\mat22{B^{-1}}{0}{0}{B^*}
\mat22{I_{n}}{-C}{0}{I_{n}}.
\end{multline}
belongs to $\text{\sl Sp}(2n,\R)$.
\index{generating function}
Moreover, we define on $\R^{n}\times \R^{n}$ the {\it generating function} $S$
of the symplectic mapping
$\Xi_{A,B,C}$
by the identity
\begin{equation}\label{4.genfun}
S(x,\eta)=\frac12\bigl(\poscal{Ax}{x}+2\poscal{Bx}{\eta}
+\poscal{C\eta}{\eta}
\bigr)
\
\text{so that}
\hs
\Xi \Bigl(\frac{\p S}{\p\eta}\oplus \eta\Bigr)=
x\oplus\frac{\p S}{\p x}.
\end{equation}
For a symplectic mapping $\Xi$, to be of the form \eqref{gensym}
is equivalent
to the assumption that the mapping
$x\mapsto \pi_{\R^{n}\times\{0\}}\Xi(x\oplus 0)$
is invertible from $\R^{n}$ to $\R^{n}$;
moreover, if this mapping is not invertible,
the symplectic mapping $\Xi$ is the product of two mappings of the type
$\Xi_{A,B,C}$.
\end{theorem}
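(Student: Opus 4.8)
The plan is to reduce the whole statement to block‑matrix bookkeeping, organised around two preliminary observations. First, $\text{\sl Sp}(2n,\R)$ is stable under transposition: from $S^{*}\sigma S=\sigma$ one gets $S^{*}=-\sigma S^{-1}\sigma$, a product of symplectic matrices since $S^{-1}$ and $\sigma$ are symplectic (here $\sigma^{2}=-I_{2n}$, $\sigma^{*}=-\sigma$). Consequently, writing a matrix in $n\times n$ blocks as $S=\mat22{P}{Q}{R}{T}$, the membership $S\in\text{\sl Sp}(2n,\R)$ is equivalent to: $P^{*}R$, $Q^{*}T$, $PQ^{*}$, $RT^{*}$ are symmetric, and $P^{*}T-R^{*}Q=PT^{*}-QR^{*}=I_{n}$. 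Second, each of the three families \eqref{syty01}, \eqref{syty02}, \eqref{syty03} has determinant $1$ (block‑triangular with unit determinant on the diagonal, resp.\ $\det(B^{-1})\det(B^{*})=1$), so once we know these families generate $\text{\sl Sp}(2n,\R)$, the inclusion $\text{\sl Sp}(2n,\R)\subseteq Sl(2n,\R)$ follows for free.

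First I would clear away the computational assertions. Multiplying the three matrices in the right‑hand side of \eqref{gensym} gives the announced block form for $\Xi_{A,B,C}$, and checking the six relations above shows $\Xi_{A,B,C}\in\text{\sl Sp}(2n,\R)$ whenever $A,C\in\text{\sl Sym}(n,\R)$, $B\in\text{\it Gl}(n,\R)$. The generating‑function identity \eqref{4.genfun} is then a one‑line verification: from $S(x,\eta)=\frac12\bigl(\poscal{Ax}{x}+2\poscal{Bx}{\eta}+\poscal{C\eta}{\eta}\bigr)$ one has $\p S/\p\eta=Bx+C\eta$ and $\p S/\p x=Ax+B^{*}\eta$, and applying the block form of $\Xi_{A,B,C}$ to $\matdu{Bx+C\eta}{\eta}$ produces $\matdu{x}{Ax+B^{*}\eta}$ after the obvious cancellations of the terms carrying $C$.

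Next comes the invertibility criterion. Since $\Xi(x\oplus 0)=\matdu{\Xi_{11}x}{\Xi_{21}x}$, the map $x\mapsto\pi_{\R^{n}\times\{0\}}\Xi(x\oplus 0)$ is simply $x\mapsto\Xi_{11}x$. If $\Xi=\Xi_{A,B,C}$ then its $(1,1)$ block is $B^{-1}$, invertible; conversely, if $P:=\Xi_{11}$ is invertible I would set $B=P^{-1}$, $A=RP^{-1}$, $C=-P^{-1}Q$, noting that the symmetry of $A$ and of $C$ are precisely the relations $P^{*}R=R^{*}P$ and $PQ^{*}=QP^{*}$, while $P^{*}T-R^{*}Q=I_{n}$ forces $B^{*}-AB^{-1}C=T$; hence $\Xi=\Xi_{A,B,C}$. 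In particular, every $\Xi\in\text{\sl Sp}(2n,\R)$ with invertible $(1,1)$ block is a product of three generators of the types \eqref{syty01}--\eqref{syty03}.

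The one genuinely nontrivial point, and the step I expect to be the obstacle, is the case $P=\Xi_{11}$ not invertible; everything hinges on the sublemma: \emph{there is $C_{0}\in\text{\sl Sym}(n,\R)$ with $P+C_{0}R$ invertible}. The key remark is that $P^{*}R=R^{*}P$ forces $R$ to carry $\ker P$ into $(\range P)^{\perp}$: for $x\in\ker P$ and any $x'$, $\poscal{Rx}{Px'}=\poscal{P^{*}Rx}{x'}=\poscal{x}{P^{*}Rx'}=\poscal{Px}{Rx'}=0$. Since moreover $\ker P\cap\ker R=\{0\}$ (the first $n$ columns of the invertible matrix $\Xi$ are independent), $R$ is injective on $\ker P$, and a dimension count upgrades the inclusion to $R(\ker P)=(\range P)^{\perp}$. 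Take $C_{0}=\pi$, the orthogonal projection onto $(\range P)^{\perp}$ (symmetric), and decompose $\R^{n}=(\ker P)^{\perp}\oplus\ker P$, $z=z_{1}+z_{2}$: then $(P+\pi R)z=Pz_{1}+\pi Rz_{1}+Rz_{2}$ with $Pz_{1}\in\range P$ and the rest in $(\range P)^{\perp}$, so $(P+\pi R)z=0$ forces $Pz_{1}=0$, hence $z_{1}=0$, hence $Rz_{2}=0$, hence $z_{2}=0$; thus $P+\pi R$ is invertible. Granting this, $\mat22{I_{n}}{C_{0}}{0}{I_{n}}\Xi$ has invertible $(1,1)$ block, so it equals some $\Xi_{A,B,C}$ by the previous step; since $\mat22{I_{n}}{C_{0}}{0}{I_{n}}=\Xi_{0,I_{n},-C_{0}}$ is itself of the form \eqref{gensym}, we get $\Xi=\Xi_{0,I_{n},-C_{0}}\,\Xi_{A,B,C}$, a product of two mappings of type $\Xi_{A,B,C}$. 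Combining both cases, every element of $\text{\sl Sp}(2n,\R)$ is a product of at most two mappings \eqref{gensym}, hence of at most six of the generators \eqref{syty01}--\eqref{syty03}; this establishes the generation statement, and together with the first paragraph the inclusion $\text{\sl Sp}(2n,\R)\subseteq Sl(2n,\R)$.
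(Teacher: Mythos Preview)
Your argument is correct and complete, whereas the paper does not prove this theorem at all: it simply cites Theorem~1.2.6 of \cite{MR4726947}. So there is no ``paper's own proof'' to compare against here; you have supplied one.

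Two minor points. First, in the last step you write $\Xi=\Xi_{0,I_{n},-C_{0}}\,\Xi_{A,B,C}$, but since $\mat22{I_{n}}{C_{0}}{0}{I_{n}}\Xi=\Xi_{A,B,C}$ you actually get $\Xi=\mat22{I_{n}}{-C_{0}}{0}{I_{n}}\Xi_{A,B,C}=\Xi_{0,I_{n},C_{0}}\,\Xi_{A,B,C}$; this is a harmless sign slip since the inverse of a type-\eqref{syty03} matrix is again of that type. Second, the equality $R(\ker P)=(\range P)^{\perp}$ is not needed in your injectivity check for $P+\pi R$: you only use the inclusion $R(\ker P)\subseteq(\range P)^{\perp}$ and the fact that $\ker P\cap\ker R=\{0\}$. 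The choice $C_{0}=\pi$ (orthogonal projection onto $(\range P)^{\perp}$) is a clean concrete witness; an alternative one sometimes sees is to take $C_{0}=\lambda I_{n}$ for $\lambda$ avoiding finitely many values, but your construction is more explicit.
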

This result is proven in 
Theorem 1.2.6 of  \cite{MR4726947}.
\begin{remark}
Let us consider $\Xi$ in $\text{\sl Sp}(2n,\R)$: we have
\begin{equation}\label{notsym}
\Xi=\mat22{P}{Q}{R}{S},\quad\text{where
$P,
Q,
R,
S,$
are $n\times n$
matrices.
}
\end{equation}
The equation
\begin{equation}\label{4.eqsygr}
{\Xi^{*}} \sigma\Xi=\sigma
\end{equation} is satisfied
with 
$
\sigma=\mat22{0}{I_{n}}{-I_{n}}{0}, 
$
which means
\begin{equation}\label{4.eqsyma}
{P^{*}} R={({P^{*}} R)}^{*},\quad
{Q^{*}} S={({Q^{*} S})}^{*},\quad
{P^{*}} S-{R^{*}} Q=I_{n}.
\end{equation}
 \end{remark}
The following claim is proven in (1.2.23-24) of \cite{MR4726947}.
\begin{claim}\label{claim.isosta}
 The mapping $\Xi\mapsto (\Xi^{*})^{-1}$ is an isomorphism
of the group $\text{\sl Sp}(2n,\R)$.
As a consequence, 
  \begin{gather}
  \Xi=\mat22{P}{Q}{R}{S} \in \text{\sl Sp}(2n,\R)
  \quad\text{ is also equivalent to}
\label{fc45y9}
\\
 P Q^{*}={(P Q^{*})}^{*},\quad
  R S^{*}={(R S^{*})}^{*},\quad
  P S^{*}-Q R^{*}=I_{n}.
  \label{4.eqsym'}
 \end{gather}
\end{claim}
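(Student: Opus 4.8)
The plan is to reduce the statement to elementary manipulations organised around the single relation $\sigma^{2}=-I_{2n}$ (equivalently $\sigma^{*}=\sigma^{-1}=-\sigma$). First note that any $\Xi$ with ${\Xi^{*}}\sigma\Xi=\sigma$ is invertible: taking determinants and using $\det\sigma=1$ gives $(\det\Xi)^{2}=1$. The key step is then that \emph{transposition preserves $\text{\sl Sp}(2n,\R)$}: from ${\Xi^{*}}\sigma\Xi=\sigma$ rewrite $\Xi=\sigma^{-1}(\Xi^{*})^{-1}\sigma$ and substitute,
\[
\Xi\sigma\Xi^{*}=\sigma^{-1}(\Xi^{*})^{-1}\sigma\cdot\sigma\cdot\Xi^{*}=\sigma^{-1}(\Xi^{*})^{-1}\sigma^{2}\Xi^{*}=-\sigma^{-1}(\Xi^{*})^{-1}\Xi^{*}=-\sigma^{-1}=\sigma ,
\]
so $({\Xi^{*}})^{*}\sigma\,\Xi^{*}=\sigma$, i.e. $\Xi^{*}\in\text{\sl Sp}(2n,\R)$. (Running this backwards also shows that $\Xi\in\text{\sl Sp}(2n,\R)$ is equivalent to $\Xi\sigma\Xi^{*}=\sigma$.) Combined with the standard fact that $\Xi^{-1}$ is symplectic whenever $\Xi$ is — multiply ${\Xi^{*}}\sigma\Xi=\sigma$ on the left by $(\Xi^{*})^{-1}$ and on the right by $\Xi^{-1}$ — this gives $(\Xi^{*})^{-1}\in\text{\sl Sp}(2n,\R)$.

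Next I would check that $\Phi(\Xi)=(\Xi^{*})^{-1}$ is a group automorphism. It sends $\text{\sl Sp}(2n,\R)$ into itself by the previous step; it is a homomorphism because transposition and inversion each reverse the order of a product, so their composition preserves it,
\[
\Phi(\Xi_{1}\Xi_{2})=\bigl(\Xi_{2}^{*}\Xi_{1}^{*}\bigr)^{-1}=(\Xi_{1}^{*})^{-1}(\Xi_{2}^{*})^{-1}=\Phi(\Xi_{1})\Phi(\Xi_{2});
\]
and it is an involution, since transposition and inversion commute and each squares to the identity, so $\Phi\bigl(\Phi(\Xi)\bigr)=\bigl(((\Xi^{*})^{-1})^{*}\bigr)^{-1}=(\Xi^{-1})^{-1}=\Xi$. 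A bijective homomorphism of a group onto itself is an isomorphism, which is the first assertion of the claim.

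For the block relations \eqref{4.eqsym'}, write $\Xi=\mat22{P}{Q}{R}{S}$, so that $\Xi^{*}=\mat22{P^{*}}{R^{*}}{Q^{*}}{S^{*}}$, and recall from the step above that $\Xi\in\text{\sl Sp}(2n,\R)\Leftrightarrow\Xi^{*}\in\text{\sl Sp}(2n,\R)$. Writing out the characterisation \eqref{4.eqsyma} for the matrix $\Xi^{*}$ — i.e. replacing $(P,Q,R,S)$ there by $(P^{*},R^{*},Q^{*},S^{*})$ — gives exactly $PQ^{*}=(PQ^{*})^{*}$, $RS^{*}=(RS^{*})^{*}$ and $PS^{*}-QR^{*}=I_{n}$, which is \eqref{4.eqsym'}; conversely, \eqref{4.eqsym'} says that $\Xi^{*}$ obeys \eqref{4.eqsyma}, hence $\Xi=(\Xi^{*})^{*}$ is symplectic. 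I do not expect any genuine obstacle: the only place where the special form of $\sigma$ intervenes is the identity $\sigma^{2}=-I_{2n}$ used in the displayed computation of the first paragraph, and everything else is bookkeeping with products, transposes and inverses.
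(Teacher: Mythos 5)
Your proof is correct and complete. The paper itself does not reproduce a proof of this claim in the present text — it cites (1.2.23-24) of \cite{MR4726947} — so you are in effect reconstructing the standard argument, and you do so accurately. The pivot of the argument is exactly the right one: the identity $\sigma^{2}=-I_{2n}$ (hence $\sigma^{-1}=-\sigma=\sigma^{*}$) converts $\Xi^{*}\sigma\Xi=\sigma$ into $\Xi\sigma\Xi^{*}=\sigma$, showing that transposition preserves $\text{\sl Sp}(2n,\R)$; combined with the trivial fact that inversion preserves any group, this gives that $\Phi(\Xi)=(\Xi^{*})^{-1}$ maps $\text{\sl Sp}(2n,\R)$ to itself, and your verification that $\Phi$ is a homomorphism and involution is standard and correct. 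The deduction of \eqref{4.eqsym'} is also clean: since $\Xi\in \text{\sl Sp}(2n,\R)\Leftrightarrow \Xi^{*}\in\text{\sl Sp}(2n,\R)$, you may read off \eqref{4.eqsym'} by applying \eqref{4.eqsyma} to $\Xi^{*}=\mat22{P^{*}}{R^{*}}{Q^{*}}{S^{*}}$, and your parenthetical remark that one could equivalently expand $\Xi\sigma\Xi^{*}=\sigma$ directly in block form is a worthwhile alternative, as it produces \eqref{4.eqsym'} by inspection without invoking \eqref{4.eqsyma}. No gaps.
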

\begin{exs}\label{exa.444}
 \rm The $4\times 4$ matrix
 \begin{equation}\label{sinblo}
 \begin{pmatrix}
\begin{matrix}
0&0\\
0&1
\end{matrix}
&\begin{matrix}
1&0\\
0&0
\end{matrix}\\
\begin{matrix}
-1&0\\
0&0
\end{matrix}&
\begin{matrix}
0&0\\
0&1
\end{matrix}
\end{pmatrix}
=\begin{pmatrix}
P&Q
\\
R&S
\end{pmatrix},
\end{equation}
belongs to $Sp(4, \R)$
 although the block $2\times 2$
 matrices $P,Q,R, S,$
 are all singular (with rank 1).
The symplectic matrix
\begin{equation}\label{noti22}
\mat22{0}{I_{n}}{-I_{n}}{0}=2^{-1/2}
\mat22{I_{n}}{I_{n}}{-I_{n}}{I_{n}}2^{-1/2}
\mat22{I_{n}}{I_{n}}{-I_{n}}{I_{n}}=\Xi_{-I_{n}, 2^{1/2}I_{n}, -I_{n}}^{2},
\end{equation}
is not of the form 
$\Xi_{A,B,C}$ but\footnote{Note that for a symplectic mapping to be of type $\Xi_{A,B,C}$ requires (in fact is equivalent) to the fact that $\Xi_{11}$ is invertible, using the notation \eqref{blocks}.} is the square of such a matrix.
It is also the case of all the mappings
$
(x_{k},\xi_{k})\mapsto (\xi_{k},-x_{k}),$
with the other coordinates fixed.
Similarly the symplectic  matrix
\begin{equation}\label{}
\mat22{0}{-I_{n}}{I_{n}}{I_{n}}=
\mat22{I_{n}}{-I_{n}}{0}{I_{n}}
\mat22{I_{n}}{0}{I_{n}}{I_{n}},
\end{equation}
is not of the form 
$\Xi_{A,B,C}$ but is the product
$\Xi_{0,I,I}\Xi_{I,I,0}$.
\end{exs}
\begin{remark}
We have seen in \eqref{4.eqsym'}, \eqref{4.eqsyma} some equivalent conditions for a matrix
\begin{equation}\label{fgw123}
\Xi=\begin{pmatrix}
P&Q\\
R&S
\end{pmatrix}
\quad\text{where $P,Q,R, S$ are  $n\times n$ real matrices,}
\end{equation}
to be symplectic. 
We note here that when $\Xi\in \text{\sl Sp}(2n,\R)$, we have 
\begin{equation}\label{arfd58}
\Xi^{-1}=\begin{pmatrix}
S^{*}&-Q^{*}\\
-R^{*}&P^{*}
\end{pmatrix},
\end{equation}
as it is easily checked from \eqref{4.eqsym'}, \eqref{4.eqsyma}. 
When $\det P\not=0$, we can prove  that $\Xi=\Xi_{A,B,C}$ as\footnote{If $\Xi$ is given by 
\eqref{fgw123} with $P$ invertible, we define $B=P^{-1}, C=-P^{-1}Q, A=RP^{-1}$
and we get from \eqref{4.eqsyma} that $\Xi=\Xi_{A,B,C}$.
} defined in  \eqref{gensym}.
Also from  \eqref{arfd58}, we get 
that if $\det S\not =0$ we have 
$$
\Xi^{-1}=\Xi_{A,B,C},
$$
so that 
\begin{equation}\label{}
\Xi=\begin{pmatrix}
I_{n}&C\\
0&I_{n}
\end{pmatrix}
\begin{pmatrix}
B&0\\
0&B^{*-1}
\end{pmatrix}
\begin{pmatrix}
I_{n}&0\\
-A&I_{n}
\end{pmatrix}.
\end{equation}
Some other properties of the same type  are available when $\det Q$ or $\det R$ are different from $0$. Indeed we have for $\Xi\in \text{\sl Sp}(2n,\R)$ and $\sigma$ given by \eqref{ffqq99},
\begin{equation}\label{singsing}
\Xi \sigma=
\begin{pmatrix}
P&Q\\
R&S
\end{pmatrix} \sigma
=\begin{pmatrix}
-Q&P\\
-S&R
\end{pmatrix}\underbrace{=}_{\text{if $\det Q\not=0$}}\Xi_{A,B,C},
\end{equation}
so that 
\begin{equation}\label{}
\Xi =-\Xi_{A,B,C} \sigma=
\begin{pmatrix}
I_{n}&0\\
A&I_{n}
\end{pmatrix}
\begin{pmatrix}
B^{-1}&0\\
0&B^{*}
\end{pmatrix}
\begin{pmatrix}
I_{n}&-C\\
0&I_{n}
\end{pmatrix}
\begin{pmatrix}
0&-I_{n}\\
I_{n}&0
\end{pmatrix}.
\end{equation}
If we have $\det R\not=0$, using the two first equalities in \eqref{singsing}, we get that 
$
(\Xi \sigma)^{-1}=\Xi_{A,B,C} 
$
which gives
\begin{equation}\label{}
\Xi=
\begin{pmatrix}
I_{n}&C\\
0&I_{n}
\end{pmatrix}
\begin{pmatrix}
B&0\\
0&B^{*-1}
\end{pmatrix}
\begin{pmatrix}
I_{n}&0\\
-A&I_{n}
\end{pmatrix}
\begin{pmatrix}
0&-I_{n}\\
I_{n}&0
\end{pmatrix}.
\end{equation}
However, it is indeed possible when $n\ge 2$ to have a symplectic matrix in $\text{\sl Sp}(2n,\R)$ in the form \eqref{fgw123} such that all the blocks are singular,
as shown by  the example \eqref{sinblo}.
\end{remark}
\subsection{The metaplectic group}
\subsubsection{Basics}
With
$\Z_{4}$ standing for $\Z/4\Z$, we consider the mapping
\begin{equation}\label{311311}
\symm\times \text{\it Gl}(n,\R)\times \symm\times\Z_{4}
\ni (A,B,C, m)\mapsto \omega(A,B,C,m),
 \end{equation}
 where $\omega(A,B,C,m)$ is a function defined on $\R^{n}\times \R^{n}$ by
 \begin{equation}\label{}
\omega(A,B,C,m)(x,\eta)
=e^{\frac{i\pi m}2} {\val{\det B}}^{1/2}e^{i\pi\{\poscal{Ax}{x}+2\poscal{Bx}{\eta}+\poscal{C\eta}{\eta}\}}.
\end{equation}
We note in particular that 
$\Z_{4}\ni m\mapsto e^{\frac{i\pi m}2}$ is well-defined.
We shall always require the following link between $m$ and $B$:
we want that $m$ be \emph{compatible}  with $B$,
which means
\begin{equation}\label{compat}
e^{i\pi m}=\sign(\det B),
\end{equation}
in such a way that we have $\bigl(e^{\frac{i\pi m}2} {\val{\det B}}^{1/2}\bigr)^{2}=\det B.$
In fact the above mapping \eqref{311311}
is defined for
$$
(B,m)\in \bigl(\text{\it Gl}_{+}(n,\R)\times \{0,2\}\bigr)\cup \bigl(\text{\it Gl}_{-}(n,\R)\times \{1,3\}\bigr),
\quad
(A,C)\in \symm^{2},
$$
where we consider $\{0,2\}$ and $\{1,3\}$ as  subsets of $\Z_{4}$.
We shall denote 
\begin{equation}\label{314314}
m(B)=\{m\in \Z_{4}, e^{i\pi m}=\sign(\det B)\}=
\begin{cases}
\{0,2\}&\text{if $\det B>0$,}
\\
\{1,3\}&\text{if $\det B<0$.}
\end{cases}
\end{equation}
The next proposition is Proposition 1.2.11 in \cite{MR4726947}.
\begin{proposition}\label{pro.kj77qq}
Let $A,B,C$ be as in Theorem \ref{4.thm.gensym},
 let  $S$ be the generating function of $\Xi_{A,B,C}$ (cf. \eqref{4.genfun})
 and let $m\in \Z_{4}$ such that \eqref{compat} is satisfied.
We define the operator $M_{A,B,C}^{\scriptscriptstyle\{m\}}$ on $\mathscr S(\R^{n})$ by
\begin{equation}\label{4.genmet}
(M_{A,B,C}^{\scriptscriptstyle\{m\}}v)(x)=e^{\frac{i\pi m}2}
{\val{\det B}}^{1/2}
\int_{\R^{n}}e^{2i\pi S(x,\eta)}\hat v(\eta) d\eta.
\end{equation}
This operator is an automorphism of 
$\mathscr S'(\R^{n})$ and of $\mathscr S(\R^{n})$
which is unitary on $L^2(\R^{n})$,
and such that, for all $a\in \mathscr S'(\RZ)$, 
\begin{equation}\label{1239}
\bigl(M_{A,B,C}^{\scriptscriptstyle\{m\}}\bigr)^{\!*}\ \opw{a}M_{A,B,C}^{\scriptscriptstyle\{m\}}=\opw{a\circ\Xi_{A,B,C}},
\end{equation}
where $\Xi_{A,B,C}$ is defined   in Theorem \ref{4.thm.gensym}.
\end{proposition}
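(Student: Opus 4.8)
The statement to prove is Proposition~\ref{pro.kj77qq}: the operator $M_{A,B,C}^{\scriptscriptstyle\{m\}}$ defined by \eqref{4.genmet} is an automorphism of $\mathscr S(\R^{n})$ and $\mathscr S'(\R^{n})$, is unitary on $L^{2}(\R^{n})$, and satisfies the Weyl-covariance identity \eqref{1239}. Since the three generators \eqref{syty01}--\eqref{syty03} compose to give a general $\Xi_{A,B,C}$ via \eqref{gensym}, the natural strategy is to factor $M_{A,B,C}^{\scriptscriptstyle\{m\}}$ as a product of three elementary metaplectic operators, establish each of the three properties for each factor, and then multiply. Concretely, writing $S(x,\eta)=\tfrac12(\poscal{Ax}{x}+2\poscal{Bx}{\eta}+\poscal{C\eta}{\eta})$ and inserting $\hat v(\eta)=\int e^{-2i\pi y\cdot\eta}v(y)\,dy$ into \eqref{4.genmet}, the phase $e^{2i\pi S(x,\eta)}$ splits multiplicatively as $e^{i\pi\poscal{Ax}{x}}\cdot e^{2i\pi\poscal{Bx}{\eta}}\cdot e^{i\pi\poscal{C\eta}{\eta}}$, which exhibits $M_{A,B,C}^{\scriptscriptstyle\{m\}}$ as the composition of (i) multiplication by the unimodular function $e^{i\pi\poscal{Ax}{x}}$, (ii) the operator $v\mapsto \val{\det B}^{1/2}\,\hat v$ followed by the change of variables $\eta\mapsto B^{*}\eta$ — equivalently $v\mapsto e^{\frac{i\pi m}{2}}\val{\det B}^{1/2}(\mathcal F v)(B^{*}\,\cdot)$ — and (iii) the Fourier-side ``chirp'' multiplication by $e^{i\pi\poscal{C\eta}{\eta}}$, i.e.\ conjugation of multiplication by $e^{i\pi\poscal{C\,\cdot}{\cdot}}$ by $\mathcal F$.

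\textbf{Key steps.} First I would record that each of the three elementary operators preserves $\mathscr S(\R^{n})$ and extends to an automorphism of $\mathscr S'(\R^{n})$ (multiplication by a Schwartz-multiplier of polynomial growth with all derivatives of polynomial growth, linear change of variables, and Fourier transform, all being automorphisms of $\mathscr S$ and $\mathscr S'$), and that each is $L^{2}$-unitary: Plancherel gives unitarity of $\mathcal F$; the factor $\val{\det B}^{1/2}$ is exactly the Jacobian factor that makes the dilation $v\mapsto v(B^{*}\cdot)$ into an isometry; and multiplication by a function of modulus~$1$ is trivially unitary — here the compatibility condition \eqref{compat} is what makes $e^{\frac{i\pi m}{2}}\val{\det B}^{1/2}$ an unambiguous square root of $\det B$ (and a unimodular scalar times $\val{\det B}^{1/2}$), so the global phase $e^{\frac{i\pi m}{2}}$ does not spoil unitarity. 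Composing the three yields the first two assertions. For \eqref{1239} I would verify the covariance identity $(T^{*})\opw{a}\,T=\opw{a\circ\chi}$ separately for each generator $T$ against its symplectic map $\chi$: for $T=$ multiplication by $e^{i\pi\poscal{Ax}{x}}$ one checks $\chi=\Xi_{A,I,0}$ sends $(x,\xi)\mapsto(x,\xi+Ax)$ by a direct computation on the Weyl kernel (the Gaussian multiplier conjugates the quantization of $a(x,\xi)$ to that of $a(x,\xi+Ax)$, which is the standard metaplectic covariance for ``shear'' maps); for $T=\mathcal F$-conjugated chirp one gets $\chi=\Xi_{0,I,-C}\colon (x,\xi)\mapsto (x+C\xi,\xi)$ similarly; and for the dilation–Fourier factor $\chi=\Xi_{0,B,0}\colon (x,\xi)\mapsto (B^{-1}x, B^{*}\xi)$ is the elementary symplectic covariance of the Weyl calculus under the substitution $(x,D)\mapsto$ the conjugated variables. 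Then, using that the composition of the three symplectic maps in this order is exactly $\Xi_{A,B,C}$ (which is \eqref{gensym}), the covariance identities chain up:
\[
\bigl(M_{A,B,C}^{\scriptscriptstyle\{m\}}\bigr)^{*}\opw{a}M_{A,B,C}^{\scriptscriptstyle\{m\}}
=\opw{a\circ\Xi_{A,B,C}},
\]
which is \eqref{1239}.

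\textbf{Main obstacle.} The routine parts are the $\mathscr S$/$\mathscr S'$ mapping properties and $L^{2}$-unitarity, which reduce to Plancherel and change of variables; the real work is the covariance identity \eqref{1239}. The hard step there is the shear generator: verifying that conjugation by multiplication by $e^{i\pi\poscal{Ax}{x}}$ turns $\opw{a}$ into $\opw{a\circ\Xi_{A,I,0}}$ requires a careful manipulation of the Weyl kernel $k_{a}(x,y)=\int e^{2i\pi(x-y)\cdot\xi}a(\tfrac{x+y}{2},\xi)\,d\xi$ — the conjugating factor contributes $e^{i\pi(\poscal{Ax}{x}-\poscal{Ay}{y})}=e^{2i\pi\poscal{A\frac{x+y}{2}}{x-y}}$, which is absorbed by translating $\xi\mapsto \xi - A\frac{x+y}{2}$ inside the $\xi$-integral, yielding precisely $a(\tfrac{x+y}{2},\xi + A\frac{x+y}{2})$ as the symbol, i.e.\ $a\circ\Xi_{A,I,0}$. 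One must be attentive that all these formal integral manipulations are justified (first for $a\in\mathscr S(\RZ)$, then extended to $a\in\mathscr S'(\RZ)$ by density/duality), and that the composition bookkeeping of generators matches the order in \eqref{gensym} so that the three symbol substitutions compose to $a\circ\Xi_{A,B,C}$ and not to some conjugate. Since this is Proposition~1.2.11 of \cite{MR4726947}, I would at this point simply cite that reference for the detailed verification, having indicated the structure of the argument.
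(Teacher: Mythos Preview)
Your proposal is correct and follows essentially the same route as the paper: factor $M_{A,B,C}^{\scriptscriptstyle\{m\}}=M_{A,I,0}^{\scriptscriptstyle\{0\}}M_{0,B,0}^{\scriptscriptstyle\{m\}}M_{0,I,C}^{\scriptscriptstyle\{0\}}$, check that each elementary factor is an automorphism of $\mathscr S,\mathscr S'$ and unitary on $L^{2}$, verify the covariance identity \eqref{1239} for each generator separately (the paper simply records \eqref{segal02}--\eqref{segal04}), and then compose using $\Xi_{A,B,C}=\Xi_{A,I,0}\Xi_{0,B,0}\Xi_{0,I,C}$. One small slip to fix when you write it up: the Fourier-side chirp $e^{i\pi\poscal{CD}{D}}$ corresponds to $\Xi_{0,I,C}\colon(x,\xi)\mapsto(x-C\xi,\xi)$, not $\Xi_{0,I,-C}$, and correspondingly the conjugating factor in your kernel computation for the $A$-shear should carry a minus sign, $e^{-i\pi(\poscal{Ax}{x}-\poscal{Ay}{y})}$; the final answers are unaffected.
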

\begin{remark}
 We have for $A,B,C$ as above, 
 \begin{align}
 (M_{A, I, 0}^{\scriptscriptstyle\{0\}} v)(x)&= e^{i\pi \poscal{Ax}{x}} v(x),\label{mety01}\\
 (M_{0,B,0}^{\scriptscriptstyle\{m\}} v)(x)&=e^{\frac{i\pi m}2}
{\val{\det B}}^{1/2}
v(Bx),\ m \text{ satisfying \eqref{compat}},\label{mety02}\\
   (M_{0, I, C}^{\scriptscriptstyle\{0\}} v)(x)&= \bigl(e^{i\pi \poscal{CD_{x}}{D_{x}}} v\bigr)(x),\label{mety03}
\end{align}
three operators which are obviously automorphisms of $\mathscr S(\R^{n})$ and of 
$\mathscr S'(\R^{n})$ as well as unitary operators in $L^{2}(\R^{n})$.
In particular we find that, keeping in mind \eqref{1254},
\begin{gather}
\bigl(M_{A, I, 0}^{\scriptscriptstyle\{m\}}\bigr)^{-1}=
M_{-A, I, 0}^{\scriptscriptstyle\{-m\}}, \quad
\bigl(M_{0, I, C}^{\scriptscriptstyle\{m\}}\bigr)^{-1}=M_{0, I, -C}^{\scriptscriptstyle\{-m\}},\quad m\in m(I_{n})=\{0,2\},
\label{311313}\\
\bigl(M_{0,B,0}^{\scriptscriptstyle\{m\}}\bigr)^{-1}=M_{0,B^{-1},0}^{\scriptscriptstyle\{-m\}},
\quad m\in m(B).
\label{311414}
\end{gather}
\end{remark}
\begin{proof}
 Formula \eqref{1239} is easily checked  for each operator \eqref{mety01}, \eqref{mety02}
 and \eqref{mety03}.
 We have also
 \begin{align}
\mathcal F^{*}\opw{a}\mathcal F&=\opw{a(\xi,-x)},\label{segal01}\\
e^{-i\pi Ax^{2}}\opw{a}e^{i\pi Ax^{2}}&=\opw{a(x,Ax+\xi)},\ \text{\footnotesize $A$ real symmetric $n\times n$ matrix,}\label{segal02}\\
e^{-i\pi CD^{2}}\opw{a}e^{i\pi CD^{2}}&=\opw{a(x-C\xi,\xi)},\ \text{\footnotesize $C$ real symmetric $n\times n$ matrix,}\label{segal03}\\
\mathcal N_{B}^{*}\opw{a}
\mathcal N_{B}&=\opw{a(B^{-1}x,B^{*}\xi)}, \quad B\in \text{\it Gl}(n,\R),\label{segal04}
\end{align}
where 
\begin{equation}\label{}
\bigl(\mathcal N_{B} v\bigr)(x)={\val{\det B}}^{1/2} v(Bx).
\end{equation}
Since we have $\Xi_{A,B,C}=\Xi_{A,I,0}\ \Xi_{0,B,0}\ \Xi_{0,I,C}$ and 
 \begin{align}\label{54gvrs}
 M_{A,B,C}^{\scriptscriptstyle\{m\}} =M_{A,I,0}^{\scriptscriptstyle\{0\}} M_{0,B,0}^{\scriptscriptstyle\{m\}} 
 M_{0,I,C}^{\scriptscriptstyle\{0\}},
\end{align}
we get \eqref{1239} and the proposition.
\end{proof}
\begin{remark}\label{footsign}
With the set $m(B)$ defined in \eqref{314314} and with obvious notations we have
\begin{equation}\label{1254}
m(-B)= m(B)+n,\quad m(B^{*})= m(B), \quad m(B^{-1})=-m(B)=m(B).
\end{equation}
\rm
Indeed, $m\in m(-B)$ is equivalent to having
$$
e^{i\pi m}=\sign(\det(-B))=(-1)^{n}\sign(\det B)=e^{i\pi(n+m')}, \ m'\in m(B),
$$
which is equivalent to  $(m-n-m')\in \{0,2\}$ modulo 4, proving the first result,
since $m(B)+\{0,2\}=m(B)$.
On the other hand,
$m\in m(B^{*})$ is equivalent to 
$$
e^{i\pi m}=\sign(\det(B^{*}))=\sign(\det B)=e^{i\pi m'}, \ m'\in m(B),
$$
which means
$m(B^{*})=m(B)+\{0,2\}=m(B).$
Also the
equality $\sign(\det B)=\sign(\det (B^{-1}))$ implies that $m(B)=m(B^{-1})$ and moreover since the mapping
$$\Z_{4}\ni m\mapsto -m\in \Z_{4},$$ leaves invariant $\{0,2\}$ and stable
$\{1,3\}$,
we get the last equality in \eqref{1254}.
We note in particular that we have
\begin{equation}\label{sig+++}
\mett{0}{I_{n}}{0}{0}=\Id_{L^{2}(\R^{n})},\ 
\mett{0}{I_{n}}{0}{2}=-\Id_{L^{2}(\R^{n})},
\end{equation}
and also with the notation \eqref{sigma0}, using the now proven  \eqref{1254},
\begin{equation}\label{3115++}
\mett{0}{-I_{n}}{0}{n}=e^{\frac{i\pi n}2}\sigma_{0},\
\mett{0}{-I_{n}}{0}{n+2}=-e^{\frac{i\pi n}2}\sigma_{0},
\end{equation}
noting that 
$
(\pm e^{\frac{i\pi n}2})^{2}=(-1)^{n}=\det(-I_{n})
$
so that \eqref{compat} is satisfied.
More generally, we have 
$$
 \text{for $\det B>0$, }\mett{A}{B}{C}{0}=-\mett{A}{B}{C}{2},
 \qquad
 \text{for $\det B<0$, }\mett{A}{B}{C}{1}=-\mett{A}{B}{C}{3}.
$$
\end{remark}

\begin{exs}\rm
In \eqref{noti22}, we have seen in particular that, for $n=1$,  
$$
\mat22{0}{1}{-1}{0}=\left\{2^{-1/2}\mat22{1}{1}{-1}{1}\right\}^{2}, \quad
2^{-1/2}\mat22{1}{1}{-1}{1} =\Xi_{-1,2^{1/2}, -1}.
$$
We have also with the notations of Theorem \ref{4.thm.gensym},
$$
(M_{-1, 2^{1/2},-1}\expo{0}v)(x)=\int_{\R}e^{2i\pi\frac12(-x^{2}+2^{3/2}x\eta-\eta^{2})}\hat v(\eta) d\eta 2^{1/4},
$$
so that the kernel $k_{1}(x,y)$ of the operator $M_{-1, 2^{1/2},-1}\expo{0}$ is 
$$
k_{1}(x,y)=2^{1/4}\int e^{i\pi(
-x^{2}+2^{3/2}x\eta-\eta^{2}
)} e^{-2i\pi y\eta} d\eta\underbrace{=}_{\text{use  \eqref{foimga}}}
2^{1/4}e^{-i\pi/4} e^{i\pi(x^{2}+y^{2})}e^{-2^{3/2} i\pi xy},
$$
so that the kernel $k_{2}$
of the operator $(M_{-1, 2^{1/2},-1}\expo{0})^{2}$ is (using again  \eqref{foimga}),
\begin{multline*}
k_{2}(x,y)=\int k_{1}(x,z) k_{1}(z,y) dz\\=2^{1/2} e^{-i\pi/2} e^{i\pi(x^{2}+y^{2})}
\int e^{2i\pi z^{2}} e^{-2i\pi z2^{1/2}(x+y)}dz
=e^{-i\pi/4} e^{-2i\pi xy},
\end{multline*}
so that 
\begin{equation}\label{}
(M_{-1, 2^{1/2},-1}\expo{0})^{2}=e^{-i\pi/4}\mathcal F_{1}, \end{equation}
with $\mathcal F_{1}$ standing for the  1D Fourier transformation.
We get by tensorisation that in $n$ dimensions, 
\begin{equation}\label{kjh123}
(M_{-I_{n}, 2^{1/2}I_{n},-I_{n}}\expo{0})^{2}=e^{-i\pi n/4}\mathcal F_{n}, \end{equation}
with $\mathcal F_{n}$ standing for the  Fourier transformation in $n$ dimensions.
Similar expressions can be obtained for $\mathcal F_{[k;n]}$, the Fourier transformation with respect to the variable $x_{k}$ in $n$ dimensions, $k\in \llbracket1,n\rrbracket$ with 
\begin{equation}\label{}
(M_{A_{k}, B_{k},C_{k}}\expo{0})^{2}=
e^{-i\pi/4}\mathcal F_{[k;n]},
\end{equation}
where $B_{k}$ is the $n\times n$ diagonal matrix with diagonal entries equal to $1$ except for the $k$th equal to $2^{1/2}$, the $n\times n$ diagonal
matrices $A_{k}=C_{k}$ with diagonal entries equal to 0, except for the $k$th equal to $-1$.
\end{exs}
\index{metaplectic group}
\begin{definition}\label{def.41fdhh}\rm
 The metaplectic group $\text{\sl Mp}(n)$
 is defined  as the subgroup of the group of unitary operators on $L^{2}(\R^{n})$ generated by
 \begin{align}
&M_{A,I,0}\expo{m'},\text{\footnotesize where $A\in\symm$, $m'\in\{0,2\}=m(I)$, cf. \eqref{mety01}},\label{hhjj01}\\
 &M_{0,B,0}\expo{m},\text{\footnotesize
 with $B\in \text{\it Gl}(n,\R)$, $m\in m(B)$,
 cf. \eqref{mety02},
 }\label{hhjj02}\\
 &M_{0,I,C}\expo{m''},\text{\footnotesize where $C\in \symm$, $m''\in\{0,2\}=m(I)$, cf. \eqref{mety03}}.
 \label{hhjj03}
\end{align}
\end{definition}
\begin{remark}\label{rem.44oorr}
 The elements of $\text{\sl Mp}(n)$
are also automorphisms of $\mathscr S(\R^{n})$ and of $\mathscr S'(\R^{n}).$
Moreover, from \eqref{311313},\eqref{311414}, the elements of $\text{\sl Mp}(n)$ are products
of operators of type \eqref{hhjj01},\eqref{hhjj02},\eqref{hhjj03}.
\end{remark}
\begin{claim}\label{claim.444}
 If $M$ belongs to $\text{\sl Mp}(n)$, then $-M$  belongs to $\text{\sl Mp}(n)$.
\end{claim}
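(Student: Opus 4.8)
The plan is to exhibit $-\Id_{L^2(\R^n)}$ as an element of $\metg$ and then use the group structure. If $-\Id_{L^2(\R^n)}\in\metg$, then for any $M\in\metg$ the product $(-\Id_{L^2(\R^n)})\,M=-M$ again lies in $\metg$, since $\metg$ is a group (Definition \ref{def.41fdhh}). So everything reduces to producing $-\Id$ as a word in the generators \eqref{hhjj01}, \eqref{hhjj02}, \eqref{hhjj03}.

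First I would recall from \eqref{sig+++} that $\mett{0}{I_n}{0}{2}=-\Id_{L^2(\R^n)}$; the issue is only whether the \emph{label} $m=2$ is an admissible choice, i.e.\ whether the generator of type \eqref{hhjj02} (or \eqref{hhjj01}) may be taken with $B=I_n$ and $m=2$. But \eqref{hhjj02} allows exactly $m\in m(I_n)=\{0,2\}$ by \eqref{314314}, since $\det I_n=1>0$; equivalently \eqref{hhjj01} allows $A=0$, $m'\in\{0,2\}$. Hence $M_{0,I_n,0}^{\scriptscriptstyle\{2\}}$ is literally one of the generators listed in Definition \ref{def.41fdhh}, and by \eqref{sig+++} it equals $-\Id_{L^2(\R^n)}$.

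Then the conclusion is immediate: given $M\in\metg$, write $M$ as a product of generators of the three types (Remark \ref{rem.44oorr}); prepending the generator $M_{0,I_n,0}^{\scriptscriptstyle\{2\}}=-\Id_{L^2(\R^n)}$ yields another such product, which therefore lies in $\metg$ and equals $-M$. This proves the claim.

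There is essentially no obstacle here: the only point requiring care is the bookkeeping of the compatibility condition \eqref{compat} — one must check that $m=2$ is compatible with $B=I_n$, which holds because $e^{i\pi\cdot 2}=1=\sign(\det I_n)$, and that the corresponding operator is genuinely $-\Id$ rather than $+\Id$, which is exactly the content of the second equality in \eqref{sig+++} (coming from the factor $e^{\frac{i\pi m}{2}}=e^{i\pi}=-1$ in \eqref{4.genmet}). Alternatively, if one prefers not to invoke \eqref{sig+++} directly, one can observe that $M_{0,I_n,0}^{\scriptscriptstyle\{0\}}=\Id$ and that $M_{0,I_n,0}^{\scriptscriptstyle\{2\}}=e^{i\pi}M_{0,I_n,0}^{\scriptscriptstyle\{0\}}=-\Id$ from the defining formula \eqref{4.genmet}, the two operators differing only by the phase factor $e^{\frac{i\pi m}{2}}$.
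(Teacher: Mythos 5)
Your argument is correct and is essentially the same as the paper's own proof: Claim \ref{claim.444} is proved there by noting, via Remark \ref{footsign} (in particular \eqref{sig+++}), that $\mett{0}{I_n}{0}{2}=-\Id_{L^2(\R^n)}$ is one of the listed generators, so $-\Id\in\metg$, and then the group structure gives $-M\in\metg$. Your additional bookkeeping of the compatibility condition \eqref{compat} and of the phase factor $e^{i\pi m/2}$ in \eqref{4.genmet} is correct and merely makes explicit what the paper leaves implicit.
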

\begin{proof}
 According to Remark \ref{footsign}, we have $\mett{0}{I_{n}}{0}{2}=-\mett{0}{I_{n}}{0}{0}=-\Id_{L^{2}(\R^{n})}$
 so that $-\Id_{L^{2}(\R^{n})}$ belongs to $\text{\sl Mp}(n)$, proving the claim.
\end{proof}
The next proposition is Proposition 1.2.15 in \cite{MR4726947}.
\begin{proposition}
The metaplectic group $\text{\sl Mp}(n)$
 is  generated by
 \begin{align}
&M_{A,I,0}\expo{m'},\text{\footnotesize where $A\in\symm$, $m'\in\{0,2\}$, cf. \eqref{mety01}},
\label{778801}\\
 &M_{0,B,0}\expo{m},\text{\footnotesize
 with $B\in \text{\it Gl}(n,\R)$, $m\in m(B)$,
 cf. \eqref{mety02},
 }\label{778802}\\
 &e^{-\frac{i\pi n}4}\mathcal F, \text{\footnotesize where $\mathcal F$ is the Fourier transformation.}
 \label{778803}
\end{align}
 \end{proposition}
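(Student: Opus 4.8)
The plan is to compare the proposed generating set \eqref{778801}--\eqref{778803} with the defining generating set \eqref{hhjj01}--\eqref{hhjj03} of $\metg$ from Definition \ref{def.41fdhh}. Since \eqref{778801} coincides verbatim with \eqref{hhjj01} and \eqref{778802} with \eqref{hhjj02}, exactly two things need to be checked: first, that $e^{-\frac{i\pi n}4}\mathcal F$ lies in $\metg$, so that the subgroup generated by \eqref{778801}--\eqref{778803} is contained in $\metg$; and second, that every operator $M_{0,I,C}\expo{m''}$ of type \eqref{hhjj03} can be written as a product of operators \eqref{778801}--\eqref{778803} and their inverses, so that this subgroup contains all the defining generators of $\metg$, hence $\metg$ itself.

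For the first point I would simply invoke \eqref{kjh123}, namely $(M_{-I_{n},2^{1/2}I_{n},-I_{n}}\expo{0})^{2}=e^{-\frac{i\pi n}4}\mathcal F$. By \eqref{54gvrs} the operator $M_{-I_{n},2^{1/2}I_{n},-I_{n}}\expo{0}$ factors as $M_{-I_{n},I,0}\expo{0}\,M_{0,2^{1/2}I_{n},0}\expo{0}\,M_{0,I,-I_{n}}\expo{0}$, a product of three operators of types \eqref{hhjj01}, \eqref{hhjj02}, \eqref{hhjj03} (here $0\in m(2^{1/2}I_{n})$ since $\det(2^{1/2}I_{n})>0$); hence this operator, and therefore its square $e^{-\frac{i\pi n}4}\mathcal F$, belongs to $\metg$.

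For the second point the key tool is the Fourier-conjugation identity. By \eqref{4.genfun} the generating function of $\Xi_{0,I,C}$ is $S(x,\eta)=\poscal{x}{\eta}+\tfrac12\poscal{C\eta}{\eta}$ for $C\in\symm$, so \eqref{4.genmet} gives
\begin{equation*}
(M_{0,I,C}\expo{0}v)(x)=\int_{\R^{n}}e^{2i\pi\poscal{x}{\eta}}\,e^{i\pi\poscal{C\eta}{\eta}}\,\hat v(\eta)\,d\eta=\mathcal F^{-1}\bigl(M_{C,I,0}\expo{0}\,\mathcal F v\bigr)(x),
\end{equation*}
where the last equality uses \eqref{mety01} applied to $M_{C,I,0}\expo{0}$ acting on $\hat v=\mathcal F v$. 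Thus $M_{0,I,C}\expo{0}=\mathcal F^{-1}M_{C,I,0}\expo{0}\mathcal F$. Now $M_{C,I,0}\expo{0}$ is already a generator of type \eqref{778801} (with ``$A$'' equal to $C$), and writing $\mathcal G:=e^{-\frac{i\pi n}4}\mathcal F$ the scalar prefactors $e^{\mp\frac{i\pi n}4}$ cancel, so $M_{0,I,C}\expo{0}=\mathcal G^{-1}M_{C,I,0}\expo{0}\mathcal G$, a word in the generators \eqref{778801} and \eqref{778803}. The case $m''=2$ is then handled by $M_{0,I,C}\expo{2}=-M_{0,I,C}\expo{0}$ together with $-\Id=M_{0,I_{n},0}\expo{2}$ (cf. \eqref{sig+++}), the latter being a generator of type \eqref{778802} since $2\in m(I_{n})$. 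This exhausts \eqref{hhjj03}, and the proposition follows from Definition \ref{def.41fdhh} and Remark \ref{rem.44oorr}.

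The argument is essentially bookkeeping; the only spot deserving care is the phase normalization in the Fourier conjugation — one must verify that conjugating the multiplication operator $M_{C,I,0}\expo{0}$ by $\mathcal F$ produces no leftover scalar factor, so that the constant $e^{-\frac{i\pi n}4}$ in $\mathcal G$ cancels between the two copies and no scalar operators lying outside $\metg$ are introduced. Granting this normalization check, I expect no genuine obstacle.
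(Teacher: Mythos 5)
Your argument is correct. The paper does not actually prove this proposition in-line; it merely cites it as Proposition~1.2.15 of \cite{MR4726947}, so there is no in-paper proof to compare against. Your route — observe that the lists \eqref{778801}, \eqref{778802} coincide with \eqref{hhjj01}, \eqref{hhjj02}; use \eqref{kjh123} together with the factorization \eqref{54gvrs} to show $e^{-\frac{i\pi n}4}\mathcal F\in\metg$ (so the new generators lie in $\metg$); and then use the Fourier conjugation $M_{0,I,C}\expo{0}=\mathcal F^{-1}M_{C,I,0}\expo{0}\mathcal F$, with the scalar $e^{\mp\frac{i\pi n}4}$ cancelling so that $M_{0,I,C}\expo{0}=\mathcal G^{-1}M_{C,I,0}\expo{0}\mathcal G$ for $\mathcal G=e^{-\frac{i\pi n}4}\mathcal F$ — is the standard exchange argument and is sound. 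The conjugation identity follows immediately from \eqref{mety01}, \eqref{mety03} (or from the explicit integral in \eqref{4.genmet} specialized to $A=0$, $B=I$), and your treatment of $m''=2$ via \eqref{sig+++} and $-\Id=M_{0,I_n,0}\expo{2}$ correctly covers both phase choices. No gap.
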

 \begin{remark} We have, with $\sigma_{0}$ defined in \eqref{sigma0}, 
 \begin{equation}\label{azqs22}
\sigma_{0}\mathcal F^{2}=\Id,\quad[\mathcal F,\sigma_{0}]=0, \quad{\text{so that}\quad \mathcal F^{*}=\mathcal F^{-1}=\sigma_{0}\mathcal F=\mathcal F\sigma_{0}.}
\end{equation}
 \end{remark}
  \begin{remark}
 According to \eqref{azqs22}  and to Remark \ref{footsign}, we find
 \begin{equation}\label{}
(e^{-i\pi n/4} \mathcal F)^{*}=e^{i\pi n/4} \mathcal F\sigma_{0}=e^{-i\pi n/4} \mathcal Fe^{i\pi n/2} \sigma_{0}
=e^{-i\pi n/4} \mathcal F M_{0,-I_{n}, 0}^{\scriptscriptstyle\{n\}}.
\end{equation}
As a consequence, 
$
e^{-i\pi n/4}\mathcal F, e^{-i\pi n/2}\sigma_{0}, e^{i\pi n/2}\sigma_{0}
$
belong to the metaplectic group.
\end{remark}
\begin{lemma}\label{lem.54dftz}
 For $Y\in \RZ$, we define the linear form $L_{Y}$ on $\RZ$ by
 \begin{equation}\label{linear}
L_{Y}(X)=\poscal{\sigma Y}{X}=[Y,X].
\end{equation}
For any $M\in \text{\sl Mp}(n)$ there exists a unique $\chi\in \text{\sl Sp}(2n,\R)$ such that 
\begin{equation}\label{hg54rr}\forall Y\in \RZ, \quad 
M^{*}\opw{L_{Y}}M=\opw{L_{\chi^{-1} Y}}.
\end{equation}
\end{lemma}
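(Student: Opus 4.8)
The plan is to reduce everything to the symplectic covariance identity \eqref{1239}, which is already available for the generators of $\metg$, and then to specialize it to linear symbols.

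First I would record two elementary facts about the operators $\opw{L_Y}$. Since $L_Y$ is a linear (in particular polynomial) symbol, it lies in $\mathscr S'(\RZ)$, so $\opw{L_Y}$ is well defined and \eqref{1239} may be applied with $a=L_Y$; concretely, writing $Y=(y,\eta)$ one has $L_Y(x,\xi)=\eta\cdot x-y\cdot\xi$, so $\opw{L_Y}=\eta\cdot x-y\cdot D$ is a genuine first-order operator with no ordering ambiguity. Moreover the map $\RZ\ni Y\mapsto \opw{L_Y}$ is linear and injective: the operators of multiplication by $x_1,\dots,x_n$ together with $D_1,\dots,D_n$ are linearly independent on $\mathscr S(\R^{n})$, so $\opw{L_Y}=0$ forces $Y=0$ (equivalently, $L_Y$ determines $\sigma Y$, hence $Y$, since $\sigma$ is invertible). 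This injectivity will yield the uniqueness of $\chi$.

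For existence, by Remark \ref{rem.44oorr} any $M\in\metg$ is a finite product $M=M_1\cdots M_k$ of operators of the types \eqref{hhjj01}, \eqref{hhjj02}, \eqref{hhjj03}, that is, of the form $M_j=M_{A_j,B_j,C_j}^{\scriptscriptstyle\{m_j\}}$ for suitable $A_j,B_j,C_j$ (two of the three being trivial for each $j$). Applying \eqref{1239} to each factor and iterating gives, for every $a\in\mathscr S'(\RZ)$,
\[
M^{*}\opw{a}M=M_k^{*}\cdots M_1^{*}\opw{a}M_1\cdots M_k=\opw{a\circ\Xi_1\circ\cdots\circ\Xi_k},
\]
where $\Xi_j=\Xi_{A_j,B_j,C_j}\in\text{\sl Sp}(2n,\R)$. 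Setting $\chi=\Xi_1\cdots\Xi_k\in\text{\sl Sp}(2n,\R)$, we thus have $M^{*}\opw{a}M=\opw{a\circ\chi}$. Taking $a=L_Y$ and using that $\poscal{\cdot}{\cdot}$ is the Euclidean pairing together with $\chi^{*}\sigma\chi=\sigma$, i.e. $\chi^{*}\sigma=\sigma\chi^{-1}$ (valid because $\chi$ is symplectic), we compute
\[
(L_Y\circ\chi)(X)=\poscal{\sigma Y}{\chi X}=\poscal{\chi^{*}\sigma Y}{X}=\poscal{\sigma\chi^{-1}Y}{X}=L_{\chi^{-1}Y}(X),
\]
so that $M^{*}\opw{L_Y}M=\opw{L_Y\circ\chi}=\opw{L_{\chi^{-1}Y}}$, which is \eqref{hg54rr}.

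Finally, uniqueness: if $\chi,\chi'\in\text{\sl Sp}(2n,\R)$ both satisfy \eqref{hg54rr}, then $\opw{L_{\chi^{-1}Y}}=\opw{L_{\chi'^{-1}Y}}$ for every $Y$, so by the injectivity recalled above $\chi^{-1}Y=\chi'^{-1}Y$ for all $Y\in\RZ$, whence $\chi=\chi'$. I do not expect a genuine obstacle here; the only point deserving a word of care is that the $\chi$ produced by the first step must not depend on the chosen factorization $M=M_1\cdots M_k$, but that is automatic from the uniqueness just proved (alternatively, the same computation shows $\chi_{MN}=\chi_M\chi_N$, so $M\mapsto\chi_M$ is a well-defined group homomorphism, which is in fact the covering map $\Psi$).
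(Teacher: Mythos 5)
Your proof is correct. The paper itself does not include a proof of this lemma — it simply cites Lemma 1.2.17 of \cite{MR4726947} — so there is no in-paper argument to compare line by line, but your route is the standard and expected one, and it is the same one that makes \eqref{1239} available in the first place.

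The argument is sound throughout: you reduce to a product of generators via Remark \ref{rem.44oorr}, iterate the symplectic covariance identity \eqref{1239} (observing that each generator in \eqref{hhjj01}--\eqref{hhjj03} is a special case of $M_{A,B,C}^{\scriptscriptstyle\{m\}}$) to get $M^{*}\opw{a}M=\opw{a\circ\chi}$ with $\chi=\Xi_1\cdots\Xi_k\in\text{\sl Sp}(2n,\R)$, then specialize $a=L_Y$ and use $\chi^{*}\sigma=\sigma\chi^{-1}$ (a direct consequence of \eqref{55vhqs}) to obtain $L_Y\circ\chi=L_{\chi^{-1}Y}$. The uniqueness argument via injectivity of $Y\mapsto\opw{L_Y}$ is correct; a slightly cleaner formulation of that step is to invoke the injectivity of the Weyl quantization $\mathscr S'(\RZ)\ni a\mapsto\opw{a}$ (cf. Proposition \ref{pro1716}), which gives $\opw{L_{Y}}=\opw{L_{Y'}}\Rightarrow L_Y=L_{Y'}\Rightarrow\sigma Y=\sigma Y'\Rightarrow Y=Y'$, but the linear-independence argument you give is equally valid. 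Your closing remark that the resulting map $M\mapsto\chi_M$ is automatically well defined and multiplicative correctly anticipates the content of Theorem \ref{thm.groups}.
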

This is Lemma 1.2.17 in \cite{MR4726947}.
We can thus define a mapping 
\begin{equation}\label{hqma28}
\Psi:\text{\sl Mp}(n)\rightarrow \text{\sl Sp}(2n,\R), \quad\text{with $\Psi(M)=\chi$ satisfying
\eqref{hg54rr}.} 
\end{equation}
In particular we have from \eqref{1239} in Proposition \ref{pro.kj77qq} and \eqref{kjh123}, that
\begin{equation}\label{1270}
\Psi(M_{A,B,C})=\Xi_{A,B,C}, \qquad \Psi\bigl(e^{-\frac{i\pi n}4}\mathcal F\bigr)=\sigma=\mat22{0}{I_{n}}{-I_{n}}{0}.
\end{equation}
The following result is Theorem 1.2.18 in \cite{MR4726947}.
\begin{theorem}\label{thm.groups}
 The mapping $\Psi$ defined in \eqref{hqma28} is a surjective homomorphism of groups with kernel $\{\pm 
 \Id_{L^{2}(\R^{n})}
 \}$.
\end{theorem}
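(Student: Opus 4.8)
The plan is to prove the three assertions — $\Psi$ is a group homomorphism, $\Psi$ is onto, and $\ker\Psi=\{\pm\Id\}$ — in that order, the last one carrying essentially all the difficulty. That $\Psi$ is well defined, with the symplectic matrix $\chi$ attached to $M$ unique, is already granted by Lemma~\ref{lem.54dftz}. For multiplicativity I would iterate the covariance identity \eqref{hg54rr}: writing $\chi_j=\Psi(M_j)$ for $M_1,M_2\in\text{\sl Mp}(n)$, we have for every $Y\in\RZ$
\begin{multline*}
(M_1M_2)^{*}\opw{L_Y}(M_1M_2)=M_2^{*}\bigl(M_1^{*}\opw{L_Y}M_1\bigr)M_2\\=M_2^{*}\opw{L_{\chi_1^{-1}Y}}M_2=\opw{L_{\chi_2^{-1}\chi_1^{-1}Y}}=\opw{L_{(\chi_1\chi_2)^{-1}Y}},
\end{multline*}
so the uniqueness in Lemma~\ref{lem.54dftz} forces $\Psi(M_1M_2)=\chi_1\chi_2$; in particular $\Psi(\Id)=I_{2n}$ and $\Psi(M^{-1})=\Psi(M)^{-1}$. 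Surjectivity is then immediate from \eqref{1270}: the images $\Psi(M_{A,I,0})$, $\Psi(M_{0,B,0})$, $\Psi(M_{0,I,C})$ are precisely the matrices \eqref{syty01}, \eqref{syty02}, \eqref{syty03}, which generate $\text{\sl Sp}(2n,\R)$ by Theorem~\ref{4.thm.gensym}, so the subgroup $\Psi(\text{\sl Mp}(n))$ — containing a generating family — is all of $\text{\sl Sp}(2n,\R)$.

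For the kernel, one inclusion is trivial: conjugation by a unimodular scalar fixes every $\opw{L_Y}$, so $\Psi(\pm\Id)=I_{2n}$, and $-\Id\in\text{\sl Mp}(n)$ by Claim~\ref{claim.444}. For the reverse inclusion, let $M\in\ker\Psi$; then $M$ commutes with every $\opw{L_Y}$, hence with every unitary $e^{it\opw{L_Y}}$, $t\in\R$, $Y\in\RZ$. These are, up to unimodular factors, the phase-space translations $u(x)\mapsto e^{2i\pi\eta\cdot x}u(x-y)$, which form an irreducible family on $L^{2}(\R^{n})$ (Stone--von Neumann); by Schur's lemma $M=c\,\Id$ with $\val c=1$. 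Thus everything comes down to showing $c=\pm1$, and this is the \emph{main obstacle}.

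To settle it I would argue that $\Psi$ is a covering homomorphism: the one-parameter subgroups attached to the generators of $\text{\sl Mp}(n)$ project under $\Psi$ onto one-parameter subgroups whose infinitesimal generators span the Lie algebra of $\text{\sl Sp}(2n,\R)$, which — the two algebras having equal dimension $n(2n+1)$ — makes the induced Lie-algebra morphism an isomorphism, so $\Psi$ is a local homeomorphism at $\Id$ and, being a homomorphism onto the connected group $\text{\sl Sp}(2n,\R)$, a covering map. Then $\ker\Psi$ is a discrete subgroup of the circle $\{c\,\Id:\val c=1\}$, hence finite cyclic, and it contains $-\Id$. To pin its order to exactly $2$, lift a loop generating $\pi_1(\text{\sl Sp}(2n,\R))\cong\Z$: take the phase-space rotation in the $(x_1,\xi_1)$-plane, $\theta\in[0,2\pi]$, whose metaplectic lift starting at $\Id$ is the flow generated by the one-dimensional harmonic oscillator $\opw{\frac12(x_1^{2}+\xi_1^{2})}$ acting on the first variable; a direct spectral computation shows this path ends at $-\Id$, so the monodromy of the generator of $\pi_1$ is $-\Id$, whence $\ker\Psi=\{(-\Id)^{k}:k\in\Z\}=\{\pm\Id\}$ and $c=\pm1$. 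A more computational alternative for this last point would be to write a given $M\in\ker\Psi$ as a product of generators of the form \eqref{hhjj01}--\eqref{hhjj03}, apply it to the normalized Gaussian $2^{n/4}e^{-\pi\val x^{2}}$, and read off the resulting phase, using that the corresponding product of symplectic matrices is $I_{2n}$.
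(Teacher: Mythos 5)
This theorem is stated but not proved in the paper: it is imported verbatim as Theorem 1.2.18 of \cite{MR4726947}, so there is no in-paper argument against which to compare, and I assess your proposal on its own terms. The opening moves are sound: the multiplicativity computation via iterated use of \eqref{hg54rr} is the right one, surjectivity does fall out of \eqref{1270} together with Theorem~\ref{4.thm.gensym}, and the Schur/Stone--von Neumann reduction of $\ker\Psi$ to scalars $c\,\Id$ with $\val c=1$ is correct.

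The step from $\val c=1$ to $c\in\{\pm1\}$, which you rightly single out as the crux, is where the argument has a genuine gap. Your covering-theory route tacitly assumes that $\text{\sl Mp}(n)$ is a \emph{connected} finite-dimensional Lie group whose tangent space at $\Id$ has dimension $n(2n+1)$; only then does the chain ``Lie-algebra isomorphism $\Rightarrow$ local homeomorphism $\Rightarrow$ covering map $\Rightarrow$ monodromy'' apply. Neither assumption is automatic from Definition~\ref{def.41fdhh}, which presents $\text{\sl Mp}(n)$ as an abstract subgroup of the infinite-dimensional unitary group of $L^2(\R^n)$: that this subgroup carries a finite-dimensional Lie structure, and that it is connected even though $B\mapsto M_{0,B,0}\expo{m}$ ranges over the two components of $\text{\it Gl}(n,\R)$, are facts of essentially the same depth as the theorem itself, and leaving them implicit makes the reasoning circular in spirit.

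The cleanest closure within the paper's own toolkit is the ``computational alternative'' you mention at the end, but carried out with the $\mathcal M_{P,L,Q}\expo{m}$ generators rather than by hitting a Gaussian. Given $c\,\Id\in\text{\sl Mp}(n)$, Proposition~\ref{pro.9159++} writes $c\,\Id=\mathcal M_{P_{1},L_{1},Q_{1}}\expo{m_{1}}\mathcal M_{P_{2},L_{2},Q_{2}}\expo{m_{2}}$, so by Claim~\ref{claim.911} one has $\mathcal M_{P_{2},L_{2},Q_{2}}\expo{m_{2}}=c\,\mathcal M_{-Q_{1},-L_{1}^{*},-P_{1}}\expo{n-m_{1}}$. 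Equating the explicit Gaussian kernels --- injectivity of the parametrization being \eqref{quafor} --- forces $(P_{2},L_{2},Q_{2})=(-Q_{1},-L_{1}^{*},-P_{1})$ and $c=e^{i\pi(m_{1}+m_{2}-n)/2}$. Since $m_{1}\in m(L_{1})$ and $m_{2}\in m(L_{2})=m(-L_{1}^{*})=m(L_{1})+n$ by \eqref{1254}, one gets $m_{2}-m_{1}\equiv n\pmod 2$, hence $m_{1}+m_{2}-n$ is even and $c\in\{-1,1\}$. This is exactly the content of Theorem~\ref{thm.phaseb}, which you could also simply invoke at this point --- though note that it too is quoted from \cite{MR4726947} without proof.
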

\begin{remark}
  As a consequence, the mapping $\Psi$
 is a two-fold covering and $\text{\sl Mp}(n)$ appears as $Sp_{2}(2n,\R)$,
 the two-fold covering of $\text{\sl Sp}(2n,\R)$.
 In fact, our results related to the uncertainty principle, such as Theorem \ref{thm.21poii},
 can be applied
 to the 
  subgroup of the unitary group of $L^{2}(\R^{n})$ generated by
 \eqref{hhjj01}, \eqref{hhjj02}, \eqref{hhjj03} and the operators of multiplication by a complex number of modulus 1
 ($L^{2}(\R^{n})\ni u\mapsto z u\in L^{2}(\R^{n})$ where $\val z=1$).
 Indeed, for a metaplectic operator $\mathcal M$ and $z\in {\mathbb C}$,
 we have obviously
 \begin{equation}\label{}
\mu\bigl(z\mathcal M\bigr)=\val z\mu(\mathcal M).
\end{equation}
 \end{remark}
\subsubsection{Another set of generators for the metaplectic group}
\begin{definition}\label{def.lersym} 
Let $P, L, Q$ be $n\times n$ real matrices such that $P=P^{*}, Q=Q^{*}$,
$\det L\not=0$ and $m\in \Z_{4}$ compatible with $L$ (cf.\eqref{compat}).
We define the operator $\mathcal M_{P,L,Q}\expo{m}$ by the formula
\begin{equation}\label{322322}
(\mathcal M_{P,L,Q}\expo{m} u)(x)=e^{-\frac{i\pi n}4}e^{\frac{i\pi m}2}{\val{\det L}}^{1/2}\int_{\R^{n}} e^{i\pi\{
\poscal{Px}{x}-2\poscal{Lx}{y}+\poscal{Q y}{y}\}}
u(y) dy.
\end{equation}
\end{definition}
\begin{nb}\rm
 Let us note that for $m$ compatible with $L$, we have 
 $$
 \left(e^{\frac{i\pi m}2}\val{\det L}^{1/2}\right)^{2}= \det L.
 $$
\end{nb}
The next result is Proposition A.2.2 in the Appendix  of \cite{MR4726947}.
\begin{proposition}\label{pro.989898}
 The operator $\mathcal M_{P,L,Q}\expo{m}$ given in Definition \ref{def.lersym} is an automorphism of $\mathscr S(\R^{n})$ and of $\mathscr S'(\R^{n})$ which is a unitary operator on $L^{2}(\R^{n})$ belonging to the metaplectic group
 (cf. Definition \ref{def.41fdhh}). Moreover the metaplectic group is generated by the set $\{\mathcal M_{P,L,Q}\expo{m}\}_{\substack{P=P^{*},\hskip0.7pt Q=Q^{*}\\
 \det L\not=0,\hskip0.7pt m\in m(L)}}$.
\end{proposition}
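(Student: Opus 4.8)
The plan is to realize the operator $\mathcal M_{P,L,Q}\expo{m}$ as an explicit product of the elementary generators of $\metg$ listed in Definition \ref{def.41fdhh}, namely the quadratic multipliers $\mett{A}{I_{n}}{0}{0}$ of \eqref{mety01}, the normalized dilations $\mett{0}{B}{0}{m}$ of \eqref{mety02}, and the normalized Fourier transform $e^{-\frac{i\pi n}4}\mathcal F$, which belongs to $\metg$ by \eqref{778803}. Concretely, for $P=P^{*}$, $Q=Q^{*}$, $\det L\not=0$ and $m\in m(L)$, I claim
\begin{equation}\label{factoMLQ}
\mathcal M_{P,L,Q}\expo{m}=\mett{P}{I_{n}}{0}{0}\;\mett{0}{L}{0}{m}\;\bigl(e^{-\frac{i\pi n}4}\mathcal F\bigr)\;\mett{Q}{I_{n}}{0}{0}.
\end{equation}
Each factor on the right-hand side lies in $\metg$, hence --- by Remark \ref{rem.44oorr} and the fact that $\metg$ is a subgroup of the unitary group of $L^{2}(\R^{n})$ --- is an automorphism of $\mathscr S(\R^{n})$ and of $\mathscr S'(\R^{n})$ and a unitary operator on $L^{2}(\R^{n})$; these properties then transfer to $\mathcal M_{P,L,Q}\expo{m}$ once \eqref{factoMLQ} is proven.

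To establish \eqref{factoMLQ} I would simply follow the action of the right-hand side on an arbitrary $u\in\mathscr S(\R^{n})$, applying the four operators in turn from right to left. By \eqref{mety01}, $\mett{Q}{I_{n}}{0}{0}u=e^{i\pi\poscal{Qy}{y}}u$; then $e^{-\frac{i\pi n}4}\mathcal F$ sends this to $e^{-\frac{i\pi n}4}$ times the Fourier transform of $\mett{Q}{I_{n}}{0}{0}u$; then $\mett{0}{L}{0}{m}$ multiplies the result by $e^{\frac{i\pi m}2}{\val{\det L}}^{1/2}$ and rescales its argument by $L$, by \eqref{mety02}; and finally $\mett{P}{I_{n}}{0}{0}$ multiplies by $e^{i\pi\poscal{Px}{x}}$. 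Writing out the Fourier transform with the chosen normalization and evaluating it at $Lx$ reproduces exactly the right-hand side of \eqref{322322}. The only delicate point here is the bookkeeping of the phase constants: it is precisely the compatibility $m\in m(L)$ (see \eqref{compat}) that makes $e^{\frac{i\pi m}2}{\val{\det L}}^{1/2}$ a genuine square root of $\det L$, hence $\mett{0}{L}{0}{m}$ a bona fide generator, so that \eqref{factoMLQ} is an identity inside $\metg$.

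For the second assertion --- that $\bigl\{\mathcal M_{P,L,Q}\expo{m}\bigr\}$ already generates $\metg$ --- it suffices, by the generating system \eqref{778801}, \eqref{778802}, \eqref{778803} of $\metg$ recalled above, to recover each of those generators from the $\mathcal M$'s. Taking $P=Q=0$, $L=I_{n}$, $m=0$ in \eqref{322322} gives $\mathcal M_{0,I_{n},0}\expo{0}=e^{-\frac{i\pi n}4}\mathcal F$, i.e. \eqref{778803}. Since $\mett{0}{I_{n}}{0}{0}=\Id_{L^{2}(\R^{n})}$ (see \eqref{sig+++}), specializing \eqref{factoMLQ} to $Q=0$, $L=I_{n}$, $m=0$ reduces it to $\mathcal M_{P,I_{n},0}\expo{0}=\mett{P}{I_{n}}{0}{0}\,\bigl(e^{-\frac{i\pi n}4}\mathcal F\bigr)$, so that $\mett{P}{I_{n}}{0}{0}=\mathcal M_{P,I_{n},0}\expo{0}\,\bigl(\mathcal M_{0,I_{n},0}\expo{0}\bigr)^{-1}$ belongs to the group generated by the $\mathcal M$'s; likewise $-\Id_{L^{2}(\R^{n})}=\mett{0}{I_{n}}{0}{2}=\mathcal M_{0,I_{n},0}\expo{2}\,\bigl(\mathcal M_{0,I_{n},0}\expo{0}\bigr)^{-1}$ belongs to it, and combined with $\mett{P}{I_{n}}{0}{2}=-\mett{P}{I_{n}}{0}{0}$ (Remark \ref{footsign}) this yields all of \eqref{778801}. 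Finally, taking $P=Q=0$ in \eqref{factoMLQ} gives $\mathcal M_{0,L,0}\expo{m}=\mett{0}{L}{0}{m}\,\bigl(e^{-\frac{i\pi n}4}\mathcal F\bigr)$, so $\mett{0}{L}{0}{m}$ is also in the generated group, giving \eqref{778802}. Together with the first part --- which shows conversely that each $\mathcal M_{P,L,Q}\expo{m}$ lies in $\metg$ --- this proves the two subgroups coincide. The argument is entirely elementary; the only real obstacle is keeping the phases and the $\Z_{4}$-index straight in \eqref{factoMLQ}, together with the availability of the earlier generation result for $\metg$.
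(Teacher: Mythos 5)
Your factorization
$$\mathcal M_{P,L,Q}\expo{m}=\mett{P}{I_{n}}{0}{0}\;\mett{0}{L}{0}{m}\;\bigl(e^{-\tfrac{i\pi n}4}\mathcal F\bigr)\;\mett{Q}{I_{n}}{0}{0}$$
is correct: applying the four factors right-to-left to $u$ reproduces \eqref{322322} exactly, and the phase $e^{-\frac{i\pi n}4}e^{\frac{i\pi m}2}\val{\det L}^{1/2}$ works out precisely because $m\in m(L)$. The specializations $\mathcal M_{0,I_n,0}\expo{0}=e^{-\frac{i\pi n}4}\mathcal F$, $\mathcal M_{P,I_n,0}\expo{0}=\mett{P}{I_n}{0}{0}\bigl(e^{-\frac{i\pi n}4}\mathcal F\bigr)$ and $\mathcal M_{0,L,0}\expo{m}=\mett{0}{L}{0}{m}\bigl(e^{-\frac{i\pi n}4}\mathcal F\bigr)$, together with $\mathcal M_{0,I_n,0}\expo{2}\bigl(\mathcal M_{0,I_n,0}\expo{0}\bigr)^{-1}=-\Id$, recover the generating system \eqref{778801}--\eqref{778803}, so the converse inclusion also holds. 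The paper itself does not reproduce a proof of this statement (it cites Proposition A.2.2 of \cite{MR4726947}), but it does record an equivalent factorization in \eqref{linkme} (combined with \eqref{54gvrs}), namely $\mathcal M_{P,L,Q}^{\{m(L)\}}=M_{P,I,0}^{\{0\}}M_{0,-L,0}^{\{m(L)-n\}}M_{0,I,Q}^{\{0\}}\bigl(\mathcal F e^{-i\pi n/4}\bigr)^{-1}$; your version places the Fourier transform in the middle and so avoids the operator $M_{0,I,C}\expo{0}=e^{i\pi\poscal{CD}{D}}$, which is slightly cleaner but otherwise the same idea. In short, the proof is correct and aligned with the paper's own toolkit.
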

\begin{remark}
Using the Notation \eqref{4.genmet} and \eqref{1254}, we see that\footnote{
 We note that $m(B)+n\in\{m(-B),m(-B)+2\}$ modulo 4:
 indeed we have modulo 4
 $$
\begin{cases} 
\text{for $n$ even,}&\underbrace{\{0,2\}}_{\det B >0}+n=\underbrace{\{0,2\}}_{\det(-B)>0},
\qquad \underbrace{\{1,3\}}_{\det B<0}+n=\underbrace{\{1,3\}}_{\det (-B)<0},
\\
\text{for $n$ odd,}&\underbrace{\{0,2\}}_{\det B >0}+n=\underbrace{\{1,3\}}_{\det(-B)<0},
\qquad \underbrace{\{1,3\}}_{\det B<0}+n=\underbrace{\{0,2\}}_{\det (-B)>0}.
 \end{cases}
 $$
 We have also 
 $
 m(L)-n\in \{m(-L), m(-L)+2\}
 $
 since we know already (from the above in that footnote)
 that  $m(L)-n\in\{m(-L),m(-L)+2\}-2n$,
which gives 
 $m(L)-n\in\{m(-L),m(-L)+2\}$ for $n$ even; for $n=2l+1$ odd we get
 the same result since 
 $$
 m(L)-n\in\{m(-L),m(-L)+2\}-4l-2=\{m(-L)-2,m(-L)\}=\{m(-L)+2,m(-L)\}.
 $$
 }
 \begin{equation}\label{linkme}
 M_{A,B,C}^{\scriptscriptstyle\{m(B)\}}=\mathcal M_{A,-B,C}^{\scriptscriptstyle\{m(B)+n\}}\mathcal F e^{-i\pi n/4},\quad \mathcal M_{P,L,Q}^{\scriptscriptstyle\{m(L)\}}=M_{P,-L,Q}^{\scriptscriptstyle\{m(L)-n\}}
 \left(\mathcal F e^{-i\pi n/4}\right)^{-1},
\end{equation}
\end{remark}
The following lemma is Lemma A.2.4 in \cite{MR4726947}.
\begin{lemma}\label{lem.kfd89e} With the homomorphism $\Psi$ defined in \eqref{hqma28} and defining 
\begin{equation}\label{trua88}
\Lambda_{P,L,Q}=\Psi(\mathcal M_{P,L,Q}\expo{m}),
\end{equation}
we find that 
\begin{equation}\label{327327}
\Lambda_{P,L,Q}=\mat22{L^{-1}Q}{\hs L^{-1}}{PL^{-1}Q-L^{*}}{\hs PL^{-1}}.
\end{equation}
\end{lemma}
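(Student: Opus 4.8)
The plan is to reduce the computation of $\Lambda_{P,L,Q}=\Psi(\mathcal M_{P,L,Q}\expo{m})$ to two facts already at our disposal: that $\Psi$ is a group homomorphism (Theorem \ref{thm.groups}), and that it sends the standard generators to the symplectic matrices recorded in \eqref{1270}, namely $\Psi(M_{A,B,C})=\Xi_{A,B,C}$ and $\Psi\bigl(e^{-\frac{i\pi n}4}\mathcal F\bigr)=\sigma=\mat22{0}{I_{n}}{-I_{n}}{0}$.

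First I would invoke the second identity of \eqref{linkme}, which expresses $\mathcal M_{P,L,Q}$ through the generators already understood:
\begin{equation*}
\mathcal M_{P,L,Q}^{\scriptscriptstyle\{m(L)\}}=M_{P,-L,Q}^{\scriptscriptstyle\{m(L)-n\}}\bigl(\mathcal Fe^{-i\pi n/4}\bigr)^{-1}.
\end{equation*}
Applying $\Psi$ and using that it is multiplicative — and that, since $\ker\Psi=\{\pm\Id_{L^{2}(\R^{n})}\}$, two operators $\mathcal M_{P,L,Q}\expo{m}$ and $\mathcal M_{P,L,Q}\expo{m'}$ with $m,m'$ both compatible with $L$ differ only by a factor $\pm1$ (cf. Remark \ref{footsign}) and therefore have the same image — one gets
\begin{equation*}
\Lambda_{P,L,Q}=\Psi\bigl(M_{P,-L,Q}\bigr)\,\Psi\bigl(e^{-\frac{i\pi n}4}\mathcal F\bigr)^{-1}=\Xi_{P,-L,Q}\,\sigma^{-1}.
\end{equation*}

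Next I would specialize the explicit formula \eqref{gensym} for $\Xi_{A,B,C}$ to $A=P$, $B=-L$, $C=Q$ (legitimate since $\det(-L)\neq0$), using $(-L)^{-1}=-L^{-1}$ and $(-L)^{*}=-L^{*}$, which yields
\begin{equation*}
\Xi_{P,-L,Q}=\mat22{-L^{-1}}{L^{-1}Q}{-PL^{-1}}{PL^{-1}Q-L^{*}}.
\end{equation*}
Combining this with $\sigma^{-1}=-\sigma=\mat22{0}{-I_{n}}{I_{n}}{0}$ (because $\sigma^{2}=-I_{2n}$, by \eqref{ffqq99}), a direct $2\times2$ block multiplication $\Xi_{P,-L,Q}\,\sigma^{-1}$ produces exactly the matrix claimed in \eqref{327327}.

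I do not expect a genuine obstacle: the substance of the lemma has already been packaged into \eqref{linkme}, Theorem \ref{thm.groups}, and \eqref{1270}, so that what remains is sign-bookkeeping and a block-matrix product, together with the brief remark that $\Lambda_{P,L,Q}$ does not depend on the compatible index $m$. The only mildly delicate point is keeping the order of composition straight, since \eqref{linkme} writes $\mathcal M_{P,L,Q}$ as a product with the inverse Fourier factor on the right, which is why $\sigma^{-1}$ (and not $\sigma$) appears. As an alternative self-contained route, one may bypass \eqref{linkme} entirely and compute $\bigl(\mathcal M_{P,L,Q}\expo{m}\bigr)^{*}\opw{L_{Y}}\mathcal M_{P,L,Q}\expo{m}$ for $Y$ running over a basis of $\RZ$, after factoring $\mathcal M_{P,L,Q}\expo{m}$ through the elementary operators \eqref{mety01}, \eqref{mety02}, \eqref{mety03} and $\mathcal F$ and applying the symplectic covariance identities \eqref{segal01}–\eqref{segal04}; this is more laborious but invokes less machinery.
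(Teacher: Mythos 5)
Your proof is correct. The paper itself does not reproduce a proof of this lemma (it only cites Lemma A.2.4 of \cite{MR4726947}), so there is no in-paper argument to compare against; nonetheless your derivation is valid and is assembled entirely from material already established in the paper, which is exactly what one wants here. The key points all check out: the second identity of \eqref{linkme} reduces $\mathcal M_{P,L,Q}\expo{m}$ to $M_{P,-L,Q}\bigl(e^{-\frac{i\pi n}{4}}\mathcal F\bigr)^{-1}$; Theorem~\ref{thm.groups} justifies applying $\Psi$ multiplicatively; \eqref{1270} gives $\Psi(M_{P,-L,Q})=\Xi_{P,-L,Q}$ and $\Psi\bigl((e^{-\frac{i\pi n}{4}}\mathcal F)^{-1}\bigr)=\sigma^{-1}$; specializing \eqref{gensym} with $B=-L$ gives
\[
\Xi_{P,-L,Q}=\mat22{-L^{-1}}{L^{-1}Q}{-PL^{-1}}{PL^{-1}Q-L^{*}},
\]
and multiplying on the right by $\sigma^{-1}=\mat22{0}{-I_{n}}{I_{n}}{0}$ yields precisely \eqref{327327}. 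Your remark that the choice of compatible index $m$ only changes $\mathcal M_{P,L,Q}\expo{m}$ by a sign, so that $\Lambda_{P,L,Q}$ is well defined, is a point worth making explicit since the lemma's statement drops the superscript. As an additional sanity check, one can verify that the formula obtained is consistent with Claim~\ref{claim.911}: applying \eqref{arfd58} to $\Lambda_{P,L,Q}$ gives exactly $\Lambda_{-Q,-L^{*},-P}$, matching $(\mathcal M_{P,L,Q}\expo{m})^{-1}=\mathcal M_{-Q,-L^{*},-P}\expo{n-m}$.
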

\begin{claim}\label{claim.911}
   Let $P, L, Q, m$ be as in Definition \ref{def.lersym}. Then we have 
   \begin{equation}\label{929929}
(\mathcal M_{P,L,Q}^{\scriptscriptstyle\{m\}})^{-1}=\mathcal M_{-Q,-L^{*},-P}^{\scriptscriptstyle\{n-m\}},
\end{equation}
and moreover $n-m\in m(-L^{*})$.
\end{claim}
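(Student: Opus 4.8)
\emph{Proof plan.} The plan is to exploit that, by Proposition \ref{pro.989898}, the operator $\mathcal M_{P,L,Q}^{\scriptscriptstyle\{m\}}$ is \emph{unitary} on $L^{2}(\R^{n})$, so that its inverse is its adjoint. Since this operator is given in \eqref{322322} by the (oscillatory) integral kernel
$$
k(x,y)=e^{-\frac{i\pi n}4}e^{\frac{i\pi m}2}{\val{\det L}}^{1/2}\,e^{i\pi\{\poscal{Px}{x}-2\poscal{Lx}{y}+\poscal{Q y}{y}\}},
$$
the kernel of $(\mathcal M_{P,L,Q}^{\scriptscriptstyle\{m\}})^{-1}=(\mathcal M_{P,L,Q}^{\scriptscriptstyle\{m\}})^{*}$ is $\overline{k(y,x)}$, and the whole argument reduces to recognizing $\overline{k(y,x)}$ as the kernel assigned by \eqref{322322} to the tuple $(-Q,-L^{*},-P,n-m)$. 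Note that no Fresnel/Gaussian integration is needed, only the transposition-and-conjugation rule for adjoint kernels.

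First I would do the phase bookkeeping. Passing to $\overline{k(y,x)}$ turns the exponent $i\pi\{\cdot\}$ into $-i\pi\{\cdot\}$ and conjugates the scalar prefactor; swapping $x$ and $y$ and using that $P,L,Q$ are real with $P=P^{*}$, $Q=Q^{*}$, together with $\poscal{Ly}{x}=\poscal{L^{*}x}{y}$, rewrites $\overline{k(y,x)}$ as
$$
e^{\frac{i\pi n}4}e^{-\frac{i\pi m}2}{\val{\det L}}^{1/2}\,e^{i\pi\{\poscal{(-Q)x}{x}-2\poscal{(-L^{*})x}{y}+\poscal{(-P)y}{y}\}},
$$
which already exhibits the quadratic data $(-Q,-L^{*},-P)$. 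It then remains to match the scalar factors: one has $\val{\det(-L^{*})}=\val{(-1)^{n}\det L}=\val{\det L}$, and the elementary identity $e^{-i\pi n/4}e^{i\pi(n-m)/2}=e^{i\pi n/4}e^{-i\pi m/2}$ shows that $e^{\frac{i\pi n}4}e^{-\frac{i\pi m}2}{\val{\det L}}^{1/2}$ coincides with $e^{-\frac{i\pi n}4}e^{\frac{i\pi(n-m)}2}{\val{\det(-L^{*})}}^{1/2}$, which is exactly the prefactor in \eqref{322322} for $(-Q,-L^{*},-P,n-m)$. This yields \eqref{929929}.

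Finally I would check that $n-m$ is \emph{compatible} with $-L^{*}$ in the sense of \eqref{compat}, so that the right-hand side of \eqref{929929} is legitimate. From \eqref{1254} one has $m(-L^{*})=m(L^{*})+n=m(L)+n$, hence $n+m\in m(-L^{*})$; since both possible values $\{0,2\}$ and $\{1,3\}$ of $m(-L^{*})$ are stable under adding $2$ modulo $4$, and $n-m=(n+m)-2m$ with $2m\in\{0,2\}\subset\Z_{4}$, we conclude $n-m\in m(-L^{*})$.

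I do not expect a genuine obstacle here: the only points needing care are keeping the phase factors $e^{\pm i\pi n/4}$ and $e^{\pm i\pi m/2}$ consistent and handling the $\Z_{4}$-arithmetic of the compatibility condition; the substantive fact — that the inverse is the adjoint and that the adjoint kernel is $\overline{k(y,x)}$ — is immediate from Proposition \ref{pro.989898}. An alternative, slightly less direct route would be to compute $\Lambda_{-Q,-L^{*},-P}$ from \eqref{327327}, verify that it equals $\Lambda_{P,L,Q}^{-1}$, and then use that $\Psi$ has kernel $\{\pm\Id\}$ to pin down the sign through the metaplectic index; but the direct kernel computation above is cleaner and makes the choice of index $n-m$ transparent.
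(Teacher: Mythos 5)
Your proof is correct, and it takes a genuinely different route from the paper. The paper proves \eqref{929929} by computing the kernel of the composition $\mathcal M_{P,L,Q}^{\scriptscriptstyle\{m\}}\mathcal M_{-Q,-L^{*},-P}^{\scriptscriptstyle\{n-m\}}$: integrating out the intermediate variable produces $\val{\det L}\,e^{i\pi\{Px^{2}-Py^{2}\}}\delta_{0}(Lx-Ly)=\delta_{0}(x-y)$, a Fresnel-type computation that identifies the composition as the identity, after which unitarity gives the inverse. You instead exploit unitarity at the outset to write $(\mathcal M_{P,L,Q}^{\scriptscriptstyle\{m\}})^{-1}$ as the adjoint, whose kernel is $\overline{k(y,x)}$, and then read off the new tuple $(-Q,-L^{*},-P,\,n-m)$ by pure transposition, conjugation, and the phase identity $e^{-i\pi n/4}e^{i\pi(n-m)/2}=e^{i\pi n/4}e^{-i\pi m/2}$. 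Your route avoids the oscillatory-integral and distributional $\delta$-function manipulation entirely, which is a genuine simplification; the paper's route has the mild advantage of directly exhibiting the composition as the identity without invoking the adjoint. For the compatibility statement you verify $n-m\in m(-L^{*})$ by using $m(-L^{*})=m(L)+n$ and the stability of $\{0,2\}$ and $\{1,3\}$ under shifts by $\{0,2\}$, essentially the same $\Z_{4}$ bookkeeping as the paper's argument that $-m(L)=m(L)$. Both proofs are complete and correct.
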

\no
Indeed, calculating the kernel $\kappa$ of $\mathcal M_{P,L,Q}^{\scriptscriptstyle\{m\}}
\mathcal M_{-Q,-L^{*},-P}^{\scriptscriptstyle\{n-m\}}$,
we get
\begin{multline*}
\kappa(x,y)
=e^{\frac{i\pi}2(m+n-m-n)}
\val{\det L}
 \int e^{i\pi\{Px^{2}-2Lx\cdot z+Qz^{2}
-Qz^{2}+2L^{*}z\cdot y-Py^{2}\}} dz
\\
= \val{\det L}
e^{i\pi\{Px^{2}-P y^{2}\}} \delta_{0}(Lx-Ly)=
 \delta_{0}(x-y),
\end{multline*}
so that 
$\mathcal M_{P,L,Q}^{\scriptscriptstyle\{m\}}
\mathcal M_{-Q,-L^{*},-P}^{\scriptscriptstyle\{n-m\}}=\Id_{L^{2}(\R^{n})}$ and since 
$\mathcal M_{P,L,Q}\expo{m}$ is unitary, this proves \eqref{929929}.
The last assertion is equivalent to
$
n-m\in m(L)+n,
$
i.e. to $m\in-m(L)$.
Since the  mapping 
$$\Z/4\Z\ni x\mapsto-x\in \Z/4\Z,$$
leaves stable the sets $\{0,2\}, \{1,3\}$,
we obtain the sought result,
concluding the proof of the claim.\qed
\begin{remark}
 We may also note that since \eqref{929929} holds true, having
 $$
 (\mathcal M_{P,L,Q}^{\scriptscriptstyle\{m\}})^{-1}=\Mett{P'}{L'}{Q'}{m'}
 $$
 would imply that the kernels of the above operator should coincide, i.e.
 $$
e^{-\frac{i\pi n}4} e^{\frac{i\pi}2\{n-m\}} e^{i\pi\{-Q x^{2}+2L^{*}x\cdot y-P y^{2}\}}
=e^{-\frac{i\pi n}4} e^{\frac{i\pi}2\{m'\}} e^{i\pi\{P'x^{2}-2L'x\cdot y+Q' y^{2}\}},
 $$
 implying readily that
 $$
 P'=-Q, L'=-L^{*}, Q'=-P, \quad \pi\frac{(n-m-m')}{2}\in 2\pi \Z,
 $$
 and thus $n-m=m'\mod 4$.
 More generally the mapping \eqref{311311}
 $$
 \text{\sl Sym}(n,\R)\times \text{\it Gl}(n,\R)
 \times  \text{\sl Sym}(n,\R)\times \Z_{4}\ni(P,L,Q,m)\mapsto \Mett{P}{L}{Q}{m}\in  \mathcal U\bigl(L^{2}(\R^{n})
 \bigr),
 $$
 is injective since for a symmetric $(2n)\times (2n)$ matrix $\mathcal A$ ,
 \begin{multline}\label{quafor}
\forall X\in \RZ, 
e^{\frac{i\pi}2\{\mathcal A X^{2}+\mu\}}=1\overbrace{\Longrightarrow}^{\text{\tiny differentiation}}
\forall X\in \RZ, e^{\frac{i\pi}2\{\mathcal A X^{2}+\mu\}}\mathcal AX=0
\\\Longrightarrow
\forall X\in \RZ, \mathcal A X=0 
 \quad\text{i.e. }\mathcal A=0,\quad\text{and thus $\mu\in 4\Z$.}
\end{multline}
 \end{remark} 
 The next proposition is Proposition A.2.10 in \cite{MR4726947}.
\begin{proposition}\label{pro.9159++}
 The metaplectic group $\text{\sl Mp}(2n)$ is equal to the set
\begin{equation}\label{}
 \left\{\mathcal M_{P_{1}, L_{1}, Q_{1}}\expo{m_{1}}\mathcal M_{P_{2}, L_{2}, Q_{2}}\expo{m_{2}}\right\}_{
 \substack
 {P_{j}=P_{j}^{*},\hskip0.6pt Q_{j}=Q_{j}^{*}\\m_{j}\in m(L_{j}),\hskip0.6pt  \det L_{j}\not=0}
 }.
\end{equation}
In other words, every metaplectic operator of $\text{\sl Mp}(2n)$ is the product of two operators of type 
$\mathcal M_{P, L, Q}\expo{m}$ as given by Definition \ref{def.lersym}.
\end{proposition}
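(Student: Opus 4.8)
The plan is to pass through the covering homomorphism $\Psi:\metg\to\text{\sl Sp}(2n,\R)$ (Theorem \ref{thm.groups}) and reduce the statement to a purely linear-algebraic factorisation inside $\text{\sl Sp}(2n,\R)$. The bridge is the following reformulation of Lemma \ref{lem.kfd89e}: by \eqref{327327} the symplectic matrix $\Lambda_{P,L,Q}=\Psi(\mathcal M_{P,L,Q}\expo{m})$ has upper-right $(n\times n)$ block equal to $L^{-1}$, hence invertible; conversely, if $\Xi\in\text{\sl Sp}(2n,\R)$ has invertible $(1,2)$-block $\Xi_{12}$, then setting $L=\Xi_{12}^{-1}$, $Q=\Xi_{12}^{-1}\Xi_{11}$, $P=\Xi_{22}\Xi_{12}^{-1}$, one checks directly from \eqref{4.eqsyma}--\eqref{4.eqsym'} that $P=P^{*}$, $Q=Q^{*}$ and $PL^{-1}Q-L^{*}=\Xi_{21}$, so that $\Xi=\Lambda_{P,L,Q}$. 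Thus the family $\{\Lambda_{P,L,Q}\}_{P=P^{*},\,Q=Q^{*},\,\det L\neq0}$ is exactly the set $\mathcal G\subset\text{\sl Sp}(2n,\R)$ of symplectic matrices with invertible $(1,2)$-block; moreover $\mathcal G=\mathcal G^{-1}$, since by \eqref{arfd58} the $(1,2)$-block of $\Xi^{-1}$ is $-\Xi_{12}^{*}$.

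The heart of the matter is then the claim that $\mathcal G\cdot\mathcal G=\text{\sl Sp}(2n,\R)$. Because $\mathcal G=\mathcal G^{-1}$, it is enough to show that for every $\Xi\in\text{\sl Sp}(2n,\R)$ there is $\Lambda\in\mathcal G$ with $\Lambda\Xi\in\mathcal G$, and then write $\Xi=\Lambda^{-1}(\Lambda\Xi)$. I would take $\Lambda=\mat22{-C}{I_{n}}{-I_{n}}{0}$ with $C=C^{*}$; by \eqref{4.eqsyma} this is symplectic, and its $(1,2)$-block is $I_{n}$, so $\Lambda\in\mathcal G$. A direct multiplication gives that the $(1,2)$-block of $\Lambda\Xi$ is $\Xi_{22}-C\Xi_{12}$, so the whole problem comes down to finding a symmetric $C$ making $\Xi_{22}-C\Xi_{12}$ invertible. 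I would read this geometrically: letting $\ell=\Xi(\{0\}\times\R^{n})$ be the Lagrangian subspace spanned by the columns of $\mat22{\Xi_{12}}{}{\Xi_{22}}{}$ (one column block), the matrix $\Xi_{22}-C\Xi_{12}$ is invertible precisely when $\ell$ is transverse to the Lagrangian $\{(x,Cx):x\in\R^{n}\}$; since the Lagrangians of this last form are exactly those transverse to the vertical $\{0\}\times\R^{n}$, what is needed is a Lagrangian subspace transverse both to $\ell$ and to the vertical, i.e. a common Lagrangian complement — which always exists. The degenerate case $\Xi_{12}=0$ is immediate, since then $\Xi_{11}^{*}\Xi_{22}=I_{n}$ by \eqref{4.eqsyma}, so $\Xi_{22}$ is already invertible and $C=0$ works.

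Finally I would assemble everything. Given $\mathcal M\in\metg$, put $\Xi=\Psi(\mathcal M)$ and use the factorisation $\Xi=\Lambda'\Lambda''$ with $\Lambda',\Lambda''\in\mathcal G$, hence $\Lambda'=\Lambda_{P_{1},L_{1},Q_{1}}$ and $\Lambda''=\Lambda_{P_{2},L_{2},Q_{2}}$ with $P_{j}=P_{j}^{*}$, $Q_{j}=Q_{j}^{*}$, $\det L_{j}\neq0$. Choosing $m_{j}\in m(L_{j})$, Proposition \ref{pro.989898} says $\mathcal M_{P_{1},L_{1},Q_{1}}\expo{m_{1}}\mathcal M_{P_{2},L_{2},Q_{2}}\expo{m_{2}}$ is metaplectic, and since $\Psi$ is a homomorphism its $\Psi$-image is $\Lambda'\Lambda''=\Xi=\Psi(\mathcal M)$. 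As $\ker\Psi=\{\pm\Id_{L^{2}(\R^{n})}\}$ by Theorem \ref{thm.groups}, we get $\mathcal M=\pm\,\mathcal M_{P_{1},L_{1},Q_{1}}\expo{m_{1}}\mathcal M_{P_{2},L_{2},Q_{2}}\expo{m_{2}}$; the sign is absorbed by replacing $m_{1}$ with $m_{1}+2$, which still lies in $m(L_{1})$ and, by inspection of \eqref{322322}, changes $\mathcal M_{P_{1},L_{1},Q_{1}}\expo{m_{1}}$ into its opposite. This proves that $\metg$ is contained in the set of products of two operators of type $\mathcal M_{P,L,Q}\expo{m}$; the reverse inclusion is immediate from Proposition \ref{pro.989898}. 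The main obstacle is the symplectic-geometry step of the second paragraph — producing a symmetric $C$ with $\Xi_{22}-C\Xi_{12}$ invertible, equivalently a common Lagrangian complement of $\ell$ and the vertical subspace; once this is settled, the rest is bookkeeping with the covering map and the indices $m$.
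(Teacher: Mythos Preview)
The paper does not prove this proposition in-text; it merely records that it is Proposition~A.2.10 in \cite{MR4726947}. So there is no in-paper proof to compare against, and I can only assess your argument on its own.

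Your strategy is correct and clean: identify $\{\Lambda_{P,L,Q}\}$ with the set $\mathcal G\subset\text{\sl Sp}(2n,\R)$ of symplectic matrices with invertible $(1,2)$-block, show $\mathcal G\cdot\mathcal G=\text{\sl Sp}(2n,\R)$, and then lift through $\Psi$ using $\ker\Psi=\{\pm\Id\}$ and the index shift $m_1\mapsto m_1+2$. The verifications you sketch (that $P,Q$ are symmetric when $\Xi_{12}$ is invertible, that $\mathcal G=\mathcal G^{-1}$ via \eqref{arfd58}, that your chosen $\Lambda$ is symplectic, and that the $(1,2)$-block of $\Lambda\Xi$ is $\Xi_{22}-C\Xi_{12}$) are all correct, as is the geometric reformulation: $\Xi_{22}-C\Xi_{12}$ is invertible if and only if the Lagrangian $\ell=\Xi(\{0\}\times\R^n)$ is transverse to the graph $\{(x,Cx)\}$.

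The one point that is asserted rather than proved is the existence of a common Lagrangian complement to $\ell$ and to the vertical $\{0\}\times\R^n$. This is a standard fact, but since you flag it yourself as ``the main obstacle'', you should either give a reference (e.g.\ H\"ormander's treatise, Volume~III, or any text on the Lagrangian Grassmannian) or a short argument. One self-contained route: the set of Lagrangians transverse to a fixed Lagrangian is open and dense in the Lagrangian Grassmannian (it is the complement of the Maslov cycle), hence the intersection of two such sets is nonempty. Alternatively, an explicit construction is possible: write $\R^{2n}=\ell\oplus\ell'$ with $\ell'$ any Lagrangian complement of $\ell$, and if $\ell'$ meets the vertical nontrivially, shear $\ell'$ along $\ell$ by a suitable symmetric form to kill the intersection. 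Once this step is supplied, your proof is complete.
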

The next theorem is Theorem A.2.11 in \cite{MR4726947}.
\begin{theorem}\label{thm.phaseb}
 Let $M$ be an element of $\text{\sl Mp}(n)$ such that $M=e^{i\phi}\Id_{L^{2}(\R^{n})}, \phi\in \R.$
 Then $e^{i\phi}$ belongs to the set $\{-1,1\}$. In other words, the intersection of the metaplectic group with the unit circle (identified to the unitary operators in $L^{2}(\R^{n})$ defined by the mappings $v\mapsto z v$ where $z\in\mathbb S^{1}\subset {\mathbb C}$)
 is reduced to the set $\{-1,1\}$.
 \end{theorem}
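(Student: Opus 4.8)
The plan is to push $M$ through the homomorphism $\Psi$ of \eqref{hqma28} and to invoke the identification of its kernel. Since $M=e^{i\phi}\Id_{L^{2}(\R^{n})}$ commutes with every bounded operator, one has for all $Y\in\RZ$
\[
M^{*}\opw{L_{Y}}M=e^{-i\phi}\opw{L_{Y}}e^{i\phi}=\opw{L_{Y}},
\]
so that, by Lemma \ref{lem.54dftz}, the unique $\chi\in\text{\sl Sp}(2n,\R)$ attached to $M$ by \eqref{hg54rr} satisfies $L_{\chi^{-1}Y}=L_{Y}$ for every $Y$. As $Y\mapsto L_{Y}$ is injective (the form $\sigma$ being invertible), this forces $\chi^{-1}Y=Y$ for all $Y$, i.e. $\chi=I_{2n}$; hence $\Psi(M)=I_{2n}$, so $M\in\ker\Psi=\{\pm\Id_{L^{2}(\R^{n})}\}$ by Theorem \ref{thm.groups}, whence $e^{i\phi}\in\{-1,1\}$. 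I would present this route first, as it is the shortest.

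Should one want an argument that does not re-use the computation of $\ker\Psi$, I would instead start from Proposition \ref{pro.9159++} and write $M=\mathcal M_{P_{1},L_{1},Q_{1}}^{\scriptscriptstyle\{m_{1}\}}\mathcal M_{P_{2},L_{2},Q_{2}}^{\scriptscriptstyle\{m_{2}\}}$. Then $M=e^{i\phi}\Id$ together with Claim \ref{claim.911} gives
\[
\mathcal M_{P_{1},L_{1},Q_{1}}^{\scriptscriptstyle\{m_{1}\}}=e^{i\phi}\bigl(\mathcal M_{P_{2},L_{2},Q_{2}}^{\scriptscriptstyle\{m_{2}\}}\bigr)^{-1}=e^{i\phi}\,\mathcal M_{-Q_{2},-L_{2}^{*},-P_{2}}^{\scriptscriptstyle\{n-m_{2}\}}.
\]
Comparing the Schwartz kernels of the two sides and using the injectivity statement \eqref{quafor} of the quadratic-phase parametrisation, one reads off $P_{1}=-Q_{2}$, $L_{1}=-L_{2}^{*}$, $Q_{1}=-P_{2}$, and the surviving scalar prefactors (together with $\val{\det L_{1}}=\val{\det L_{2}}$) then yield $e^{i\phi}=e^{\frac{i\pi}{2}(m_{1}+m_{2}-n)}$.

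It then remains to check that $m_{1}+m_{2}-n$ is even, and this is where I expect the real work to be. From $m_{1}\in m(L_{1})=m(-L_{2}^{*})=m(L_{2})+n$ (using \eqref{1254}) and $m_{2}\in m(L_{2})$ one gets $m_{1}+m_{2}-n\in m(L_{2})+m(L_{2})$; and since $m(L_{2})$ is one of the two-element sets $\{0,2\}$ or $\{1,3\}$, its self-sum lies in $\{0,2\}$ modulo $4$. Hence $e^{i\phi}=e^{\frac{i\pi}{2}\cdot 0}$ or $e^{\frac{i\pi}{2}\cdot 2}$, i.e. $e^{i\phi}\in\{1,-1\}$. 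The main obstacle is precisely this index/sign bookkeeping — keeping track of the compatibility condition \eqref{compat} at each step — while the first route sidesteps it altogether at the cost of leaning on Theorem \ref{thm.groups}.
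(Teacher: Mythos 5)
The paper does not prove Theorem \ref{thm.phaseb} in the text; it cites Theorem A.2.11 of \cite{MR4726947}, so there is no in-text argument to compare against. Your second route is the substantial one, and it is correct: decomposing $M=\mathcal M_{P_{1},L_{1},Q_{1}}^{\scriptscriptstyle\{m_{1}\}}\mathcal M_{P_{2},L_{2},Q_{2}}^{\scriptscriptstyle\{m_{2}\}}$ by Proposition \ref{pro.9159++}, applying Claim \ref{claim.911} to invert the second factor, equating Schwartz kernels (at $x=y=0$ the scalar prefactors must match, and then the exponential of a continuous real quadratic form equal to $1$ at the origin forces the form to vanish, which is the content of \eqref{quafor}) gives $P_{1}=-Q_{2}$, $L_{1}=-L_{2}^{*}$, $Q_{1}=-P_{2}$, $\val{\det L_{1}}=\val{\det L_{2}}$ and hence $e^{i\phi}=e^{\frac{i\pi}{2}(m_{1}+m_{2}-n)}$. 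The index bookkeeping is then exactly right: $m_{1}\in m(L_{1})=m(-L_{2}^{*})=m(L_{2})+n$ by \eqref{1254}, $m_{2}\in m(L_{2})$, and since $m(L_{2})+m(L_{2})\subset\{0,2\}\pmod 4$ whichever of $\{0,2\}$ or $\{1,3\}$ it is, you get $m_{1}+m_{2}-n\in\{0,2\}\pmod 4$ and so $e^{i\phi}\in\{1,-1\}$. Given that Claim \ref{claim.911}, \eqref{quafor} and Proposition \ref{pro.9159++} are all stated here precisely to support such an argument, this is almost certainly the intended proof in the reference.

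Your caution about the first route is well placed and you should not present it as "the shortest proof" without a caveat stronger than the one you give. Once one knows $\ker\Psi=\{\pm\Id_{L^{2}(\R^{n})}\}$, the statement is of course a one-liner, since $e^{i\phi}\Id$ trivially lies in $\ker\Psi$. But the nontrivial half of that equality, $\ker\Psi\subseteq\{\pm\Id\}$, is standardly proved by observing that $M\in\ker\Psi$ commutes with every $\opw{a}$, hence with all of $\mathcal B(L^{2}(\R^{n}))$, hence is a scalar $c\Id$ with $\val c=1$ --- and at that point one needs exactly the content of Theorem \ref{thm.phaseb} to conclude $c=\pm1$. Unless the cited Theorem \ref{thm.groups} is established by some independent computation (and this paper does not exhibit one), the first route is circular. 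Keep the second route as the actual proof and treat the first as a sanity check, not an alternative.
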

We may go back to the description given by Proposition \ref{pro.kj77qq} and Definition \ref{def.41fdhh}.
The next proposition is Proposition A.2.12 in \cite{MR4726947}.
\begin{proposition}\label{pro.917917}
 The metaplectic group $\text{\sl Mp}(n)$ is equal to the set
\begin{equation}\label{}
 \left\{M_{A_{1},B_{1},C_{1}}\expo{m_{1}}
 M_{A_{2},B_{2},C_{2}}\expo{m_{2}}\right\}_{
 \substack
 {A_{j}=A_{j}^{*}, C_{j}=C_{j}^{*}\\ m_{j}\in m(B_{j}),\       \det B_{j}\not=0}
 }
\end{equation}
In other words, every metaplectic operator of $\text{\sl Mp}(n)$ is the product of two operators of type 
$M_{A, B, C}\expo{m}$ as given by Proposition \ref{pro.kj77qq}.
\end{proposition}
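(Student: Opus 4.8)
The plan is to lift, through the covering homomorphism $\Psi$ of Theorem~\ref{thm.groups}, the analogous decomposition that is already available at the level of symplectic matrices in Theorem~\ref{4.thm.gensym}. Given $\mathcal M\in\text{\sl Mp}(n)$, I would first set $\Xi=\Psi(\mathcal M)\in\text{\sl Sp}(2n,\R)$ and use Theorem~\ref{4.thm.gensym} to write $\Xi$ as a product of two mappings of type $\Xi_{A,B,C}$: if $\Xi_{11}$ is invertible then $\Xi=\Xi_{A,B,C}$ for suitable $A=A^{*}$, $C=C^{*}$, $B\in\text{\it Gl}(n,\R)$, and one may write $\Xi=\Xi_{A,B,C}\,\Xi_{0,I_{n},0}$ since $\Xi_{0,I_{n},0}=I_{2n}$ by \eqref{gensym}; if $\Xi_{11}$ is singular, the same theorem asserts directly that $\Xi=\Xi_{A_{1},B_{1},C_{1}}\Xi_{A_{2},B_{2},C_{2}}$. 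In all cases one obtains data $A_{j}=A_{j}^{*}$, $C_{j}=C_{j}^{*}$, $\det B_{j}\neq0$ with $\Xi=\Xi_{A_{1},B_{1},C_{1}}\Xi_{A_{2},B_{2},C_{2}}$, and one picks $m_{j}\in m(B_{j})$.

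Next, I would consider $N:=M_{A_{1},B_{1},C_{1}}^{\scriptscriptstyle\{m_{1}\}}M_{A_{2},B_{2},C_{2}}^{\scriptscriptstyle\{m_{2}\}}$, which lies in $\text{\sl Mp}(n)$ by \eqref{54gvrs} and Definition~\ref{def.41fdhh}. Since $\Psi$ is a group homomorphism (Theorem~\ref{thm.groups}) and $\Psi\bigl(M_{A,B,C}^{\scriptscriptstyle\{m\}}\bigr)=\Xi_{A,B,C}$ for every compatible $m$ (by \eqref{1270} together with \eqref{1239}, the phase factor cancelling under conjugation), I get $\Psi(N)=\Xi_{A_{1},B_{1},C_{1}}\Xi_{A_{2},B_{2},C_{2}}=\Xi=\Psi(\mathcal M)$. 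As $\ker\Psi=\{\pm\Id_{L^{2}(\R^{n})}\}$ (Theorem~\ref{thm.groups}), this forces $\mathcal M=\varepsilon N$ with $\varepsilon\in\{-1,+1\}$, and it only remains to dispose of the case $\varepsilon=-1$. There I would invoke Remark~\ref{footsign} (equivalently, inspect \eqref{4.genmet} directly): replacing the index $m_{1}$ by $m_{1}+2$ multiplies $M_{A_{1},B_{1},C_{1}}^{\scriptscriptstyle\{m_{1}\}}$ by $e^{i\pi}=-1$, while $m_{1}+2$ is still compatible with $B_{1}$ because the sets $\{0,2\}$ and $\{1,3\}$ in \eqref{314314} are invariant under adding $2$. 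Hence $\mathcal M=M_{A_{1},B_{1},C_{1}}^{\scriptscriptstyle\{m_{1}'\}}M_{A_{2},B_{2},C_{2}}^{\scriptscriptstyle\{m_{2}\}}$ with $m_{1}'=m_{1}$ or $m_{1}'=m_{1}+2$, both lying in $m(B_{1})$; this establishes that $\text{\sl Mp}(n)$ is contained in the displayed set, the reverse inclusion being clear since every $M_{A,B,C}^{\scriptscriptstyle\{m\}}$ belongs to $\text{\sl Mp}(n)$.

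The whole argument is essentially an assembly: the two nontrivial ingredients—surjectivity of $\Psi$ with kernel $\{\pm\Id_{L^{2}(\R^{n})}\}$, and the factorization of an arbitrary symplectic matrix into at most two blocks of type $\Xi_{A,B,C}$—are quoted from \cite{MR4726947}, so I do not expect a genuine obstacle; the only point that demands attention is the sign normalization in the last step, which is why one must carry along the index $m$ modulo $4$ throughout rather than working solely with the associated symplectic matrix. (The very same scheme, with Lemma~\ref{lem.kfd89e} in place of \eqref{1270}, reproves Proposition~\ref{pro.9159++}, so one could alternatively deduce the statement from that proposition via the dictionary \eqref{linkme}, at the cost of a slightly more involved reduction back to two factors.)
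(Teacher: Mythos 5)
Your proof is correct and is the natural argument; the paper itself only cites Proposition~A.2.12 of \cite{MR4726947} without reproducing a proof. Your strategy — decompose $\Xi=\Psi(\mathcal M)$ into at most two factors $\Xi_{A,B,C}$ via Theorem~\ref{4.thm.gensym} (inserting $\Xi_{0,I_n,0}=I_{2n}$ when $\Xi_{11}$ is invertible), lift through the surjective homomorphism $\Psi$ using $\Psi\bigl(M_{A,B,C}^{\scriptscriptstyle\{m\}}\bigr)=\Xi_{A,B,C}$, and then dispose of the residual sign from $\ker\Psi=\{\pm\Id\}$ by the shift $m_1\mapsto m_1+2$ which stays in $m(B_1)$ and flips the sign per Remark~\ref{footsign} — assembles exactly the ingredients (Theorems~\ref{4.thm.gensym} and~\ref{thm.groups}, formula~\eqref{1270}, Remark~\ref{footsign}) that the paper makes available for this purpose, and leaves no gap.
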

\subsection{Weyl quantization}\label{sec.weyl}
\subsubsection{Wigner distribution}
Let $u,v$ be given functions in $L^2(\R^n)$.
The function $\Omega$, defined on $\R^n\times \R^n$ by
\begin{equation}\label{funcome}
\R^{n}\times \R^{n}\ni(z,x)\mapsto u(x+\frac z2) \bar v(x-\frac z2) =\Omega(u,v)(x,z),
\end{equation}
belongs to $L^{2}(\RZ)$ from the identity
\begin{equation}\label{newide}
\int_{\RZ}{\val{\Omega(u,v)(x,z)}}^{2} dxdz=\norm{u}^{2}_{L^{2}(\R^{n})}\norm{v}^{2}_{L^{2}(\R^{n})}.
\end{equation}
We have also
\begin{equation}\label{113uhb}
\sup_{x\in \R^n}\int_{\R^n}\val{\Omega(x,z)} dz\le 2^n \norm{u}_{L^{2}(\R^{n})}\norm{v}_{L^{2}(\R^{n})}.
\end{equation}
We may then give the following definition.
\begin{definition}\label{wigne+}
 Let $u,v$ be given functions in $L^2(\R^n)$. We define the joint Wigner distribution
 $\mathcal W(u,v)$ 
as the partial Fourier transform
with respect to $z$ of the function $\Omega$ defined in
\eqref{funcome}.
We have for  $(x,\xi)\in\R^n_{x}\times \R^n_{\xi}$, using \eqref{113uhb},
\index{$\mathcal W(u,v)$}
\begin{equation}\label{wigner}
\mathcal W(u,v)(x,\xi)=\int_{\R^n} e^{-2i\pi z\cdot \xi} u(x+\frac z2) \bar v(x-\frac z2) dz.
\end{equation}
The Wigner distribution of $u$ is defined as $\mathcal W(u,u)$.
\end{definition}
\begin{nb}
 By inverse Fourier transformation we get, in a weak sense,
 \index{reconstruction formula}
  \begin{equation}\label{623new}
u(x_{1})\otimes \bar v(x_{2})=\int \mathcal W(u,v)(\frac{x_{1}+x_{2}}2, \xi)\ e^{2i\pi(x_{1}-x_{2})\cdot \xi}d\xi.
\end{equation}
\end{nb}
\begin{lemma}
 Let $u,v$ be given functions in $L^2(\R^n)$.  The function 
 $\mathcal W(u,v)$  belongs to $L^2(\RZ)$ and we have
 \begin{equation}\label{norm}
\norm{\mathcal  W(u,v)}_{L^{2}(\RZ)}=\norm{u}_{L^{2}(\R^{n})}\norm{v}_{L^{2}(\R^{n})}.
\end{equation} 
We have  also
\begin{equation}\label{complex}
\overline{\mathcal W (u,v)(x,\xi)}=\mathcal W (v,u)(x,\xi),
\end{equation}
so that 
 $\mathcal W(u,u)$ is real-valued.
\end{lemma}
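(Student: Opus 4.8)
The plan is to read \eqref{norm} as a (partial) Plancherel identity. By \eqref{wigner}, $\mathcal W(u,v)$ is the Fourier transform of the function $\Omega(u,v)$ of \eqref{funcome} with respect to the variable $z$ \emph{alone}, in the normalization $\widehat f(\xi)=\int e^{-2i\pi x\cdot\xi}f(x)\,dx$ recalled in the Appendix; write $\mathcal F_{z}$ for this partial Fourier transform, acting on functions of $(x,z)\in\RZ$. The first step is to observe that $\mathcal F_{z}$ is a unitary operator on $L^{2}(\RZ)$: this is Plancherel's theorem applied in the $z$-slice for a.e. fixed $x$, combined with Fubini's theorem to integrate the slice-wise identity against $dx$. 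Since \eqref{newide} says precisely that $\Omega(u,v)\in L^{2}(\RZ)$ with $\norm{\Omega(u,v)}_{L^{2}(\RZ)}=\norm{u}_{L^{2}(\R^{n})}\norm{v}_{L^{2}(\R^{n})}$, applying $\mathcal F_{z}$ gives $\mathcal W(u,v)\in L^{2}(\RZ)$ together with \eqref{norm}.

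One small point must be addressed for this to be rigorous: the pointwise integral formula \eqref{wigner} has to be shown to compute, for a.e. $x$, the same function of $\xi$ as the $L^{2}$-valued operator $\mathcal F_{z}$. This is where \eqref{113uhb} (equivalently, the Cauchy--Schwarz inequality in $z$ together with Fubini) enters: it shows that for a.e. $x$ the slice $z\mapsto\Omega(u,v)(x,z)$ belongs to $L^{1}(\R^{n}_{z})\cap L^{2}(\R^{n}_{z})$, so that its $L^{1}$-Fourier transform, which is the integral in \eqref{wigner}, agrees a.e. with its $L^{2}$-Fourier transform. Alternatively, and perhaps more transparently, one may first prove \eqref{norm} for $u,v\in\mathscr S(\R^{n})$ by a direct computation (Plancherel in $\xi$ for each fixed $x$, then integration in $x$ using \eqref{newide}, all integrals being absolutely convergent), and then extend to $u,v\in L^{2}(\R^{n})$ by density, using that $(u,v)\mapsto\mathcal W(u,v)$ is sesquilinear and, by the Schwartz case, bounded $L^{2}\times L^{2}\to L^{2}(\RZ)$, while \eqref{113uhb} guarantees that the pointwise formula survives the limit.

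For \eqref{complex} I would argue directly from \eqref{wigner}: conjugating gives $\overline{\mathcal W(u,v)(x,\xi)}=\int_{\R^{n}}e^{2i\pi z\cdot\xi}\,\overline{u(x+\frac z2)}\,v(x-\frac z2)\,dz$, and the change of variables $z\mapsto-z$ turns the right-hand side into $\int_{\R^{n}}e^{-2i\pi z\cdot\xi}\,v(x+\frac z2)\,\overline{u(x-\frac z2)}\,dz=\mathcal W(v,u)(x,\xi)$. Specializing to $v=u$ yields $\overline{\mathcal W(u,u)}=\mathcal W(u,u)$, i.e. $\mathcal W(u,u)$ is real-valued.

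There is no genuine obstacle here: the statement is the classical Moyal identity for the cross-Wigner distribution. The only thing requiring a modicum of care is the bookkeeping mentioned above, namely reconciling the a.e.-defined pointwise integral \eqref{wigner} with the $L^{2}$-theory of the partial Fourier transform, and justifying the interchange of the $x$- and $z$-integrations, which is clean on Schwartz functions and then propagated by density.
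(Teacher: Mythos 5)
Your argument matches the paper's proof: both identify $\mathcal W(u,v)$ as the partial Fourier transform in $z$ of $\Omega(u,v)$ and invoke Plancherel together with \eqref{newide} to get \eqref{norm}, and both obtain \eqref{complex} directly from the integral formula \eqref{wigner}. The extra care you take in reconciling the pointwise formula with the $L^2$-theory is a sound elaboration of a point the paper treats as immediate, not a different route.
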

\begin{proof}
Note that the function $\mathcal W(u,v)$ is in $L^{2}(\RZ)$ and satisfies
\eqref{norm}
from \eqref{newide} and the definition of $\mathcal W$ as the partial Fourier transform of $\Omega$.
Property \eqref{complex} is immediate and entails that $\mathcal W(u,u)$ is real-valued.
\end{proof}
\begin{remark}\label{rem13}
\textrm
We note also that the real-valued function $\mathcal W (u,u)$ can take negative values,
choosing for instance $u_{1}(x)=x e^{-\pi x^{2}}$ on the real line, we get
\[
\mathcal W (u_{1},u_{1})(x,\xi)=2^{1/2} e^{-2\pi(x^{2}+\xi^{2})}\Bigl(x^{2}+\xi^{2}-\frac1{4\pi}\Bigr).
\]
\end{remark}
\subsubsection{The Weyl quantization}
The main goal of Hermann Weyl in his seminal paper \cite{MR0450450} was to give a simple formula, also providing symplectic covariance, ensuring that  real-valued Hamiltonians $a(x,\xi)$ get quantized by formally self-adjoint operators.  The standard way of dealing with differential operators does not achieve that goal since for instance the standard quantization of the Hamiltonian
$x\xi$ (indeed real-valued) is the operator $xD_{x}$, which is not symmetric 
($D_{x}=\frac1{2i\pi}\frac{\p}{\p x})$; H.~Weyl's choice in that case was
\[
x\xi\ \text{should be quantized by  the operator\quad} \frac{1}{2}(xD_{x}+D_{x}x),
\quad\text{(indeed symmetric),}
\]
and more generally, say for 
$a\in \mathscr S(\RZ), u\in \mathscr S(\R^n)$, the quantization of the Hamiltonian $a(x,\xi)$, denoted by $\opw{a}$, 
should be 
given by the formula
\index{$\opw{a}$}
\begin{equation}
(\opw{a} u)(x)=\iint e^{2i\pi (x-y)\cdot \xi} a\Bigl(\frac{x+y}2, \xi\Bigr) u(y) dy d\xi.
\end{equation}
For $v\in\mathscr S(\R^n)$, we may consider
\begin{multline*}
\poscal{\opw{a} u}{v}_{L^2(\R^n)}
=\iiint a(x,\xi)e^{-2i\pi z\cdot \xi} 
u(x+\frac z2)\bar v(x-\frac z2) dzdxd\xi 
\\
=\iint_{\R^n\times \R^n} a(x,\xi) \mathcal W(u,v)(x,\xi) dx d\xi,
\end{multline*}
and the latter formula allows us to give the following definition.
\begin{definition}
Let $a\in \mathscr S'(\RZ)$. We define the Weyl quantization $\opw{a}$ of the Hamiltonian $a$, by the formula
\begin{equation}\label{weylq}
(\opw{a}u)(x)=\iint e^{2i\pi (x-y)\cdot \xi} a\Bigl(\frac{x+y}2, \xi\Bigr) u(y) dy d\xi,
\end{equation}
to be understood weakly as 
\begin{equation}\label{eza654}
\poscal{\opw{a}u}{\bar v}_{\mathscr S'(\R^{n}), \mathscr S(\R^{n})}=\poscal{a}{\mathcal W(u,v)}_{\mathscr S'(\RZ), \mathscr S(\RZ)}.
\end{equation}
\end{definition}
We note that the sesquilinear  mapping 
\[
\mathscr S(\R^{n})\times \mathscr S(\R^{n})\ni(u,v)\mapsto \mathcal W(u,v) \in \mathscr S(\RZ),
\]
is continuous so that the above bracket of duality $
\poscal{a}{\mathcal W(u,v)}_{\mathscr S'(\R^{2n}), \mathscr S(\R^{2n})}
$
makes sense.
We note as well that a temperate distribution $a\in\mathscr S'(\RZ)$ gets quantized
by a continuous  operator $\opw a$ from 
$\mathscr S(\R^{n})$ 
into
$\mathscr S'(\R^{n})$.
This very general framework  is not really useful since we want to compose our operators $\opw{a}\opw{b}$.
A first step in this direction is to look for sufficient conditions
ensuring that the operator $\opw{a}$ is bounded on $L^{2}(\R^{n})$. 
Moreover, for $a\in \mathscr S'(\RZ)$ and $b$ a \emph{polynomial} in ${\mathbb C}[x,\xi]$,
we have the composition formula,
\begin{align}
\opw{a}\opw{b}&=\opw{a\sharp b},\label{gfcd44}
\\
(a\sharp b)(x,\xi)&= \sum_{k\ge 0}\frac1{(4i\pi)^{k}}\sum_{\val \alpha+\val \beta=k}
\frac{(-1)^{\val \beta}}{\alpha!\beta!}(\p_{\xi}^{\alpha}\p_{x}^{\beta}a)(x,\xi)
(\p_{x}^{\alpha}\p_{\xi}^{\beta}b)(x,\xi),\label{gfcd44+}
\end{align}
which involves here a finite sum.
This follows from (2.1.26) in \cite{MR2599384}
where several generalizations can be found (see in particular in that reference the integral formula (2.1.18) which can be given a meaning for quite general classes of symbols).
We shall use the following notation
\begin{equation}\label{}
 \omega_{k}(a,b)=\frac1{(4i\pi)^{k}}\sum_{\val \alpha+\val \beta=k}
\frac{(-1)^{\val \beta}}{\alpha!\beta!}(\p_{\xi}^{\alpha}\p_{x}^{\beta}a)(x,\xi)
(\p_{x}^{\alpha}\p_{\xi}^{\beta}b)(x,\xi).
\end{equation}
As a consequence of \eqref{gfcd44+}, we get that 
\begin{align}
(a\sharp b)&=\sum_{k\ge 0}\omega_{k}(a,b), \quad \omega_{0}(a,b)=ab, \quad \omega_{1}(a,b)=\frac1{4i\pi}
\poi{a}{b},
\label{gfl227}\\
\poi{a}{b}&=\p_{\xi}a\cdot \p_{x}b-\p_{x}a\p_{\xi}b,
\end{align}
where $\poi{a}{b}$ is called the \emph{Poisson bracket} of $a$ and $b$.
\begin{claim}\label{cla.lin001}
 Let $L$ be a linear form on $\RZ$ and let $N\ge 1$ be an integer. Then we have 
 \begin{equation}\label{for545}
\underbrace{L\sharp \dots \sharp L}_{\text{\rm $N$ factors}}=L^{N}.
\end{equation}
\end{claim}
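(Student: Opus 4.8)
The plan is to prove \eqref{for545} by induction on $N$, the base case $N=1$ being the definition. For the inductive step I would assume $\underbrace{L\sharp\dots\sharp L}_{N\text{ factors}}=L^{N}$ and first observe that, since $L$ is a polynomial, the composition formula \eqref{gfcd44} applies; combined with the injectivity of $a\mapsto \opw a$ on $\mathscr S'(\RZ)$ and the associativity of operator composition, this yields
\[
\opw{\underbrace{L\sharp\dots\sharp L}_{N+1\text{ factors}}}=\opw{L}^{N+1}=\opw{L}^{N}\opw{L}=\opw{L^{N}}\opw{L}=\opw{L^{N}\sharp L},
\]
so that everything reduces to checking the single identity $L^{N}\sharp L=L^{N+1}$.

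For this I would exploit that $L$ has degree $1$, hence $\p_{x}^{\alpha}\p_{\xi}^{\beta}L=0$ as soon as $\val\alpha+\val\beta\ge 2$. In the term $\omega_{k}(L^{N},L)$ appearing in \eqref{gfcd44+} the factor $\p_{x}^{\alpha}\p_{\xi}^{\beta}L$ (with $\val\alpha+\val\beta=k$) therefore vanishes for every $k\ge 2$, so that only $\omega_{0}$ and $\omega_{1}$ contribute and, by \eqref{gfl227},
\[
L^{N}\sharp L=\omega_{0}(L^{N},L)+\omega_{1}(L^{N},L)=L^{N+1}+\frac1{4i\pi}\poi{L^{N}}{L}.
\]
The last step is then to note that $\poi{L^{N}}{L}=0$: writing $\p_{\xi}(L^{N})=NL^{N-1}\p_{\xi}L$ and $\p_{x}(L^{N})=NL^{N-1}\p_{x}L$, with $\p_{\xi}L$ and $\p_{x}L$ constant because $L$ is linear, one gets $\poi{L^{N}}{L}=NL^{N-1}\poi{L}{L}=0$ by antisymmetry of the Poisson bracket (alternatively via the Leibniz rule $\poi{fg}{h}=f\poi{g}{h}+g\poi{f}{h}$). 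Hence $L^{N}\sharp L=L^{N+1}$, which closes the induction.

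I do not anticipate any genuine obstacle here: the computation is short and rests only on the two elementary facts that $L$ annihilates second-order derivatives and that the Poisson bracket of a function with itself is zero. The one point worth recording explicitly — more a matter of care than a difficulty — is that \eqref{gfcd44} together with the injectivity of $\opw{\cdot}$ makes the iterated sharp product of polynomials unambiguous, so that writing $\underbrace{L\sharp\dots\sharp L}_{N\text{ factors}}$ without specifying a bracketing is legitimate.
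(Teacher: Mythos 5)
Your proof is correct and follows essentially the same route as the paper: induction on $N$, reducing to the identity $L^{N}\sharp L=L^{N+1}$, and using the composition formula \eqref{gfcd44+} to see that only $\omega_{0}$ and $\omega_{1}$ survive (because $L$ is linear), with $\omega_{1}(L^{N},L)=\frac1{4i\pi}\poi{L^{N}}{L}=0$. You merely spell out two details the paper leaves implicit — the associativity/well-definedness of the iterated $\sharp$ via the operator side, and the Leibniz-rule computation of $\poi{L^{N}}{L}$ — but the argument is the same.
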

\begin{proof}[Proof of the claim]
 The formula is true for $N=1,2,$ from Formulas \eqref{gfcd44}, \eqref{gfcd44+} since 
 $\poi{L}{L}=0$. Let $N\ge 2$ such that \eqref{for545} holds true.
 Then we calculate
 $$
 \underbrace{L\sharp \dots \sharp L}_{\text{\rm $N$ factors}}\sharp L=L^{N}\sharp L
 =L^{N}L+\frac1{4i\pi}\poi{L^{N}}{L}=L^{N+1},
 $$
 concluding the induction proof.
\end{proof}
The next proposition is Proposition 1.2.2 in \cite{MR4726947}.
\begin{proposition}\label{pro1717}
Let $a$ be a tempered distribution on $\RZ$. Then we have 
 \begin{equation}\label{norm01}
\norm{\opw{a}}_{\mathcal B(L^{2}(\R^{n}))}\le \min\bigl(2^{n}\norm{a}_{L^{1}(\RZ)}, \norm{\hat a}_{L^{1}(\RZ)}\bigr).
\end{equation}
\end{proposition}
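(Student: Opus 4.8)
The plan is to prove the two bounds in \eqref{norm01} separately: the first is essentially a reformulation of \eqref{113uhb}, and the second rests on the elementary fact that the Weyl quantization of a plane wave is a unitary operator. Throughout one may assume $a\in L^1(\RZ)$ (resp. $\hat a\in L^1(\RZ)$), since otherwise the corresponding bound is vacuous.

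\emph{The bound $\norm{\opw a}_{\mathcal B(L^2(\R^n))}\le 2^n\norm a_{L^1(\RZ)}$.} I would start from \eqref{eza654}, which for $u,v\in\mathscr S(\R^n)$ reads $\poscal{\opw a u}{v}_{L^2(\R^n)}=\iint_{\RZ}a(x,\xi)\,\mathcal W(u,v)(x,\xi)\,dx\,d\xi$, and bound the right-hand side by $\norm a_{L^1(\RZ)}\sup_{(x,\xi)}\val{\mathcal W(u,v)(x,\xi)}$. The supremum is $\le 2^n\norm u_{L^2(\R^n)}\norm v_{L^2(\R^n)}$: from \eqref{wigner}, the substitution $w=x+\tfrac z2$ turns the modulus of the defining integral into $2^n\int\val{u(w)}\val{v(2x-w)}\,dw$, which Cauchy--Schwarz controls by $2^n\norm u_{L^2(\R^n)}\norm v_{L^2(\R^n)}$ (this is the content of \eqref{113uhb}). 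Density of $\mathscr S(\R^n)$ in $L^2(\R^n)$ then gives the operator bound.

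\emph{The bound $\norm{\opw a}_{\mathcal B(L^2(\R^n))}\le \norm{\hat a}_{L^1(\RZ)}$.} The key computation is the Weyl symbol calculus for a character. For $Y=(\eta_0,y_0)\in\RZ$ set $e_Y(x,\xi)=e^{2i\pi(x\cdot\eta_0+\xi\cdot y_0)}=e^{2i\pi\poscal{(x,\xi)}{Y}}$; performing the $\xi$-integration in \eqref{weylq} first (it produces a Dirac mass $\delta(x-y+y_0)$), I would obtain
$$\bigl(\opw{e_Y}u\bigr)(x)=e^{2i\pi(x+y_0/2)\cdot\eta_0}\,u(x+y_0),$$
a translation composed with multiplication by a function of modulus one, hence a unitary operator on $L^2(\R^n)$; in particular $\norm{\opw{e_Y}}_{\mathcal B(L^2(\R^n))}=1$. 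Writing $a=\int_{\RZ}\hat a(Y)\,e_Y\,dY$ by Fourier inversion (with the paper's normalisation of the Fourier transform) and testing against $u,v\in\mathscr S(\R^n)$, Fubini --- legitimate because the integrand is dominated by $\val{\hat a(Y)}\,2^n\norm u_{L^2(\R^n)}\norm v_{L^2(\R^n)}$ --- yields $\poscal{\opw a u}{v}_{L^2(\R^n)}=\int_{\RZ}\hat a(Y)\,\poscal{\opw{e_Y}u}{v}_{L^2(\R^n)}\,dY$, whence $\val{\poscal{\opw a u}{v}_{L^2(\R^n)}}\le\norm{\hat a}_{L^1(\RZ)}\norm u_{L^2(\R^n)}\norm v_{L^2(\R^n)}$.

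\emph{Passage to a general tempered distribution.} The computations above are rigorous for $a\in\mathscr S(\RZ)$. For a general tempered $a$ with $a\in L^1(\RZ)$ (resp. $\hat a\in L^1(\RZ)$) I would approximate by $a_j\in\mathscr S(\RZ)$ with $a_j\to a$ in $L^1(\RZ)$ (resp. $\hat a_j\to\hat a$ in $L^1(\RZ)$), observe that $\poscal{\opw{a_j}u}{v}_{L^2(\R^n)}\to\poscal{\opw a u}{v}_{L^2(\R^n)}$ for fixed $u,v\in\mathscr S(\R^n)$, and let $j\to\infty$ in the uniform estimates. The only mildly delicate point of the whole argument is this Fubini step in the plane-wave decomposition for merely $L^1$ data; but since it is carried out inside the scalar pairing against fixed $u,v\in\mathscr S(\R^n)$, where every integrand is dominated by an integrable function, I do not expect any genuine obstacle.
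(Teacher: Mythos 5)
The paper does not actually prove this proposition; it cites Proposition 1.2.2 of \cite{MR4726947}, so there is no in-text proof to compare against. Your argument is correct and follows the standard route one finds in that reference and in the other cited texts: the pointwise bound $\val{\mathcal W(u,v)(x,\xi)}\le 2^{n}\norm{u}_{L^{2}}\norm{v}_{L^{2}}$, equivalently \eqref{113uhb}, gives the $\norm{a}_{L^{1}}$ estimate via \eqref{eza654}; and the $\norm{\hat a}_{L^{1}}$ estimate comes from the plane-wave decomposition of the symbol together with the observation that $\opw{e_{Y}}$ is a unitary phase-space translation --- indeed, with your notation it coincides with $\tau_{(-y_{0},\eta_{0})}$ in the paper's \eqref{phtrans}, so $\norm{\opw{e_{Y}}}_{\mathcal B(L^{2})}=1$.

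One small imprecision worth flagging: the dominating function $\val{\hat a(Y)}\,2^{n}\norm{u}_{L^{2}}\norm{v}_{L^{2}}$ you invoke for the Fubini interchange is constant in the phase-space variable $X$, hence not by itself integrable over $\RZ\times\RZ$, so it does not directly justify swapping the $X$- and $Y$-integrations in $\iint a(X)\mathcal W(u,v)(X)\,dX=\iint\bigl(\int\hat a(Y)e^{2i\pi X\cdot Y}dY\bigr)\mathcal W(u,v)(X)\,dX$. What one should observe instead is that, for $u,v\in\mathscr S(\R^{n})$, $\mathcal W(u,v)\in\mathscr S(\RZ)\subset L^{1}(\RZ)$, so that $\val{\hat a(Y)}\,\val{\mathcal W(u,v)(X)}\in L^{1}(\RZ\times\RZ)$; Fubini then applies and the inner integral is exactly $\poscal{\opw{e_{Y}}u}{v}$ by \eqref{eza654}. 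Since you carry out the plane-wave step only for $a\in\mathscr S(\RZ)$ and then pass to general $a$ by density, this is a matter of bookkeeping rather than a gap. The density argument itself is sound: $L^{1}$ convergence of $a_{j}$, or of $\hat a_{j}$, forces $a_{j}\to a$ in $\mathscr S'(\RZ)$, whence $\poscal{\opw{a_{j}}u}{v}\to\poscal{\opw{a}u}{v}$ for fixed $u,v\in\mathscr S(\R^{n})$, and the uniform bounds survive the limit.
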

The next statement is Claim 1.2.3 in \cite{MR4726947}.
\begin{claim}\label{foot2}
Let $(x,\xi)$ be in $\R^{n}\times \R^{n}$.
We define 
the operator $\sigma_{x,\xi}$ (called \emph{phase symmetry}), on $\mathscr S'(\R^{n})$
by
\begin{equation}\label{phsymm}
(\sigma_{x,\xi} u)(y)=u(2x-y) e^{-4i\pi(x-y)\cdot \xi}.
\end{equation}
Then 
$\sigma_{x,\xi}$
is unitary and self-adjoint on $L^{2}(\R^{n})$.
We have in particular
\begin{equation}\label{sigma0}
(\sigma_{0,0}w)(y)=w(-y).
\end{equation}
The operator $\sigma_{0,0}$ will be denoted by $\sigma_{0}$.
\end{claim}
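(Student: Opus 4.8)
The plan is to prove the three assertions by elementary direct computation, the substantive content being the two $L^2$ statements. Note first that $\sigma_{x,\xi}$ is well defined on $\mathscr S'(\R^n)$ (and preserves $\mathscr S(\R^n)$): as a function of $y$ the factor $e^{-4i\pi(x-y)\cdot\xi}$ equals $e^{-4i\pi x\cdot\xi}e^{4i\pi y\cdot\xi}$, a smooth bounded function with bounded derivatives, so $\sigma_{x,\xi}$ is the composition of an affine change of variables with a multiplication operator of that type. I would then check that $\sigma_{x,\xi}$ is an isometry of $L^2(\R^n)$: since $\val{e^{-4i\pi(x-y)\cdot\xi}}=1$ for every $y$, the substitution $z=2x-y$ gives
$$
\norm{\sigma_{x,\xi}u}_{L^2(\R^n)}^2=\int_{\R^n}\val{u(2x-y)}^2\,dy=\int_{\R^n}\val{u(z)}^2\,dz=\norm{u}_{L^2(\R^n)}^2 .
$$

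Next I would verify that $\sigma_{x,\xi}$ is an involution, $\sigma_{x,\xi}^2=\Id$. Applying \eqref{phsymm} twice, for $u\in\mathscr S'(\R^n)$ and $y\in\R^n$,
$$
(\sigma_{x,\xi}^2u)(y)=(\sigma_{x,\xi}u)(2x-y)\,e^{-4i\pi(x-y)\cdot\xi}=u(y)\,e^{-4i\pi(y-x)\cdot\xi}\,e^{-4i\pi(x-y)\cdot\xi}=u(y),
$$
since $(y-x)\cdot\xi+(x-y)\cdot\xi=0$. Hence $\sigma_{x,\xi}$ is a bijection of $L^2(\R^n)$ onto itself equal to its own inverse; together with the isometry property this already shows $\sigma_{x,\xi}$ is unitary on $L^2(\R^n)$.

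For self-adjointness I would compute, for $u,v\in L^2(\R^n)$,
$$
\poscal{\sigma_{x,\xi}u}{v}_{L^2(\R^n)}=\int_{\R^n}u(2x-y)\,e^{-4i\pi(x-y)\cdot\xi}\,\overline{v(y)}\,dy ,
$$
and perform the substitution $z=2x-y$, under which $x-y$ becomes $z-x$, so $u(2x-y)=u(z)$, $v(y)=v(2x-z)$ and $e^{-4i\pi(x-y)\cdot\xi}=e^{4i\pi(x-z)\cdot\xi}$; this yields
$$
\poscal{\sigma_{x,\xi}u}{v}_{L^2(\R^n)}=\int_{\R^n}u(z)\,\overline{v(2x-z)}\,e^{4i\pi(x-z)\cdot\xi}\,dz=\int_{\R^n}u(z)\,\overline{(\sigma_{x,\xi}v)(z)}\,dz=\poscal{u}{\sigma_{x,\xi}v}_{L^2(\R^n)} ,
$$
so $\sigma_{x,\xi}^{*}=\sigma_{x,\xi}$. (Alternatively, once unitarity and $\sigma_{x,\xi}^2=\Id$ are in hand, $\sigma_{x,\xi}^{*}=\sigma_{x,\xi}^{-1}=\sigma_{x,\xi}$ is immediate.) Finally, setting $x=0$ and $\xi=0$ in \eqref{phsymm} gives $(\sigma_{0,0}w)(y)=w(-y)\,e^{0}=w(-y)$, which is \eqref{sigma0}. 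The argument is routine; the only point that needs a little care is bookkeeping the sign of the exponent under the reflection $y\mapsto 2x-y$, and there is no real obstacle.
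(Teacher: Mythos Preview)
Your proof is correct: the isometry, involution, and self-adjointness computations are all clean, and the alternative deduction $\sigma_{x,\xi}^{*}=\sigma_{x,\xi}^{-1}=\sigma_{x,\xi}$ from unitarity plus $\sigma_{x,\xi}^{2}=\Id$ is the slickest route. The paper itself does not give a proof of this claim---it simply records it as Claim~1.2.3 of \cite{MR4726947}---so your direct verification is exactly the kind of elementary argument one would supply, and there is nothing to compare against.
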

\begin{proposition}\label{pro1716}
Let $a\in \mathscr S'(\RZ)$. The distribution kernel $k_{a}(x,y)$ of the operator $\opw{a}$ is
\begin{equation}\label{reteey}
k_{a}(x,y)=\widehat a^{[2]}(\frac{x+y}2, y-x),
\end{equation}
where $a^{[2]}$ stands for the Fourier transform of $a$ with respect to the second variable.
Let $k\in \mathscr S'(\RZ)$ be the distribution kernel of a continuous operator $A$ from $\mathscr S(\R^n)$ into   $\mathscr S'(\R^n)$. Then the Weyl 
symbol $a$
of $A$ is 
\begin{equation}\label{}
a(x,\xi)= \int e^{-2\pi i t\cdot \xi}
k\bigl(x+\frac t2,x-\frac t2\bigr) dt,
\end{equation}
where the integral sign means  that we take the Fourier transform with respect to $t$
of the distribution $k(x+\frac t2,x-\frac t2)$ on $\RZ$.
\end{proposition}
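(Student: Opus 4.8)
The approach is to read the kernel straight off the weak definition \eqref{eza654} of $\opw{a}$ combined with the explicit formula \eqref{wigner} for the Wigner distribution, once one recognizes that the assignment $a\mapsto\opw{a}$ is a partial Fourier transform precomposed with a volume-preserving linear change of variables on $\RZ$; the second formula of the statement then comes out by inverting both of these operations.

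First I would fix the linear isomorphism $S$ of $\RZ$ given by $S(x,z)=\bigl(x-\tfrac z2,\,x+\tfrac z2\bigr)$, whose inverse is $S^{-1}(x,y)=\bigl(\tfrac{x+y}2,\,y-x\bigr)$ and whose determinant is $1$ (in $n\times n$ blocks $S=\left(\begin{smallmatrix}I_{n}&-\frac12 I_{n}\\ I_{n}&\frac12 I_{n}\end{smallmatrix}\right)$, whose blocks commute, so $\det S=\det\bigl(\tfrac12 I_{n}+\tfrac12 I_{n}\bigr)=1$). Let $\mathcal F_{(2)}$ denote the Fourier transform with respect to the second variable on $\RZ$, normalized exactly as in \eqref{wigner} (integration kernel $e^{-2i\pi z\cdot\xi}$, no prefactor), so that $\mathcal F_{(2)}a=\widehat a^{[2]}$; by the one-dimensional identity $\int\widehat\phi\,\psi=\int\phi\,\widehat\psi$ it is its own transpose for the bilinear $\mathscr S'(\RZ),\mathscr S(\RZ)$ duality. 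Reading \eqref{funcome}, the function $\Omega(u,v)$ is exactly $(\bar v\otimes u)\circ S$, where $(\bar v\otimes u)(p,q)=\bar v(p)u(q)$, so Definition \ref{wigne+} gives
\[
\mathcal W(u,v)=\mathcal F_{(2)}\bigl[(\bar v\otimes u)\circ S\bigr].
\]

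Next I would substitute this into \eqref{eza654}. Writing $k_{a}\in\mathscr S'(\RZ)$ for the Schwartz kernel of $\opw{a}$, so that $\poscal{\opw{a}u}{\bar v}=\poscal{k_{a}}{\bar v\otimes u}$ for $u,v\in\mathscr S(\R^{n})$, and using successively that $\mathcal F_{(2)}$ is self-transpose and that $S$ is unimodular,
\[
\poscal{k_{a}}{\bar v\otimes u}=\poscal{a}{\mathcal F_{(2)}\bigl[(\bar v\otimes u)\circ S\bigr]}=\poscal{\widehat a^{[2]}}{(\bar v\otimes u)\circ S}=\poscal{\widehat a^{[2]}\circ S^{-1}}{\bar v\otimes u}.
\]
Since the linear span of $\{\bar v\otimes u:\ u,v\in\mathscr S(\R^{n})\}$ is dense in $\mathscr S(\RZ)$, this forces $k_{a}=\widehat a^{[2]}\circ S^{-1}$, that is $k_{a}(x,y)=\widehat a^{[2]}\bigl(\tfrac{x+y}2,\,y-x\bigr)$, which is \eqref{reteey}. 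For the converse, if $k$ is the kernel of a continuous operator $A:\mathscr S(\R^{n})\to\mathscr S'(\R^{n})$, the same identity read the other way says $\widehat a^{[2]}=k\circ S$, i.e.\ $\widehat a^{[2]}(x,z)=k\bigl(x-\tfrac z2,\,x+\tfrac z2\bigr)$; applying $\mathcal F_{(2)}^{-1}$ (integration kernel $e^{2i\pi z\cdot\xi}$) and then the substitution $z\mapsto-t$ yields $a(x,\xi)=\int e^{-2i\pi t\cdot\xi}k\bigl(x+\tfrac t2,\,x-\tfrac t2\bigr)\,dt$, the stated formula.

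There is no genuine obstacle here beyond bookkeeping. The only point that really requires care is strict consistency with the Fourier normalization fixed in \eqref{wigner}, which is what makes the sign in $y-x$ (versus $x-y$) and the absence of any power of $2\pi$ come out correctly; one should also check that $\mathcal F_{(2)}$, precomposition by the unimodular $S$, and the tensor-product map $(u,v)\mapsto\bar v\otimes u$ are all continuous on the relevant spaces, so that the manipulations above are legitimate for an arbitrary tempered distribution $a\in\mathscr S'(\RZ)$ and not merely for Schwartz symbols (for those one could alternatively compute $k_{a}(x,y)=\int e^{2i\pi(x-y)\cdot\xi}a(\tfrac{x+y}2,\xi)\,d\xi$ straight from \eqref{weylq} and then conclude by density).
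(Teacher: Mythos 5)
Your proof is correct and follows essentially the same route as the paper: it reads $k_{a}$ off the weak definition \eqref{eza654} by transposing the partial Fourier transform onto $a$ and then performing the unimodular linear change of variables from $(t,z)$ to $(x,y)$, with the converse formula obtained by Fourier inversion in the second variable. The only difference is presentational — you name the change of variables $S$, verify $\det S=1$, and invoke density of tensor products explicitly, whereas the paper carries out the identical manipulation in a single chain of equalities.
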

\begin{proof}
 With $u,v\in \mathscr S(\R^n)$, we have defined $\opw{a}$ via Formula \eqref{eza654} 
 and using  Definition \ref{wigne+}, we get
 \begin{align*}
 \poscal{\opw{a}u}{\bar v}_{\mathscr S'(\R^{n}), \mathscr S(\R^{n})}&=\poscal{a(x, \xi)}{
\widehat{\Omega}^{[2]}(x,\xi)
 }_{\mathscr S'(\RZ), \mathscr S(\RZ)}
\\&=
\poscal{\widehat a^{[2]}(t, z)}{
u(t+\frac z2) \bar v(t-\frac z2) 
 }_{\mathscr S'(\RZ), \mathscr S(\RZ)}
 \\&=
\poscal{\widehat a^{[2]}(\frac{x+y}2, y-x)}{
u(y) \bar v(x) 
 }_{\mathscr S'(\RZ), \mathscr S(\RZ)},
\end{align*}
proving \eqref{reteey}. As a consequence, we find that 
$
k_{a}(x+\frac t2,x-\frac t2)={\widehat a^{[2]}}(x,-t),
$
and by Fourier inversion, this entails
\begin{equation}\label{symker}
a(x,\xi)=\text{Fourier}_{t}\bigl(k_{a}(x+\frac t2,x-\frac t2)\bigr)(\xi)=\int e^{-2\pi i t\cdot \xi}
k_{a}(x+\frac t2,x-\frac t2) dt,
\end{equation}
where the integral sign means  that we perform a Fourier transformation with respect to the variable $t$.
 \end{proof}
\subsubsection{Symplectic covariance of the Weyl calculus}
\begin{theorem}[Symplectic covariance of the Weyl calculus]\label{thm.3216++}
 Let $a$ be in $\mathscr S'(\RZ)$ and let $\chi$ be in $\text{\sl Sp}(2n,\R)$. Then for a metaplectic operator $M$ such that $\Psi(M)=\chi$, we have  
 \index{Segal formula}
 \begin{equation}\label{segal}
M^{*}\ \opw{a} M=\opw{a\circ \chi}.
\end{equation}
For $u,v\in \mathscr S(\R^{n}),$
we have 
\begin{equation}\label{segal+}
\mathcal W\left(M u, Mv\right)=\mathcal W(u,v)\circ \chi^{-1},
\end{equation}
where $\mathcal W$ is the Wigner distribution given in \eqref{wigner}.
\end{theorem}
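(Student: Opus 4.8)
The plan is to reduce the Segal identity \eqref{segal} to the case of the elementary generators of the metaplectic group, propagate it through arbitrary products, and then deduce \eqref{segal+} from \eqref{segal} by duality. First I would invoke Remark \ref{rem.44oorr}: every $M\in\text{\sl Mp}(n)$ is a finite product $M=G_{1}\cdots G_{k}$ of operators of the three types \eqref{hhjj01}, \eqref{hhjj02}, \eqref{hhjj03}, that is, of the form $M_{A,I,0}$, $M_{0,B,0}$ or $M_{0,I,C}$. For each such generator $G$, Proposition \ref{pro.kj77qq} --- formula \eqref{1239}, or equivalently the Segal identities \eqref{segal01}--\eqref{segal04} used to prove it --- gives $G^{*}\opw{a}G=\opw{a\circ\Psi(G)}$, where by \eqref{1270} $\Psi(G)$ is the corresponding $\Xi_{A,I,0}$, $\Xi_{0,B,0}$ or $\Xi_{0,I,C}$. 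This is the genuine analytic input, and it is already available.

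Next I would propagate this through the product. Setting $\chi_{j}=\Psi(G_{j})$ and writing $M^{*}\opw{a}M=G_{k}^{*}\cdots G_{1}^{*}\,\opw{a}\,G_{1}\cdots G_{k}$, one performs the innermost conjugation first: $G_{1}^{*}\opw{a}G_{1}=\opw{a\circ\chi_{1}}$, then $G_{2}^{*}\opw{a\circ\chi_{1}}G_{2}=\opw{(a\circ\chi_{1})\circ\chi_{2}}=\opw{a\circ(\chi_{1}\chi_{2})}$, and inductively, after $k$ steps, $M^{*}\opw{a}M=\opw{a\circ(\chi_{1}\chi_{2}\cdots\chi_{k})}$. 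Since $\Psi$ is a group homomorphism (Theorem \ref{thm.groups}), $\chi_{1}\chi_{2}\cdots\chi_{k}=\Psi(G_{1})\cdots\Psi(G_{k})=\Psi(M)=\chi$, which is \eqref{segal}. The right-hand side is unchanged under $M\mapsto-M$, consistently with $\ker\Psi=\{\pm\Id\}$, so the result does not depend on the chosen lift $M$ of $\chi$; and one checks that \eqref{segal} is compatible with the defining property \eqref{hg54rr} of $\Psi$ because, for a linear form $L_{Y}$ as in \eqref{linear}, the relation $\chi^{*}\sigma\chi=\sigma$ gives exactly $L_{Y}\circ\chi=L_{\chi^{-1}Y}$.

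Finally, for \eqref{segal+}, I would use \eqref{eza654} together with the unitarity of $M$: for $u,v\in\mathscr S(\R^{n})$ and $a\in\mathscr S(\RZ)$,
\[
\poscal{a}{\mathcal W(Mu,Mv)}=\poscal{\opw{a}\,Mu}{\overline{Mv}}=\poscal{M^{*}\opw{a}M\,u}{\bar v}=\poscal{\opw{a\circ\chi}u}{\bar v}=\poscal{a\circ\chi}{\mathcal W(u,v)}.
\]
Because symplectic matrices have determinant $1$ (see the footnote to \eqref{55vhqs}), the change of variable $Y=\chi X$ gives $\poscal{a\circ\chi}{\mathcal W(u,v)}=\poscal{a}{\mathcal W(u,v)\circ\chi^{-1}}$, and since $M$ is an automorphism of $\mathscr S(\R^{n})$ both $\mathcal W(Mu,Mv)$ and $\mathcal W(u,v)\circ\chi^{-1}$ lie in $\mathscr S(\RZ)$, so testing against all $a$ yields \eqref{segal+}. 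The whole argument is bookkeeping once \eqref{1239} is granted; the only points I expect to need care are the order of composition in the induction (so that $a\circ(\chi_{1}\cdots\chi_{k})$ appears, not its reverse) and the consistency of \eqref{segal} with the linear-form definition of $\Psi$ --- there is no substantial obstacle, the analytic heart of the matter having already been dealt with in \eqref{segal01}--\eqref{segal04}.
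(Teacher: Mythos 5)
Your proof is correct and follows exactly the route the paper takes: decompose $M$ into the generators of Definition \ref{def.41fdhh}, apply \eqref{1239} to each factor and use the homomorphism property of $\Psi$ from Theorem \ref{thm.groups} to obtain \eqref{segal}, then derive \eqref{segal+} by duality via \eqref{eza654}. The paper's proof is a terse one-line citation of these same ingredients; you have simply spelled out the induction and the change of variables (using $\det\chi=1$) in full.
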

\begin{proof}
 The first property follows  immediately from \eqref{1239} and 
 Definition \ref{def.41fdhh},
 whe\-reas \eqref{segal+} is a consequence of \eqref{eza654} and \eqref{segal}.
\end{proof}
We note also that for $Y=(y,\eta)\in \RZ$, the symmetry $S_{Y}$ is defined by
$S_{Y}(X)=2Y-X$ and is quantized by the phase symmetry $\sigma_{Y}$ as defined by \eqref{phsymm}
with the formula
\begin{equation}\label{segalsym}
\opw{a\circ S_{Y}}=\sigma_{Y}^{*}\opw{a}\sigma_{Y}=\sigma_{Y}
\opw{a}\sigma_{Y}.
\end{equation}
\index{phase translation}
\index{{~\bf Notations}!$\tau_{y,\eta}$}
Similarly, 
the translation $T_{Y}$ is defined on the phase space by $T_{Y}(X)=X+Y$ and is quantized by the {\it phase translation} $\tau_{Y}$,
\begin{equation}\label{phtrans}
(\tau_{(y,\eta)} u)(x)=u(x-y) e^{2i\pi(x-\frac y2)\cdot\eta},
\end{equation}
and we have
\begin{equation}\label{1224}
\opw{a\circ T_{Y}}=\tau_{Y}^{*}\opw{a}\tau_{Y}=\tau_{-Y}
\opw{a}\tau_{Y}.
\end{equation}
\section{Proofs of the main results}
\subsection{First cases: \texorpdfstring{$\Xi_{12}$}{xi12} invertible}
\begin{proposition}\label{pro.baba}
 Let $\mathcal M_{P,L,Q}\expo{m}$ be a metaplectic operator
as given by Definition \ref{def.lersym}.
Then with the variance defined by \eqref{variance}, we have for all $u\in \mathscr S(\R^{n})$, 
\begin{equation}\label{}
\sqrt{\mathcal V(\mathcal M u)\mathcal V( u)}\ge \frac{\norm{u}_{L^{2}(\R^{n})}^{2}}{4\pi}
\trace\left((L^{*-1} L^{-1})^{1/2}\right)= \frac{\norm{u}_{L^{2}(\R^{n})}^{2}}{4\pi}\trinorm{L^{-1}}_{{\mathtt{S}^{1}}},
\end{equation}
where 
$\trinorm{L^{-1}}_{{\mathtt{S}^{1}}}$ is the trace-norm of $L^{-1}$, that is the sum of the eigenvalues of $\val{L^{-1}}$
(see Section \ref{sec.11hhff} in our Appendix).
\end{proposition}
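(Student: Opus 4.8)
The plan is to combine the symplectic covariance of the Weyl calculus (Theorem \ref{thm.3216++}), the one-dimensional commutator identity \eqref{sharp1}, and the singular value decomposition of $L^{-1}$; the SVD is exactly what converts a sum of one-dimensional Heisenberg bounds into the trace-norm $\trinorm{L^{-1}}_{{\mathtt{S}^{1}}}$.

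First I would set $\mathcal M=\mathcal M_{P,L,Q}\expo{m}$ and $\chi=\Psi(\mathcal M)=\Lambda_{P,L,Q}$, recalling from Lemma \ref{lem.kfd89e} and \eqref{327327} that the upper-left and upper-right $n\times n$ blocks of $\chi$ are $L^{-1}Q$ and $L^{-1}$. Since the operator of multiplication by $x_{j}$ is $\opw{x_{j}}$, Theorem \ref{thm.3216++} gives $\mathcal M^{*}\opw{x_{j}}\mathcal M=\opw{x_{j}\circ\chi}$, and $x_{j}\circ\chi$ is the linear form $(x,\xi)\mapsto\sum_{k}(L^{-1}Q)_{jk}x_{k}+\sum_{k}(L^{-1})_{jk}\xi_{k}$, whose Weyl quantization is the operator $\sum_{k}(L^{-1}Q)_{jk}x_{k}+\sum_{k}(L^{-1})_{jk}D_{k}$ with $D_{k}=(2i\pi)^{-1}\p_{x_{k}}$. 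As $\mathcal M$ is unitary this yields $\norm{x_{j}\mathcal M u}=\Norm{\opw{x_{j}\circ\chi}u}$ for every $u\in\mathscr S(\R^{n})$, hence $\mathcal V(\mathcal M u)=\sum_{j}\Norm{\opw{x_{j}\circ\chi}u}^{2}$.

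The next step is a Heisenberg estimate against a \emph{rotated} family of position operators. Fix any real orthogonal $n\times n$ matrix $R$ and set $y_{j}^{R}=\sum_{l}R_{jl}x_{l}$ (a multiplication operator); since $\sum_{j}R_{jl}R_{jl'}=\delta_{ll'}$, one has $\sum_{j}\norm{y_{j}^{R}u}^{2}=\mathcal V(u)$, i.e. the variance is insensitive to an orthogonal change of the Euclidean coordinates. By Cauchy--Schwarz in $L^{2}(\R^{n})$, then discarding the real bracket coming from the $\sum_{k}(L^{-1}Q)_{jk}x_{k}$ part (a combination of integrals $\int x_{k}x_{l}\val{u}^{2}\,dx$), and invoking \eqref{sharp1} in the form $\im\poscal{D_{k}u}{x_{l}u}=\tfrac1{4\pi}\delta_{kl}\norm{u}^{2}$, one gets
\[
\norm{x_{j}\mathcal M u}\,\norm{y_{j}^{R}u}\ \geq\ \Val{\im\poscal{\opw{x_{j}\circ\chi}u}{y_{j}^{R}u}}\ =\ \frac{\norm{u}^{2}}{4\pi}\,\val{(L^{-1}R^{*})_{jj}}.
\]
Summing over $j$ and using $\bigl(\sum_{j}a_{j}^{2}\bigr)\bigl(\sum_{j}b_{j}^{2}\bigr)\geq\bigl(\sum_{j}a_{j}b_{j}\bigr)^{2}$ with $a_{j}=\norm{x_{j}\mathcal M u}$, $b_{j}=\norm{y_{j}^{R}u}$, this gives, for \emph{every} orthogonal $R$,
\[
\mathcal V(\mathcal M u)\,\mathcal V(u)\ \geq\ \Bigl(\frac{\norm{u}^{2}}{4\pi}\Bigr)^{2}\Bigl(\,\sum_{j}\val{(L^{-1}R^{*})_{jj}}\,\Bigr)^{2}.
\]

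Finally I would optimize over $R$. Writing the singular value decomposition $L^{-1}=U\Sigma W^{*}$ with $U,W$ orthogonal and $\Sigma=\diag(\sigma_{1},\dots,\sigma_{n})$, $\sigma_{j}>0$ (all strictly positive since $\det L\neq0$), one has $\trace\bigl((L^{*-1}L^{-1})^{1/2}\bigr)=\sum_{j}\sigma_{j}=\trinorm{L^{-1}}_{{\mathtt{S}^{1}}}$; and taking $R=UW^{*}$ makes $L^{-1}R^{*}=U\Sigma U^{*}$, whose diagonal entries $\sum_{k}\sigma_{k}U_{jk}^{2}$ are nonnegative and sum to $\trace\Sigma$. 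Hence $\sum_{j}\val{(L^{-1}R^{*})_{jj}}=\trinorm{L^{-1}}_{{\mathtt{S}^{1}}}$, and taking square roots gives the stated inequality. I do not expect a serious obstacle; the one non-mechanical point is precisely this bookkeeping---pairing $x_{j}\mathcal M u$ with the rotated $y_{j}^{R}u$ rather than with $x_{j}u$, then tuning $R$ via the SVD of $L^{-1}$ (pairing directly with $x_{j}u$ only gives the weaker bound $\tfrac{\norm{u}^{2}}{4\pi}\sum_{j}\val{(L^{-1})_{jj}}$). Equivalently, one could first conjugate $\mathcal M$ by the $\mathcal V$-preserving unitaries $v\mapsto\val{\det R}^{1/2}\,v\circ R$, which replace $L^{-1}$ by $R_{1}^{*}L^{-1}R_{2}^{*}$, reduce to the diagonal case $L^{-1}=\Sigma$, and then argue coordinate by coordinate.
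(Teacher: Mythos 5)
Your proof is correct and follows essentially the same route as the paper: use symplectic covariance to convert $\mathcal V(\mathcal M u)$ into $L^{2}$-norms of first-order operators acting on $u$, apply the one-dimensional Heisenberg bound coordinate-wise against a rotated family of position operators, then tune the rotation via the SVD of $L^{-1}$ so that the resulting sum of $\val{(L^{-1}R^{*})_{jj}}$ equals the trace norm. The paper organizes the same idea through Lemma \ref{lem.241+++} (which computes $\Lambda^{-1}$ and the Poisson bracket explicitly) and the polar decomposition of Proposition \ref{pro.54128u}, and it keeps two orthonormal bases $(\zeta_j)$, $(\tau_j)$ rather than fixing one to the canonical basis; these are cosmetic differences and the substance is identical.
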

\begin{proof}[Proof of the Proposition]
 We start with a lemma.
\begin{lemma}\label{lem.241+++}
Let $\mathcal M_{P,L,Q}\expo{m}$ be a metaplectic operator
as given by Definition \ref{def.lersym}.
Let $a, b$ be linear forms on the phase space given by 
\eqref{233rez}.
Then we have,
with 
$
\Lambda_{P,L,Q}
$
defined in \eqref{327327},
\begin{multline}\label{}
2\re\Poscal{ (\zeta\cdot y-z\cdot D_{y})
(\mathcal M_{P,L,Q}\expo{m}u)(y)}{i \mathcal M_{P,L,Q}\expo{m}
\bigl(\tau\cdot x-t \cdot D_{x})u
\bigr)}_{L^{2}(\R^{n})}
\\
=2\re\Poscal{ \opw{a}\mathcal M_{P,L,Q}\expo{m}u}{i \mathcal M_{P,L,Q}\expo{m}\opw{b} u}
\\
=\frac1{2\pi}\Poi{a\circ \Lambda_{P,L,Q}}{b}\norm{u}_{L^{2}(\R^{n})}^{2}
\\
=\frac1{2\pi}
\Bigl[
(\zeta-Pz)\cdot\bigl(
L^{-1}(\tau+Qt)
\bigr)
+Lz\cdot t
\Bigr]
\norm{u}_{L^{2}(\R^{n})}^{2}.
\end{multline}
 \end{lemma}
 \begin{nb}
 We know from Proposition \ref{pro.9159++} that every element in the metaplectic group is the product
of two operators of type $\mathcal M_{P,L,Q}\expo{m}$ as given in Definition
\ref{def.lersym}  .
Of course it is \emph{not} true that all metaplectic operators are of type $\mathcal M_{P,L,Q}\expo{m}$
since the Identity itself has the singular kernel $\delta(x-y)$ which is different from all the imaginary Gaussian functions serving as kernels of the operators $\mathcal M_{P,L,Q}\expo{m}$.
\end{nb}
\begin{proof}[Proof of the lemma]
Let $\mathcal M_{P,L,Q}\expo{m}$ be a metaplectic operator
as given by Definition \ref{def.lersym}.
We would like to calculate for $u\in \mathscr S(\R^{n})$,
$a, b$ real valued linear forms on $\R^{n}_{x}\times \R^{n}_{\xi}$,
with $L^{2}(\R^{n})$ dot-products,
\begin{align}
2\re&\Poscal{ \opw{a}\mathcal M_{P,L,Q}\expo{m}u}{i \mathcal M_{P,L,Q}\expo{m}\opw{b} u}
\notag\\&=2\re\Poscal{ (\mathcal M_{P,L,Q}\expo{m})^{\hskip-1pt*} \opw{a}\mathcal M_{P,L,Q}\expo{m}u}{i\opw{b} u}
\notag\\\text{\scriptsize (from Lemma \ref{lem.kfd89e})}&=
2\re\Poscal{\opw{a\circ \Lambda_{P,L,Q}}u}{i\opw{b} u}
\notag\\
&=\Poscal{\bigl[\opw{a\circ \Lambda_{P,L,Q}}, i\opw{b} \bigr]u}{u}
\notag\\
&=\frac1{2\pi}\Poscal{\OPW{\poi{a\circ \Lambda_{P,L,Q}}{b}} u}{u}.
\label{tar654}
\end{align}
We may assume\footnote{We may keep in mind that the phase space is $\Phi=E\oplus E^{*}$
(here $E$ is the \emph{configuration space}, a real finite dimensional vector space) and that the symplectic form $\sigma$ is bilinear alternate on $\Phi$ with
\begin{equation}\label{233233}
\poscal{\sigma X}{Y}_{\Phi^{*}, \Phi}=[X,Y]=\poscal{\xi}{y}_{E^{*},E}-\poscal{\eta}{x}_{E^{*},E}, \quad X=x\oplus\xi, Y=y\oplus \eta,
\end{equation}
in such a way that  our notation $\xi\cdot y-\eta\cdot x$ is the difference of two brackets of duality.
We have also that the kernel of the operator $\mathcal M_{P,L,Q}\expo{m}$, given by \eqref{322322}
is
$$
e^{-\frac{i\pi n}4} e^{\frac{i\pi m(L)}{2}}{\val{\det L}}^{1/2} e^{i\pi\left\{\poscal{Px}{x}_{E^{*},E}-2\poscal{Lx}{y}_{E^{*},E}+\poscal{Qy}{y}_{E^{*},E}\right\}},
$$
with 
\begin{equation}\label{232232}
P:E\longrightarrow E^{*}, \ P=P^{*}, \qquad Q:E\longrightarrow E^{*}, \ Q=Q^{*},\qquad
L:E\longrightarrow E^{*} \text{ invertible.}
\end{equation}
}
 that, 
with $Y=(y,\eta)$, $Z=(z,\zeta)$, $T=(t,\tau)$, $X=(x,\xi)$, in $\R^{n}\times\R^{n}$,
\begin{equation}\label{233rez}
a(Y)=[Z,Y]=\zeta\cdot y-\eta\cdot z, \qquad b(X)=[T,X]=\tau\cdot x-\xi\cdot t.
\end{equation}
We have 
\begin{equation}\label{234234}
(a\circ \Lambda)(X)=[Z, \Lambda X]=[\Lambda^{-1}Z, X],
\end{equation}
and since $\Lambda$ is a symplectic mapping we have $\Lambda^{*}\sigma\Lambda=\sigma$,
that is from \eqref{327327} and \eqref{232232}
\begin{align*}
\Lambda^{-1}=\sigma^{-1}\Lambda^{*}\sigma&=
\mat22{0}{-I}{I}{0}
\mat22{QL^{*-1}}{QL^{*-1}P-L}{\hs L^{*-1}}{\hs L^{*-1}P}
\mat22{0}{I}{-I}{0}
\\&=
\mat22{0}{-I}{I}{0}
\mat22{L-QL^{*-1}P}{\hs QL^{*-1}}{-L^{*-1}P}{L^{*-1}}
\\&=\mat22{L^{*-1}P}{\hs -L^{*-1}}{L-QL^{*-1}P}{QL^{*-1}},
\end{align*}
and thus \eqref{234234} gives 
\begin{multline}\label{23523k}
(a\circ \Lambda)(X)=\Bigl[\Bigl(L^{*-1}Pz-L^{*-1}\zeta,
(L-QL^{*-1}P)z+QL^{*-1}\zeta
\Bigr), (x,\xi)\Bigr]
\\=\Bigl((L-QL^{*-1}P)z+QL^{*-1}\zeta\Bigr)\cdot x
-\xi\cdot\Bigl(L^{*-1}Pz-L^{*-1}\zeta\Bigr).
\end{multline}
We calculate now the Poisson bracket,
using \eqref{233rez}, \eqref{23523k},
\begin{align}\label{}
\Poi{a\circ \Lambda}{b}&=\bigl(-L^{*-1}Pz+L^{*-1}\zeta\bigr)\cdot \tau
+\bigl((L-QL^{*-1}P)z+QL^{*-1}\zeta\bigr)\cdot t
\\
&=\tau\cdot L^{*-1}(\zeta-Pz)
+\bigl(Lz+QL^{*-1}(\zeta-Pz)\bigr)\cdot t
\notag\\
&=\underbrace{(\zeta-Pz)}_{E^{*}}\cdot\bigl(
\underbrace{L^{-1}\underbrace{(\tau+Qt)}_{E^{*}}}_{E}
\bigr)
+\underbrace{Lz}_{E^{*}}\cdot \underbrace{t}_{E},
\notag
\end{align} 
which does not depend on the variable $X$ as the Poisson bracket of two linear forms in $X$.
We get thus from \eqref{tar654}
\begin{multline}\label{}
2\re\Poscal{ \opw{a}\mathcal M_{P,L,Q}\expo{m}u}{i \mathcal M_{P,L,Q}\expo{m}\opw{b} u}_{L^{2}(\R^{n})}
\\
=\frac1{2\pi}
\Bigl[
(\zeta-Pz)\cdot\bigl(
L^{-1}(\tau+Qt)
\bigr)
+Lz\cdot t
\Bigr]
\norm{u}_{L^{2}(\R^{n})}^{2},
\end{multline}
that is 
\begin{multline}\label{}
2\re\Poscal{ (\zeta\cdot y-z\cdot D_{y})
(\mathcal M_{P,L,Q}\expo{m}u)(y)}{i \mathcal M_{P,L,Q}\expo{m}
\bigl(\tau\cdot x-t \cdot D_{x})u
\bigr)}_{L^{2}(\R^{n})}
\\
=\frac1{2\pi}
\Bigl[
(\zeta-Pz)\cdot\bigl(
L^{-1}(\tau+Qt)
\bigr)
+Lz\cdot t
\Bigr]
\norm{u}_{L^{2}(\R^{n})}^{2},
\end{multline}
concluding the proof of Lemma \ref{lem.241+++}.
\end{proof}
Let us go on with the proof of Proposition \ref{pro.baba}.
 Choosing $z=t=0$ in Lemma \ref{lem.241+++}, we get 
 \begin{equation*}
 2\re\Poscal{ (\zeta\cdot y)
(\mathcal M_{P,L,Q}\expo{m}u)(y)}{i \mathcal M_{P,L,Q}\expo{m}
\bigl((\tau\cdot x)u
\bigr)}_{L^{2}(\R^{n})}
=\frac1{2\pi}(\zeta\cdot L^{-1}\tau)\norm{u}_{L^{2}(\R^{n})}^{2},
\end{equation*}
which implies
that 
\begin{equation}\label{key001}
\norm{ (\zeta\cdot y)
(\mathcal M_{P,L,Q}\expo{m}u)(y)}_{L^{2}(\R^{n})}
\norm{(\tau\cdot x)u}_{L^{2}(\R^{n})}
\ge \frac1{4\pi}\val{(\zeta\cdot L^{-1}\tau)}\norm{u}_{L^{2}(\R^{n})}^{2}.
\end{equation}
\begin{nb}
 We note at this stage of the proof, that Inequality 
 \eqref{key001} is already providing a non-trivial estimate by choosing $\zeta=L^{-1}\tau$: we get 
 \begin{equation*}
 \norm{L^{-1}\tau}\norm{\tau}
\norm{\val y
(\mathcal M_{P,L,Q}\expo{m}u)(y)}_{L^{2}(\R^{n})}
\norm{\val xu}_{L^{2}(\R^{n})}
\ge \frac1{4\pi}\norm{L^{-1}\tau}^{2}\norm{u}_{L^{2}(\R^{n})}^{2},
\end{equation*}
implying for any $\tau$,
\begin{equation*}
\sqrt{\mathcal V\bigl(\mathcal M_{P,L,Q}\expo{m}u\bigr)\mathcal V\bigl(u\bigr)}
\ge \frac1{4\pi}\frac{\norm{L^{-1}\tau}}{\norm{\tau}}\norm{u}_{L^{2}(\R^{n})}^{2},
\end{equation*}
and thus
\begin{equation}\label{key002}
\mu\bigl(\mathcal M_{P,L,Q}\expo{m}\bigr)
\ge \frac1{4\pi}\norm{L^{-1}}=\frac1{4\pi}\trinorm{L^{-1}}_{{\mathtt{S}^{\io}}}.
\end{equation}
However, we are looking for a  better estimate, and we want to  replace $\trinorm{L^{-1}}_{{\mathtt{S}^{\io}}}$ in the right-hand-side (just the operator-norm) by  the much larger
Schatten norm of index 1, 
$\trinorm{L^{-1}}_{{\mathtt{S}^{1}}}$
(see \eqref{3143ss} in our Appendix);
as a matter of fact,
we shall see below that this will give the equality 
$$
\mu\bigl(\mathcal M_{P,L,Q}\expo{m}\bigr)=\frac1{4\pi}\trinorm{L^{-1}}_{{\mathtt{S}^{1}}}.
$$
\end{nb}
Let  $(\zeta_{j})_{1\le j\le n}$,
$(\tau_{j})_{1\le j\le n}$ be two orthonormal basis of $\R^{n}$ for the canonical Euclidean structure on $\R^{n}$.
We find from \eqref{key001} that for each $j\in \llbracket 1,n \rrbracket$,
\begin{equation*}
\left(\int {\val{\zeta_{j}\cdot y}}^{2}{\val{(\mathcal M u)(y)}}^{2} dy\right)^{1/2}
\left(\int \val{\tau_{j}\cdot x}^{2}{\val{u(x)}}^{2} dx\right)^{1/2}\ge \frac{\norm{u}_{L^{2}(\R^{n})}^{2}}{4\pi}
\val{\poscal{\zeta_{j}}{L^{-1}\tau_{j}}},
\end{equation*}
so that
\begin{multline*}
\sum_{1\le j\le n}\left(\int {\val{\zeta_{j}\cdot y}}^{2}{\val{(\mathcal M u)(y)}}^{2} dy\right)^{1/2}
\left(\int {\val{\tau_{j}\cdot x}}^{2}{\val{u(x)}}^{2} dx\right)^{1/2}
\\\ge \frac{\norm{u}_{L^{2}(\R^{n})}^{2}}{4\pi}\sum_{1\le j\le n}
\val{\poscal{\zeta_{j}}{L^{-1}\tau_{j}}},
\end{multline*}
which implies that 
\begin{multline*}
\left(\sum_{1\le j\le n}\int {\val{\zeta_{j}\cdot y}}^{2}{\val{(\mathcal M u)(y)}}^{2} dy\right)^{1/2}
\left(\sum_{1\le j\le n}\int {\val{\tau_{j}\cdot x}}^{2}{\val{u(x)}}^{2} dx\right)^{1/2}
\\\ge \frac{\norm{u}_{L^{2}(\R^{n})}^{2}}{4\pi}\sum_{1\le j\le n}
\val{\poscal{\zeta_{j}}{L^{-1}\tau_{j}}},
\end{multline*}
i.e.
\begin{multline*}
\left(\int {\val{y}}^{2}{\val{(\mathcal M u)(y)}}^{2} dy\right)^{1/2}
\left(\int {\val{x}}^{2}{\val{u(x)}}^{2} dx\right)^{1/2}
\ge \frac{\norm{u}_{L^{2}(\R^{n})}^{2}}{4\pi}\sum_{1\le j\le n}
\val{\poscal{\zeta_{j}}{L^{-1}\tau_{j}}},
\end{multline*}
so that
\begin{equation}\label{pre456}
\mathcal V(\mathcal M u)\mathcal V( u)\ge \frac{\norm{u}_{L^{2}(\R^{n})}^{4}}{4^{2}\pi^{2}}
\left(\sum_{1\le j\le n}
\val{\poscal{\zeta_{j}}{L^{-1}\tau_{j}}}\right)^{2}.
\end{equation}
Thanks to our Proposition
\ref{pro.54128u} in our Appendix
we have  with a unitary matrix $W$ the identity 
$$
\poscal{\zeta_{j}}{L^{-1}\tau_{j}}=\poscal{\zeta_{j}}{ W\val{L^{-1}}\tau_{j}}
=\poscal{W^{*}\zeta_{j}}{ \val{L^{-1}}\tau_{j}}.
$$
The set $(W^{*}\zeta_{j})_{1\le j\le n}$ is also an orthonormal basis and we can choose $\tau_{j}=W^{*}\zeta_{j}$
so that we get 
\[
\sum_{1\le j\le n}
\val{\poscal{\zeta_{j}}{L^{-1}\tau_{j}}}=\sum_{1\le j\le n}\poscal{\val{L^{-1}}\tau_{j}}{\tau_{j}}=\trace\val{L^{-1}},
\]
and this, together with \eqref{pre456}, gives the result of Proposition \ref{pro.baba}.
\end{proof}
Let us now take a look at  a simple example.
\begin{ex}\label{ex.21jhgg}
 Let $n\ge 2$ and let $\mathcal F_{1}$ be the partial Fourier transformation
 with respect to the variable $x_{1}$.
 Then the operator $e^{-i\pi/4}\mathcal F_{1}$ belongs to the metaplectic group $\textsl{Mp}(n)$.
 Indeed, let us consider, with obvious notations, the symmetric $n\times n$ matrix,
 \begin{equation}\label{}
C=\mat22{-1}{0_{1,n-1}}{0_{n-1,1}}{I_{n-1}}, \quad e^{i\pi CD^{2}}= e^{-i\pi D_{1}^{2}} e^{i\pi D_{2}^{2}}, \quad D_{2}=(D_{x_{2}}, \dots, D_{x_{n}}).
\end{equation}
Using the notations \eqref{4.genmet}, \eqref{linkme}, 
we have 
\begin{equation}\label{}
e^{i\pi CD^{2}}=M_{0,I_{n}, C}\expo{0}=\mathcal M_{0,-I_{n},C}^{\scriptscriptstyle\{n\}}\mathcal F e^{-i\pi n/4}\in \textsl{Mp}(n),
\end{equation}
where $\mathcal F$ stands for the (full)
Fourier transform.
We calculate the kernel $\kappa$
of the operator 
$
e^{i\pi CD^{2}}e^{-i\pi/4}\mathcal F_{1}
$
and we find that
\begin{align*}\label{}
\kappa(x,y)&=\int e^{2i\pi(x-z)\cdot \eta} e^{i\pi(-\eta_{1}^{2}+\eta_{2}^{2})}
d\eta_{1}d\eta_{2}
e^{-2i\pi z_{1}y_{1}} \delta(z_{2}-y_{2})
dz_{1}dz_{2}
e^{-i\pi/4}
\\&=
\int e^{2i\pi[(x_{1}-z_{1})\eta_{1}+(x_{2}-y_{2})\cdot \eta_{2}]} e^{i\pi(-\eta_{1}^{2}+\eta_{2}^{2})}
d\eta_{1}d\eta_{2}
e^{-2i\pi z_{1}y_{1}} 
dz_{1}
e^{-i\pi/4}
\\
&=
e^{i\pi[-x_{2}^{2}-2(x_{1}y_{1}-x_{2}y_{2})-y_{1}^{2}-y_{2}^{2}]} e^{\frac{i\pi(n-2)}{4}}.
\end{align*}
With
\begin{equation*}
L=\mat22{1}{0}{0}{-I_{n-1}}, \quad \text{we have } (\det L)^{1/2}=\pm e^{\frac{i\pi(n-1)}{2}},
\end{equation*}
so that 
$$
e^{i\pi CD^{2}}e^{-i\pi/4}\mathcal F_{1}=\mathcal M_{P,L,Q}\expo{m}, \quad P=
\mat22{0}{0}{0}{-I_{n-1}}, Q=-I_{n},
$$
since 
$$
e^{-\frac{i\pi n}4} e^{\frac{i\pi m}2}=e^{\frac{i\pi(n-2)}{4}}\Longleftrightarrow m-\frac n2=\frac n2 -1,\mod 4\Longleftrightarrow m=n-1,\mod 4.
$$
Eventually we have $e^{i\pi(n-1)}=\sign(\det L)$
$$
e^{i\pi CD^{2}}e^{-i\pi/4}\mathcal F_{1}=\mathcal M_{P,L,Q}\expo{m_{L}},\quad m_{L}\in m(L), 
$$
proving that 
$e^{-i\pi/4}\mathcal F_{1}$ belongs to $\textsl{Mp}(n)$, although it is not\footnote{Indeed,
the kernel of $e^{-i\pi/4}\mathcal F_{1}$ is 
$e^{-i\pi/4}
e^{-2i\pi x_{1}y_{1}}\delta_{0,n-1}(x_{2}-y_{2}),
$
which is supported in a vector subspace with dimension $n+1$ in $\R^{2n}$,
whereas the kernel of $\mathcal M_{P,L,Q}\expo{m(L)}$ is the Gaussian function
$$
e^{-\frac{i\pi n}4} e^{\frac{i\pi m(L)}2} e^{i\pi(Px^{2}-2L x\cdot y+Qy^{2})},
$$
whose support is the whole $\RZ$.
} of the  type 
$\mathcal M_{P,L,Q}\expo{m}$.
We check now,
\begin{align*}
&\frac1{2\pi}\norm{u}_{L^{2}(\R^{n})}^{2}=
2\re\poscal{(D_{1}u)(x_{1},x_{2})}{ix_{1} u(x_{1},x_{2})}_{L^{2}(\R^{n})}
\\&=2\re\poscal{\xi_{1}e^{-i\pi/4}\mathcal F_{1}u}{ie^{-i\pi/4}\mathcal F_{1}(x_{1} u)}_{L^{2}(\R^{n})}
\\
&\le2\left(\iint {\val{\xi_{1}}}^{2}\Val{(e^{-i\pi/4}\mathcal F_{1}u)(\xi_{1},x_{2})}^{2}d\xi_{1}dx_{2}\right)^{1/2}
\left(\iint {\val{x_{1}}}^{2}\Val{u(x_{1},x_{2})}^{2}dx_{1}dx_{2}\right)^{1/2}
\\
&\le 2\mathcal V\bigl(e^{-i\pi/4}\mathcal F_{1} u\bigr)^{1/2}\mathcal V (u)^{1/2},
\end{align*}
so that, according to the notation \eqref{mesure++},
\begin{equation}\label{}
\mu(e^{-i\pi/4}\mathcal F_{1})\ge \frac1{4\pi}.
\end{equation}
\end{ex}
\begin{lemma}
 Let $n$ be an integer $\ge 1$ and let $n_{1}, n_{2}$ integers such that $n_{1}\ge 1$ and $n=n_{1}+n_{2}$.
 We define $\mathcal F_{n_{1}}$ as the Fourier transformation with respect to $n_{1}$ variables amongst 
 $\{x_{1},\dots, x_{n}\}$. Then we have, according to the notation \eqref{mesure++}, 
 \begin{equation}\label{2316}
\mu(e^{-i\pi n_{1}/4}\mathcal F_{n_{1}})= \frac{n_{1}}{4\pi}.
\end{equation}
\end{lemma}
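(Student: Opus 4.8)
The plan is to establish the two inequalities $\mu\bigl(e^{-i\pi n_{1}/4}\mathcal F_{n_{1}}\bigr)\ge \frac{n_{1}}{4\pi}$ and $\mu\bigl(e^{-i\pi n_{1}/4}\mathcal F_{n_{1}}\bigr)\le \frac{n_{1}}{4\pi}$ separately. Without loss of generality assume $\mathcal F_{n_{1}}$ acts on the first $n_{1}$ variables; write $x=(x',x'')\in\R^{n_{1}}\times\R^{n_{2}}$, so that $\mathcal V(f)=\int_{\R^{n}}\bigl(\val{x'}^{2}+\val{x''}^{2}\bigr)\val{f(x)}^{2}dx$, and set $\mathcal M=e^{-i\pi n_{1}/4}\mathcal F_{n_{1}}$. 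Since $\mathcal M$ is unitary on $L^{2}(\R^{n})$ and maps $\mathscr S(\R^{n})$ into itself, $\mu(\mathcal M)$ is well defined by \eqref{mesure++} (note that, as in Example \ref{ex.21jhgg}, $\mathcal M$ actually belongs to $\text{\sl Mp}(n)$, although the argument below does not use this).

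For the lower bound I would take $u\in\mathscr S(\R^{n})$ with $\norm{u}_{L^{2}(\R^{n})}=1$. For each $j\in\{1,\dots,n_{1}\}$ the elementary Fourier identity in the variable $x_{j}$ gives $\xi_{j}\mathcal M u=e^{-i\pi n_{1}/4}\mathcal F_{n_{1}}(D_{x_{j}}u)$, hence $\norm{\xi_{j}\mathcal M u}_{L^{2}(\R^{n})}=\norm{D_{x_{j}}u}_{L^{2}(\R^{n})}$ because $\mathcal F_{n_{1}}$ is unitary. Discarding the nonnegative contributions of $\val{x''}^{2}$ in $\mathcal V(\mathcal M u)$ and of $\val{x''}^{2}$ together with the components $x_{k}$, $k>n_{1}$, in $\mathcal V(u)$, one obtains $\mathcal V(\mathcal M u)\ge \sum_{j=1}^{n_{1}}\norm{D_{x_{j}}u}_{L^{2}(\R^{n})}^{2}$ and $\mathcal V(u)\ge \sum_{j=1}^{n_{1}}\norm{x_{j}u}_{L^{2}(\R^{n})}^{2}$. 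Freezing all variables but $x_{j}$ and applying the one-dimensional Heisenberg inequality \eqref{flat02} (cf. Section \ref{sec.ugtf88}) yields $\norm{D_{x_{j}}u}_{L^{2}(\R^{n})}\norm{x_{j}u}_{L^{2}(\R^{n})}\ge \frac1{4\pi}$ for every $j$, so by Cauchy--Schwarz
\begin{multline*}
\sqrt{\mathcal V(\mathcal M u)\mathcal V(u)}\ge\Bigl(\sum_{j=1}^{n_{1}}\norm{D_{x_{j}}u}_{L^{2}(\R^{n})}^{2}\Bigr)^{1/2}\Bigl(\sum_{j=1}^{n_{1}}\norm{x_{j}u}_{L^{2}(\R^{n})}^{2}\Bigr)^{1/2}\\
\ge\sum_{j=1}^{n_{1}}\norm{D_{x_{j}}u}_{L^{2}(\R^{n})}\norm{x_{j}u}_{L^{2}(\R^{n})}\ge\frac{n_{1}}{4\pi},
\end{multline*}
which is the desired lower bound, hence $\mu(\mathcal M)\ge \frac{n_{1}}{4\pi}$.

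For the upper bound I would produce a minimizing sequence. Let $\phi(x')=2^{n_{1}/4}e^{-\pi\val{x'}^{2}}$ be the $L^{2}$-normalized Gaussian on $\R^{n_{1}}$ that is extremal for the Heisenberg inequality, so that $\int_{\R^{n_{1}}}\val{x'}^{2}\val{\phi(x')}^{2}dx'=\int_{\R^{n_{1}}}\val{\xi'}^{2}\val{\hat\phi(\xi')}^{2}d\xi'=\frac{n_{1}}{4\pi}$ (the sharpness of this value being recalled after \eqref{first1} and in Section \ref{sec.ugtf88}), and fix $\psi\in\mathscr S(\R^{n_{2}})$ with $\norm{\psi}_{L^{2}(\R^{n_{2}})}=1$. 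For $\varepsilon>0$ set $u_{\varepsilon}(x',x'')=\phi(x')\varepsilon^{-n_{2}/2}\psi(x''/\varepsilon)\in\mathscr S(\R^{n})$, so that $\norm{u_{\varepsilon}}_{L^{2}(\R^{n})}=1$. Since $\mathcal M$ acts only on the $x'$-variables we have $(\mathcal M u_{\varepsilon})(\xi',x'')=e^{-i\pi n_{1}/4}\hat\phi(\xi')\varepsilon^{-n_{2}/2}\psi(x''/\varepsilon)$, and a direct computation exploiting the product structure shows that both $\mathcal V(u_{\varepsilon})$ and $\mathcal V(\mathcal M u_{\varepsilon})$ equal $\frac{n_{1}}{4\pi}+\varepsilon^{2}\int_{\R^{n_{2}}}\val{x''}^{2}\val{\psi(x'')}^{2}dx''$. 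Letting $\varepsilon\to0$ gives $\sqrt{\mathcal V(\mathcal M u_{\varepsilon})\mathcal V(u_{\varepsilon})}\to\frac{n_{1}}{4\pi}$, hence $\mu(\mathcal M)\le\frac{n_{1}}{4\pi}$; together with the lower bound this proves \eqref{2316}.

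There is no serious obstacle here; the three points deserving attention are: the reduction $\norm{\xi_{j}\mathcal M u}_{L^{2}(\R^{n})}=\norm{D_{x_{j}}u}_{L^{2}(\R^{n})}$, which makes the $n_{2}$ passive variables harmless in the lower estimate; the bookkeeping of the two one-variable contributions to the variances of $u_{\varepsilon}$ and $\mathcal M u_{\varepsilon}$; and the use of the extremality of the Gaussian for the $n_{1}$-dimensional Heisenberg inequality, which is exactly what makes the upper and lower bounds coincide. One may also remark that \eqref{2316} is the special case of Theorem \ref{thm.21poii} in which $\Xi_{12}=\diag(I_{n_{1}},0_{n_{2}})$, so that $\trinorm{\Xi_{12}}_{{\mathtt{S}^{1}}}=n_{1}$, and that for $n_{1}=1$ it recovers the estimate of Example \ref{ex.21jhgg}.
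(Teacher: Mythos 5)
Your proof is correct and follows essentially the same route as the paper: for the lower bound, transfer $D_{x_j}$ to $\xi_j$ through $\mathcal F_{n_1}$, combine the $n_1$ one-variable Heisenberg inequalities via Cauchy--Schwarz, and discard the passive $x''$-contributions to the variances; for the upper bound, take a Gaussian $\phi$ in the $x'$-block tensored with a concentrating bump $\psi(\cdot/\varepsilon)\varepsilon^{-n_2/2}$ in $x''$, exactly as in the paper. The only cosmetic difference is the order of the two steps in the lower-bound chain (you bound the variances first and then apply Cauchy--Schwarz to $\sum_j a_j b_j$, while the paper sums the commutator identity first and then applies Cauchy--Schwarz), which is immaterial.
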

\begin{proof}
 The proof of the inequality $\mu(e^{-i\pi n_{1}/4}\mathcal F_{n_{1}})\ge  \frac{n_{1}}{4\pi}$ follows the same lines as  in Example \ref{ex.21jhgg}. We do have with  the notation
 $$
 x=(\underbrace{x_{1}, \dots, x_{n_{1}}}_{x_{[n_{1}]}}, 
 \underbrace{x_{n_{1}+1}, \dots x_{n_{1}+n_{2}}}_{x_{[n_{2}]}}),
 $$
 \begin{align*}
&\frac{n_{1}}{2\pi}\norm{u}_{L^{2}(\R^{n})}^{2}=\sum_{1\le j\le n_{1}}
2\re\poscal{(D_{j}u)(x)}{ix_{j} u(x)}_{L^{2}(\R^{n})}
\\&=\sum_{1\le j\le n_{1}}2\re\poscal{\xi_{j}\mathcal F_{n_{1}}u}{i\mathcal F_{n_{1}}(x_{j} u)}_{L^{2}(\R^{n})}
\\
&\le2\sum_{1\le j\le n_{1}}
\left(\iint {\val{\xi_{j}}}^{2}\Val{(\mathcal F_{n_{1}}u)(\xi_{[n_{1}]},x_{[n_{2}]})}^{2}d\xi_{[n_{1}]}dx_{[n_{2}]}\right)^{1/2}
\left(\int {\val{x_{j}}}^{2}\Val{u(x)}^{2}dx\right)^{1/2}
\\
&\le 2
\left[\sum_{1\le j\le n_{1}}\iint{\val{\xi_{j}}}^{2}\Val{(\mathcal F_{n_{1}}u)(\xi_{[n_{1}]},x_{[n_{2}]})}^{2}d\xi_{[n_{1}]}dx_{[n_{2}]}\right]^{1/2}
\hskip-15pt\times\left[\sum_{1\le j\le n_{1}}\int{\val{x_{j}}}^{2}\Val{u(x)}^{2}dx\right]^{1/2}
\\
&\le 2\mathcal V\bigl(\mathcal F_{n_{1}} u\bigr)^{1/2}\mathcal V (u)^{1/2},
\end{align*}
proving the sought inequality. To prove the equality \eqref{2316}, we choose
$u$ as a tensor product 
$$
u_{\varepsilon}(x)= 2^{n_{1}/4}e^{-\pi\norm{x_{[n_{1}]}}^{2}} \phi\bigl(\frac{x_{[n_{2}]}}{\varepsilon}\bigr)\varepsilon^{-n_{2}/2},\quad \phi\in \mathscr S(\R^{n_{2}}),
\norm{\phi}_{L^{2}(\R^{n_{2}})}=1, \quad \varepsilon>0.
$$
We check easily that
\begin{align*}
\mathcal V(\mathcal F_{n_{1}}u_{\varepsilon})&=\int {\val{\xi_{[n_{1}]}}}^{2}
2^{n_{1}/2} e^{-2\pi\norm{\xi_{[n_{1}]}}^{2}}d\xi_{[n_{1}]}
+O(\varepsilon^{2}), 
\\
\mathcal V(u_{\varepsilon})
&=\int {\val{x_{[n_{1}]}}}^{2}
2^{n_{1}/2} e^{-2\pi\norm{x_{[n_{1}]}}^{2}}dx_{[n_{1}]}
+O(\varepsilon^{2}),
\end{align*}
and we already know  from \eqref{oi54ff} that
$$
\int {\val{\xi_{[n_{1}]}}}^{2}
2^{n_{1}/2} e^{-2\pi\norm{\xi_{n_{1}}}^{2}}d\xi_{[n_{1}]}\int {\val{x_{[n_{1}]}}}^{2}
2^{n_{1}/2} e^{-2\pi\norm{x_{n_{1}}}^{2}}dx_{[n_{1}]}=\frac{n_{1}^{2}}{2^{4}\pi^{2}},
$$
proving the lemma.
 \end{proof}
 \subsection{Using the symplectic covariance}
\begin{theorem}\label{lem.44kkqs} Let $\mathcal M$ be an element of the metaplectic group and let $\Xi=\Psi(\mathcal M),$
where $\Psi$ is the homomorphism defined in  \eqref{hqma28}.
Then for $a, b$ real-valued linear forms on $\R^{n}\times \R^{n}$,
we have,
for all $u\in \mathscr S(\R^{n})$, 
 \begin{align}\label{2419++}
2\re&\Poscal{ \opw{a}\mathcal Mu}{i \mathcal M\opw{b} u}_{L^{2}(\R^{n})}
=\frac{\poi{a\circ \Xi}{b}}{2\pi}\norm{u}^{2}_{L^{2}(\R^{n})}.
\end{align}
\end{theorem}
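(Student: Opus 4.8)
The plan is to repeat the chain of equalities \eqref{tar654} from the proof of Lemma \ref{lem.241+++}, replacing the explicit computation of $(\mathcal M_{P,L,Q}\expo{m})^{*}\opw{a}\mathcal M_{P,L,Q}\expo{m}$ by the general symplectic covariance of the Weyl calculus. Since every element $\mathcal M$ of the metaplectic group (Definition \ref{def.41fdhh}) is unitary on $L^{2}(\R^{n})$, we may first move $\mathcal M$ to the left slot:
$$
2\re\Poscal{\opw{a}\mathcal M u}{i\mathcal M\opw{b}u}_{L^{2}(\R^{n})}
=2\re\Poscal{\mathcal M^{*}\opw{a}\mathcal M u}{i\opw{b}u}_{L^{2}(\R^{n})}.
$$
By Theorem \ref{thm.3216++} applied with $\chi=\Xi=\Psi(\mathcal M)$ we have $\mathcal M^{*}\opw{a}\mathcal M=\opw{a\circ\Xi}$, and $a\circ\Xi$ is again a real-valued linear form on $\R^{n}\times\R^{n}$, since $a$ is linear and $\Xi$ is a real linear map. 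In particular $\opw{a\circ\Xi}$ and $\opw{b}$ are formally self-adjoint operators sending $\mathscr S(\R^{n})$ into itself, so all the brackets below are well defined.

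Next I would invoke the elementary identity, valid for formally self-adjoint operators $A,B$ on $\mathscr S(\R^{n})$ and $u\in\mathscr S(\R^{n})$,
$$
2\re\Poscal{Au}{iBu}_{L^{2}(\R^{n})}=i\Poscal{[A,B]u}{u}_{L^{2}(\R^{n})},
$$
with $A=\opw{a\circ\Xi}$ and $B=\opw{b}$, and then compute the commutator. Because $a\circ\Xi$ and $b$ are polynomials of degree $1$, all their derivatives of order $\ge2$ vanish, so in the composition formula \eqref{gfcd44}--\eqref{gfcd44+} only the terms $\omega_{0}$ and $\omega_{1}$ survive; hence
$$
\opw{a\circ\Xi}\opw{b}=\opw{(a\circ\Xi)\,b+\frac1{4i\pi}\poi{a\circ\Xi}{b}},\qquad
\opw{b}\opw{a\circ\Xi}=\opw{(a\circ\Xi)\,b-\frac1{4i\pi}\poi{a\circ\Xi}{b}},
$$
using $\poi{b}{a\circ\Xi}=-\poi{a\circ\Xi}{b}$. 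Since the Poisson bracket of two linear forms is a constant, subtraction gives $[\opw{a\circ\Xi},\opw{b}]=\frac1{2i\pi}\poi{a\circ\Xi}{b}\,\Id_{L^{2}(\R^{n})}$.

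Combining the two displays yields
$$
2\re\Poscal{\opw{a}\mathcal M u}{i\mathcal M\opw{b}u}_{L^{2}(\R^{n})}
=i\cdot\frac1{2i\pi}\poi{a\circ\Xi}{b}\,\norm{u}_{L^{2}(\R^{n})}^{2}
=\frac{\poi{a\circ\Xi}{b}}{2\pi}\,\norm{u}_{L^{2}(\R^{n})}^{2},
$$
which is exactly \eqref{2419++}. I do not expect a genuine obstacle: the proof is a direct generalization of the computation already performed in Lemma \ref{lem.241+++}, and the only points deserving attention are that $a\circ\Xi$ remains a real-valued linear (hence polynomial) symbol, so that the self-adjointness used in the commutator identity and the truncation of the composition formula to $\omega_{0}+\omega_{1}$ are both legitimate. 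One could equally well replace the appeal to Theorem \ref{thm.3216++} by Lemma \ref{lem.54dftz}: writing the linear form $a$ as $L_{Y}$ for a suitable $Y\in\R^{2n}$, one has $L_{Y}\circ\Xi=L_{\Xi^{-1}Y}$ by symplecticity of $\Xi$, so that $\mathcal M^{*}\opw{L_{Y}}\mathcal M=\opw{L_{\Xi^{-1}Y}}=\opw{L_{Y}\circ\Xi}$, and the argument proceeds verbatim.
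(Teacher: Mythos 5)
Your proof is correct and follows essentially the same route as the paper, which says the argument ``is obvious and follows indeed the same lines as the proof of \eqref{tar654} above''; you correctly identify that for a general metaplectic $\mathcal M$ the key substitute for Lemma \ref{lem.kfd89e} is the symplectic covariance of Theorem \ref{thm.3216++} (or equivalently Lemma \ref{lem.54dftz}), after which the self-adjointness/commutator manipulation and the truncation of \eqref{gfcd44+} to $\omega_{0}+\omega_{1}$ proceed exactly as in \eqref{tar654}.
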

\begin{nb}
 The Poisson bracket $\poi{a\circ \Xi}{b}$ is a constant scalar function since 
 $a\circ \Xi$ and $b$ are both linear forms on $\RZ$.
\end{nb}
\begin{proof}
 The proof is obvious and follows indeed the same lines as the proof of \eqref{tar654} above.
 \end{proof}
 \begin{corollary}
 Let $\mathcal M$, $\Xi$ be as in Theorem \ref{lem.44kkqs} and let $b$ be 
 the linear form on $\RZ$ given by $b(X)=[T,X]$ for some $T\in \RZ$. Then there exists a real-valued linear form $a$ on $\RZ$ such that,
 for all $u\in \mathscr S(\R^{n})$, 
 \begin{equation}\label{2420++}
\norm{\opw{a}\mathcal Mu}_{L^{2}(\R^{n})}\norm{\opw{b}u}_{L^{2}(\R^{n})}\ge\frac{\norm{\Xi T}^{2}}{4\pi}
\norm{u}^{2}_{L^{2}(\R^{n})}.
\end{equation}
We may choose $a(Y)=[Y,\sigma \Xi T]$.
\end{corollary}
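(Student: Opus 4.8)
The plan is to obtain \eqref{2420++} as an immediate consequence of Theorem \ref{lem.44kkqs} together with a Cauchy--Schwarz inequality and the unitarity of $\mathcal M$; the only real content is to verify that the indicated linear form $a(Y)=[Y,\sigma\Xi T]$ is precisely the one that makes the Poisson bracket on the right-hand side of \eqref{2419++} equal to $\norm{\Xi T}^{2}$. First I would record two elementary identities. Since $\sigma^{*}=-\sigma$ and $\sigma^{2}=-I_{2n}$, we have $\sigma^{*}\sigma=I_{2n}$; and for two linear forms on $\RZ$ written as $\ell_{V}(X)=\poscal{V}{X}$ with the ordinary Euclidean inner product, a one-line computation from the formula \eqref{gfl227} for the Poisson bracket shows that $\poi{\ell_{V}}{\ell_{W}}=[V,W]=\poscal{\sigma V}{W}$, a constant scalar.

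Now take $a(Y)=[Y,\sigma\Xi T]=\poscal{\sigma Y}{\sigma\Xi T}=\poscal{Y}{\Xi T}$, which is a real-valued linear form on $\RZ$ since $\Xi T$ is a real vector, so that Theorem \ref{lem.44kkqs} applies to the pair $(a,b)$. Then $(a\circ\Xi)(X)=\poscal{\Xi X}{\Xi T}=\poscal{\Xi^{*}\Xi T}{X}$, while $b(X)=[T,X]=\poscal{\sigma T}{X}$, whence, using the two identities above,
\[
\poi{a\circ\Xi}{b}=[\Xi^{*}\Xi T,\sigma T]=\poscal{\sigma\Xi^{*}\Xi T}{\sigma T}=\poscal{\Xi^{*}\Xi T}{T}=\norm{\Xi T}^{2}\ge 0 .
\]
Plugging this into \eqref{2419++} gives $2\re\Poscal{\opw{a}\mathcal Mu}{i\mathcal M\opw{b}u}=\dfrac{\norm{\Xi T}^{2}}{2\pi}\norm{u}_{L^{2}(\R^{n})}^{2}$.

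To finish, I would bound the left-hand side from above by Cauchy--Schwarz, $2\re\Poscal{\opw{a}\mathcal Mu}{i\mathcal M\opw{b}u}\le 2\norm{\opw{a}\mathcal Mu}_{L^{2}(\R^{n})}\norm{i\mathcal M\opw{b}u}_{L^{2}(\R^{n})}$, and use that $\mathcal M$ is unitary on $L^{2}(\R^{n})$ to replace $\norm{i\mathcal M\opw{b}u}_{L^{2}(\R^{n})}$ by $\norm{\opw{b}u}_{L^{2}(\R^{n})}$; combining with the previous display yields exactly \eqref{2420++}. There is no genuine obstacle here beyond the symplectic bookkeeping: the one point that must be checked carefully is that the Poisson bracket comes out \emph{non-negative} and equal to $\norm{\Xi T}^{2}$ on the nose, which is precisely what singles out the normalization $a(Y)=[Y,\sigma\Xi T]$ and explains why no replacement of $a$ by $-a$ is required.
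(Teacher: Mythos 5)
Your proof is correct and takes essentially the same approach as the paper: both verify that $a(Y)=[Y,\sigma\Xi T]$ makes the Poisson bracket $\poi{a\circ\Xi}{b}$ equal to $\norm{\Xi T}^{2}$, and then conclude from Theorem \ref{lem.44kkqs} by Cauchy--Schwarz and the unitarity of $\mathcal M$. The only difference is cosmetic: you compute the bracket through the Euclidean pairing and $\sigma^{*}\sigma=I$, while the paper writes $a(Y)=[Z,Y]$ with $Z=-\sigma\Xi T$ and leans on the symplectic invariance $[\Xi^{-1}Z,T]=[Z,\Xi T]$.
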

\begin{nb}
Assuming that $b$ is not the zero linear form,
 with the linear form $\tilde a$ defined by
 \begin{equation}\label{}
 \tilde a(Y)=\frac{[Y,\sigma \Xi T]}{\norm{\Xi T}^{2}},
\end{equation}
we get 
\begin{equation}\label{242099}
\norm{\opw{\tilde a}\mathcal Mu}_{L^{2}(\R^{n})}\norm{\opw{b}u}_{L^{2}(\R^{n})}\ge\frac{1}{4\pi}
\norm{u}^{2}_{L^{2}(\R^{n})},
\end{equation}
a somewhat general uncertainty principle for metaplectic transformations.
\end{nb}
\begin{proof}[Proof of the corollary]
  Let $Z\in \RZ$ and let $a$ be the linear form given by $a(Y)=[Z,Y]$.
We have seen that $(a\circ \Xi)(X)=[\Xi^{-1}Z, X]$.
  According to \eqref{2419++} and the Cauchy-Schwarz inequality, it is enough to find $a$ such that 
  $
 \val {\poi{a\circ \Xi}{b}}=\norm{\Xi T}^{2},
  $
 that is to find $Z\in \RZ$ such that
  \begin{equation}\label{00666}
  [\Xi^{-1} Z, T]=\norm{\Xi T}^{2}.
\end{equation}
We choose $Z=-\sigma \Xi T$,
and  this gives
$
[\Xi^{-1} Z, T]=[ Z, \Xi T]=\poscal{\sigma Z}{\Xi T}=\norm{\Xi T}^{2},
$
proving the corollary.
\end{proof}
\begin{proof}[Proof of Theorem \ref{thm.21poii}]
{\bf We assume first that $\Xi_{12}$ is invertible.}
The result  follows almost immediately from Proposition \ref{pro.baba}:
indeed,
using
\eqref{327327},
\eqref{4.eqsyma} and \eqref{4.eqsym'}, we choose
$$
L=\Xi_{12}^{-1}, \quad P=\Xi_{22}\Xi_{12}^{-1}, \quad Q=\Xi_{12}^{-1}\Xi_{11},
$$
and using the fact that $\Xi$ is a symplectic matrix, which is expressed by the equalities
\begin{align}\label{}
&\bigl(\Xi_{11}^{*}\Xi_{21}\bigr)^{*}=\Xi_{11}^{*}\Xi_{21},\label{symp01}
\\
&\bigl(\Xi_{12}^{*}\Xi_{22}\bigr)^{*}=\Xi_{12}^{*}\Xi_{22},\label{symp02}
\\
&\Xi_{11}^{*}\Xi_{22}-\Xi_{21}^{*}\Xi_{12}=I_{n},\label{symp03}
\end{align}
as well as\footnote{Note that the three conditions \eqref{symp01}, \eqref{symp02}, \eqref{symp03} are equivalent to $\Xi\in \textsl{Sp}(2n,\R)$, which is equivalent to 
$\Xi^{*-1}\in \textsl{Sp}(2n,\R)$,
itself equivalent to 
\eqref{symp04}, \eqref{symp05}, \eqref{symp06}.
As a result we can freely use these six consequences of the assumption $\Xi$ symplectic.} 
\begin{align}\label{}
&\bigl(\Xi_{11}\Xi_{12}^{*}\bigr)^{*}=\Xi_{11}\Xi_{12}^{*},\label{symp04}
\\
&\bigl(\Xi_{21}\Xi_{22}^{*}\bigr)^{*}=\Xi_{21}\Xi_{22}^{*},\label{symp05}
\\
&\Xi_{11}\Xi_{22}^{*}-\Xi_{12}\Xi_{21}^{*}=I_{n},\label{symp06}
\end{align}
we get that $P,Q$ are symmetric since
$$
P^{*}-P=\Xi_{12}^{*-1}\Xi_{22}^{*}-\Xi_{22}\Xi_{12}^{-1}=\Xi_{12}^{*-1}
\underbrace{\bigl(
\Xi_{22}^{*}\Xi_{12}-\Xi_{12}^{*}\Xi_{22}
\bigr)
}_{=0\text{ from \eqref{symp02}}}\Xi_{12}^{-1},
$$
$$
Q^{*}-Q=\Xi_{11}^{*}\Xi_{12}^{*-1}-\Xi_{12}^{-1}\Xi_{11}=\Xi_{12}^{-1}
\underbrace{\bigl(
\Xi_{12}\Xi_{11}^{*}-\Xi_{11}\Xi_{12}^{*}
\bigr)}_{=0\text{ from \eqref{symp04}}}
\Xi_{12}^{*-1},
$$
and we have moreover 
\begin{align*}
&PL^{-1}Q-L^{*}-\Xi_{21}=\Xi_{22}\Xi_{12}^{-1}\Xi_{12}\Xi_{12}^{-1}\Xi_{11}-\Xi_{12}^{*-1}-\Xi_{21}
\\
&=\Xi_{22}\Xi_{12}^{-1}\Xi_{11}-\Xi_{12}^{*-1}-\Xi_{21}
\\&\hskip-9pt \underbrace{=}_{\eqref{symp06}}\Xi_{22}\Xi_{12}^{-1}\Xi_{11}-\Xi_{12}^{*-1}-\bigl(\Xi_{22}\Xi_{11}^{*}-I_{n}\bigr)\Xi_{12}^{*-1}
\\
&=\Xi_{22}\Xi_{12}^{-1}\Xi_{11}-\Xi_{12}^{*-1}-\Xi_{22}\Xi_{11}^{*}\Xi_{12}^{*-1}+\Xi_{12}^{*-1}
\\
&=\Xi_{22}\bigl(\Xi_{12}^{-1}\Xi_{11}-\Xi_{11}^{*}\Xi_{12}^{*-1}\bigr)
\\
&=\Xi_{22}\Xi_{12}^{-1}
\underbrace{\bigl(\Xi_{11}\Xi_{12}^{*}-\Xi_{12}\Xi_{11}^{*}\bigr)\Xi_{12}^{*-1}}_{=0 \text{ from }\eqref{symp04}},
\end{align*}
and $PL^{-1}=\Xi_{22}\Xi_{12}^{-1}\Xi_{12}=\Xi_{22}$,
so that 
$\Xi=\Lambda_{P,L,Q}$ and we may apply Proposition \ref{pro.baba}
to get the inequality
\begin{equation}\label{ine001}
\mu(\mathcal M)\ge \frac1{4\pi}\trinorm{\Xi_{12}}_{\texttt{S}^{1}}.
\end{equation}
{\bf We would like now to prove  that the above inequality can be improved into an equality, while we still assume the invertibility of $\Xi_{12}$.}
To start with, we could 
take,
with $\lambda>0$, 
$$
v_{\lambda}(x)=\phi(x\lambda) \lambda^{n/2}, \quad\phi\in \mathscr S(\R^{n}), \quad
\norm{\phi}_{L^{2}(\R^{n})}=1=\norm{v_{\lambda}}_{L^{2}(\R^{n})}.
$$
To handle 
$
\mathcal M_{P,L,Q}, 
$
it is better to define
$
w_{\lambda, Q}(x)=v_{\lambda}(x) e^{-i\pi Qx^{2}},
$
which has $L^{2}$ norm 1 and is such that 
\begin{multline*}
\bigl(\mathcal M_{P,L,Q} w_{\lambda,Q})(x)=e^{-\frac{i\pi n}4+\frac{i\pi m}2} e^{i\pi P x^{2}}
{\val{\det L}}^{1/2}
\int e^{-2i\pi Lx\cdot y} v_{\lambda}(y) dy
\\
=e^{-\frac{i\pi n}4+\frac{i\pi m}2} e^{i\pi P x^{2}}
{\val{\det L}}^{1/2}(\mathcal F v_{\lambda})(Lx)
\\=e^{-\frac{i\pi n}4+\frac{i\pi m}2} e^{i\pi P x^{2}}
{\val{\det L}}^{1/2}\widehat \phi (Lx\lambda^{-1})\lambda^{-n/2},
\end{multline*}
entailing that
\begin{gather}
\mathcal V\bigl(\mathcal M_{P,L,Q} w_{\lambda,Q}\bigr)=\val{\det L}\int 
\val x^{2}
{\val{\widehat \phi (Lx\lambda^{-1})}}^{2} \lambda^{-n}dx=
\lambda^{2}\int 
{\val{L^{-1}\xi}}^{2}
{\val{\widehat \phi (\xi)}}^{2} d\xi
\label{est001}
\\
\mathcal V\bigl(w_{\lambda,Q}\bigr)=\lambda^{-2}\int {\val{x}}^{2}{\val{\phi(x)}}^{2} dx.
\label{est002}
\end{gather}
We obtain thus with $\Xi_{12}=L^{-1}$,
\begin{multline}\label{87iuff}
\sqrt{\mathcal V\bigl(\mathcal M_{P,L,Q} w_{\lambda,Q}\bigr)\mathcal V\bigl(w_{\lambda,Q}\bigr)}
\\=\left(\int
{\val{\Xi_{12} \xi}}^{2}
{\val{\widehat \phi (\xi)}}^{2} 
 d\xi\right)^{1/2}
\left(\int
{\val{x}}^{2}
{\val{\phi (x)}}^{2} dx\right)^{1/2}
\\
\le \frac12\Bigl\langle
\OPW{{\val{\Xi_{12} \xi}}^{2}+{\val x}^{2}}\phi, \phi\Bigr\rangle_{L^{2}(\R^{n})}.
\end{multline}
Using the polar decomposition of
Proposition \ref{pro.54128u}
in our Appendix,
 we have 
$$
\Xi_{12}=U\val{\Xi_{12}}, \quad\text{where $U$ is a unitary matrix},
$$
so that 
\begin{equation}\label{fd55qq}
\OPW{{\val{\Xi_{12} \xi}}^{2}+{\val x}^{2}}=\OPW{\bigl\vert
\val{\Xi_{12}} \xi
\bigr\vert^{2}+{\val x}^{2}},
\end{equation}
and the symplectic covariance of the Weyl calculus (see Theorem \ref{thm.3216++}) 
shows that 
\eqref{fd55qq} is unitarily equivalent to the operator
\begin{equation}\label{}
\OPW{\bigl\vert
\val{\Xi_{12}} B^{*}\xi
\bigr\vert^{2}+{\val {B^{-1}x}}^{2}},\quad\text{where $B$ belongs to $\textsl{Gl}(n,\R)$}.
\end{equation}
We may choose 
\begin{equation}\label{}
B={\val{\Xi_{12}}}^{-1/2}=B^{*},
\end{equation}
and we get that \eqref{fd55qq} is unitarily equivalent to the operator
\begin{multline}\label{tr88ss}
\OPW{\bigl\vert
{\val{\Xi_{12}}}^{1/2}\xi
\bigr\vert^{2}+\bigl\vert
{\val{\Xi_{12}}}^{1/2}x
\bigr\vert^{2}}
=\OPW{
\poscal{\val{\Xi_{12}} \xi}{\xi}+\poscal{\val{\Xi_{12}} x}{x}
}
\\
=\OPW{\sum_{1\le j\le n}
\lambda_{j}\bigl(\val{\Xi_{12}}\bigr)(x_{j}^{2}+\xi_{j}^{2})},
\end{multline}
where 
$
\{\lambda_{j}\bigl(\val{\Xi_{12}}\bigr)\}_{1\le j\le n}
$
are the eigenvalues of $\val{\Xi_{12}}$,
that is the singular values of $\Xi_{12}$.
A consequence of the inequality \eqref{87iuff} and the identity \eqref{tr88ss}   is that 
\begin{multline}\label{9854hh}
\mu(\mathcal M_{P,L,Q})
\le \inf_{\substack{\phi\in \mathscr S(\R^{n})\\
\norm{\phi}_{L^{2}(\R^{n})}=1}}
\sqrt{\mathcal V\bigl(\mathcal M_{P,L,Q} w_{\lambda,Q}\bigr)\mathcal V\bigl(w_{\lambda,Q}\bigr)}
\\\le \frac12
\inf\Bigl[
\OPW{\sum_{1\le j\le n}
\lambda_{j}\bigl(\val{\Xi_{12}}\bigr)(x_{j}^{2}+\xi_{j}^{2})}
\Bigr].
\end{multline}
On the  other hand, its is easy to calculate the right-hand-side of \eqref{9854hh},
which is
\begin{equation}\label{}
\frac1{4\pi}\sum_{1\le j\le n}
\lambda_{j}\bigl(\val{\Xi_{12}}\bigr)=\frac1{4\pi}\trinorm{\Xi_{12}}_{{\mathtt{S}^{1}}},
\end{equation}
proving that 
\begin{equation}\label{}
\mu(\mathcal M_{P,L,Q})
\le \frac1{4\pi}\trinorm{\Xi_{12}}_{{\mathtt{S}^{1}}},
\end{equation}
and eventually with \eqref{ine001}
the sought equality in  of  Theorem \ref{thm.21poii} in the case where $\Xi_{12}$ is invertible.
We are thus left with proving Theorem \ref{thm.21poii} in the case where $\Xi_{12}$ is a singular matrix.
Example \ref{exa.444} shows that $\Xi_{12}$ could be singular
and, even worse, it could happen that the four blocks $\{\Xi_{jk}\}_{1\le j,k\le 2}$ in the decomposition of $\Xi$
in Theorem  \ref{thm.21poii} are all singular $n\times n$ matrices when $n\ge 2$.
\begin{remark}\label{rem.54jupp}
 Before tackling the general singular case, it is interesting to check first  that we have $\mu(\mathcal M)=0$ when $\Xi_{12}=0$: indeed in that case
 we find that, thanks to \eqref{symp03},
 $$
 \Xi_{11}^{*}\Xi_{22}=I_{n},
 $$
 and thus $\Xi_{11}$ is invertible proving that 
 $$
 \Xi=\Xi_{A,B,C}=
{\arraycolsep=8pt     
\begin{pmatrix*}[l]
{B^{-1}}&{-B^{-1}C}\\
{AB^{-1}}&{B^*-AB^{-1}C}
\end{pmatrix*}}
=
\mat22{I_{n}}{0}{A}{I_{n}}
\mat22{B^{-1}}{0}{0}{B^*}
\mat22{I_{n}}{-C}{0}{I_{n}},
 $$
 where $B$ is invertible and $A,C$ symmetric. The condition $\Xi_{12}=0$ forces $C=0$
 and we find
 $$
 \Xi=\mat22{I_{n}}{0}{A}{I_{n}}
\mat22{B^{-1}}{0}{0}{B^*},
$$
and $M=M_{A,B,0}^{\scriptscriptstyle\{m\}}$ with 
\begin{multline*}
(M_{A,B,0}^{\scriptscriptstyle\{m\}}v)(x)=e^{\frac{i\pi m}2}
{\val{\det B}}^{1/2}
e^{i\pi Ax^{2}}
\int_{\R^{n}}e^{i\pi (2Bx\cdot \eta)}\hat v(\eta) d\eta
\\
=e^{i\pi Ax^{2}}(\det B)^{1/2} v(Bx).
\end{multline*}
In particular we have 
\begin{multline*}
\mathcal V(Mu)\mathcal V(u)=
\iint \val{\det B}{\val{x}}^{2}{\val{v(Bx)}}^{2}
{\val{y}}^{2}{\val{v(y)}}^{2}dxdy
\\
=
\iint {\val{B^{-1}x}}^{2}{\val{v(x)}}^{2}
{\val{y}}^{2}{\val{v(y)}}^{2}dxdy\le \norm{B^{-1}}^{2}
\mathcal V(u)\mathcal V(u).
\end{multline*}
We already know that 
$$
\inf_{\substack{u\in \mathscr S(\R^{n})\\
\norm{u}_{L^{2}(\R^{n})}=1}}\mathcal V(u)=0,
$$
and this gives $\mu(M)=0$.
\end{remark}
\subsection{The singular case}
{We need now to tackle the case where $\Xi_{12}$ is a singular matrix with positive rank. We may assume that $n\ge 2$,
since the singular case in one dimension is treated by Remark \ref{rem.54jupp}.}
According to Theorem \ref{lem.44kkqs},
we find that,
choosing
$$
a(y,\eta)=\zeta\cdot y, \quad b(x,\xi)=\tau\cdot x,
$$
we have with a ``vertical'' $Z=(0,\zeta)$
$$
\Xi X=(\Xi_{11}x+\Xi_{12}\xi,\Xi_{21}x+\Xi_{22}\xi), \quad 
a(\Xi X)=[Z, \Xi X]=\zeta\cdot(\Xi_{11}x+\Xi_{12}\xi),
$$
and thus, with a unitary operator $\mathcal U$ (thanks to Proposition \ref{pro.54128u} in our Appendix),
$$
\poi{a\circ \Xi}{b}=\poscal{\Xi_{12}^{*}\zeta}{\tau}=\poscal{\zeta}{\mathcal U\val{\Xi_{12}}\tau}.
$$
We get then 
$$\sum_{1\le j\le n}
\norm{\opw{\zeta_{j}\cdot y}\mathcal M v}\norm{\opw{\tau_{j}\cdot x} v}\ge \frac1{4\pi}\norm{v}^{2}
\sum_{1\le j\le n}\val{\poscal{\mathcal U^{*}\zeta_{j}}{\val{\Xi_{12}}\tau_{j}}}.
$$
Let $(\tau_{j})_{1\le j\le n}$ be an orthonormal basis of $\R^{n}$.
We choose $\zeta_{j}=\mathcal U\tau_{j}$ 
and we obtain
that $(\zeta_{j})_{1\le j\le n}$ is also an orthonormal basis.
Moreover, we have 
\begin{multline*}
\frac1{4\pi}\sum_{1\le j\le n}\val{\poscal{\tau_{j}}{\val {\Xi_{12}}\tau_{j}}}\norm{v}^{2}
\\=
\frac1{4\pi}\sum_{1\le j\le n}\val{\poscal{\mathcal U^{*}\mathcal U\tau_{j}}{\val {\Xi_{12}}\tau_{j}}}\norm{v}^{2}
=\frac1{4\pi}\sum_{1\le j\le n}\val{\poscal{U^{*}\zeta_{j}}{\val {\Xi_{12}}\tau_{j}}}\norm{v}^{2}
\\
\le \left(\sum_{1\le j\le n}\norm{{\zeta_{j}\cdot y}\mathcal M v}^{2}\right)^{1/2}
 \left(\sum_{1\le j\le n}\norm{{\tau_{j}\cdot x} v}^{2}\right)^{1/2}
 \le \mathcal V(\mathcal M v)^{1/2}\mathcal V( v)^{1/2},
\end{multline*}
proving indeed that, even in the case where $\Xi_{12}$ is a singular matrix, we have 
\begin{equation}\label{ine002}
\mu(\mathcal M)\ge\frac1{4\pi} \trinorm{\Xi_{12}}_{{\mathtt{S}^{1}}}.
\end{equation}
{\bf We are now left with proving that the reverse inequality of \eqref{ine002} holds true,
when $\Xi_{12}$ is a singular matrix with positive rank.
}
It is helpful to begin with a simple particular case.
\begin{lemma}
 Let $n\ge 2$ be an integer, let $r\in \llbracket 1, n-1\rrbracket$ and let $C$ be a real symmetric $n\times n$ matrix with rank $r$.
 Then with $M=e^{i\pi \poscal{C D}{D}}$, we have 
 $$
 \mu(M)=\frac1{4\pi}\sum_{1\le j\le n}\val{\lambda_{j}(C)},
 $$
 where $\{\lambda_{j}(C)\}_{1\le j\le n}$ are the eigenvalues of $C$ (listed with their multiplicities).
\end{lemma}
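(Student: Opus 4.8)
The lower bound costs nothing. By \eqref{mety03} we have $M=\mett{0}{I_{n}}{C}{0}$, and \eqref{1270} gives $\Psi(M)=\Xi_{0,I_{n},C}=\mat22{I_{n}}{-C}{0}{I_{n}}$, so that $\Xi_{12}=-C$ and $\trinorm{\Xi_{12}}_{\mathtt{S}^{1}}=\trinorm{C}_{\mathtt{S}^{1}}=\sum_{1\le j\le n}\val{\lambda_{j}(C)}$; the inequality \eqref{ine002}, valid for any metaplectic operator, then yields $\mu(M)\ge\frac1{4\pi}\sum_{j}\val{\lambda_{j}(C)}$. The whole matter is therefore the matching \emph{upper} bound, which I would prove by constructing nearly optimal test functions.

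First I would reduce to the case where $C$ is diagonal. By the spectral theorem there is an orthogonal matrix $O$ with $OCO^{*}=\operatorname{diag}(\lambda_{1},\dots,\lambda_{r},0,\dots,0)$, where $\lambda_{1},\dots,\lambda_{r}$ are the nonzero eigenvalues of $C$. The unitary operator $\mathcal N_{O}v=v(O\,\cdot)$, which is metaplectic (cf. \eqref{mety02}), satisfies $\mathcal V(\mathcal N_{O}v)=\mathcal V(v)$ because $O$ is orthogonal, hence $\mu(\mathcal N_{O}^{*}M\mathcal N_{O})=\mu(M)$; and a one-line computation on the Fourier side gives $\mathcal N_{O}^{*}e^{i\pi\poscal{CD}{D}}\mathcal N_{O}=e^{i\pi\poscal{(OCO^{*})D}{D}}$. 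So we may assume $C=\operatorname{diag}(\lambda_{1},\dots,\lambda_{r},0,\dots,0)$, in which case $M=\bigotimes_{j=1}^{n}e^{i\pi\lambda_{j}D_{x_{j}}^{2}}$ acts as a tensor product. Testing then on tensor products $u=\bigotimes_{j=1}^{n}u_{j}$ with each $u_{j}$ normalized in $L^{2}(\R)$, and using that every factor $e^{i\pi\lambda_{j}D_{x_{j}}^{2}}$ is unitary, one gets $\mathcal V(Mu)=\sum_{j}\mathcal V\bigl(e^{i\pi\lambda_{j}D^{2}}u_{j}\bigr)$ and $\mathcal V(u)=\sum_{j}\mathcal V(u_{j})$, so everything is reduced to one variable. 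For the dilated Gaussian $g_{t}(x)=t^{1/2}2^{1/4}e^{-\pi t^{2}x^{2}}$ (with $\Norm{g_{t}}_{L^{2}(\R)}=1$), conjugating $e^{i\pi\lambda D^{2}}$ by the dilation and applying the symplectic covariance of the Weyl calculus (Theorem \ref{thm.3216++}) to $\opw{x^{2}}$ and the shear $\mat22{1}{-\mu}{0}{1}$ — or, just as quickly, a direct Gaussian integral — gives
\[ \mathcal V\bigl(e^{i\pi\lambda D^{2}}g_{t}\bigr)=\frac{t^{-2}+\lambda^{2}t^{2}}{4\pi},\qquad \mathcal V(g_{t})=\frac1{4\pi t^{2}}. \]

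Finally I would optimize the scales. Set $\Lambda=\sum_{1\le j\le r}\val{\lambda_{j}}$ and, given $c>0$ and $s>0$, take $u_{j}=g_{t_{j}}$ with $t_{j}^{2}=\Lambda/(c\val{\lambda_{j}})$ for $1\le j\le r$ and $u_{j}=g_{s}$ for $r<j\le n$. Then $\sum_{j\le r}\mathcal V(g_{t_{j}})=\frac{c}{4\pi}$ and $\sum_{j\le r}\mathcal V\bigl(e^{i\pi\lambda_{j}D^{2}}g_{t_{j}}\bigr)=\frac1{4\pi}\bigl(c+\Lambda^{2}/c\bigr)$, so that
\[ \mathcal V(Mu)\,\mathcal V(u)=\frac1{16\pi^{2}}\Bigl(c+\frac{\Lambda^{2}}{c}+\frac{n-r}{s^{2}}\Bigr)\Bigl(c+\frac{n-r}{s^{2}}\Bigr). \]
Letting $s\to+\infty$ and then $c\to0^{+}$ sends the right-hand side to $\Lambda^{2}/(16\pi^{2})$, whence $\mu(M)\le\frac{\Lambda}{4\pi}=\frac1{4\pi}\sum_{j}\val{\lambda_{j}(C)}$, and combined with the lower bound this proves the lemma.

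The main obstacle, beyond bookkeeping, is precisely this optimization step. The naive equal-scale choice $t_{j}\equiv t$ only delivers the Cauchy–Schwarz-weaker value $\frac{r}{16\pi^{2}}\sum_{j}\lambda_{j}^{2}$; one must instead scale the $j$th Gaussian like $\val{\lambda_{j}}^{-1/2}$, use the zero modes merely to absorb mass (as in \eqref{213213}), and only then let the common parameter $c$ tend to $0$. The reduction to diagonal $C$ via the $\mathcal V$-invariance of orthogonal substitutions, and the explicit one-dimensional variance $\mathcal V(e^{i\pi\lambda D^{2}}g_{t})$, are the two auxiliary ingredients that make the argument clean.
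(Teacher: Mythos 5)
Your proof is correct; the lower bound and the reduction to diagonal $C$ via orthogonal conjugation are exactly as in the paper, but the upper bound is argued differently. The paper splits the variables into the ``good'' block $y=(x_{1},\dots,x_{r})$ and the ``zero'' block $z$, tests on $u(y,z)=v(y)\,\phi(z/\varepsilon)\,\varepsilon^{-(n-r)/2}$, sends $\varepsilon\to0_{+}$ to decouple the two blocks, and then simply quotes the already-proved case of Theorem~\ref{thm.21poii} with $\Xi_{12}$ invertible in dimension~$r$ (which itself rests on the polar decomposition and the spectrum of the harmonic oscillator $\OPW{\poscal{\val{\Xi_{12}}\xi}{\xi}+\poscal{\val{\Xi_{12}}x}{x}}$). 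You instead bypass the invertible case entirely: after diagonalizing, you observe that $M$ and the test functions factor as tensor products, reduce to one variable, compute $\mathcal V\bigl(e^{i\pi\lambda D^{2}}g_{t}\bigr)=(\lambda^{2}t^{2}+t^{-2})/(4\pi)$ explicitly, and then choose the scale $t_{j}\sim\val{\lambda_{j}}^{-1/2}$ so that the trace norm appears after the common parameter $c$ tends to $0$. The zero modes are handled in the same spirit as the paper's $\varepsilon$-concentration, just written as letting the last $(n-r)$ Gaussian widths $s\to+\infty$. What you gain is a self-contained, elementary derivation (no appeal to the spectral analysis underlying the invertible case); what the paper gains is a cleaner logical reduction that exposes the role of the non-degenerate block. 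Your closing remark on why the equal-scale choice only yields $\bigl(r\sum_{j}\lambda_{j}^{2}\bigr)^{1/2}/(4\pi)$ — which is weaker than $\trinorm{C}_{\mathtt{S}^{1}}/(4\pi)$ by Cauchy--Schwarz — correctly identifies the crux of the optimization.
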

\begin{proof}
 We note first of all that the eigenvalues of $\sqrt{C^{*}C}=\val C$ are 
 $\{\val{\lambda_{j}(C)}\}_{1\le j\le n}$ so that 
 $
 \trinorm{C}_{\texttt{S}^{1}}=\sum_{1\le j\le n}\val{\lambda_{j}(C)}.
 $
 As a consequence, from \eqref{ine002}, we get the inequality 
 \begin{equation}\label{45kkjj}
  \mu(M)\ge \frac1{4\pi}\sum_{1\le j\le n}\val{\lambda_{j}(C)},
\end{equation}
since $e^{i\pi \poscal{C D}{D}}$ is a metaplectic transformation such that 
 $$
 \Psi(e^{i\pi \poscal{C D}{D}})=\mat22{I_{n}}{-C}{0}{I_{n}}.
 $$ 
 Since $C$ is real symmetric, there exists an orthogonal matrix $\Omega$ such that 
 $$
 C=\tr{\Omega}\Delta \Omega,\quad \text{$\Delta$ diagonal 
 $(\mu_{1}, \dots, \mu_{r}, \underbrace{0, \dots, 0}_{\text{$(n-r)$ zeros}})$},
 $$
 where the $(\mu_{l})_{1\le l\le r}$ are the non-zero eigenvalues of $C$ (counted with their multipli\-cities).
 As a consequence, we have 
 $
 M=e^{i\pi \poscal{\Delta\Omega D}{\Omega D}}
 $
 and thus
 \begin{align*}
 (Mv)(x)&=\int_{\R^{n}} e^{2i\pi x\cdot \xi}e^{i\pi \poscal{\Delta\Omega \xi}{\Omega \xi}}\hat v(\xi) d\xi
 \\&=
 \int_{\R^{n}} e^{2i\pi \Omega x\cdot \eta}e^{i\pi \poscal{\Delta\eta}{\eta}}\hat v(\Omega^{-1}\eta) d\eta
 =\bigl(e^{i\pi \poscal{\Delta D_{y}}{D_{y}}}(v\circ \Omega^{-1})\bigr)(\Omega x)
 \\
 \text{\scriptsize (using the notation \eqref{4.genmet})}&=\bigl(M_{0, \Omega, 0}\expo{m}e^{i\pi \poscal{\Delta D_{y}}{D_{y}}} M_{0, \Omega^{-1}, 0}\expo{-m}v\bigr)(x),
 \end{align*}
 implying that 
 \begin{equation}\label{2439++}
 MM_{0, \Omega, 0}\expo{m}=M_{0, \Omega, 0}\expo{m}e^{i\pi \poscal{\Delta D}{D}}.
\end{equation}
We check now the operator $e^{i\pi \poscal{\Delta D}{D}}=e^{i\pi
\sum_{1\le j\le r} \mu_{j}D_{x_{j}}^{2}
}$.
With the notation
$$
y=(x_{1}, \dots ,x_{r}), \quad z=(x_{r+1}, \dots, x_{n}),
$$
we consider for $v\in \mathscr S(\R^{r}), \phi\in \mathscr S(\R^{n-r})$, 
$$
u(y, z)= v(y) \phi(z/\varepsilon)\varepsilon^{-\frac{(n-r)}2}, \quad \varepsilon>0, \quad \norm{\phi}_{L^{2}(\R^{n-r})}=1=\norm{v}_{L^{2}(\R^{r})}.
$$
We have 
\begin{align*}
\mathcal V( u)&=\iint_{\R^{r}\times \R^{n-r}}
\bigl({\val y}^{2}+\varepsilon^{2}{\val t}^{2}\bigr){\val{v(y)}}^{2} {\val{\phi(t)}}^{2}dy dt,
\\
\mathcal V( e^{i\pi \poscal{\Delta D}{D}}u)&=\iint_{\R^{r}\times \R^{n-r}}
\bigl({\val y}^{2}+\varepsilon^{2}{\val t}^{2}\bigr)
{\val{
e^{i\pi
\sum_{1\le j\le r} \mu_{j}D_{x_{j}}^{2}
}v(y)
}}^{2} {\val{\phi(t)}}^{2}dy dt, 
\end{align*}
so that, with obvious notations, 
\begin{multline*}\label{}
\mathcal V_{n}( e^{i\pi \poscal{\Delta D}{D}}u)\mathcal V_{n}( u)=
\\
\Bigl(\int {\val y}^{2}
{\val{e^{i\pi\sum_{1\le j\le r} \mu_{j}D_{x_{j}}^{2}}v(y)}}^{2}
+\varepsilon^{2}\int {\val t}^{2}\val {\phi(t)}^{2}dt\Bigr)
\times
\Bigl(\int \val y^{2}{\val{v(y)}}^{2}+\varepsilon^{2}\int {\val t}^{2}{\val {\phi(t)}}^{2}dt\Bigr)
\\
=\left(\mathcal V_{r}( e^{i\pi \poscal{\Delta D}{D}}v)+\varepsilon^{2}\int {\val t}^{2}{\val {\phi(t)}}^{2} dt 
\right)
\times\left(\mathcal V_{r}( v)+\varepsilon^{2}\int {\val t}^{2}\val {\phi(t)}^{2} dt \right),
\end{multline*}
which means with
$$
\gamma_{\phi}=\left(\int {\val t}^{2}{\val {\phi(t)}}^{2} dt\right)^{1/2},
$$
\begin{multline*}
\mathcal V_{n}( e^{i\pi \poscal{\Delta D}{D}}u)\mathcal V_{n}( u)
=\mathcal V_{r}( e^{i\pi \poscal{\Delta D}{D}}v)\mathcal V_{r}( v)
\\+\varepsilon^{2}\gamma_{\phi}^{2}
\bigl(\mathcal V_{r}( e^{i\pi \poscal{\Delta D}{D}}v)+\mathcal V_{r}( v)\bigr)
+\varepsilon^{4}\gamma_{\phi}^{4}.
\end{multline*}
This proves that for any $\varepsilon>0, v_{0}\in \mathscr S(\R^{r}), \phi_{0}\in \mathscr S(\R^{n-r}),$
\begin{multline*}
\inf_{\substack{u\in \mathscr S(\R^{n})\\\norm{u}_{L^{2}(\R^{n})}=1}}\mathcal V_{n}( e^{i\pi \poscal{\Delta D}{D}}u)\mathcal V_{n}( u)
\le 
\inf_{\substack{v\in \mathscr S(\R^{r})\\\norm{v}_{L^{2}(\R^{r})}=1}}\mathcal V_{r}( e^{i\pi \poscal{\Delta D}{D}}v)\mathcal V_{r}(v)
\\
+\varepsilon^{2}\gamma_{\phi_{0}}^{2}
\bigl(\mathcal V_{r}( e^{i\pi \poscal{\Delta D}{D}}v_{0})+\mathcal V_{r}( v_{0})\bigr)
+\varepsilon^{4}\gamma_{\phi_{0}}^{4},
\end{multline*}
and thus that 
\begin{multline}\label{}
\inf_{\substack{u\in \mathscr S(\R^{n})\\\norm{u}_{L^{2}(\R^{n})}=1}}\mathcal V_{n}( e^{i\pi \poscal{\Delta D}{D}}u)\mathcal V_{n}( u)\le 
\inf_{\substack{v\in \mathscr S(\R^{r})\\\norm{v}_{L^{2}(\R^{r})}=1}}\mathcal V_{r}( e^{i\pi \poscal{\Delta D}{D}}v)\mathcal V_{r}(v)
\\
=\mu_{r}\bigl(
e^{i\pi \poscal{\Delta D}{D}}
\bigr)=\frac1{4\pi}\trinorm{\Delta}_{\texttt{S}^{1}}=\frac1{4\pi}\sum_{1\le j\le r}\val{\mu_{j}}
=
\frac1{4\pi}\sum_{1\le j\le r}\val{\lambda_{j}(C)},
\end{multline}
proving that, from \eqref{2439++}
\begin{equation}\label{df55ar}
\mu_{n}\bigl( M_{0, \Omega^{-1}, 0}\expo{-m}MM_{0, \Omega, 0}\expo{m}\bigr)=
\frac1{4\pi}\sum_{1\le j\le r}\val{\lambda_{j}(C)}.
\end{equation}
\begin{claim}
 Let $\Omega\in O(n)$. Then we have for $u\in \mathscr S(\R^{n})$,  
 $$
 \mathcal V( M_{0, \Omega, 0}\expo{m} u)=\mathcal V(u).
 $$
\end{claim}\par\no
We have indeed
$$
\mathcal V( M_{0, \Omega, 0}\expo{m} u)=\int_{\R^{n}}{\val{x}}^{2} {\val{u(\Omega x)}}^{2}dx
=\int_{\R^{n}}{\val{y}}^{2}{\val{u(y)}}^{2}dy=\mathcal V( u),
$$
proving the claim.
As a consequence we get that for $u\in \mathscr S(\R^{n})$,  
$$
\mathcal V(M_{0, \Omega^{-1}, 0}\expo{-m}MM_{0, \Omega, 0}\expo{m} u) \mathcal V(u)
=\mathcal V(MM_{0, \Omega, 0}\expo{m} u) \mathcal V(u),
 $$
 and thus
 \begin{multline*}
\inf_{\substack{u\in \mathscr S(\R^{n})\\\norm{u}_{L^{2}(\R^{n})=1}}}
\mathcal V(M_{0, \Omega^{-1}, 0}\expo{-m}MM_{0, \Omega, 0}\expo{m} u) \mathcal V(u)
=
\inf_{\substack{u\in \mathscr S(\R^{n})\\\norm{u}_{L^{2}(\R^{n})=1}}}
\mathcal V(MM_{0, \Omega, 0}\expo{m} u) \mathcal V(u)
\\
=
\inf_{\substack{v\in \mathscr S(\R^{n})\\\norm{v}_{L^{2}(\R^{n})=1}}}
\mathcal V(Mv) \mathcal V(M_{0, \Omega^{-1}, 0}\expo{-m} v)
=
\inf_{\substack{v\in \mathscr S(\R^{n})\\\norm{v}_{L^{2}(\R^{n})=1}}}
\mathcal V(Mv) \mathcal V( v)=\mu(M)
\end{multline*}
 which eventually proves the lemma, using \eqref{df55ar}.
 \end{proof}
{\bf We want finally to deal with  the general case where $\Xi_{12}$ is singular with rank $r\in\llbracket 1, n-1\rrbracket$.}
According to Proposition \ref{pro.9159++},
it is enough to tackle the case where
\begin{equation}\label{54lknn}
\mathcal M=
\mathcal M_{P_{1}, L_{1}, Q_{1}}\expo{m_{1}}\mathcal M_{P_{2}, L_{2}, Q_{2}}\expo{m_{2}},
\end{equation}
where 
$P_{j},Q_{j}$ are symmetric $n\times n$
matrices and $L_{j}$ are invertible $n\times n$
matrices.
\begin{lemma}\label{lem.nor456}
Let $\mathcal M$ be given by \eqref{54lknn}.
We have, using the notations \eqref{4.genmet},
\begin{gather}
\mathcal M
=M_{P_{1},I_{n},0}
M_{0,-I_{n},0}\expo{-n}
M_{0,B,0}\expo{m_{B}}
M_{0,I_{n},C}\expo{0}
M_{Q_{2}, I_{n},0}\expo{0},\label{mm0011}
\\
B=L_{2}^{*-1}L_{1}\in\textsl{Gl}(n,\R), \quad C
= L_{2}^{*-1}(Q_{1}+P_{2})L_{2}^{-1}\in\textsl{Sym}(n,\R).\label{mm0022}
\end{gather}
Moreover, defining
\begin{equation}\label{}
\wt{\mathcal M}=M_{0,B,0}\expo{m_{B}}
M_{0,I_{n},C}\expo{0}=M_{0,B,C}\expo{m_{B}},
\end{equation}
we have
\begin{align}
\Psi(\wt{\mathcal M})&=\wt\Xi=\mat22{B^{-1}}{-B^{-1}C}{0}{B^*},
\label{mm0033}
\\
\Psi({\mathcal M})&=\Xi=
\begin{pmatrix*}[l]
{B^{-1}CQ_{2}-B^{-1}}&{B^{-1}C}
\\
{P_{1}(B^{-1}C Q_{2}-B^{-1})-B^{*}Q_{2}}\hs&{P_{1}B^{-1}C-B^*}
\end{pmatrix*},
\label{mm003+}
\\
\mu(\mathcal M)&=\mu(\wt{\mathcal M}),\label{mm0044}
\end{align}
where $\mu$ is defined in \eqref{mesure++}.
\end{lemma}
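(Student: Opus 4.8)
The plan is to establish the factorization \eqref{mm0011}--\eqref{mm0022} first; the other three identities are quick consequences of it. The starting point is the single-operator decomposition
\[
\mathcal M_{P,L,Q}\expo{m}=M_{P,I_n,0}\;M_{0,L,0}\expo{m}\;\bigl(e^{-i\pi n/4}\mathcal F\bigr)\;M_{Q,I_n,0},
\]
which is read off from \eqref{322322}: the factor $e^{i\pi\poscal{Px}{x}}$ is the chirp multiplication $M_{P,I_n,0}$, the factor $e^{i\pi\poscal{Qy}{y}}$ is $M_{Q,I_n,0}$ applied to the argument, and the remaining kernel $e^{-2i\pi\poscal{Lx}{y}}$ together with the prefactor $e^{-i\pi n/4}e^{i\pi m/2}\val{\det L}^{1/2}$ is exactly $M_{0,L,0}\expo{m}\bigl(e^{-i\pi n/4}\mathcal F\bigr)$, since $\int e^{-2i\pi\poscal{Lx}{y}}w(y)\,dy=\widehat w(Lx)$. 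Substituting this into $\mathcal M=\mathcal M_{P_1,L_1,Q_1}\expo{m_1}\mathcal M_{P_2,L_2,Q_2}\expo{m_2}$ and collapsing the two adjacent chirp multiplications in the middle by $M_{Q_1,I_n,0}M_{P_2,I_n,0}=M_{Q_1+P_2,I_n,0}$ leaves a product containing two copies of $e^{-i\pi n/4}\mathcal F$, which must be commuted to the right.

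The core of the proof is the ensuing conjugation computation, resting on three rules that all follow from \eqref{segal01}--\eqref{segal04} (or directly from the defining formulas): (i) $\mathcal F$ conjugates $M_{A,I_n,0}\expo{0}$ into the chirp Fourier multiplier $M_{0,I_n,A}\expo{0}$ and $M_{0,B,0}\expo{m}$ into $M_{0,B^{*-1},0}\expo{m}$, with no extra scalar---checked by comparing the Gaussian kernels when $A$, resp. $B$, is invertible and extending by continuity; (ii) $M_{0,B,0}\expo{m}$ conjugates $M_{0,I_n,A}\expo{0}$ into $M_{0,I_n,B^{-1}AB^{*-1}}\expo{0}$, and $M_{0,B_1,0}\expo{m_1}M_{0,B_2,0}\expo{m_2}=M_{0,B_2B_1,0}\expo{m_1+m_2}$ (with $m_1+m_2$ compatible with $B_2B_1$, since $e^{i\pi(m_1+m_2)}=\sign(\det B_1)\sign(\det B_2)=\sign\det(B_2B_1)$); (iii) $\bigl(e^{-i\pi n/4}\mathcal F\bigr)^2=e^{-i\pi n/2}\sigma_0=M_{0,-I_n,0}\expo{-n}$, a parity operator commuting with every $M_{0,B,0}\expo{m}$ and every $M_{0,I_n,A}\expo{0}$. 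Iterating (i)--(iii)---push the first $\mathcal F$ across $M_{Q_1+P_2,I_n,0}$ to produce $M_{0,I_n,Q_1+P_2}\expo{0}$, carry it left through $M_{0,L_1,0}\expo{m_1}$ to turn it into $M_{0,I_n,L_1^{-1}(Q_1+P_2)L_1^{*-1}}\expo{0}$, use the second $\mathcal F$ both to turn $M_{0,L_2,0}\expo{m_2}$ into $M_{0,L_2^{*-1},0}\expo{m_2}$ and to release a factor $M_{0,-I_n,0}\expo{-n}$, merge $M_{0,L_1,0}\expo{m_1}M_{0,L_2^{*-1},0}\expo{m_2}=M_{0,B,0}\expo{m_B}$ with $B=L_2^{*-1}L_1$ and $m_B=m_1+m_2$, carry the chirp Fourier multiplier back to the right of $M_{0,B,0}\expo{m_B}$ (its matrix gets conjugated by $B$, becoming $B\bigl(L_1^{-1}(Q_1+P_2)L_1^{*-1}\bigr)B^{*}=L_2^{*-1}(Q_1+P_2)L_2^{-1}$, i.e. the matrix $C$ of \eqref{mm0022}), and move $M_{0,-I_n,0}\expo{-n}$ to the front---yields precisely \eqref{mm0011}--\eqref{mm0022}. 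I expect the phase and index bookkeeping in this step to be the only real difficulty; once $m_B$ is taken to be $m_1+m_2$ all scalar factors cancel, and what remains is to verify the compatibility of the several $\expo{\,\cdot\,}$ indices, which is handled by Remark \ref{footsign}. (If one only wants \eqref{mm0011} up to a sign---enough for all of the following---the sign is automatic, because both sides are metaplectic, their $\Psi$-images will be shown to coincide, and $\text{\sl Mp}(n)\cap\mathbb S^1=\{-1,1\}$ by Theorem \ref{thm.phaseb}.)

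The remaining three identities then follow at once. By \eqref{54gvrs}, $\wt{\mathcal M}=M_{0,B,0}\expo{m_B}M_{0,I_n,C}\expo{0}=M_{0,B,C}\expo{m_B}$, so \eqref{mm0033} is $\Psi(\wt{\mathcal M})=\Xi_{0,B,C}=\mat22{B^{-1}}{-B^{-1}C}{0}{B^*}$ by \eqref{gensym}. For \eqref{mm003+}, apply the homomorphism $\Psi$ to \eqref{mm0011}, using \eqref{1270} and $\Psi(M_{0,-I_n,0}\expo{-n})=\Xi_{0,-I_n,0}=-I_{2n}$, and multiply out $\mat22{I_n}{0}{P_1}{I_n}(-I_{2n})\mat22{B^{-1}}{-B^{-1}C}{0}{B^*}\mat22{I_n}{0}{Q_2}{I_n}$; the result is exactly the displayed block matrix. (As a cross-check that also re-derives \eqref{mm0022}, computing $\Lambda_{P_1,L_1,Q_1}\Lambda_{P_2,L_2,Q_2}$ from \eqref{327327} gives $(1,2)$-block $L_1^{-1}(Q_1+P_2)L_2^{-1}=B^{-1}C$ and $(1,1)$-block $L_1^{-1}\bigl((Q_1+P_2)L_2^{-1}Q_2-L_2^{*}\bigr)=B^{-1}CQ_2-B^{-1}$.) Finally, \eqref{mm0011} reads $\mathcal M=A\,\wt{\mathcal M}\,E$ with $(Aw)(x)=e^{-i\pi n/2}e^{i\pi\poscal{P_1 x}{x}}w(-x)$ and $(Eu)(x)=e^{i\pi\poscal{Q_2 x}{x}}u(x)$; both $A$ and $E$ are unitary automorphisms of $\mathscr S(\R^n)$ leaving the variance $\mathcal V$ invariant, because the chirps have modulus $1$ and the reflection $x\mapsto-x$ leaves $\int\val x^2\val{f(x)}^2\,dx$ unchanged. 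Putting $v=Eu$, which runs over the unit sphere of $\mathscr S(\R^n)$ as $u$ does, we get $\mathcal V(\mathcal M u)=\mathcal V(A\wt{\mathcal M}v)=\mathcal V(\wt{\mathcal M}v)$ and $\mathcal V(u)=\mathcal V(E^{-1}v)=\mathcal V(v)$, hence $\mu(\mathcal M)=\mu(\wt{\mathcal M})$ by \eqref{mesure++}; the sign possibly left undetermined in \eqref{mm0011} is harmless here since $\mu(z\mathcal M)=\val z\,\mu(\mathcal M)$.
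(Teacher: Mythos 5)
Your proof is correct, but it proceeds by a genuinely different route from the paper's. The paper establishes \eqref{mm0011} by a direct kernel computation: it writes out the Schwartz kernel of $\mathcal M_{P_1,L_1,Q_1}\expo{m_1}\mathcal M_{P_2,L_2,Q_2}\expo{m_2}$ as a double oscillatory integral, inserts an inverse Fourier transform, integrates out one variable to produce a $\delta$-function, and then reads off the factorization from the resulting expression. You instead start from the single-generator factorization $\mathcal M_{P,L,Q}\expo{m}=M_{P,I_n,0}\,M_{0,L,0}\expo{m}\,(e^{-i\pi n/4}\mathcal F)\,M_{Q,I_n,0}$ (read off directly from \eqref{322322}) and carry out a purely algebraic conjugation calculus: you commute the two copies of $e^{-i\pi n/4}\mathcal F$ through chirps and dilations using the rules $\mathcal F M_{A,I_n,0}=M_{0,I_n,A}\mathcal F$, $\mathcal F M_{0,B,0}\expo{m}=M_{0,B^{*-1},0}\expo{m}\mathcal F$, $M_{0,B,0}M_{0,I_n,A}=M_{0,I_n,B^{-1}AB^{*-1}}M_{0,B,0}$, merge $M_{0,L_1,0}\expo{m_1}M_{0,L_2^{*-1},0}\expo{m_2}=M_{0,B,0}\expo{m_1+m_2}$, and fuse $\mathcal F^2=\sigma_0$ into $M_{0,-I_n,0}\expo{-n}$. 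All of these rules are easily verified (directly or via \eqref{segal01}--\eqref{segal04}), the compatibility of the indices $m_B=m_1+m_2\in m(L_2^{*-1}L_1)$ holds by $\sign\det(L_2^{*-1}L_1)=\sign(\det L_1)\sign(\det L_2)$, and the scalar factor $(e^{-i\pi n/4})^2=e^{-i\pi n/2}$ is absorbed into $M_{0,-I_n,0}\expo{-n}$, so there is in fact no sign ambiguity left to resolve by Theorem \ref{thm.phaseb}. The remaining identities \eqref{mm0033}, \eqref{mm003+}, \eqref{mm0044} are then derived exactly as in the paper. What your approach buys is the elimination of the oscillatory-integral manipulation (Fubini interchange plus a $\delta$-distribution) in favor of a short book-keeping argument within the metaplectic group; what the paper's approach buys is self-containedness, needing only the defining integral \eqref{322322} and no commutation rules.
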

\begin{proof}
 We have 
 \begin{align*}
&(\mathcal M v)(x)
 =\overbrace{e^{i\pi \frac{m_{1}+m_{2}}2} e^{-\frac{i\pi n}2}
 {\val{\det L_{1}}}^{1/2}{\val{\det L_{2}}}^{1/2}
}^{\omega_{12}}
\\&\hskip75pt
 \times
 \iint
 e^{i\pi(P_{1}x^{2}-2L_{1}x\cdot\xi+Q_{1}\xi^{2})}
  e^{i\pi(P_{2}\xi^{2}-2L_{2}\xi\cdot y+Q_{2}y^{2})}
v(y)
  d\xi dy 
\\
&=\omega_{12}e^{i\pi P_{1}x^{2}}
\iiint e^{i\pi(Q_{1}+P_{2})\xi^{2}}
e^{-2i\pi \xi\cdot (L_{1}x+L_{2}^{*}y)}
\mathcal F(e^{i\pi Q_{2}y^{2}} v(y))(\eta) e^{2i\pi y\cdot \eta}
d\eta dyd\xi
   \end{align*}
 \begin{align*}
&=\omega_{12} e^{i\pi P_{1}x^{2}}
\iint
\delta_{0}(\eta-L_{2}\xi)
e^{i\pi(Q_{1}+P_{2})\xi^{2}}\mathcal F(e^{i\pi Q_{2}y^{2}} v(y))(\eta)
e^{-2i\pi \xi\cdot L_{1}x} d\eta d\xi
\\
&=\omega_{12}e^{i\pi P_{1}x^{2}}
\int
e^{i\pi(Q_{1}+P_{2})\xi\cdot \xi}
\mathcal F(e^{i\pi Q_{2}y^{2}} v(y))
(L_{2}\xi)
e^{-2i\pi \xi\cdot L_{1}x}  d\xi
\\
&=
\omega_{12}{\val{\det L_{2}}}^{-1}e^{i\pi P_{1}x^{2}}
\int
e^{i\pi L_{2}^{*-1}(Q_{1}+P_{2})L_{2}^{-1}\xi^{2}}
\mathcal F(\underbrace{e^{i\pi Q_{2}y^{2}} v(y)}_{v_{Q_{2}}(y)})
(\xi)
e^{-2i\pi \xi\cdot L_{2}^{*-1}L_{1}x}  d\xi
\\
&=
e^{i\pi \frac{m_{1}+m_{2}}2} e^{-\frac{i\pi n}2}
 {\val{\det L_{1}}}^{1/2}
{\val{\det L_{2}}}^{-1/2}e^{i\pi P_{1}x^{2}}
\bigl(e^{i\pi L_{2}^{*-1}(Q_{1}+P_{2})L_{2}^{-1}D^{2}}v_{Q_{2}}\bigr)(-L_{2}^{*-1}L_{1}x).
\end{align*}
We have
thus, using the notations \eqref{4.genmet}, \eqref{sigma0}, \eqref{sig+++},
\begin{align}\label{}
\mathcal M&= M_{P_{1},I_{n},0}
M_{0,-I_{n},0}\expo{-n}M_{0,L_{2}^{*-1}L_{1},0}\expo{m_{1}+m_{2}}
e^{i\pi L_{2}^{*-1}(Q_{1}+P_{2})L_{2}^{-1}D^{2}}
M_{Q_{2}, I_{n},0}\expo{0}
\\
&=M_{P_{1},I_{n},0}
M_{0,-I_{n},0}\expo{-n}
M_{0,B,0}\expo{m_{B}}
e^{i\pi CD^{2}}
M_{Q_{2}, I_{n},0}\expo{0},
\\
\text{with } B&=L_{2}^{*-1}L_{1}, \quad C
= L_{2}^{*-1}(Q_{1}+P_{2})L_{2}^{-1},
\end{align}
proving
\eqref{mm0011}, \eqref{mm0022}.
Moreover we have 
\begin{align*}
&\Psi(\wt{\mathcal M})=\mat22{B^{-1}}{0}{0}{B^*}\mat22{I_n}{-C}{0}{I_n}
=\mat22{B^{-1}}{-B^{-1}C}{0}{B^*}=\wt\Xi
\\
&\wt\Xi_{11}=B^{-1}, \quad \wt\Xi_{12}=-B^{-1}C, \quad \wt\Xi_{21}=0,
\quad \wt\Xi_{22}=B^*,
\end{align*}
proving  \eqref{mm0033};
we calculate now
$$
\Psi(\mathcal M)= 
\mat22{I_{n}}{0}{P_{1}}{I_{n}}\mat22{-I_{n}}{0}{0}{-I_{n}}\wt{\Xi}\mat22{I_{n}}{0}{Q_{2}}{I_{n}},
$$
and we get\footnote{The most important point in \eqref{mm0033},
\eqref{mm003+}
is the fact that $\Xi_{12}=-\wt\Xi_{12}$, which is compatible with 
\eqref{mm0044} and the statement of Theorem \ref{thm.21poii}.
} easily  \eqref{mm003+}.
We note also that 
$$
\mathcal V\bigl(\mathcal M v\bigr)
=\mathcal V\bigl(M_{P_{1},I_{n},0}
M_{0,-I_{n},0}\expo{-n}
\wt{\mathcal M}
 M_{Q_{2}, I_{n},0}\expo{0}v\bigr)=
 \mathcal V\bigl(
\wt{\mathcal M}
 M_{Q_{2}, I_{n},0}\expo{0}v\bigr),
$$
so that we find 
\begin{multline*}\mu(\mathcal M)^{2}=
\inf_{\substack{v\in \mathscr S(\R^{n})\\\norm{v}_{L^{2}(\R^{n})=1}}}
\mathcal V(\mathcal M v)\mathcal V(v)
=
\inf_{\substack{v\in \mathscr S(\R^{n})\\\norm{v}_{L^{2}(\R^{n})=1}}}
\mathcal V(\wt{\mathcal M}
 M_{Q_{2}, I_{n},0}\expo{0}v)\mathcal V(v)
\\
=
\inf_{\substack{u\in \mathscr S(\R^{n})\\\norm{u}_{L^{2}(\R^{n})=1}}}
\mathcal V(\wt{\mathcal M}
 u)\mathcal V(M_{-Q_{2}, I_{n},0}\expo{0}u)
 =
\inf_{\substack{u\in \mathscr S(\R^{n})\\\norm{u}_{L^{2}(\R^{n})=1}}}
\mathcal V(\wt{\mathcal M}
 u)\mathcal V(u)
 =\mu(\wt{\mathcal M})^{2},
\end{multline*}
proving \eqref{mm0044} and the lemma.
\end{proof}
\begin{claim}\label{cla.251}
Let $\mathcal M, \wt{\mathcal M}, B,C$ be given by Lemma \ref{lem.nor456}.
To complete the proof of 
Theorem \ref{thm.21poii}, it is enough to prove that 
\begin{equation}\label{}
\mu\bigl(\wt{\mathcal M}\bigr)\le \frac1{4\pi}\trinorm{B^{-1}C}_{\texttt{S}^{1}}.
\end{equation}
Moreover, we have with $v\in \mathscr S(\R^{n})$,
 $w=e^{i\pi CD^{2}}v$, the inequality
\begin{equation}\label{ineq01}
\sqrt{\mathcal V(\wt{\mathcal M} v)\mathcal V(v)}
\le \frac12\poscal{\opw{\norm{B^{-1}x}^{2}+\norm{x+C\xi}^{2}}w}{w}_{L^{2}(\R^{n})}.
\end{equation}
\end{claim}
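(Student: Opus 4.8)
The plan is to treat the two assertions of the claim separately; both reduce to bookkeeping once Lemma \ref{lem.nor456} is in hand.

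For the reduction (``it is enough''), I would combine three facts already proved. First, \eqref{mm003+} gives $\Xi_{12}=B^{-1}C$, whence $\trinorm{\Xi_{12}}_{\texttt{S}^{1}}=\trinorm{B^{-1}C}_{\texttt{S}^{1}}$. Second, \eqref{mm0044} gives $\mu(\mathcal M)=\mu(\wt{\mathcal M})$. Third, the lower bound \eqref{ine002}, established for \emph{every} metaplectic operator (the singular case included), reads $\mu(\mathcal M)\ge\frac1{4\pi}\trinorm{\Xi_{12}}_{\texttt{S}^{1}}$. Putting these together, $\mu(\wt{\mathcal M})\ge\frac1{4\pi}\trinorm{B^{-1}C}_{\texttt{S}^{1}}$ already holds, so proving the reverse inequality $\mu(\wt{\mathcal M})\le\frac1{4\pi}\trinorm{B^{-1}C}_{\texttt{S}^{1}}$ forces the equality $\mu(\mathcal M)=\frac1{4\pi}\trinorm{\Xi_{12}}_{\texttt{S}^{1}}$, which is exactly Theorem \ref{thm.21poii} in the one remaining case, namely $\Xi_{12}$ singular of positive rank (the case $\Xi_{12}$ invertible and the case $\Xi_{12}=0$ having been settled).

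For the inequality \eqref{ineq01}, I would compute both variances explicitly on $w=e^{i\pi CD^{2}}v$. By \eqref{54gvrs}, \eqref{mety02}, \eqref{mety03} one has $\wt{\mathcal M}v=M_{0,B,0}\expo{m_B}w$, so $\val{(\wt{\mathcal M}v)(x)}^{2}=\val{\det B}\,\val{w(Bx)}^{2}$; the substitution $y=Bx$ then yields $\mathcal V(\wt{\mathcal M}v)=\int\val{B^{-1}y}^{2}\val{w(y)}^{2}\,dy=\poscal{\opw{\norm{B^{-1}x}^{2}}w}{w}$, since multiplication by $\norm{B^{-1}x}^{2}$ is precisely the Weyl quantization of that symbol. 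On the other hand $v=e^{-i\pi CD^{2}}w=(e^{i\pi CD^{2}})^{*}w$, so $\mathcal V(v)=\poscal{\opw{\norm{x}^{2}}v}{v}=\poscal{e^{i\pi CD^{2}}\opw{\norm{x}^{2}}e^{-i\pi CD^{2}}w}{w}$, and \eqref{segal03}, applied with $C$ replaced by $-C$, identifies the operator on the right with $\opw{\norm{x+C\xi}^{2}}$. Both $\mathcal V(\wt{\mathcal M}v)$ and $\mathcal V(v)$ being nonnegative, the arithmetic–geometric mean inequality and the linearity of $a\mapsto\opw{a}$ give
\[
\sqrt{\mathcal V(\wt{\mathcal M}v)\mathcal V(v)}\le\tfrac12\bigl(\mathcal V(\wt{\mathcal M}v)+\mathcal V(v)\bigr)=\tfrac12\poscal{\opw{\norm{B^{-1}x}^{2}+\norm{x+C\xi}^{2}}w}{w},
\]
which is \eqref{ineq01}.

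The only point inside the claim that calls for care is the sign when transporting the symbol $\norm{x}^{2}$ through the conjugation by $e^{\pm i\pi CD^{2}}$ via \eqref{segal03}; there is no genuine obstacle in the claim itself. The substantive difficulty, which this claim merely prepares, lies downstream: extracting from \eqref{ineq01} the bound $\mu(\wt{\mathcal M})\le\frac1{4\pi}\trinorm{B^{-1}C}_{\texttt{S}^{1}}$ in the singular case. There the quadratic form $\norm{B^{-1}x}^{2}+\norm{x+C\xi}^{2}$ is degenerate exactly when $C$ is singular, so one cannot simply minimize its Weyl quantization over unit vectors; a block decomposition adapted to $\ker C$ together with a concentration (tensor-product) argument, in the spirit of the preceding lemma on $e^{i\pi\poscal{CD}{D}}$, will be needed.
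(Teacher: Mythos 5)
Your proof is correct and follows essentially the same route as the paper's. The reduction step (the ``it is enough'' part) is exactly what the paper does, combining \eqref{mm003+}, \eqref{mm0044} and \eqref{ine002} in a few lines. For \eqref{ineq01}, you and the paper both write $\mathcal V(\wt{\mathcal M}v)=\int\norm{B^{-1}y}^2\val{w(y)}^2\,dy$ by substitution, identify $\mathcal V(v)$ with a quadratic form acting on $w$ via the conjugation by $e^{\pm i\pi CD^2}$, and finish with AM--GM. The only small divergence is in how the conjugation identity is invoked: the paper decomposes $\norm{x}^2=\sum_j x_j^2$, applies symplectic covariance to each linear form $x_j$, and then uses Claim~\ref{cla.lin001} ($L\sharp L=L^2$) to recombine squares of quantized linear forms into the quantization of a quadratic symbol; you instead apply Theorem~\ref{thm.3216++} directly to the quadratic symbol $\norm{x}^2$, which is legitimate since the covariance holds for all $a\in\mathscr S'(\RZ)$, and is a touch shorter. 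Both arguments give the same bound, and your sign bookkeeping in transporting through $e^{-i\pi CD^2}$ via \eqref{segal03} is correct.
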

\begin{proof}[Proof of the claim]
 The first statement in the above claim is obvious,
  since we already know from \eqref{ine002}, \eqref{mm0044}
 that 
 $$
  \frac1{4\pi}\trinorm{\Xi_{12}=B^{-1}C}_{\texttt{S}^{1}}\le \mu(\mathcal M)
  =\mu(\wt{\mathcal M}).
 $$
 On the other hand, we have
 \begin{align*}
 &\sqrt{\mathcal V(\wt{\mathcal M} v)\mathcal V(v)}\le 
 \frac12\left(
\int \norm{B^{-1}x}^{2}{\val{(e^{i\pi CD^{2}}v)(x)}}^{2}dx
+\int \norm{x}^{2}{\val{v(x)}}^{2}dx
 \right)
 \\
&=\frac12\left(
\int \norm{B^{-1}x}^{2}{\val{w(x)}}^{2}dx
+\int \norm{x}^{2}
{\val{(e^{-i\pi C D^{2}}w)(x)}}^{2}dx
 \right)
 \\
& =\frac12\left(
\int \norm{B^{-1}x}^{2}{\val{w(x)}}^{2}dx
+\sum_{1\le j\le n}\int {x_{j}}^{2}
{\val{(e^{-i\pi C D^{2}}w)(x)}}^{2}dx
 \right)
 \\
 &=\frac12\left(
\int \norm{B^{-1}x}^{2}
{\val{w(x)}}^{2}dx
+\sum_{1\le j\le n}\norm{x_{j}e^{-i\pi C D^{2}}w}_{L^{2}(\R^{n})}^{2}
 \right)
 \\
 &=\frac12\left(
\int \norm{B^{-1}x}^{2}{\val{w(x)}}^{2}dx
+\sum_{1\le j\le n}\norm{e^{i\pi C D^{2}} x_{j}e^{-i\pi C D^{2}}w}_{L^{2}(\R^{n})}^{2}
 \right),
\end{align*}
and applying the symplectic covariance of the Weyl calculus
as given by Theorem \ref{thm.3216++} 
and Claim \ref{cla.lin001},
we find that,
with $(\mathbf e_{j}
)_{1\le j\le n}$ the canonical basis of $\R^{n}$,
\begin{multline}\label{ine003}
 \sqrt{\mathcal V(\wt{\mathcal M} v)\mathcal V(v)}\le 
 \frac12\poscal{
 \OPW{\norm{B^{-1}x}^{2}+\sum_{1\le j\le n}\poscal{x+C\xi}{\mathbf e_{j}}_{\R^{n}}^{2}}
 w}{w}_{L^{2}(\R^{n})}
\\
=\frac12\poscal{
 \OPW{
 \norm{\val{B^{-1}}x}^{2}+\norm{x+C\xi}^{2}
 }
 w}{w}_{L^{2}(\R^{n})},
\end{multline}
proving the claim.
\end{proof}
\begin{claim}\label{cla.252}
Let $\mathcal M, \wt{\mathcal M}, B,C$ be given by Lemma \ref{lem.nor456}.
We define the following quadratic form on $\RZ$, 
\begin{equation}\label{}
Q(x,\xi)= \norm{\val{B^{-1}}x}^{2}+\norm{x+C\xi}^{2}.
\end{equation}
We have then 
\begin{equation}\label{ine005}
\mu\bigl(\wt{\mathcal M}\bigr)\le\frac12 
\inf\Bigl[\spectrum 
\bigl\{\opw{Q}\bigr\}\Bigr].
\end{equation}
\end{claim}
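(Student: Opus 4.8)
The plan is to insert inequality \eqref{ine003} of Claim \ref{cla.251} into the definition \eqref{mesure++} of $\mu$, using that $e^{i\pi CD^{2}}$ is unitary, and then to identify the resulting infimum of Rayleigh quotients with the bottom of the spectrum of $\opw Q$. First I would note that $e^{i\pi CD^{2}}=M_{0,I_{n},C}\expo{0}$ is a unitary automorphism of $\mathscr S(\R^{n})$ (cf.\ \eqref{mety03}), so the correspondence $v\mapsto w=e^{i\pi CD^{2}}v$ occurring in \eqref{ine003} is a bijection of the unit sphere of $\mathscr S(\R^{n})$ onto itself. Since \eqref{ine003} reads $\sqrt{\mathcal V(\wt{\mathcal M}v)\mathcal V(v)}\le\frac12\poscal{\opw Q w}{w}_{L^{2}(\R^{n})}$ with that $w$, taking the infimum over unit vectors $v\in\mathscr S(\R^{n})$, equivalently over unit vectors $w\in\mathscr S(\R^{n})$, gives
\begin{equation*}
\mu\bigl(\wt{\mathcal M}\bigr)\le\frac12\inf_{\substack{w\in\mathscr S(\R^{n})\\ \norm{w}_{L^{2}(\R^{n})}=1}}\poscal{\opw Q w}{w}_{L^{2}(\R^{n})}.
\end{equation*}

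Next I would observe that $Q(x,\xi)=\norm{\val{B^{-1}}x}^{2}+\norm{x+C\xi}^{2}$ is a nonnegative quadratic form on $\RZ$, so $\opw Q$ is a nonnegative symmetric operator on $\mathscr S(\R^{n})$; being the Weyl quantization of a real quadratic form bounded from below, it is essentially self-adjoint on $\mathscr S(\R^{n})$ (a classical fact, which one may also read off by conjugating $\opw Q$, through the symplectic covariance of Theorem \ref{thm.3216++}, into a direct sum of harmonic oscillators and nonnegative quadratic multiplication operators). Writing still $\opw Q$ for its self-adjoint closure, $\mathscr S(\R^{n})$ is a core for it and $\opw Q\ge 0$, so the variational description of the bottom of the spectrum combined with density of $\mathscr S(\R^{n})$ in the domain of $\opw Q$ for the graph norm — for each $\varepsilon>0$ one picks, via the spectral theorem, a unit vector in the domain of $\opw Q$ with Rayleigh quotient at most $\inf\spectrum\{\opw Q\}+\varepsilon$ and approximates it in the graph norm by Schwartz functions — gives
\begin{equation*}
\inf_{\substack{w\in\mathscr S(\R^{n})\\ \norm{w}_{L^{2}(\R^{n})}=1}}\poscal{\opw Q w}{w}_{L^{2}(\R^{n})}=\inf\Bigl[\spectrum\bigl\{\opw Q\bigr\}\Bigr].
\end{equation*}
Combining the two displays yields \eqref{ine005}.

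The only delicate point is this last identification, namely that $\mathscr S(\R^{n})$ is a core for the self-adjoint realisation of $\opw Q$ (equivalently that $\opw Q$ is essentially self-adjoint on $\mathscr S(\R^{n})$, so that there is no ambiguity in $\spectrum\{\opw Q\}$). When $C$ is singular the form $Q$ degenerates — its radical is $\{0\}\times\ker C$ — so Williamson's diagonalisation does not apply verbatim; but the symplectic normal form of a nonnegative quadratic form still reduces $\opw Q$ to an operator for which essential self-adjointness on Schwartz space is elementary, and this property is transported back by the metaplectic conjugation of Theorem \ref{thm.3216++}. Alternatively, in the degenerate directions the bottom of the spectrum is approached along Schwartz functions by the concentration argument already used to prove \eqref{213213}, which is all that is needed for the inequality \eqref{ine005}.
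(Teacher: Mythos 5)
Your argument is correct and follows essentially the same route as the paper's proof, which simply asserts that \eqref{ine005} ``follows immediately from \eqref{ine003} and the definition of $w$.'' You make explicit the two implicit steps: that $v\mapsto w=e^{i\pi CD^{2}}v$ is a bijection of the unit sphere of $\mathscr S(\R^{n})$, and that the infimum of the Rayleigh quotient of $\opw{Q}$ over unit Schwartz functions coincides with $\inf\bigl[\spectrum\{\opw{Q}\}\bigr]$; the second point, which the paper takes for granted, is where your attention to the core/essential self-adjointness issue is well placed, and it is indeed the direction $\inf_{w\in\mathscr S,\norm w=1}\poscal{\opw{Q}w}{w}\le\inf[\spectrum\{\opw{Q}\}]$ that needs justification (the reverse is automatic since $\mathscr S(\R^{n})\subset D(\opw{Q})$). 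Your remark that one can alternatively bypass the abstract core argument by producing explicit Schwartz minimizing sequences is also in line with what the paper actually does in its later lemmas (cf.\ the proof of Proposition \ref{pro.app123}).
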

\begin{proof}
 Inequality \eqref{ine005} follows immediately from \eqref{ine003} and the definition of $w$ in
 Claim \ref{cla.251}.
 \end{proof}
\begin{lemma}
 Let $C$ be a symmetric real $n\times n$ matrix and let $\Omega$ be a symmetric
 positive-definite $n\times n$ matrix. We define the operator
 \begin{equation}\label{}
\mathcal H_{C,\Omega}=\OPW{\norm{\Omega x}^{2}+\norm{C \xi+x}^{2}},
\end{equation}
where $\norm{\cdot}$ stands for the canonical Euclidean norm on $\R^{n}$.
Then we have
\begin{equation}\label{}
\inf_{\substack{v\in \mathscr S(\R^{n})\\ \norm{v}_{L^{2}(\R^{n})}=1}}
\poscal{\mathcal H_{C,\Omega}v}{v}_{L^{2}(\R^{n})}\le \frac1{2\pi}\trace{\sqrt{C \Omega^{2}C}}.
\end{equation}
\end{lemma}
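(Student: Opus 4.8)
The plan is to show that the bottom of the spectrum of $\mathcal H_{C,\Omega}=\opw{q}$, where $q(x,\xi)=\poscal{\Omega^{2}x}{x}+\norm{C\xi+x}^{2}$ is a non-negative quadratic form on $\RZ$, is at most $\tfrac1{2\pi}\trace\sqrt{C\Omega^{2}C}$. Since only the inequality $\le$ is needed (the reverse one is already contained in \eqref{ine002}), it suffices to produce, for every $\varepsilon>0$, a unit vector $v\in\mathscr S(\R^{n})$ with $\poscal{\opw{q}v}{v}\le \tfrac1{2\pi}\trace\sqrt{C\Omega^{2}C}+\varepsilon$.

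First I would write $q$ as a sum of squares of the real linear forms $m_{j}(x,\xi)=\poscal{\Omega e_{j}}{x}$ and $\ell_{j}(x,\xi)=\poscal{e_{j}}{x}+\poscal{Ce_{j}}{\xi}$, $1\le j\le n$, so that by Claim \ref{cla.lin001},
$$\opw{q}=\sum_{1\le j\le n}\opw{m_{j}}^{2}+\sum_{1\le j\le n}\opw{\ell_{j}}^{2},$$
a sum of squares of self-adjoint operators. Using $\opw{f}\opw{g}=\opw{fg}+\tfrac1{4i\pi}\{f,g\}$ for linear forms together with elementary Poisson bracket computations gives $[\opw{m_{j}},\opw{m_{k}}]=[\opw{\ell_{j}},\opw{\ell_{k}}]=0$ and $[\opw{m_{j}},\opw{\ell_{k}}]=\tfrac{i}{2\pi}(\Omega C)_{jk}$. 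Next I pick a singular value decomposition $\Omega C=U\Sigma V^{*}$ with $U,V\in O(n)$ and $\Sigma=\diag(s_{1},\dots,s_{r},0,\dots,0)$, $s_{j}>0$, $r=\rank C$, and replace $\opw{m_{j}},\opw{\ell_{j}}$ by the orthogonally rotated families $\opw{m'_{j}}=\sum_{i}U_{ij}\opw{m_{i}}$, $\opw{\ell'_{j}}=\sum_{i}V_{ij}\opw{\ell_{i}}$; orthogonality of $U,V$ leaves the two sums above unchanged, while now $[\opw{m'_{j}},\opw{\ell'_{k}}]=\tfrac{i s_{j}}{2\pi}\delta_{jk}$, the $\opw{m'_{j}}$ still mutually commute and the $\opw{\ell'_{j}}$ still mutually commute. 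In terms of the rotated linear forms, the only non-zero Poisson brackets are $\{m'_{j},\ell'_{j}\}=-s_{j}$ for $j\le r$.

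This puts $q$ into a symplectic normal form: the $2r$ forms $s_{j}^{-1/2}m'_{j},s_{j}^{-1/2}\ell'_{j}$ ($j\le r$) have canonical Poisson brackets, so they can be completed to a symplectic coordinate system on $\RZ$ in which, moreover, the remaining forms $m'_{j},\ell'_{j}$ ($j>r$) — which Poisson-commute with everything produced so far and with one another, hence span an isotropic subspace — become functions of the last $n-r$ momentum coordinates only. Thus there is $\chi\in\textsl{Sp}(2n,\R)$ with $q=q_{0}\circ\chi$, where $q_{0}(y,\eta)=\sum_{1\le j\le r}s_{j}(y_{j}^{2}+\eta_{j}^{2})+R(\eta_{r+1},\dots,\eta_{n})$ with $R\ge 0$ a quadratic form. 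By the symplectic covariance of the Weyl calculus (Theorem \ref{thm.3216++}), $\opw{q}$ is unitarily equivalent to $\opw{q_{0}}$, and on $L^{2}(\R^{r})\otimes L^{2}(\R^{n-r})$ the latter equals $\opw{\sum_{j\le r}s_{j}(y_{j}^{2}+\eta_{j}^{2})}\otimes\Id+\Id\otimes R(D_{y''})$. Taking $v$ to be the tensor product of the Gaussian ground states of the oscillators $\opw{s_{j}(y_{j}^{2}+\eta_{j}^{2})}$ with a function whose Fourier transform is concentrated near the origin in the last $n-r$ variables yields, for every $\varepsilon>0$,
$$\poscal{\opw{q}v}{v}\le \frac1{2\pi}\sum_{1\le j\le r}s_{j}+\varepsilon=\frac1{2\pi}\trinorm{\Omega C}_{\mathtt{S}^{1}}+\varepsilon=\frac1{2\pi}\trace\sqrt{C\Omega^{2}C}+\varepsilon,$$
where I use that the bottom of the spectrum of $\opw{\sum_{j\le r}s_{j}(y_{j}^{2}+\eta_{j}^{2})}$ is $\tfrac1{2\pi}\sum_{j\le r}s_{j}$ (by the same computation as the one following \eqref{9854hh}) and that $R(D_{y''})\ge 0$ has $0$ at the bottom of its spectrum. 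Letting $\varepsilon\to 0$ proves the lemma.

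The main obstacle is the singular case $r<n$: the reduction of a degenerate non-negative quadratic form to the normal form $q_{0}$ above — in particular, that the leftover isotropic family $\{m'_{j},\ell'_{j}\}_{j>r}$ is simultaneously conjugated into the momentum variables by a single symplectic transformation, and that the resulting block is a non-negative Fourier multiplier — is where the care is needed. When $C$ is invertible ($r=n$) there is no leftover block, and one can even bypass symplectic covariance: the complex Gaussian $v_{\Gamma}(x)=2^{n/4}(\det\Gamma_{2})^{1/4}e^{i\pi\poscal{\Gamma x}{x}}$ with $\Gamma=\Gamma_{1}+i\Gamma_{2}$, $\Gamma_{1},\Gamma_{2}$ real symmetric and $\Gamma_{2}>0$, satisfies $\norm{v_{\Gamma}}_{L^{2}}=1$ and
$$\poscal{\opw{q}v_{\Gamma}}{v_{\Gamma}}=\frac1{4\pi}\Bigl[\trace\bigl((\Omega^{2}+I)\Gamma_{2}^{-1}\bigr)+2\trace(C\Gamma_{1}\Gamma_{2}^{-1})+\trace(\Gamma_{1}C^{2}\Gamma_{1}\Gamma_{2}^{-1})+\trace(C^{2}\Gamma_{2})\Bigr];$$
minimizing first over $\Gamma_{1}$ (optimal value $\Gamma_{1}=-C^{-1}$, which collapses the bracket to $\trace(\Omega^{2}\Gamma_{2}^{-1})+\trace(C^{2}\Gamma_{2})$) and then over $\Gamma_{2}>0$ by the same elementary matrix optimization used in the proof of Proposition \ref{pro.baba} (via the polar decomposition, Proposition \ref{pro.54128u}) gives exactly $\tfrac1{2\pi}\trace\sqrt{C\Omega^{2}C}$, recovering the bound with equality in that case.
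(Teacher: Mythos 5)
Your proof is correct and takes a genuinely different route from the paper's. The paper proceeds by orthogonally diagonalizing $C$ to the block form $\begin{pmatrix}\Lambda&0\\0&0\end{pmatrix}$ with $\Lambda$ invertible $r\times r$, observing that the resulting Weyl symbol is independent of $\xi_{2}$, and then invoking Proposition \ref{pro.app123} (a ``freezing'' lemma: for a non-negative quadratic symbol independent of $\xi_{2}$, the bottom of the spectrum equals $\inf_{x_{1}}$ of the ground states of the $x_{1}$-frozen lower-dimensional operators) to reduce to the $r$-dimensional invertible-$C$ case, which is treated as a known base case. You instead decompose $q$ as a sum of $2n$ squares of linear forms, compute their Poisson brackets, rotate via a singular value decomposition of $\Omega C$ to Poisson-diagonalize the off-diagonal brackets (the $s_{j}$ are precisely the singular values, hence $\sum s_{j}=\trace\sqrt{C\Omega^{2}C}$), and put $q$ into a symplectic normal form: $r$ decoupled harmonic oscillators plus a non-negative quadratic form in the remaining momentum variables. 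This is the Williamson normal form of a degenerate non-negative quadratic form, derived directly and handling $r=n$ and $r<n$ on the same footing. What each buys: the paper's freezing lemma is a clean reusable tool but requires an independently established invertible base case; your approach is self-contained and structural, exposing the symplectic invariants directly, at the cost of a slightly delicate symplectic linear algebra step — namely, your tacit claim that the residual isotropic family $\{m'_{j},\ell'_{j}\}_{j>r}$ can be placed into the last $n-r$ momentum variables. That step is correct, but it hinges on the dimension count that this isotropic subspace lies inside the symplectic orthocomplement of the ``active'' $2r$-plane, a $2(n-r)$-dimensional symplectic space, whence its dimension is at most $n-r$ and it extends to a Lagrangian there which can be carried onto $\mathrm{span}\{\eta_{r+1},\dots,\eta_{n}\}$; you should state this count explicitly, since without it the claim ``become functions of the last $n-r$ momentum coordinates'' could seem to require that the $2(n-r)$ leftover forms are independent, which in general they are not (for $r<n/2$ they cannot be). Your bypass for the invertible case via direct computation on the complex Gaussians $v_{\Gamma}$ is also fine and gives equality there, as expected.
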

\begin{nb}
 The lemma is already proven  when $C$ is $0$ from Remark \ref{rem.0013},
 and also when $n=1$ from Lemma \ref{lem.appytr} in our Appendix.
\end{nb}
\begin{proof}
Taking into account the above {\it Nota Bene},
we may assume that $C$ is a symmetric matrix with rank $r\in \llbracket 1,n\rrbracket$ with $n\ge 2$.
We may as well suppose that 
$$
C=\begin{pmatrix*}[l]
\Lambda&0_{r,n-r}
\\
0_{r, n-r}&0_{n-r,n-r}
\end{pmatrix*},
\qquad
\Omega=\begin{pmatrix*}[l]
\beta_{11}^{r,r}&\beta_{12}^{r,n-r}
\\
{\beta_{12}^{*}}^{\!\!n-r,r}&\beta_{22}^{n-r,n-r}
\end{pmatrix*},
$$
with $\Lambda=\diag(\lambda_{1}, \dots, \lambda_{r})$ with all $\lambda_{j}\not=0$,
$\Omega$ symmetric positive-definite, which implies in particular that $\beta_{11}$ is positive-definite.
Now the Weyl symbol of $\mathcal H_{C,\Omega}$ is,
with obvious notations\footnote{ The norm $\norm{\cdot}_{r}$ stands for an Euclidean norm on $\R^{r}$.}
$$
a(x_{1}, x_{2}, \xi_{1})=\norm{\beta_{11}x_{1}+\beta_{12}x_{2}}^{2}_{r}
+\norm{\beta_{12}^{*}x_{1}+\beta_{22}x_{2}}_{n-r}^{2}+\norm{\Lambda \xi_{1}+x_{1}}^{2}_{r}
+\norm{x_{2}}^{2}_{n-r}.
$$
Since it does not depend on $\xi_{2}$,
we may apply Proposition 
\ref{pro.app123}
and take advantage of the fact that
\begin{equation}\label{ine008}
\inf\bigl[\spectrum\bigl(\mathcal H_{C,\Omega}\bigr)\bigr]\le \inf
\bigl[\spectrum\bigl(
\ops{w,r}{a(x_{1}, 0, \xi_{1})}\bigr)\bigr].
\end{equation}
We have in fact 
\begin{multline*}
a(x_{1}, 0, \xi_{1})=
\norm{\beta_{11}x_{1}}^{2}_{r}
+\norm{\beta_{12}^{*}x_{1}}_{n-r}^{2}+\norm{\Lambda \xi_{1}+x_{1}}^{2}_{r}
\\=
\poscal{(\beta_{11}^{2}+\beta_{12}\beta_{12}^{*})x_{1}}{x_{1}}_{r}+
\norm{\Lambda \xi_{1}+x_{1}}^{2}_{r}.
\end{multline*}
 We note that $(\beta_{11}^{2}+\beta_{12}\beta_{12}^{*})$ is positive-definite,
 so applying the result in dimension $r<n$, we get that
 \begin{multline}\label{}
 \inf
\bigl[\spectrum\bigl(
\ops{w,r}{a(x_{1}, 0, \xi_{1})}\bigr)\bigr]
\le 
 \frac1{2\pi}\trace\sqrt{\Lambda(\beta_{11}^{2}+\beta_{12}\beta_{12}^{*})\Lambda}
 \\
 = \frac1{2\pi}\trace\sqrt{C\Omega^{2}C},
\end{multline}
and thanks to \eqref{ine008},
this gives the proof of the lemma.
 \end{proof}
 We can now apply this lemma  to the case where
$\Omega=\val{B^{-1}}$, where $B$ is the invertible matrix given in Lemma
\ref{lem.nor456} and we find that,
with $Q$ defined in Claim \ref{cla.252}, we have 
\begin{multline*}
\inf\Bigl[\spectrum 
\bigl\{\opw{Q}\bigr\}\Bigr]
\le \frac1{2\pi}\trace{\sqrt{C{\val{B^{-1}}}^{2}C}}
=\frac1{2\pi}\trace{\sqrt{C B^{*-1}B^{-1}C}}
\\
=\frac1{2\pi}\trinorm{B^{-1}C}_{\mathtt{S}^{1}},
\end{multline*}
which implies,
thanks to Claim \ref{cla.252},
that 
$$
\mu\bigl(\wt{\mathcal M}\bigr)\le\frac1{4\pi}\trinorm{B^{-1}C}_{\mathtt{S}^{1}},
$$
and according to Claim \ref{cla.251}, this finally gives the proof of Theorem \ref{thm.21poii}.
\end{proof}
\section{Appendix}
\subsection{Classical analysis}
\subsubsection{Fourier transform.}\label{sec.app001}
\index{Fourier transform}
 We use in this paper  the following normalization for the Fourier transform and inversion formula: for $u\in \mathscr S(\R^{n})$,
 \begin{equation}\label{fourier}
\hat u(\xi)=\int_{\R^{n}} e^{-2i \pi x\cdot \xi} u(x) dx,\quad u(x)=\int_{\R^{n}} e^{2i \pi x\cdot \xi} \hat u(\xi) d\xi,
\end{equation}
a formula that can be extended to $u\in \mathscr S'(\R^{n})$,
with defining the distribution $\hat u$ by the duality bracket  
\begin{equation}\label{}
\poscal{\hat u}{\phi}_{\mathscr S'(\R^{n}), \mathscr S(\R^{n})}=\poscal{u}{\hat \phi}_{\mathscr S'(\R^{n}), \mathscr S(\R^{n})}.
\end{equation}
Checking \eqref{fourier} for $u\in \mathscr S'(\R^{n})$ is then easy, that  is
\begin{equation}\label{inve66}
\check{\hat{\hat u}}=u,
\end{equation}
where the distribution $\check u$ is defined by
\begin{equation}\label{}
\poscal{\check u}{\phi}_{\mathscr S'(\R^{n}), \mathscr S(\R^{n})}=\poscal{u}{\check \phi}_{\mathscr S'(\R^{n}), \mathscr S(\R^{n})}, \quad \text{with}\ \check \phi(x)=\phi(-x).
\end{equation}
It is useful to notice that for $u\in \mathscr S'(\R^{n})$,
\begin{equation}\label{inve55}
\check{\hat u}=\hat{\check u}.
\end{equation}
Our  normalization yields simple formulas for the Fourier transform of Gaussian functions: for $A$ a real-valued symmetric  positive definite  $n\times n$ matrix, we define the function $v_{A}$ in the Schwartz space by
\begin{equation}\label{gau135}
v_{A}(x)= e^{-\pi\poscal{Ax}{x}},\quad \text{and we have }\quad \widehat{v_{A}}(\xi)=(\det A)^{-1/2}
 e^{-\pi\poscal{A^{-1}\xi}{\xi}}.
\end{equation}
Similarly when $B$ is 
a real-valued symmetric  non-singular  $n\times n$ matrix, the function $w_{B}$
defined by
$$
w_{B}(x)=e^{i\pi\poscal{Bx}{x}}
$$
is in $L^{\io}(\R^{n})$ and thus a tempered distribution and we have 
\index{signature}
\index{index}
\begin{equation}\label{foimga}
 \widehat{w_{B}}(\xi)={\val{\det B}}^{-1/2} e^{\frac{i\pi}{4}\sign B}
 e^{-i\pi\poscal{B^{-1}\xi}{\xi}},
\end{equation}
where $\sign B$
stands for the {\it signature of $B$} that is, with $\mathtt E$ the set of eigenvalues of $B$
(which are real and non-zero),
\begin{equation}\label{ind001}
\sign B=\underbrace{\card (\mathtt E\cap \R_{+})}_{\nu_{+}(B)}-\underbrace{\card (\mathtt E\cap \R_{-})}_{\nu_{-}(B)=\inde{(B)}}.
\end{equation}
The integer $\nu_{-}(B)$ is called the \emph{index} of $B$, noted $\inde{(B)}$; Formula 
\eqref{foimga} can be written as 
\begin{equation}\label{foimga++}
e^{-i\pi n/4}\mathcal F\left(e^{i\pi\poscal{Bx}{x}}\right)=i^{-{\inde{B}}}
{\val{\det B}}^{-1/2} 
e^{-i\pi\poscal{B^{-1}\xi}{\xi}},
\end{equation}
since $\nu_{+}+\nu_{-}=n$ (as $B$ is non-singular), 
$$
e^{\frac {i\pi n}4} e^{-\frac{i\pi\nu_{-}}2}=e^{\frac{i\pi}4(\nu_{+}+\nu_{-}-2\nu_{-})}
=e^{\frac{i\pi}4\sign (B)}.
$$
We note also that
\begin{equation}\label{ind002}
\sign(\det B)=(-1)^{\inde{B}},
\end{equation}
so that
$$
\left(i^{-{\inde{B}}}
{\val{\det B}}^{-1/2}
\right)^{2}=(-1)^{\nu_{-}}{\val{\det B}}^{-1}=\sign(\det B)
{\val{\det B}}^{-1}=(\det B)^{-1},
$$
and thus  the prefactor $i^{-{\inde{B}}}{\val{\det B}}^{-1/2} $ in the rhs of  \eqref{foimga++} is a square root of
$1/\det B$. 
\subsubsection{Schatten norms, Polar decomposition}\label{sec.11hhff}
\begin{definition}\label{def.54lkjj}
 Let $\mathbb H$ be a (real or complex)
 Hilbert space,
 let $\mathcal B(\mathbb H)$ be the Banach space of bounded linear endomorphisms of $\mathbb H$
 and let $T\in \mathcal B(\mathbb H)$. Using the fact that 
 $T^{*}T$ is a self-adjoint non-negative bounded operator, we may define, using the spectral theorem,
 \begin{equation}\label{}
\val{T}=\sqrt{T^{*}T}.
\end{equation}
Elements of the spectrum of $\val T$ will be called the \emph{singular values} of $T$. Let $p\in [1,+\io]$. 
We define the
\emph{Schatten norm of index $p$ of the operator $T$}, as
\begin{equation}\label{3143ss}
\trinorm{T}_{\mathtt{S}^{p}}=\bigl(\trace(\val T^{p})\bigr)^{1/p}\text{ for $p$ finite},
\quad
\trinorm{T}_{\mathtt{S}^{\io}}=\norm{\val T}_{\mathcal B(\mathbb H)}=\norm{T}_{\mathcal B(\mathbb H)}.
\end{equation}
\end{definition}
\begin{lemma}
 Let $\mathbb H$ and $T$ be as in Definition \ref{def.54lkjj}.
 Then we have 
 \begin{equation}\label{}
\ker T=\ker\val{T}.
\end{equation}
\end{lemma}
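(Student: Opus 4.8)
The plan is to prove the stronger pointwise identity $\norm{Tx}=\norm{\val T x}$ for every $x\in\mathbb H$, from which the equality $\ker T=\ker\val T$ follows immediately. First I would recall what is built into Definition~\ref{def.54lkjj}: by the spectral theorem applied to the non-negative self-adjoint operator $T^{*}T$, the operator $\val T=\sqrt{T^{*}T}$ is itself a bounded non-negative self-adjoint operator satisfying $\val T^{2}=T^{*}T$; in particular $\val T^{*}=\val T$.

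Then, for an arbitrary $x\in\mathbb H$, I would simply compute, using the inner product of $\mathbb H$, the definition of the adjoint, the identity $\val T^{2}=T^{*}T$, and finally the self-adjointness of $\val T$:
$$
\norm{Tx}^{2}=\poscal{Tx}{Tx}=\poscal{T^{*}Tx}{x}=\poscal{\val T^{2}x}{x}=\poscal{\val T x}{\val T x}=\norm{\val T x}^{2}.
$$
Hence $Tx=0$ if and only if $\val T x=0$, i.e. $\ker T=\ker\val T$.

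There is essentially no obstacle: the only minor point to keep in mind is that in the real Hilbert space case $\poscal{\cdot}{\cdot}$ is symmetric bilinear rather than sesquilinear, but the displayed computation uses only that $\poscal{\cdot}{\cdot}$ is an inner product and that $\val T$ is self-adjoint, so it is valid verbatim in both the real and complex settings allowed by Definition~\ref{def.54lkjj}.
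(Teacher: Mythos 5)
Your proof is correct and is essentially the same argument as the paper's: both reduce to the identity $\poscal{T^{*}Tx}{x}=\norm{Tx}^{2}=\norm{\val T x}^{2}$, the paper phrasing it as two separate implications and you packaging it as the single pointwise equality $\norm{Tx}=\norm{\val T x}$. The latter is a marginally cleaner presentation of the identical computation.
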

\begin{proof}
We have, 
 $$
Tx=0\Longrightarrow T^{*}Tx=0\Longrightarrow0=\poscal{T^{*}T x}{x}=\norm{\val Tx}^{2}\Longrightarrow \val Tx =0,
$$
$$
\val Tx=0\Longrightarrow 0= \val T^{2}x=T^{*}Tx\Longrightarrow0=\poscal{T^{*}T x}{x}=\norm{Tx}^{2}\Longrightarrow Tx =0,
$$
concluding the proof.
\end{proof}
\begin{remark}
 The notation $\val T$ could be misleading. In particular we may have $\val T\not=\val{T^{*}}$:
 take for instance the right shift operator $S_{r}$  defined on $\ell ^{2}(\mathbb N)$ by
 $$
x=(x_{0}, x_{1}, \dots, x_{k},\dots)\mapsto(0, x_{0}, x_{1}, \dots, x_{k},\dots)=S_{r}x.
 $$
 The operator $S_{r}$ is isometric since we have obviously $\norm{S_{r}x}_{\ell ^{2}(\mathbb N)}=\norm{x}_{\ell ^{2}(\mathbb N)}$ and thus
 $$
 S_{r}^{*}S_{r}=I, \quad\text{which implies }\val{S_{r}}=I.
 $$
 On the other hand we have with $S_{l}=S_{r}^{*}$
 $$
 \poscal{S_{l}\mathbf e_{j}}{\mathbf e_{k}}_{\ell ^{2}(\mathbb N)}= \poscal{\mathbf e_{j}}{\mathbf e_{k+1}}_{\ell ^{2}(\mathbb N)}=\delta_{j,k+1},
 $$
 so that 
 $
 S_{l}\mathbf e_{0}=0$, and $\forall j\ge 1,   S_{l}\mathbf e_{j}=\mathbf e_{j-1},
 $
 i.e.
  $$
x=(x_{0}, x_{1}, \dots, x_{k},\dots)\mapsto(x_{1}, \dots, x_{k},\dots)=S_{l}x.
 $$
 We have thus
 $
 S_{l}^{*}S_{l}=S_{r}S_{r}^{*}
 $
 and $S_{r}S_{r}^{*}$ is the orthogonal projection onto $\ell ^{2}(\N^{*})$,
 since
 $$\forall j\ge 1, \ 
 S_{r}S_{r}^{*}(\mathbf e_{j})=S_{r}(\mathbf e_{j-1})=\mathbf e_{j},
 \quad
 S_{r}S_{r}^{*}(\mathbf e_{0})=0,
 $$
 implying that 
 $$
 \val{S_{r}^{*}}=I-\mathbb P_{0}, \quad \text{where $\mathbb P_{0}$ is the orthogonal projection onto $\mathcal V(\mathbf e_{0}).$}
 $$
 More generally if $T:\mathbb H\longrightarrow \mathbb H_{0}$ is isometric onto a proper subspace $\mathbb H_{0}$ of $\mathbb H$,
 we have 
 $$
 T^{*}T= \Id_{\mathbb H},\quad\text{implying }\val{T}=\Id_{\mathbb H},
 $$
 whereas $T T^{*}$ is the orthogonal projection onto $\mathbb H_{0}$: indeed, it is an orthogonal projection since $TT^{*}TT^{*}=T T^{*}$ and its range is included in the range of $T$ (which is $\mathbb H_{0}$); also if $y\in \mathbb H_{0}$, we have $y=Tx$ with $x\in \mathbb H$ and thus
 $$
TT^{*}Tx=Tx=y,\text{ implying that $\mathbb H_{0}\subset \range(TT^{*})\subset \range T=\mathbb H_{0}$,}
 $$
 proving that
 $TT^{*}=\mathbb P_{\mathbb H_{0}}$ and $\val{ T^{*}}=\mathbb P_{\mathbb H_{0}}$.
 \par
 Another caveat about the notation $\val{T}$ is that the commutator $[\val T, T]$ can be  different from $0$, even in two dimensions.
We consider the matrix 
 $$
 T=\mat22{1}{1}{0}{1}, \quad\text{so that }T^{*}T=\mat22{1}{1}{1}{2}.
 $$ 
 Then $[\val T, T]\not=0$, otherwise we would have $[T^{*}T, T]=[\val T^{2}, T]=0$ and in fact
$$
 [T^{*}T, T]=\mat22{1}{1}{1}{2}\mat22{1}{1}{0}{1}-\mat22{1}{1}{0}{1}\mat22{1}{1}{1}{2}
 =\mat22{-1}{-1}{0}{1}
 \not=0.
$$
Moreover the operators $\val{T}$ and $\val{T^{*}}$ could be different, even in two dimensions:
using the previous example, we get 
$$
\val T^{2}=T^{*}T=\mat22{1}{1}{1}{2}, \quad 
\val {T^{*}}^{2}=TT^{*}=\mat22{2}{1}{1}{1},
$$
which are two different matrices which are not even commuting
since 
$$
\mat22{1}{1}{1}{2}\mat22{2}{1}{1}{1}=\mat22{3}{2}{4}{3}, \quad
\mat22{2}{1}{1}{1}\mat22{1}{1}{1}{2}=\mat22{3}{4}{2}{3}.
$$
\end{remark}
\begin{nb}
 Nevertheless, we can note that the singular values of $T$ and $T^{*}$ are the same
 (cf. Corollary \ref{cor.310310}).
 \end{nb}
\begin{lemma}\label{lem.2154ee}
 Let $\mathbb H$,
 $\mathcal B(\mathbb H)$ be as in Definition \ref{def.54lkjj}.
 Let $T$ be an invertible operator in $\mathcal B(\mathbb H)$.
 Then the operator $\val T$ is invertible and 
 $$
 U= T{\val{T}}^{-1}\quad \text{is a unitary operator and we have $T=U\val T.$}
 $$
\end{lemma}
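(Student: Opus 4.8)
The plan is to reduce the whole statement to the spectral theorem applied to the self-adjoint non-negative operator $T^{*}T$. First I would note that since $T$ is invertible in $\mathcal B(\mathbb H)$, so is $T^{*}$, and therefore $T^{*}T$ is an invertible self-adjoint operator; moreover it is bounded below, since for every $x\in\mathbb H$ one has $\poscal{T^{*}Tx}{x}=\norm{Tx}^{2}\ge\norm{T^{-1}}^{-2}\norm{x}^{2}$. Consequently the spectrum of $T^{*}T$ is contained in an interval $[c,\norm{T^{*}T}]$ with $c>0$.

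The key step is then to deduce that $\val T=\sqrt{T^{*}T}$ is invertible. Applying the continuous functional calculus for $T^{*}T$ to the map $t\mapsto t^{1/2}$ (continuous and bounded away from zero on $[c,\norm{T^{*}T}]$), I would conclude that the spectrum of $\val T$ lies in $[\sqrt c,\sqrt{\norm{T^{*}T}}]$, so $\val T$ is invertible with $\val T^{-1}$ again a bounded self-adjoint operator, commuting with $\val T$ and with $T^{*}T$, and satisfying $(\val T^{-1})^{*}=\val T^{-1}$ and $\val T^{2}=T^{*}T$.

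With $\val T$ invertible, I would set $U=T\val T^{-1}$ and check unitarity by direct computation: $U^{*}U=\val T^{-1}T^{*}T\val T^{-1}=\val T^{-1}\val T^{2}\val T^{-1}=\Id_{\mathbb H}$, while $UU^{*}=T\val T^{-2}T^{*}=T(T^{*}T)^{-1}T^{*}=TT^{-1}(T^{*})^{-1}T^{*}=\Id_{\mathbb H}$; hence $U$ is unitary. The identity $T=U\val T$ is then immediate from $U\val T=T\val T^{-1}\val T=T$. The only genuine point in the argument is the invertibility of $\val T$, for which the quantitative lower bound $\poscal{T^{*}Tx}{x}\ge\norm{T^{-1}}^{-2}\norm{x}^{2}$ is essential; beyond that the proof is a short computation within the functional calculus, so I do not anticipate any real obstacle.
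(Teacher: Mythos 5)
Your proof is correct and follows essentially the same route as the paper: establish that $T^{*}T$ is invertible and bounded below (the paper records a lower bound $\poscal{T^{*}Tx}{x}\ge\alpha\norm{x}^{2}$, you give the explicit constant $\norm{T^{-1}}^{-2}$), deduce that $\val T$ is invertible, and then verify $U^{*}U=UU^{*}=\Id$ by the identical direct computations. The only cosmetic difference is that you invoke the continuous functional calculus to localize the spectrum of $\val T$, whereas the paper goes directly from the quadratic lower bound to invertibility, but both are standard and equivalent here.
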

\begin{proof}[Proof of the lemma]
 Since $T$ is invertible, we find that $T^{*}$ is invertible as well as  $T^{*}T $: we have 
that
$$(T^{*}T )^{-1}=T^{-1}(T^{*})^{-1}=T^{-1}(T^{-1})^{*}.
$$
As a consequence with $\val T=\sqrt{T^{*} T}$, we find that there exists $\alpha>0$ such that for all $x\in \mathbb H$, we have 
$$
\norm{\val{T}x}^{2}=\poscal{T^{*}T x}{x}\ge \alpha\norm{x}^{2},
$$
and thus $\val T$ is invertible.
Now we get also that 
$$
U^{*}U={\val{T}}^{-1}T^{*} T{\val{T}}^{-1}=(T^{*}T)^{-1/2}T^{*} T(T^{*} T)^{-1/2}=I,
$$
and
$
U U^{*}=T{\val{T}}^{-1}{\val{T}}^{-1} T^{*}= T (T^{*}T)^{-1} T^{*}=TT^{-1}T^{*-1}T^{*}=I,
$
so that $U$ is indeed unitary.
\end{proof}
\begin{proposition}\label{pro.54128u}
 Let $\mathbb H$
 be a finite-dimensional Hilbert space.
 Let $T$ be in $\mathcal B(\mathbb H)$.
 Then there exists a unitary operator $\mathcal U$ such that 
\begin{equation}\label{316app}
 T= \mathcal U\val{T}. 
\end{equation}
\end{proposition}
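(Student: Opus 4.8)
The plan is to treat the invertible case by a direct appeal to Lemma \ref{lem.2154ee}, and to handle the general (possibly non-invertible) case by assembling $\mathcal U$ by hand from an orthogonal decomposition of $\mathbb H$. First I would record the basic structural facts: since $\val T=\sqrt{T^{*}T}$ is self-adjoint and non-negative and $\mathbb H$ is finite-dimensional, $\range\val T$ is closed and $\mathbb H=\range\val T\oplus\ker\val T$ orthogonally. The key identity is $\norm{\val T x}^{2}=\poscal{T^{*}Tx}{x}=\norm{Tx}^{2}$ for every $x\in\mathbb H$, which gives at once $\ker\val T=\ker T$ (as already observed before the proposition) and the fact that the prescription $\val Tx\mapsto Tx$ is a well-defined linear isometry $V\colon\range\val T\to\range T$; it is onto $\range T$ since every $Tx$ equals $V(\val Tx)$. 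In particular $\dim\range\val T=\dim\range T$.

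Next I would produce a unitary identification of the orthogonal complements. By rank--nullity, $\codim\range\val T=n-\dim\range\val T=n-\dim\range T=\codim\range T$, so there is an isometric isomorphism $W\colon(\range\val T)^{\perp}\to(\range T)^{\perp}$ (e.g. send an orthonormal basis of the first space to one of the second). I then define $\mathcal U$ on $\mathbb H=\range\val T\oplus(\range\val T)^{\perp}$ by letting $\mathcal U$ agree with $V$ on $\range\val T$ and with $W$ on $(\range\val T)^{\perp}$. Since $V$ and $W$ are isometries whose ranges $\range T$ and $(\range T)^{\perp}$ are orthogonal and span $\mathbb H$, the operator $\mathcal U$ is an isometry of $\mathbb H$ onto itself, hence unitary; and for all $x$ we have $\mathcal U\val Tx=V(\val Tx)=Tx$, that is $\mathcal U\val T=T$, which is \eqref{316app}.

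The only delicate point is the non-invertible case: one must check that $\val Tx\mapsto Tx$ is well-defined — which is exactly the inclusion $\ker\val T\subseteq\ker T$ — and then extend the resulting partial isometry to a genuine unitary, for which the dimension count $\codim\range\val T=\codim\range T$ is what is really used; in infinite dimensions this last step may fail, and that is where finite-dimensionality enters. I would also note, as an alternative, that one could apply Lemma \ref{lem.2154ee} to the invertible operators $T+\varepsilon\,\Id$ and extract a limit of the corresponding unitaries by compactness of the unitary group of $\mathbb H$; but the direct construction above is shorter and avoids any limiting argument.
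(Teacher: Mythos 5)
Your proof is correct and follows essentially the same strategy as the paper's: decompose $\mathbb H$ orthogonally, build the natural partial isometry on $\range\val T=(\ker T)^{\perp}$, and extend it by an arbitrary isometry between the equidimensional orthogonal complements. The one small difference is that the paper reaches that partial isometry indirectly, by forming the restricted invertible operator $\tilde T\colon(\ker T)^{\perp}\to\range T$, applying Lemma~\ref{lem.2154ee} to it, and keeping track of the auxiliary projection $P$ and injection $J$; your definition of $V$ directly by $\val Tx\mapsto Tx$ (well-defined because $\ker\val T=\ker T$, and isometric because $\norm{\val Tx}=\norm{Tx}$) produces the very same map and is a bit shorter. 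Both arguments correctly identify the equality $\codim\range\val T=\codim\range T$ as the point where finite-dimensionality enters, allowing the partial isometry to be completed to a genuine unitary; your closing remark about the $T+\varepsilon\,\Id$ compactness alternative is also sound, though unnecessary.
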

\begin{proof}
According to Lemma \ref{lem.2154ee}, we may assume that the kernel of $T$ has dimension
$r\in \llbracket 1, n-1\rrbracket$, where $n=\dim \mathbb H$. Then we have 
$$
\mathbb H= \underbrace{\ker T}_{\text{dim }r}\oplus 
\underbrace{(\ker T)^{\perp}}_{\text{dim }n-r}= \underbrace{(\range T)^{\perp}}_{\text{dim } r}\oplus\underbrace{\range T}_{\text{dim }n-r},
$$
and thus
$$\begin{matrix*}[r]
\tilde T: (\ker T)^{\perp}&\longrightarrow& \range T
\\
x&\mapsto&Tx
\end{matrix*}
\quad
\text{ is an isomorphism.}
$$
We have thus, with $P$ standing for the orthogonal projection on $(\ker T)^{\perp}$
(note that $P^{*}$ is the canonical injection\footnote{Indeed we have for $x_{2}\in (\ker T)^{\perp}$, $x_{1}\in \mathbb H$,
$$
\poscal{P^{*}x_{2}}{x_{1}}_{\mathbb H}=\poscal{x_{2}}{Px_{1}}_{(\ker T)^{\perp}}
=\poscal{x_{2}}{x_{1}}_{\mathbb H},\quad\text{since $x_{2}\in(\ker T)^{\perp}.$}
$$ } of $(\ker T)^{\perp}$ into $\mathbb H$)
and $J$ for the canonical injection of $\range T$ into $\mathbb H$
(note that $J^{*}$ is the orthogonal projection from $\mathbb H$ onto $\range T$),
\begin{equation}\label{gf54vv}
T=J \tilde T P,
\end{equation}
so that 
$
T^{*}T=P^{*}\tilde T^{*}\underbrace{J^{*}J}_{\Id_{\range T}} \tilde T P,
$
and for $x=x'\oplus x'', x''=Px$,
$$
T^{*}Tx=P^{*}\tilde T^{*} J^{*}J\underbrace{\tilde T Px}_{\in \range T}=P^{*}{\val{\tilde T}}^{2}P.
$$
We note also that
$
P^{*}\val{\tilde T}PP^{*}\underbrace{\val{\tilde T}Px}_{\in (\ker T)^{\perp}}
=P^{*}\val{\tilde T}\val{\tilde T} Px,
$
proving that
$
\val{T}=P^{*}\val{\tilde T}P.
$
We have in fact, thanks to Lemma \ref{lem.2154ee},
 with a unitary $W$, the following commutative diagram:{\Large
\vs
\[
\xymatrix@R=4pc@C=3pc{\mathbb{H} \ar[rr]^T \ar[d]_P && \mathbb{H}
\\
(\ker T)^{\perp} \ar[dr]_{\val{\tilde{T}}} \ar[rr]^{\tilde{T}} && \range T \ar[u]_J
\\
&(\ker T)^{\perp} \ar[ur]_W
}
\]
\vs}
\no
We have also  from \eqref{gf54vv} and the above  diagram 
$$
T=J W\val{\tilde T} P,
$$
and defining with $R_{0}$ isometric from $\ker T$ onto $(\range T)^{\perp}$
(which are finite-dimensional spaces with the same dimension),
\begin{equation}\label{ggff44}
\mathcal Ux=\mathcal U ((I-P) x\oplus Px)= \underbrace{R_{0}\overbrace{(I-P) x}^{\in \ker T}}_{\in (\range T)^{\perp}}+\underbrace{W \hskip-7pt \overbrace{Px}^{\in (\ker T)^{\perp}}}_{\in \range T},
\end{equation}
we get
$$
\mathcal U \val T x=\mathcal U \overbrace{ P^{*}\underbrace{\val {\tilde T } Px}_{\in (\ker T)^{\perp}}
}^{\in (\ker T)^{\perp}}
=\underbrace{W\val{\tilde T}     \overbrace{Px}^{\in (\ker T)^{\perp}}      }_{\in \range T}=JW\val{\tilde T} Px=Tx,$$
so that 
$$
T=\mathcal U \val{T},
$$
and $\mathcal U$ is unitary
since, using \eqref{ggff44} and the isometries $R_{0}$ and $W$, we get 
$$
\norm{\mathcal U x}^{2}=\norm{R_{0}(I-P)x}^{2}+\norm{W Px}^{2}
=\norm{(I-P)x}^{2}+\norm{Px}^{2}=\norm{x}^{2},
$$
proving that
$
\mathcal U^{*}\mathcal U=I,
$
providing the result 
since $\mathbb H$ is finite dimensional.
\end{proof}
\begin{corollary}\label{cor.310310}
Let $\mathbb H$, $T$ be as in Proposition \ref{pro.54128u}. Then $\val{T}$ and $\val{T^{*}}$ are unitarily conjugate and in particular have  the same spectrum, so that $T$ and $T^{*}$ have the same singular values
and for all $p\in [1,+\io]$, we have 
\begin{equation}\label{}
\trinorm{T^{*}}_{\mathtt{S}^{p}}=\trinorm{T}_{\mathtt{S}^{p}}.
\end{equation}
\end{corollary}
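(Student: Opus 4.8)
The plan is to read everything off the polar decomposition supplied by Proposition~\ref{pro.54128u}: write $T=\mathcal U\val T$ with $\mathcal U$ unitary. Since $\val T$ is self-adjoint, taking adjoints gives $T^{*}=\val T\,\mathcal U^{*}$, so that
\[
\val{T^{*}}^{2}=(T^{*})^{*}T^{*}=T T^{*}=\mathcal U\,\val T\,\mathcal U^{*}\mathcal U\,\val T\,\mathcal U^{*}=\mathcal U\,\val T^{2}\,\mathcal U^{*}.
\]
Next I would invoke the elementary fact that the non-negative square root commutes with unitary conjugation: for $A\ge 0$ and $\mathcal U$ unitary, $\mathcal U\sqrt A\,\mathcal U^{*}$ is non-negative, self-adjoint, and its square is $\mathcal U A\mathcal U^{*}$, so by uniqueness of the non-negative square root (transparent from the spectral theorem, and in the finite-dimensional setting of Proposition~\ref{pro.54128u} simply from diagonalization) it equals $\sqrt{\mathcal U A\mathcal U^{*}}$. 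Applying this with $A=\val T^{2}$ yields $\val{T^{*}}=\mathcal U\,\val T\,\mathcal U^{*}$, which is precisely the asserted unitary conjugacy of $\val T$ and $\val{T^{*}}$.

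From this conjugacy the remaining statements are immediate. Unitarily conjugate operators have the same spectrum, so $\val T$ and $\val{T^{*}}$ have the same spectrum; by Definition~\ref{def.54lkjj} this says exactly that $T$ and $T^{*}$ have the same singular values. For finite $p$ one raises the identity to the $p$-th power, $\val{T^{*}}^{p}=\mathcal U\,\val T^{p}\,\mathcal U^{*}$, and uses cyclicity of the trace to get $\trace(\val{T^{*}}^{p})=\trace(\val T^{p})$, hence $\trinorm{T^{*}}_{{\mathtt{S}^{p}}}=\trinorm{T}_{{\mathtt{S}^{p}}}$; for $p=\io$ one uses instead that the operator norm is invariant under unitary conjugation.

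There is no real obstacle here: once Proposition~\ref{pro.54128u} is in hand the whole argument is a couple of lines, and the only point meriting an explicit word is the commutation of $\sqrt{\cdot\,}$ with unitary conjugation, which is routine.
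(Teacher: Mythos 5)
Your argument is correct and follows the same route as the paper: both use the polar decomposition $T=\mathcal U\val T$ from Proposition~\ref{pro.54128u} to compute $TT^{*}=\mathcal U\val T^{2}\mathcal U^{*}$ and then identify $\val{T^{*}}=\mathcal U\val T\mathcal U^{*}$ by uniqueness of the non-negative square root. Your write-up is slightly more explicit than the paper's (spelling out the square-root uniqueness, the cyclicity of the trace for finite $p$, and the unitary invariance of the operator norm for $p=\io$), but it is the same proof.
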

\begin{proof}
 We have
 $
 T T^{*}=\mathcal U \val{T}\val{T}\mathcal U^{*}=
 \mathcal U {\val{T}}^{2}\mathcal U^{*}
 $
 and 
 $$
  \mathcal U \val{T}\mathcal U^{*} \mathcal U \val{T}\mathcal U^{*}=
   \mathcal U {\val{T}}^{2}\mathcal U^{*},
 $$
 so that 
 $
 \val{T^{*}}=\mathcal U \val{T}\mathcal U^{*},
 $
 proving the sought result.
\end{proof}
\begin{remark}
 As it is apparent from the proof of Proposition \ref{pro.54128u},
 the decomposition \eqref{316app} is not unique,
 since we have many possible choices for the unitary operator $\mathcal U$,
 in particular with the choice of $R_{0}$ in \eqref{ggff44}.
 As a consequence, except in the case where $T$ is invertible,
 it does not coincide with the so-called \emph{polar decomposition} 
 (see e.g. Theorem {\footnotesize VI.10} in the B.~Simon \& M.~Reed' book \cite{MR0751959} and 
 Section 3.9 in J.~Conway's book \cite{MR1721402}) providing a decomposition
 $T=U\val{T}$ where $U$ is a partial isometry.
 However, the fact that $\mathcal U$ in \eqref{316app}
 can be chosen as unitary is helpful for us at several places,
 in particular for the precise computation of the lower bound of an Harmonic Oscillator.
\end{remark}
\subsection{Classical phase space analysis}
\subsubsection{The Classical Heisenberg Uncertainty Principle}\label{sec.ugtf88}
Let $u\in \mathscr S(\R)$. We have,  with $D_{t}=\frac{\p}{2i\pi \p t}$,
\begin{multline}\label{1259pp}
2\re\poscal{D_{t} u}{i t u}_{L^{2}(\R)}=\poscal{D_{t}u}{itu}+\poscal{itu}{D_{t}u}
=\poscal{D_{t}it u-it D_{t}u}{u}
\\
=\poscal{[D_{t}, it] u}{u}_{L^{2}(\R)}=\frac{1}{2\pi}\norm{u}^{2}_{L^{2}(\R)}, 
\end{multline}
implying in particular
\begin{equation}\label{112}
\norm{D_{t}u}_{L^{2}(\R)}\norm{tu}_{L^{2}(\R)}\ge \frac{1}{4\pi}\norm{u}^{2}_{L^{2}(\R)}, 
\end{equation}
which is an equality for $u(t)=e^{-\pi t^{2}}2^{1/4}$; moreover we infer also from 
\eqref{1259pp} that 
\begin{align}\label{}
\poscal{\pi(D_{t}^{2}+t^{2}) u}{u}&\ge \frac{1}{2}\norm{u}^{2}_{L^{2}(\R)}, 
\end{align}
proving that 
\begin{equation}\label{}
\min\bigl[ \spectrum{\pi(D_{t}^{2}+t^{2})}\bigr]=\frac12.
\end{equation}
The core argument in the proof of \eqref{112} is the integration by parts given by the third equality,
taking advantage of the non-commutation  of the operators of momentum $D_{t}$ and of position $t$. 
We can infer a non-trivial generalization of \eqref{112} by noticing that  for $u\in \mathscr S(\R^{n})$, we have 
\[
2\re\poscal{D_{x_{j}}u}{i x_{j}u}_{L^{2}(\R^{n})}=\frac1{2\pi}\norm{u}^{2}_{L^{2}(\R^{n})},
\] 
which implies for $p\in [1,+\io]$, $\frac 1{p}+\frac1{p'}=1$,
\begin{align*}
&\frac n{4\pi}\norm{u}^{2}_{L^{2}(\R^{n})}=\sum_{1\le j\le n}
\re\poscal{D_{x_{j}}u}{i x_{j}u}_{L^{2}(\R^{n})}
\le \sum_{1\le j\le n}\norm{D_{x_{j}} u}_{L^{p}(\R^{n})}
\norm{x_{j} u}_{L^{p'}(\R^{n})}
\\
&\le  \Bigl(\sum_{1\le j\le n}\norm{D_{x_{j}} u}^{p^{}}_{L^{p}(\R^{n})}\Bigr)^{1/p}
  \Bigl(\sum_{1\le j\le n}\norm{x_{j} u}^{{p'}^{}}_{L^{p'}(\R^{n})}\Bigr)^{1/{p'}}
  =\norm{Du}_{L^{p}(\R^{n})}\norm{xu}_{L^{p'}(\R^{n})},
\end{align*}
where for a function $\R^{n}\ni x\mapsto v(x) =\bigl(v_{j}(x)\bigr)_{1\le j\le n}\in\R^{n}$,
we have defined 
\begin{equation}\label{}
\norm{v}_{L^{p}(\R^{n})}=
\left(\int\sum_{1\le j\le n}{\val{v_{j}(x)}}^{p}
dx\right)^{1/p}=
\left(\int\norm{v(x)}_{p}^{p}dx\right)^{1/p}.
\end{equation}
We get thus
\begin{equation}\label{}
\Norm{Du}_{L^{p}(\R^{n})} 
  \Norm{xu(x) }_{L^{p'}(\R^{n})}\ge \frac n{4\pi}\norm{u}^{2}_{L^{2}(\R^{n})},
\end{equation}
and in particular for $p=2$,
since we have with the standard Euclidean norm on $\R^{n}$, and the vector 
$(Du)(x)=\bigl((D_{x_{j}} u)(x)\bigr)_{1\le j\le n}$,
\begin{multline*}
\Norm{D u}_{L^{2}(\R^{n})}^{2}=\int \sum_{1\le j\le n}
{\val{(D_{{x_{j}}}u)(x)}}^{2}dx
\\=\int \poscal{(Du)(x)}{(Du)(x)}_{\R^{n}} dx
=\Norm{\val{Du}}_{L^{2}(\R^{n})}^{2},
\end{multline*}
we get 
\begin{equation}\label{116}
\Norm{\val{D}u}_{L^{2}(\R^{n})} 
  \Norm{\val xu(x) }_{L^{2}(\R^{n})}\ge \frac n{4\pi}\norm{u}^{2}_{L^{2}(\R^{n})},
\end{equation}
with the Fourier multiplier
\begin{equation}\label{}
\val{D}=\bigl(\sum_{1\le j\le n} D_{x_{j}}^{2}\bigr)^{1/2}.\end{equation}
The inequality \eqref{116} can be written as 
\begin{equation}\label{}
\Bigl(\int{\val{\xi}}^{2}{\val{\hat u(\xi)}}^{2} d\xi\Bigr)
\Bigl(\int{\val{x}}^{2}{\val{u(x)}}^{2}dx\Bigr)
\ge \frac {n^{2}}{2^{4}\pi^{2}}\norm{u}^{2}_{L^{2}(\R^{n})},
\end{equation}
and is an equality for the Gaussian function 
\begin{equation}\label{}
u(x)= 2^{n/4} e^{-\pi\norm{x}_{2}^{2}}.
\end{equation}
Indeed we have 
$
\hat u(\xi)=2^{n/4}e^{-\pi\norm{\xi}_{2}^{2}},
$
and 
\begin{align}
2^{n}\Bigl(\int{\val{y}}^{2}e^{-2\pi\val y^{2}}\Bigr)^{2}&=2^{n}
\Bigl(\int_{0}^{+\io} r^{n+1} e^{-2\pi r^{2}} dr\Bigr)^{2}\frac{4 \pi^{n}}{\Gamma(n/2)^{2}}
\notag\\
\text{\scriptsize (using $r=s^{1/2}(2\pi)^{-1/2}$) }\hs&=2^{n}\frac{4 \pi^{n}}{\Gamma(n/2)^{2}}
\frac{1}{(2\pi)^{n+1}}
\Bigl(\int_{0}^{+\io} s^{n/2} e^{-s} \frac{ds}{2^{3/2}\sqrt{\pi}}\Bigr)^{2}
\notag\\
&=
\frac{1}{4\pi^{2}\Gamma(n/2)^{2}}
\Gamma(\frac n2+1)^{2}=\frac{1}{4\pi^{2}}\frac{n^{2}}4=\frac{n^2}{2^4\pi^2}.
\label{oi54ff}
\end{align}
\begin{nb}\textit{
We note that the proof of \eqref{116} in dimensions larger than $2$ does not follow imme\-diately from an integration by parts since we have to deal with proving the inequality
\[
\Norm{\val D u}_{L^2(\R^n)}\Norm{\val x u}_{L^2(\R^n)}\ge \frac{n}{4\pi}\Norm{ u}_{L^2(\R^n)}^2.
\]
When writing
\[
2\re\poscal{\val D u}{i\val x u}=\poscal{[\val D,i\val x] u}{u},
\]
the bracket $[\val D,i\val x]$ is not so easy to calculate
and we are not really able to find a direct $n$-dimensional argument to handle Inequality \eqref{116}.
}
\end{nb}
\begin{remark}\label{rem.kjda99}
 If $M$ is a unitary operator in $L^{2}(\R^{n})$, then we have
 \begin{equation}\label{}
\mu(M)=\mu(M^{*}),
\end{equation}
where $\mu$ is defined by \eqref{mesure++}.
Indeed, we have 
\begin{align*}
\mu(M^{*})^{2}&=\inf_{\substack{u\in L^{2}(\R^{n})\\\norm{u}_{L^{2}(\R^{n})=1}}}
\bigl(\mathcal V(M^{*}u)\mathcal V(u)\bigr)
\\
&=\inf_{\substack{v\in L^{2}(\R^{n})\\\norm{v}_{L^{2}(\R^{n})=1}}}
\bigl(\mathcal V(M^{*}Mv)\mathcal V(Mv)\bigr)
=\inf_{\substack{v\in L^{2}(\R^{n})\\\norm{v}_{L^{2}(\R^{n})=1}}}
\bigl(\mathcal V(v)\mathcal V(Mv)\bigr)=\mu(M)^{2}.
\end{align*}
\end{remark}
\begin{proposition}\label{pro.app123}
 Let $n_{1}, n_{2}\in \N^{*}$ and let $n=n_{1}+n_{2}$. Let $Q(x_{1}, x_{2},\xi_{2})$ be a non-negative quadratic form on $\R^{n_{1}}\times \R^{n_{2}}\times \R^{n_{2}}$. We define
 \begin{equation}\label{}
\lambda_{0}(x_{1})=\inf\Bigl[\spectrum 
\bigl\{\OPS{w,n_{2}}{Q(x_{1}, \cdot,\cdot)}\bigr\}\Bigr],
\end{equation}
where $\ops{w,n_{2}}{b}$ stands for the Weyl quantization of the Hamiltonian $b$
defined on $ \R^{n_{2}}\times \R^{n_{2}}$.
Then we have
\begin{equation}\label{}
\inf\Bigl[\spectrum 
\bigl\{\OPS{w,n}{Q}\bigr\}\Bigr]=\inf_{x_{1}\in \R} \lambda_{0}(x_{1}).
\end{equation}
\end{proposition}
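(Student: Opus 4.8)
The plan is to recognise $\ops{w,n}{Q}$ as an operator that is \emph{decomposable} along the variable $x_{1}$, and then to read off the bottom of its spectrum from the fibres. The starting observation is that, since the quadratic form $Q(x_{1},x_{2},\xi_{2})$ does not depend on $\xi_{1}$, the operator $\ops{w,n}{Q}$ commutes with every multiplication operator in $x_{1}$ and acts fibrewise: identifying $L^{2}(\R^{n})$ with $L^{2}\bigl(\R^{n_{1}}_{x_{1}};L^{2}(\R^{n_{2}}_{x_{2}})\bigr)$, one has for $u\in\mathscr S(\R^{n})$
\[
\bigl(\ops{w,n}{Q}u\bigr)(x_{1},\cdot)=\ops{w,n_{2}}{Q(x_{1},\cdot,\cdot)}\bigl(u(x_{1},\cdot)\bigr).
\]
I would obtain this from the weak definition \eqref{eza654}, using that integrating the Wigner distribution $\mathcal W(u,v)$ with respect to $\xi_{1}$ amounts, by Fourier inversion in $z_{1}$, to setting $z_{1}=0$, which reproduces the $n_{2}$-dimensional Wigner distribution $\mathcal W\bigl(u(x_{1},\cdot),v(x_{1},\cdot)\bigr)(x_{2},\xi_{2})$ of the slices. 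Taking $v=u$, this gives for every $u\in\mathscr S(\R^{n})$
\[
\poscal{\ops{w,n}{Q}u}{u}_{L^{2}(\R^{n})}=\int_{\R^{n_{1}}}\poscal{\ops{w,n_{2}}{Q(x_{1},\cdot,\cdot)}u(x_{1},\cdot)}{u(x_{1},\cdot)}_{L^{2}(\R^{n_{2}})}\,dx_{1}.
\]

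Since $Q$, hence each frozen form $Q(x_{1},\cdot,\cdot)$, is a non-negative quadratic form, the Weyl quantizations $\ops{w,n}{Q}$ and $\ops{w,n_{2}}{Q(x_{1},\cdot,\cdot)}$ are non-negative and essentially self-adjoint on $\mathscr S$, so that the bottom of the spectrum of either equals the infimum of its quadratic form over unit vectors of $\mathscr S$. With this, the lower bound is immediate: for $u\in\mathscr S(\R^{n})$ with $\norm{u}_{L^{2}(\R^{n})}=1$, bounding each fibre by
\[
\poscal{\ops{w,n_{2}}{Q(x_{1},\cdot,\cdot)}u(x_{1},\cdot)}{u(x_{1},\cdot)}\ge\lambda_{0}(x_{1})\norm{u(x_{1},\cdot)}^{2}\ge\Bigl(\inf_{t}\lambda_{0}(t)\Bigr)\norm{u(x_{1},\cdot)}^{2}
\]
and integrating the identity above yields $\poscal{\ops{w,n}{Q}u}{u}\ge\inf_{t}\lambda_{0}(t)$, hence $\inf\bigl[\spectrum\{\ops{w,n}{Q}\}\bigr]\ge\inf_{t}\lambda_{0}(t)$.

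For the reverse inequality I would concentrate test functions in $x_{1}$. Fix $t_{0}\in\R^{n_{1}}$ and $\varepsilon>0$, choose $v\in\mathscr S(\R^{n_{2}})$ with $\norm{v}_{L^{2}(\R^{n_{2}})}=1$ and $\poscal{\ops{w,n_{2}}{Q(t_{0},\cdot,\cdot)}v}{v}\le\lambda_{0}(t_{0})+\varepsilon$, and set $u_{\delta}(x_{1},x_{2})=\delta^{-n_{1}/2}\phi\bigl((x_{1}-t_{0})/\delta\bigr)v(x_{2})$ with $\phi\in\mathscr S(\R^{n_{1}})$, $\norm{\phi}_{L^{2}(\R^{n_{1}})}=1$, so that $\norm{u_{\delta}}_{L^{2}(\R^{n})}=1$. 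Inserting $u_{\delta}$ into the fibre identity and substituting $x_{1}=t_{0}+\delta s$ gives
\[
\poscal{\ops{w,n}{Q}u_{\delta}}{u_{\delta}}=\int_{\R^{n_{1}}}\poscal{\ops{w,n_{2}}{Q(t_{0}+\delta s,\cdot,\cdot)}v}{v}\,{\val{\phi(s)}}^{2}\,ds.
\]
Because $Q$ is a quadratic form, the map $t\mapsto\poscal{\ops{w,n_{2}}{Q(t,\cdot,\cdot)}v}{v}$ is a polynomial of degree at most two in $t$, so the right-hand side equals $\poscal{\ops{w,n_{2}}{Q(t_{0},\cdot,\cdot)}v}{v}+O(\delta)$ as $\delta\to0^{+}$ (the correction involves only the first two moments of $\phi$). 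Letting $\delta\to0$ gives $\inf\bigl[\spectrum\{\ops{w,n}{Q}\}\bigr]\le\lambda_{0}(t_{0})+\varepsilon$, and since $t_{0}$ and $\varepsilon$ are arbitrary, $\inf\bigl[\spectrum\{\ops{w,n}{Q}\}\bigr]\le\inf_{t}\lambda_{0}(t)$, which finishes the proof.

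The step I expect to be the main obstacle is the spectral-theoretic one: justifying that the bottom of the spectrum coincides with the infimum of the quadratic form over $\mathscr S$, i.e. that these non-negative quadratic Hamiltonians are essentially self-adjoint with $\mathscr S$ a form core. This is classical for quadratic Hamiltonians — it follows from Williamson-type normal forms together with the symplectic covariance of Theorem \ref{thm.3216++} (which conjugates $\ops{w,n}{Q}$, via an automorphism of $\mathscr S$, to a sum of harmonic oscillators and of operators of the form $\sum D_{x_{j}}^{2}$), and it already underlies the harmonic-oscillator lower bounds used earlier in the paper; everything else is the bookkeeping of the decomposable structure and a routine concentration estimate.
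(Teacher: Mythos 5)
Your proof is correct and takes essentially the same route as the paper: both establish the key fibre identity
\[
\poscal{\ops{w,n}{Q}u}{u}_{L^{2}(\R^{n})}=\int_{\R^{n_{1}}}\poscal{\ops{w,n_{2}}{Q(x_{1},\cdot,\cdot)}u(x_{1},\cdot)}{u(x_{1},\cdot)}_{L^{2}(\R^{n_{2}})}\,dx_{1},
\]
deduce the lower bound by bounding each fibre below by $\inf\lambda_{0}$, and obtain the upper bound by concentrating a tensor test function in $x_{1}$ and passing to the limit. You derive the fibre identity by integrating the Wigner distribution over $\xi_{1}$; the paper does the equivalent direct computation with the Weyl kernel, so this is cosmetic. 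Your observation that $t\mapsto\poscal{\ops{w,n_{2}}{Q(t,\cdot,\cdot)}v}{v}$ is a polynomial of degree $\le 2$, giving an $O(\delta)$ rate, is a small refinement over the paper's qualitative limit, and your explicit flagging of the form-core/essential-self-adjointness point (which the paper leaves implicit, identifying $\inf\spectrum$ with the infimum of the quadratic form over normalized Schwartz functions) is sound and is indeed the one spot where the argument rests on standard facts about quadratic Hamiltonians.
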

\begin{proof}
 Let $v\in \mathscr S(\R^{n_{1}+n_{2}})$: we have
 \begin{align*}
& \poscal{\OPS{w,n}{Q} v}{v}_{L^{2}(\R^{n})} =
 \iiint\hskip-5pt\iiint e^{2i\pi(\poscal{x_{1}-y_{1}}{\xi_{1}}+\poscal{(x_{2}-y_{2})}{\xi_{2}})}
 \\
 &\hskip108pt \times Q\bigl(\frac{x_{1}+y_{1}}{2}, \frac{x_{2}+y_{2}}{2}, \xi_{2}\bigr) v(y_{1}, y_{2})
\overline{v(x_{1}, x_{2})}
 dy_{1}dy_{2}d\xi_{1}d\xi_{2} dx_{1}dx_{2}
 \\
 &=\iint\hskip-5pt\iint e^{2i\pi\poscal{x_{2}-y_{2}}{\xi_{2}}}
 Q\bigl(x_{1}, \frac{x_{2}+y_{2}}{2}, \xi_{2}\bigr) v(x_{1}, y_{2}) 
 \overline{v(x_{1}, x_{2})}dy_{2}dx_{2} d\xi_{2} dx_{1}
 \\
 &=\int_{\R^{n_{1}}}\poscal{\OPS{w,n_{2}}{Q(x_{1},\cdot,\cdot)}v_{x_{1}}}{v_{x_{1}}}_{L^{2}(\R^{n_{2}})} dx_{1},
\end{align*}
with
$
v_{x_{1}}(x_{2})=v(x_{1},x_{2}).
$
As a consequence, we have
\begin{multline*}
 \poscal{\OPS{w,n}{Q} v}{v}_{L^{2}(\R^{n})} \ge \int_{\R^{n_{1}}}
 \lambda_{0}(x_{1})
 \norm{v_{x_{1}}}_{L^{2}(\R^{n_{2}})}^{2}dx_{1}
\\ \ge\inf_{x_{1}\in \R} \lambda_{0}(x_{1})
  \int_{\R^{n_{1}}}\norm{v_{x_{1}}}_{L^{2}(\R^{n_{2}})}^{2}dx_{1}
=
\bigl(\inf_{x_{1}\in \R} \lambda_{0}(x_{1})\bigr)
\norm{v}_{L^{2}(\R^{n})}^{2},
\end{multline*}
proving that
\begin{equation}\label{3345++}
\inf\Bigl[\spectrum 
\bigl\{\OPS{w,n}{Q}\bigr\}\Bigr]\ge \inf_{x_{1}\in \R} \lambda_{0}(x_{1}).
\end{equation}
Let $z_{1}\in \R^{n_{1}}$.
We check with $\varepsilon>0$, 
$$
v_{1}(z_{1},\varepsilon,x_{1})=\phi\bigl((x_{1}-z_{1})/\varepsilon\bigr) \varepsilon^{-n_{1}/2}, \quad \phi\in \mathscr S(\R^{n_{1}}), \quad
\norm{\phi}_{L^{2}(\R^{n_{1}})}=1,
$$
using the first computation in this proof, for $v_{2}\in \mathscr S(\R^{n_{2}})$, 
\begin{multline*}
 \poscal{\OPS{w,n}{Q} (v_{1}(z_{1},\varepsilon,\cdot)\otimes v_{2})}{ (v_{1}(z_{1},\varepsilon,\cdot)\otimes v_{2})}_{L^{2}(\R^{n})} 
 \\=
 \int_{\R^{n_{1}}}\poscal{\OPS{w,n_{2}}{Q(x_{1},\cdot,\cdot)}v_{2}}{v_{2}}_{L^{2}(\R^{n_{2}})} 
{\val{ \phi\bigl((x_{1}-z_{1})/\varepsilon\bigr)}}^{2} 
\varepsilon^{-n_{1}}
 dx_{1}
 \\=
 \int_{\R^{n_{1}}}\poscal{\OPS{w,n_{2}}{Q(z_{1}+\varepsilon t_{1},\cdot,\cdot)}v_{2}}{v_{2}}_{L^{2}(\R^{n_{2}})} 
{\val{ \phi\bigl(t_{1}\bigr)}}^{2} 
 dt_{1},
\end{multline*}
so that
\begin{multline*}
 \lim_{\varepsilon\rightarrow 0_{+}}\poscal{\OPS{w,n}{Q} (v_{1}\otimes v_{2})}{ (v_{1}\otimes v_{2})}_{L^{2}(\R^{n})} 
 \\=
 \int_{\R^{n_{1}}}\poscal{\OPS{w,n_{2}}{Q(z_{1},\cdot,\cdot)}v_{2}}{v_{2}}_{L^{2}(\R^{n_{2}})} 
{\val{ \phi\bigl(t_{1}\bigr)}}^{2} 
 dt_{1}\\
 =\poscal{\OPS{w,n_{2}}{Q(z_{1},\cdot,\cdot)}v_{2}}{v_{2}}_{L^{2}(\R^{n_{2}})}.
\end{multline*}
Let $\gamma>0$ be given.
We may choose
$v_{2}(x_{2})=w(z_{1},\gamma, x_{2})$ such that 
$$\norm{v_{2}(z_{1},\gamma,  \cdot)}_{L^{2}(\R^{n_{2}})} =1,$$ and verifying 
$$
\poscal{\OPS{w,n_{2}}{Q(z_{1},\cdot,\cdot)}v_{2}}{v_{2}}_{L^{2}(\R^{n_{2}})}\le \gamma+\lambda_{0}(z_{1}).
$$
As a consequence, we find that $$
1=\norm{v_{1}(z_{1},\varepsilon, \cdot)\otimes v_{2}(z_{1},\gamma, \cdot)}_{L^{2}(\R^{n})},
$$
and
\begin{multline*}
 \lim_{\varepsilon\rightarrow 0_{+}}\poscal{\OPS{w,n}{Q} (v_{1}(z_{1},\varepsilon, \cdot)\otimes w(z_{1},\gamma, \cdot))}{ v_{1}(z_{1}, \varepsilon, \cdot)\otimes w(z_{1}, \gamma, \cdot)}_{L^{2}(\R^{n})} \\\le \gamma+\lambda_{0}(z_{1}),
\end{multline*}
so that 
\begin{multline*}
\poscal{\OPS{w,n}{Q} (v_{1}(z_{1},\varepsilon, \cdot)\otimes w(z_{1},\gamma, \cdot))}{ v_{1}(z_{1}, \varepsilon, \cdot)\otimes w(z_{1}, \gamma, \cdot)}_{L^{2}(\R^{n})} 
\\
\le \gamma+\lambda_{0}(z_{1})+\beta(z_{1},\gamma, \varepsilon), \quad
\lim_{\varepsilon\rightarrow 0_{+}}\beta(z_{1},\gamma, \varepsilon)=0.
\end{multline*}
The previous inequality implies that for $\gamma>0,\varepsilon>0, z_{1}\in \R^{n_{1}}$,
\begin{equation}\label{reza55}
\inf\Bigl[\spectrum 
\bigl\{\OPS{w,n}{Q}\bigr\}\Bigr]\le  \gamma+\lambda_{0}(z_{1})+\beta(z_{1},\gamma, \varepsilon). 
\end{equation}
Taking the $\mathtt{\limsup}$ with respect to $\varepsilon\rightarrow 0_{+}$ on both sides of 
\eqref{reza55},
we obtain  for 
 $\gamma>0, z_{1}\in \R^{n_{1}}$,
\begin{equation}\label{reza66}
\inf\Bigl[\spectrum 
\bigl\{\OPS{w,n}{Q}\bigr\}\Bigr]\le  \gamma+\lambda_{0}(z_{1}).
\end{equation}
Taking then the $\mathtt{\limsup}$ with respect to $\gamma\rightarrow 0_{+}$ on both sides of 
\eqref{reza66},
we get for 
 $z_{1}\in \R^{n_{1}}$,
\begin{equation}\label{reza77}
\inf\Bigl[\spectrum 
\bigl\{\OPS{w,n}{Q}\bigr\}\Bigr]\le \lambda_{0}(z_{1}).
\end{equation}
proving the converse inequality of \eqref{3345++}
and the proposition.
\end{proof}
\begin{corollary}
 Let $C$ be a symmetric real $n\times n$ matrix and let $\Omega$ be a symmetric
 positive-definite $n\times n$ matrix. We define the operator
 \begin{equation}\label{}
\mathcal H_{C,\Omega}=\OPW{\norm{\Omega x}^{2}+\norm{C \xi+x}^{2}},
\end{equation}
where $\norm{\cdot}$ stands for the canonical Euclidean norm on $\R^{n}$.
Then we have
\begin{equation}\label{}
\inf_{\substack{v\in \mathscr S(\R^{n})\\ \norm{v}_{L^{2}(\R^{n})}=1}}
\poscal{\mathcal H_{C,\Omega}v}{v}_{L^{2}(\R^{n})}\le \frac1{2\pi}\trace{\sqrt{C \Omega^{2}C}}.
\end{equation}
\end{corollary}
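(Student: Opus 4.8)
The statement coincides with the Lemma already invoked in Section~3, and the plan is to prove it along the same lines: reduce to the case where $C$ is invertible, and treat that case by symplectic covariance. The case $C=0$ is nothing but Remark~\ref{rem.0013}: then $\mathcal H_{0,\Omega}$ is multiplication by the nonnegative function $\norm{\Omega x}^{2}$, whose infimum over normalized Schwartz functions is $0=\frac1{2\pi}\trace\sqrt{C\Omega^{2}C}$ (concentrate near the origin). So the genuine content is the following two steps.

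\textbf{The invertible case.} Assume $\det C\neq0$. Using $C=C^{*}$ one first rewrites the Weyl symbol of $\mathcal H_{C,\Omega}$ as $\poscal{(\Omega^{2}+I)x}{x}+2\poscal{Cx}{\xi}+\poscal{C^{2}\xi}{\xi}$. The plan is to conjugate $\mathcal H_{C,\Omega}$, successively, by the metaplectic operators implementing $(x,\xi)\mapsto(Cx,C^{-1}\xi)$ (i.e. $M_{0,C^{-1},0}$, cf.~\eqref{segal04}) and then $(x,\xi)\mapsto(x,-Cx+\xi)$ (i.e. $e^{-i\pi\poscal{Cx}{x}}$, cf.~\eqref{segal02}); a short computation shows that the symbol then becomes $\poscal{C\Omega^{2}Cx}{x}+\norm{\xi}^{2}$, so that by the symplectic covariance of the Weyl calculus (Theorem~\ref{thm.3216++}) $\mathcal H_{C,\Omega}$ is unitarily equivalent to $\opw{\poscal{C\Omega^{2}Cx}{x}+\norm{\xi}^{2}}$. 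Now $N:=C\Omega^{2}C$ is symmetric positive definite; diagonalizing it by an orthogonal matrix and then rescaling each coordinate pair $(x_{j},\xi_{j})$ by $(\nu_{j}^{-1/4},\nu_{j}^{1/4})$ — two further metaplectic conjugations — this operator is in turn unitarily equivalent to $\opw{\sum_{1\le j\le n}\sqrt{\nu_{j}}\,(x_{j}^{2}+\xi_{j}^{2})}$, where $\nu_{1},\dots,\nu_{n}>0$ are the eigenvalues of $N$. Since the bottom of the spectrum of $\opw{x^{2}+\xi^{2}}$ is $\frac1{2\pi}$ (see \eqref{112} and the lines following it), the displayed operator has bottom $\frac1{2\pi}\sum_{j}\sqrt{\nu_{j}}=\frac1{2\pi}\trace\sqrt{N}=\frac1{2\pi}\trace\sqrt{C\Omega^{2}C}$, which gives the inequality (in fact an equality) in this case. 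As an alternative to the last two conjugations, once the symbol has been brought to the form $\poscal{Nx}{x}+\norm{\xi}^{2}$ one may simply test on the Gaussian $x\mapsto e^{-\pi\poscal{\sqrt{N}x}{x}}$ and check by a direct computation that the Rayleigh quotient equals $\frac1{2\pi}\trace\sqrt{N}$.

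\textbf{The singular case.} Suppose now $C$ has rank $r$ with $1\le r\le n-1$ (the case $n=1$ being covered by the invertible case or by $C=0$). Conjugating $\mathcal H_{C,\Omega}$ by the metaplectic operator attached to an orthogonal diagonalization of the symmetric matrix $C$ (symplectic covariance again; this changes neither $\trace\sqrt{C\Omega^{2}C}$ nor the positive-definiteness of $\Omega$), one may assume $C=\diag(\lambda_{1},\dots,\lambda_{r},0,\dots,0)$ with all $\lambda_{j}\neq0$. Then the Weyl symbol $\norm{\Omega x}^{2}+\norm{C\xi+x}^{2}$ does not depend on $\xi_{r+1},\dots,\xi_{n}$, so Proposition~\ref{pro.app123} applies with $n_{1}=n-r$ (the last block of position variables) and $n_{2}=r$, giving $\inf\bigl[\spectrum\bigl\{\mathcal H_{C,\Omega}\bigr\}\bigr]\le\inf\bigl[\spectrum\bigl\{\ops{w,r}{a(x',0,\xi')}\bigr\}\bigr]$, where $a(x',0,\xi')$ is the symbol with $x_{r+1}=\dots=x_{n}=0$. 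A short computation shows $a(x',0,\xi')=\poscal{Ax'}{x'}+\norm{\Lambda'\xi'+x'}^{2}$, with $\Lambda'=\diag(\lambda_{1},\dots,\lambda_{r})$ invertible and $A$ the (positive-definite) upper-left $r\times r$ block of $\Omega^{2}$; that is, the right-hand operator is $\mathcal H_{\Lambda',\sqrt{A}}$ in dimension $r$. Applying the invertible case in dimension $r$ bounds this by $\frac1{2\pi}\trace\sqrt{\Lambda'A\Lambda'}$, and since $\Lambda'A\Lambda'$ is exactly the nonzero block of $C\Omega^{2}C$ one has $\trace\sqrt{\Lambda'A\Lambda'}=\trace\sqrt{C\Omega^{2}C}$, which finishes the argument.

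\textbf{Main obstacle.} There is nothing deep here. The only point requiring care is the bookkeeping in the invertible case: checking that each of the changes of variables used is genuinely a conjugation by an element of $\metg$ (built from the generators \eqref{syty01}--\eqref{syty03}, so that Theorem~\ref{thm.3216++} applies and the bottom of the spectrum is preserved), and that the successive symbols and the final trace identities are exactly as claimed.
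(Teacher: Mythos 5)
Your proposal is correct. For the singular case it follows exactly the paper's proof: orthogonally diagonalize $C$ by symplectic covariance, apply Proposition~\ref{pro.app123} to drop to the $r$-dimensional problem with $r=\rank C$, identify the reduced symbol as $\poscal{Ax'}{x'}+\norm{\Lambda'\xi'+x'}^{2}$ with $A$ the top-left $r\times r$ block of $\Omega^{2}$, and note that $\Lambda'A\Lambda'$ is the only nonzero block of $C\Omega^{2}C$, so the traces match. Where you genuinely add something is the invertible case: you reduce $\mathcal H_{C,\Omega}$ by a chain of explicit metaplectic conjugations ($\mathcal N_{C^{-1}}$, then $e^{i\pi\poscal{Cx}{x}}$, then an orthogonal rotation and a diagonal rescaling) to $\opw{\sum_{j}\sqrt{\nu_{j}}(x_{j}^{2}+\xi_{j}^{2})}$, with $\nu_{j}$ the eigenvalues of $C\Omega^{2}C$; this is precisely the $n$-dimensional version of Lemma~\ref{lem.appytr}. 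The paper's own proof of this corollary (and of the twin lemma stated in Section~3) simply invokes ``the result when $C$ is invertible'' or ``the result in dimension $r<n$'' without ever writing that argument for $n\ge2$, and since the dimension reduction via Proposition~\ref{pro.app123} is vacuous when $r=n$, that leaves a small circularity which your explicit treatment of the invertible case removes. The intermediate computations you sketch (the symbol after the first two conjugations becoming $\poscal{C\Omega^{2}Cx}{x}+\norm{\xi}^{2}$, each conjugation being by a bona fide element of $\metg$, and the final trace identity $\sum_j\sqrt{\nu_j}=\trace\sqrt{C\Omega^2C}$) all check out.
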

\begin{proof}
We may assume that $C$ is a symmetric matrix with rank $r\in \llbracket 1,n\rrbracket$.
We may as well suppose that 
$$
C=\begin{pmatrix*}[l]
\Lambda&0_{r,n-r}
\\
0_{r, n-r}&0_{n-r,n-r}
\end{pmatrix*},
\qquad
\Omega=\begin{pmatrix*}[l]
\beta_{11}^{r,r}&\beta_{12}^{r,n-r}
\\
{\beta_{12}^{*}}^{\!\!n-r,r}&\beta_{22}^{n-r,n-r}
\end{pmatrix*},
$$
with $\Lambda=\diag(\lambda_{1}, \dots, \lambda_{r})$ with all $\lambda_{j}\not=0$,
$\Omega$ symmetric positive-definite, which implies in particular that $\beta_{11}$ is positive-definite.
Now the Weyl symbol of $\mathcal H_{C,\Omega}$ is
$$
a(x_{1}, x_{2}, \xi_{1})=\norm{\beta_{11}x_{1}+\beta_{12}x_{2}}^{2}_{r}
+\norm{\beta_{12}^{*}x_{1}+\beta_{22}x_{2}}_{n-r}^{2}+\norm{\Lambda \xi_{1}+x_{1}}^{2}_{r}
+\norm{x_{2}}^{2}_{n-r}.
$$
Since it is a quadratic form independent of $\xi_{2}$,
we may apply 
Proposition \ref{pro.app123}.
We have for instance 
\begin{multline*}
a(x_{1}, 0, \xi_{1})=
\norm{\beta_{11}x_{1}}^{2}_{r}
+\norm{\beta_{12}^{*}x_{1}}_{n-r}^{2}+\norm{\Lambda \xi_{1}+x_{1}}^{2}_{r}
\\=
\poscal{(\beta_{11}^{2}+\beta_{12}\beta_{12}^{*})x_{1}}{x_{1}}_{r}+
\norm{\Lambda \xi_{1}+x_{1}}^{2}_{r}.
\end{multline*}
 We note that $(\beta_{11}^{2}+\beta_{12}\beta_{12}^{*})$ is positive-definite,
 so applying the result when $C$ is invertible we get that
 $$
 \inf\bigl[\spectrum(\mathcal H_{C,\Omega})\bigr]\le 
 \frac1{2\pi}\trace\sqrt{\Lambda(\beta_{11}^{2}+\beta_{12}\beta_{12}^{*})\Lambda}
 $$
 which is indeed equal to 
 $
 \frac1{2\pi}\trace\sqrt{C\Omega^{2}C},
 $
 proving the sought result.
\end{proof}
\subsubsection{On some Gaussian functions}\label{sec.gauss1}
We consider for $x\in\R, t>0$,
\begin{equation}\label{}
v_{t}(x)= e^{-\pi t x^{2}}(2t)^{1/4}, \quad\text{so that }\norm{v_{t}}_{L^{2}(\R)}=1,
\end{equation}
and 
\begin{equation}\label{}
(\mathcal M_{p,l,q} v_{t})(x)=e^{-i\pi/4}e^{i\pi m/2}{\val{l}}^{1/2}\int_{\R}
e^{i\pi(px^{2}-2l x y+qy^{2})}
e^{-\pi t y^{2}}(2t)^{1/4} dy,
\end{equation}
with $l\not=0$, $e^{i\pi m}=\sign l$. We find that 
\begin{align*}
{\val{(\mathcal M_{p,l,q} v_{t})(x)}}^{2}
&=(2t)^{1/2}\val l
\iint e^{i\pi(px^{2}-2l x y)} e^{-\pi (t-iq) y^{2}}
e^{-i\pi(px^{2}-2l x z)} e^{-\pi (t+iq) z^{2}} dy dz
\\
&=(2t)^{1/2}{\val{l}}
{\Val{\int
e^{-2i\pi lxy} e^{-\pi (t-iq) y^{2}} dy
}}^{2}
\\
&=(2t)^{1/2}{\val l} {\val{e^{-\pi(t-iq)^{-1} l^{2}x^{2}}}}^{2}(t^{2}+q^{2})^{-1/2},
\end{align*}
and thus we have 
\begin{align*}
\mathcal V\bigl(\mathcal M_{p,l,q} v_{t}\bigr)&=
\int x^{2}
e^{-2\pi\frac{t l^{2} x^{2}}{t^{2}+q^{2}}} (2t)^{1/2}\val l (t^{2}+q^{2})^{-1/2}dx
\\
&=\int y^{2}(2t)^{-1}\val l^{-2} (t^{2}+q^{2}) e^{-\pi y^{2}} dy,
\\
\text{and \quad}\mathcal V\bigl(v_{t}\bigr)&=\int x^{2} e^{-2\pi t x^{2}} (2t)^{1/2}dx 
\\
&=\int y^{2}(2t)^{-1}e^{-\pi y^{2}} dy.
\end{align*}
We get then
\begin{align*}\label{}
\mathcal V\bigl(\mathcal M_{p,l,q} v_{t}\bigr)\mathcal V\bigl(v_{t}\bigr)
&=(2t)^{-2}\val l^{-2} (t^{2}+q^{2})\left(\int y^{2}e^{-\pi y^{2}} dy\right)^{2}
\\
&=(2t)^{-2}\val l^{-2} (t^{2}+q^{2})\Bigl(\frac1{\pi^{3/2}} \Gamma(3/2)\Bigr)^{2}
\\&=\val l^{-2}(1+q^{2}t^{-2})\frac\pi{4^{2}\pi^{3}}
=\val l^{-2}(1+q^{2}t^{-2})\frac1{(4\pi)^{2}},
\end{align*}
and thus
\begin{equation}\label{}
\sqrt{\mathcal V\bigl(\mathcal M_{p,l,q} v_{t}\bigr)\mathcal V\bigl(v_{t}\bigr)}
=\tend{\frac{1}{4\pi \val l}\sqrt{1+q^{2}t^{-2}}}{\frac{1}{4\pi \val l}}{t}{+\io}.
\end{equation}
\subsubsection{Warm-up in one dimension}\label{sec.warmup}
 As a warm-up,
 let us check thoroughly the one-dimensional case. We start with a symplectic matrix  in $\textsl{Sp}(2,\R)=\textsl{Sl}(2,\R)$, say
 $$
 \Xi=\mat22{\alpha}{\beta}{\gamma}{\delta}, \quad \alpha \delta-\beta \gamma=1.
 $$
 $\bullet$ {\it We assume first that $\beta\not=0$} and we find that, using the Notation \eqref{327327}
 $$
  \Xi=\Lambda_{p,l,q}, \quad \alpha=l^{-1}q, \beta=l^{-1},  \gamma=pl^{-1}q-l, \delta=pl^{-1}.
 $$
 Looking for a metaplectic transformation $M$ in the fiber of $\Xi$,
 we get
 \begin{equation}\label{re87uu}
 \mathcal M_{p,l,q}\text{ with kernel\quad} e^{-\frac{i\pi}4} e^{\frac{i\pi m}2}{\val{l}}^{1/2}
 e^{i\pi(px^{2}-2l xy+qy^{2})}.
\end{equation}
 We consider now, with $a, b$ linear form on $\R^{2}$,
 $$
 2\re\poscal{\opw{a} \mathcal M_{p,l,q}v}{i \mathcal M_{p,l,q}\opw{b}v}
 =\frac1{2\pi}\poi{a\circ \Xi}{b}\norm{v}^{2}
 $$
 We choose $b(x,\xi)=x$ and we look for $a$ satisfying
 $$
 (a\circ \Xi)(x,\xi)=\xi, \quad\text{i.e.\quad}
 a(\alpha x+\beta \xi, \gamma x+\delta \xi)=\xi+\rho x.
 $$
 We choose then
 $$
 a(y,\eta)= \beta^{-1} y\quad \text{so that \quad}
 a(\alpha x+\beta \xi, \gamma x+\delta \xi)=\beta^{-1} (\alpha x+\beta \xi)=\xi+\rho x.
 $$
 We obtain thus that 
 $$
 2{\val{\beta}}^{-1}\mathcal V\bigl(\mathcal M v\bigr)^{1/2}\mathcal V(v)^{1/2}\ge \frac1{2\pi}\norm{v}^{2}
 $$
 that is 
 $$
 \mu(\mathcal M)\ge \frac{\val{\beta}}{4\pi}.
 $$
 Let us now show the equality $\mu(\mathcal M)= \frac{\val{\beta}}{4\pi}.$
 With $\mathcal M$ given by \eqref{re87uu}, we follow the calculations of Section 
 \ref{sec.gauss1} in our Appendix
 and we get 
 $\mu(M)\le \frac{\val{\beta}}{4\pi}$, proving the sought result.
 \par\no
  $\bullet$ {\it We assume now that $\beta=0$}, so that 
  $$
  \Xi=\mat22{\alpha}{0}{\gamma}{\alpha^{-1}},\quad \alpha\not=0.
  $$
  We find then that 
  $$
    \Xi=\Xi_{a,b,0}, \quad b=\alpha^{-1}, a=b\gamma.
  $$
  The operator
  $$
  (M_{a,b,0} v)(x)=e^{\frac{i\pi m}2} \val b^{1/2}\int  e^{i\pi (a x^{2} +2 bx\eta)}\hat v(\eta) d\eta
  $$
  Since $\mathcal V(e^{i\pi a x^{2}}w)=\mathcal V (w)$, we may assume that $a=0$
   and 
   $$
   (Mv)(x)= b^{1/2}v(bx),
   $$ 
   yielding
   $
   \mu(M)=0.
   $
   We have thus proven the following result.
\begin{theorem}
 Let $M\in \textsl{Mp}(1)$. Then we have $\Xi=\Psi(M)\in \textsl{Sp}(2,\R)=
 \textsl{Sl}(2,\R) $ and 
  $$\Xi=
  \mat22{\alpha}{\beta}{\gamma}{\delta}.$$
  Then we have 
  $
  \mu(M)= \frac{\val{\beta}}{4\pi}.
  $
\end{theorem}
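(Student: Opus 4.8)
The plan is to treat separately the two cases singled out in the warm-up, $\beta\not=0$ and $\beta=0$, in each case producing an explicit element of the fiber $\Psi^{-1}(\Xi)$ on which to evaluate $\mathcal V$, and then to bracket $\mu(M)$ from below by a commutator argument and from above by a Gaussian test function. Recall that the fiber of $\Xi$ consists of two operators differing by a sign, and $\mathcal V$ is unchanged under multiplication by $-1$, so it suffices to compute $\mu$ on one representative.

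\textbf{The case $\beta\not=0$.} Since $\Xi_{12}=\beta$ is invertible, Lemma \ref{lem.kfd89e} (in dimension one, where $L^{*}=L$) gives $\Xi=\Lambda_{p,l,q}$ with $l=\beta^{-1}$, $q=\alpha\beta^{-1}$, $p=\delta\beta^{-1}$, the remaining identity $\gamma=pl^{-1}q-l=(\alpha\delta-1)\beta^{-1}$ being forced by $\det\Xi=1$; hence $M=\pm\mathcal M_{p,l,q}\expo{m}$ for a suitable $m\in m(l)$, with kernel $e^{-i\pi/4}e^{i\pi m/2}\val l^{1/2}e^{i\pi(px^{2}-2lxy+qy^{2})}$. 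For the lower bound I would apply Theorem \ref{lem.44kkqs} with the linear forms $b(x,\xi)=x$ and $a(y,\eta)=\beta^{-1}y$: then $(a\circ\Xi)(x,\xi)=\beta^{-1}(\alpha x+\beta\xi)=\xi+\alpha\beta^{-1}x$, so $\poi{a\circ\Xi}{b}=1$, and \eqref{2419++} together with the Cauchy--Schwarz inequality and the unitarity of $M$ yields $\norm{\opw{a}\mathcal Mv}_{L^{2}(\R)}\norm{\opw{b}v}_{L^{2}(\R)}\ge\frac1{4\pi}\norm{v}_{L^{2}(\R)}^{2}$. As $\opw{a}$ is multiplication by $\beta^{-1}y$ and $\opw{b}$ is multiplication by $x$, this reads $\val\beta^{-1}\sqrt{\mathcal V(\mathcal Mv)\mathcal V(v)}\ge\frac1{4\pi}\norm{v}_{L^{2}(\R)}^{2}$, that is $\mu(M)\ge\frac{\val\beta}{4\pi}$. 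For the matching upper bound I would test on the Gaussians $v_{t}(x)=(2t)^{1/4}e^{-\pi tx^{2}}$ of Section \ref{sec.gauss1}; the computation there gives $\sqrt{\mathcal V(\mathcal M_{p,l,q}v_{t})\mathcal V(v_{t})}=\frac1{4\pi\val l}\sqrt{1+q^{2}t^{-2}}$, which tends to $\frac1{4\pi\val l}=\frac{\val\beta}{4\pi}$ as $t\to+\io$, whence $\mu(M)\le\frac{\val\beta}{4\pi}$ and so equality.

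\textbf{The case $\beta=0$.} Then $\alpha\delta=1$ with $\alpha\not=0$, and by \eqref{gensym} one has $\Xi=\Xi_{A,B,0}$ with $B=\alpha^{-1}$ and $A=\gamma\alpha$, so $M=\pm M_{A,B,0}\expo{m}$, which by \eqref{mety01} and \eqref{mety02} acts (up to sign) as $(Mv)(x)=e^{i\pi Ax^{2}}(\det B)^{1/2}v(Bx)$. Multiplication by $e^{i\pi Ax^{2}}$ preserves the modulus, so a change of variables gives $\mathcal V(Mv)=B^{-2}\mathcal V(v)$, hence $\sqrt{\mathcal V(Mv)\mathcal V(v)}=\val B^{-1}\mathcal V(v)$; running $v$ through a concentrating sequence as in Remark \ref{rem.0013} (cf. \eqref{213213}) forces $\mu(M)=0=\frac{\val\beta}{4\pi}$. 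Combining the two cases yields $\mu(M)=\frac{\val\beta}{4\pi}$ in all cases. The only step involving an actual computation is the Gaussian evaluation of Section \ref{sec.gauss1} used for the sharp upper bound when $\beta\not=0$; everything else is a direct specialization of the machinery already in place, and indeed the statement is also the $n=1$ instance of Theorem \ref{thm.21poii}, since there $\Xi_{12}=\beta$ is a scalar and $\trinorm{\Xi_{12}}_{\mathtt{S}^{1}}=\val\beta$.
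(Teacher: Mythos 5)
Your proposal is correct and follows essentially the same route as the paper's warm-up in Section \ref{sec.warmup}: the same parametrization $\Xi=\Lambda_{p,l,q}$ when $\beta\not=0$, the same choice of linear forms $a(y,\eta)=\beta^{-1}y$ and $b(x,\xi)=x$ giving $\poi{a\circ\Xi}{b}=1$ for the lower bound, the same Gaussian test family from Section \ref{sec.gauss1} for the matching upper bound, and the same reduction $(Mv)(x)=e^{i\pi Ax^{2}}b^{1/2}v(bx)$ with a concentrating sequence when $\beta=0$. There is nothing to add.
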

\begin{lemma}\label{lem.appytr}
 Let $\omega, c$ be real numbers and let 
 $
 \mathcal H_{c,\omega}=\opw{(\omega x)^{2}+(c\xi+x)^{2}}.
 $
 Then we have 
 \begin{equation}\label{}
 \inf\bigl[\spectrum \mathcal H_{c,\omega}\bigr]=\frac{\val{\omega c}}{2\pi}.
\end{equation}
\end{lemma}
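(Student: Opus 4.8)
The plan is to reduce $\mathcal H_{c,\omega}$, by means of the symplectic covariance of the Weyl calculus, to a rescaled one‑dimensional harmonic oscillator, whose ground–state energy is the quantity $\min\bigl[\spectrum\pi(D_{t}^{2}+t^{2})\bigr]=\tfrac12$ computed in Section \ref{sec.ugtf88}.

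\textbf{The generic case $\omega c\not=0$.} Here the Weyl symbol $a(x,\xi)=(\omega x)^{2}+(c\xi+x)^{2}$ is a positive–definite quadratic form, and I would trivialize it in two elementary steps. First I apply the shear $\chi_{1}(x,\xi)=(x,\xi-x/c)$, a symplectic transformation of the type \eqref{syty01} (with matrix $A=-1/c$), which is implemented by a metaplectic operator of the form $e^{i\pi Ax^{2}}$ according to \eqref{segal02} and \eqref{mety01}; a direct substitution gives $a\circ\chi_{1}(x,\xi)=(\omega x)^{2}+(c\xi)^{2}$. Next I apply the dilation $\chi_{2}$ of the type \eqref{syty02} with $B=(\val{\omega}/\val{c})^{1/2}\in\textsl{Gl}(1,\R)$, implemented by the unitary $\mathcal N_{B}$ as in \eqref{segal04}; since $\omega^{2}/B^{2}=c^{2}B^{2}=\val{\omega c}$ this yields $a\circ\chi_{1}\circ\chi_{2}(x,\xi)=\val{\omega c}(x^{2}+\xi^{2})$. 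By Theorem \ref{thm.3216++}, composing the two conjugations shows that $\mathcal H_{c,\omega}$ is unitarily equivalent to $\opw{\val{\omega c}(x^{2}+\xi^{2})}=\val{\omega c}(D_{t}^{2}+t^{2})$, so that $\spectrum\mathcal H_{c,\omega}=\val{\omega c}\cdot\spectrum(D_{t}^{2}+t^{2})$; since $\min\bigl[\spectrum(D_{t}^{2}+t^{2})\bigr]=\tfrac1{2\pi}$, this gives $\inf\bigl[\spectrum\mathcal H_{c,\omega}\bigr]=\val{\omega c}/(2\pi)$.

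\textbf{The degenerate case $\omega c=0$}, where the claimed value is $0$. If $c=0$, then $\mathcal H_{c,\omega}=\opw{(\omega^{2}+1)x^{2}}$ is the non‑negative multiplication operator by $(\omega^{2}+1)t^{2}$, and concentrating a normalized $u$ near the origin (exactly as in Remark \ref{rem.0013}) shows $\inf\bigl[\spectrum\mathcal H_{c,\omega}\bigr]=0$. If $c\not=0$ but $\omega=0$, the same shear $\chi_{1}$ now reduces $a$ to $(c\xi)^{2}$, so $\mathcal H_{c,\omega}$ is unitarily equivalent to $c^{2}D_{t}^{2}$, whose spectrum is $[0,+\infty)$; hence again $\inf\bigl[\spectrum\mathcal H_{c,\omega}\bigr]=0$.

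All the computations involved are routine. The only points requiring some care are the bookkeeping of which metaplectic operator implements each elementary symplectic map (so that Theorem \ref{thm.3216++} is invoked with the correct $\chi$) and the separate handling of the rank‑deficient symbol, for which the quadratic form cannot be brought to $x^{2}+\xi^{2}$; beyond that I do not anticipate any genuine obstacle.
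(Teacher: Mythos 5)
Your proof is correct and takes essentially the same route as the paper's: conjugate by metaplectic operators (a shear to kill the cross term, then a dilation to equalize the coefficients) so that $\mathcal H_{c,\omega}$ becomes the rescaled harmonic oscillator $\val{\omega c}\,\opw{x^{2}+\xi^{2}}$, whose ground-state energy is $\val{\omega c}/(2\pi)$ by the computation in Section \ref{sec.ugtf88}. Your handling of the degenerate case is in fact slightly more careful than the paper's, whose substitution $\alpha=\sqrt{\val{\omega/c}}$ followed by a division by $\alpha^{2}$ tacitly requires $\omega\not=0$ in addition to the stated $c\not=0$; you dispose of the remaining case $\omega=0$, $c\not=0$ separately by reducing $\mathcal H_{c,0}$ to $c^{2}D_{t}^{2}$.
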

\begin{proof}
 We may assume that $c\not=0$, since the result is obvious when $c=0$.
 (see e.g. Remark \ref{rem.0013}.)
 We define then 
 $
 \alpha=\sqrt{\val{\omega/c}},
 $
 and we obtain that,
 thanks to the symplectic covariance of the Weyl calculus,
 $ \mathcal H_{c,\omega}$ is unitarily equivalent 
 to the operator with Weyl symbol
 $$
 \omega^{2}\alpha^{-2}x^{2}+\bigl(c \alpha\xi+\alpha^{-1} x\bigr)^{2}=\val{c\omega}
 \bigl(x^{2}+(\xi+c^{-1}\alpha^{-2}x)^{2}\bigr),
 $$
 which
 is unitarily equivalent 
 to the harmonic oscillator
 $
 \val{c\omega}\opw{\xi^{2}+x^{2}}
 $
 (also from the symplectic covariance),
 whose spectrum is 
 $$
\frac { \val{c\omega}}\pi\Bigl(\frac12+\N\Bigr),
 $$
 concluding the proof of the lemma.
\end{proof}
\providecommand{\bysame}{\leavevmode\hbox to3em{\hrulefill}\thinspace}
\providecommand{\MR}{\relax\ifhmode\unskip\space\fi MR }
\providecommand{\MRhref}[2]{%
  \href{http://www.ams.org/mathscinet-getitem?mr=#1}{#2}
}
\providecommand{\href}[2]{#2}


\begin{thebibliography}{10}

\bibitem{MR0461025}
W.~O. Amrein and A.~M. Berthier, \emph{On support properties of
  {$L\sp{p}$}-functions and their {F}ourier transforms}, J. Functional Analysis
  \textbf{24} (1977), no.~3, 258--267. \MR{461025}

\bibitem{MR0780328}
Michael Benedicks, \emph{On {F}ourier transforms of functions supported on sets
  of finite {L}ebesgue measure}, J. Math. Anal. Appl. \textbf{106} (1985),
  no.~1, 180--183. \MR{780328}

\bibitem{MR2264204}
Aline Bonami and Bruno Demange, \emph{A survey on uncertainty principles
  related to quadratic forms}, Collect. Math. (2006), 1--36. \MR{2264204}

\bibitem{MR1993414}
Aline Bonami, Bruno Demange, and Philippe Jaming, \emph{Hermite functions and
  uncertainty principles for the {F}ourier and the windowed {F}ourier
  transforms}, Rev. Mat. Iberoamericana \textbf{19} (2003), no.~1, 23--55.
  \MR{1993414}

\bibitem{MR2664605}
Jean-Michel Bony, \emph{Evolution equations and generalized {F}ourier integral
  operators}, Advances in phase space analysis of partial differential
  equations, Progr. Nonlinear Differential Equations Appl., vol.~78,
  Birkh\"{a}user Boston, Boston, MA, 2009, pp.~59--72. \MR{2664605}

\bibitem{MR1721402}
John~B. Conway, \emph{A course in operator theory}, Graduate Studies in
  Mathematics, vol.~21, American Mathematical Society, Providence, RI, 2000.
  \MR{1721402}

\bibitem{MR4453622}
Nuno~C. Dias, Franz Luef, and Jo\~{a}o~N. Prata, \emph{Uncertainty principle
  via variational calculus on modulation spaces}, J. Funct. Anal. \textbf{283}
  (2022), no.~8, Paper No. 109605, 30. \MR{4453622}

\bibitem{MR0978597}
I.~Ekeland and H.~Hofer, \emph{Symplectic topology and {H}amiltonian dynamics},
  Math. Z. \textbf{200} (1989), no.~3, 355--378. \MR{978597}

\bibitem{MR1044064}
Ivar Ekeland and Helmut Hofer, \emph{Symplectic topology and {H}amiltonian
  dynamics. {II}}, Math. Z. \textbf{203} (1990), no.~4, 553--567. \MR{1044064}

\bibitem{MR0611747}
C.~Fefferman and D.~H. Phong, \emph{The uncertainty principle and sharp
  {G}\.{a}rding inequalities}, Comm. Pure Appl. Math. \textbf{34} (1981),
  no.~3, 285--331. \MR{611747}

\bibitem{MR0707957}
Charles~L. Fefferman, \emph{The uncertainty principle}, Bull. Amer. Math. Soc.
  (N.S.) \textbf{9} (1983), no.~2, 129--206. \MR{707957}

\bibitem{MR983366}
Gerald~B. Folland, \emph{Harmonic analysis in phase space}, Annals of
  Mathematics Studies, vol. 122, Princeton University Press, Princeton, NJ,
  1989. \MR{983366}

\bibitem{MR3173045}
Saifallah Ghobber and Philippe Jaming, \emph{Uncertainty principles for
  integral operators}, Studia Math. \textbf{220} (2014), no.~3, 197--220.
  \MR{3173045}

\bibitem{MR1843717}
Karlheinz Gr\"{o}chenig, \emph{Foundations of time-frequency analysis}, Applied
  and Numerical Harmonic Analysis, Birkh\"{a}user Boston, Inc., Boston, MA,
  2001. \MR{1843717}

\bibitem{MR0809718}
M.~Gromov, \emph{Pseudo holomorphic curves in symplectic manifolds}, Invent.
  Math. \textbf{82} (1985), no.~2, 307--347. \MR{809718}

\bibitem{MR1150375}
Lars H\"{o}rmander, \emph{A uniqueness theorem of {B}eurling for {F}ourier
  transform pairs}, Ark. Mat. \textbf{29} (1991), no.~2, 237--240. \MR{1150375}

\bibitem{MR0501198}
Jean Leray, \emph{Analyse lagrangienne et m\'{e}canique quantique},
  S\'{e}minaire sur les \'{E}quations aux {D}\'{e}riv\'{e}es {P}artielles
  (1976--1977), {I}, Coll\`ege de France, Paris, 1977, pp.~Exp. No. 1, 303.
  \MR{0501198}

\bibitem{MR644633}
\bysame, \emph{Lagrangian analysis and quantum mechanics}, MIT Press,
  Cambridge, Mass., 1981, A mathematical structure related to asymptotic
  expansions and the Maslov index, Translated from the French by Carolyn
  Schroeder. \MR{MR644633 (83k:58081a)}

\bibitem{MR2599384}
Nicolas Lerner, \emph{Metrics on the phase space and non-selfadjoint
  pseudo-differential operators}, Pseudo-Differential Operators. Theory and
  Applications, vol.~3, Birkh\"auser Verlag, Basel, 2010. \MR{2599384}

\bibitem{MR4726947}
\bysame, \emph{Integrating the {W}igner distribution on subsets of the phase
  space, a survey}, Memoirs of the European Mathematical Society, vol.~12, EMS
  Press, Berlin, 2024. \MR{4726947}

\bibitem{MR2766495}
Elliott~H. Lieb, \emph{The stability of matter: from atoms to stars}, fourth
  ed., Springer, Berlin, 2005, Selecta of Elliott H. Lieb, Edited by W.
  Thirring, and with a preface by F. Dyson. \MR{2766495}

\bibitem{MR2583992}
Elliott~H. Lieb and Robert Seiringer, \emph{The stability of matter in quantum
  mechanics}, Cambridge University Press, Cambridge, 2010. \MR{2583992}

\bibitem{MR0751959}
Michael Reed and Barry Simon, \emph{Methods of modern mathematical physics.
  {I}}, second ed., Academic Press, Inc. [Harcourt Brace Jovanovich,
  Publishers], New York, 1980, Functional analysis. \MR{751959}

\bibitem{MR4337266}
Yiyu Tang, \emph{A simple observation on {H}eisenberg-like uncertainty
  principles}, Zap. Nauchn. Sem. S.-Peterburg. Otdel. Mat. Inst. Steklov.
  (POMI) \textbf{503} (2021), 113--120. \MR{4337266}

\bibitem{MR552965}
Andr{\'e} Unterberger, \emph{Oscillateur harmonique et op\'erateurs
  pseudo-diff\'erentiels}, Ann. Inst. Fourier (Grenoble) \textbf{29} (1979),
  no.~3, xi, 201--221. \MR{552965}

\bibitem{MR0450450}
Hermann Weyl, \emph{Gruppen theorie und {Q}uanten mechanik}, second ed.,
  Wissenschaftliche Buchgesellschaft, Darmstadt, 1977. \MR{0450450}

\bibitem{MR4229152}
Avi Wigderson and Yuval Wigderson, \emph{The uncertainty principle: variations
  on a theme}, Bull. Amer. Math. Soc. (N.S.) \textbf{58} (2021), no.~2,
  225--261. \MR{4229152}

\bibitem{MR1635991}
Eugene~Paul Wigner, \emph{The collected works of {E}ugene {P}aul {W}igner.
  {P}art {A}. {T}he scientific papers. {V}ol. {IV}}, Springer-Verlag, Berlin,
  1997, Part I. Physical chemistry, Part II. Solid state physics, With a
  preface by Jagdish Mehra and Arthur S. Wightman, With annotations by Nandor
  Balazs and Walter Kohn, Edited by Wightman. \MR{1635991}

\end{thebibliography}
\end{document}